\documentclass[11 pt]{article}
\usepackage{subfiles}
\usepackage{enumitem}

\pdfoutput=1

\usepackage[backend=bibtex,
            isbn=false,
            doi=false,
            giveninits=true,
            style=alphabetic,
            useprefix=true,
            maxcitenames=4,
            citestyle=alphabetic]{biblatex}
\renewbibmacro{in:}{}
\bibliography{Mult-Quiver.bib}

\usepackage[all,2cell]{xy} \UseAllTwocells \SilentMatrices \SelectTips{cm}{}
\usepackage{latexsym,amsfonts,amssymb}
\usepackage{amsmath,amsthm,amscd}
\usepackage[dvips]{epsfig}
\usepackage{psfrag}

\usepackage{color}
\usepackage{etoolbox}
\usepackage{hyperref}

\usepackage{graphicx}
\usepackage{fancyhdr}
\usepackage{mathtools}
\usepackage{topcapt}
\usepackage{enumitem}
\usepackage{booktabs}
\usepackage{stmaryrd}
\usepackage[scaled]{helvet}
\usepackage[T1]{fontenc}
\input xy
\xyoption{all}
\usepackage{tikz}
\usepackage{braids}

\textwidth=16.5cm
\oddsidemargin=0cm
\evensidemargin=0cm
\textheight=21cm
\topmargin=0cm

\usepackage{tikz}
\usepgflibrary{shapes.geometric}
\usetikzlibrary{matrix}
\usepackage{pgfplots}
\usepackage{braids}
\newcommand{\tik}[1]{\begin{tikzpicture}[baseline=(current bounding box.center), scale=0.65] #1 \end{tikzpicture} }

\usetikzlibrary{arrows}
\tikzstyle cross=[preaction={draw=white, -, line width=4pt}, thick]
\tikzstyle normal=[thick]
\tikzstyle chord=[densely dotted, thick]
\tikzstyle zero=[ultra thick, gray]
\tikzstyle zerocross=[preaction={draw=white, -, line width=4pt}, ultra thick, gray]
\tikzstyle point=[draw,circle,inner sep=1,fill=black]
\tikzstyle petitpoint=[draw,circle,inner sep=0.3,fill=black]

\newcommand{\negative}[3][-]{\draw[normal,#1] (#2+1,-#3).. controls (#2+1,-#3-0.3) and (#2,-#3-0.7)..(#2,-#3-1); \draw[cross,#1] (#2,-#3).. controls (#2,-#3-0.3) and (#2+1,-#3-0.7)..(#2+1,-#3-1);}

\newcommand{\fieldgoal}[3][-]{\draw[cross,#1] (#2+2.5,-#3).. controls (#2+2.5,-#3-0.3) and (#2+1,-#3-0.7)..(#2+1,-#3-1); \draw[cross,#1] (#2,-#3).. controls (#2,-#3-0.3) and (#2+2.5,-#3-0.7)..(#2+2.5,-#3-1);\draw[cross,#1] (#2+1.5,-#3).. controls (#2+1.5,-#3-0.3) and (#2,-#3-0.7)..(#2,-#3-1);}

\newcommand{\fieldgoalsm}[3][-]{\draw[cross,#1] (#2+2.5,-#3).. controls (#2+2.5,-#3-0.3) and (#2+1,-#3-0.7)..(#2+1,-#3-1); \draw[cross,#1] (#2+1,-#3).. controls (#2+1,-#3-0.3) and (#2+2.5,-#3-0.7)..(#2+2.5,-#3-1);\draw[cross,#1] (#2+1.5,-#3).. controls (#2+1.5,-#3-0.3) and (#2,-#3-0.7)..(#2,-#3-1);}

\newcommand{\lmove}[3][-]{\draw[cross,#1] (#2+1,-#3).. controls (#2+1,-#3-0.3) and (#2,-#3-0.7)..(#2,-#3-1);}
\newcommand{\lmovesm}[3][-]{\draw[cross,#1] (#2+0.5,-#3).. controls (#2+0.5,-#3-0.3) and (#2,-#3-0.7)..(#2,-#3-1);}
\newcommand{\lmovelg}[3][-]{\draw[cross,#1] (#2+2,-#3).. controls (#2+2,-#3-0.3) and (#2,-#3-0.7)..(#2,-#3-1);}
\newcommand{\rmovesm}[3][-]{\draw[cross,#1] (#2,-#3).. controls (#2,-#3-0.3) and (#2+.5,-#3-0.7)..(#2+.5,-#3-1);}

\newcommand{\rmove}[3][-]{\draw[cross,#1] (#2,-#3).. controls (#2,-#3-0.3) and (#2+1,-#3-0.7)..(#2+1,-#3-1);}
\newcommand{\coupon}[4]{\draw[cross] (#1-0.2,-#2) rectangle (#1+#3+0.2,-#2-1);\node at (#1+#3*0.5,-#2-0.5) {#4};}

\newcommand{\up}[3][-]{ 
\draw[cross,#1] (#2,-#3).. controls (#2,-#3-0.3) and (#2+0.2,-#3-0.5).. (#2+0.5,-#3-0.5);
\draw[cross] (#2+0.5,-#3-0.5).. controls (#2+0.8,-#3-0.5) and (#2+1,-#3-0.3).. (#2+1,-#3);}
\newcommand{\ap}[3][-]{
\draw[cross,#1] (#2,-#3-1).. controls (#2,-#3-0.7) and (#2+0.2,-#3-0.5).. (#2+0.5,-#3-0.5);
\draw[cross] (#2+0.5,-#3-0.5).. controls (#2+0.8,-#3-0.5) and (#2+1,-#3-0.7).. (#2+1,-#3-1);}

\newcommand{\straight}[3][-]{\draw[cross,#1] (#2,-#3) -- (#2,-#3-1);}

\usetikzlibrary{shapes,snakes}

\newcommand{\KP}[1]{%
  \begin{tikzpicture}[baseline=-\dimexpr\fontdimen22\textfont2\relax]
  #1
  \end{tikzpicture}%
}
\newcommand{\KPA}{%
  \KP{\filldraw[color=gray, fill=none, thick] circle (0.3);}%
}
\newcommand{\KPB}{%
  \KP{
    \draw[color=gray,thick] (-0.3,0.3) -- (0.3,-0.3);
    \draw[color=gray,thick] (-0.3,-0.3) -- (-0.05,-0.05);
    \draw[color=gray,thick] (0.05,0.05) -- (0.3,0.3);
  }%
}
\newcommand{\KPC}{%
  \KP{%
    \draw[color=gray,thick] (-0.3,0.3) .. controls (0,-0.05) .. (0.3,0.3);
    \draw[color=gray,thick] (-0.3,-0.3) .. controls (0,0.05) .. (0.3,-0.3);
  }%
}
\newcommand{\KPD}{%
  \KP{%
    \draw[color=gray,thick] (-0.3,-0.3) .. controls (0.05,0) .. (-0.3,0.3);
    \draw[color=gray,thick] (0.3,-0.3) .. controls (-0.05,0) .. (0.3,0.3);
  }%
}


\newcommand{\de}{\partial}

\newcommand{\ULR}{U_\Rt^{\mathrm{L}}}

\newcommand{\UK}{U_\Ct}
\newcommand{\UqL}{U_q^{\mathrm{L}}}

\newcommand{\uq}{u_q}

\newcommand{\Aq}{{A_q}}
\newcommand{\Bq}{{B_q}}
\newcommand{\OK}{\O_\Ct}
\newcommand{\Oq}{\O_q}
\newcommand{\OR}{\O_\Rt}

\newcommand{\Rep}{\mathsf{Rep}}
\newcommand{\Repq}{\mathsf{Rep}_q}
\newcommand{\RepR}{\mathsf{Rep}_\Rt}
\newcommand{\RepK}{\mathsf{Rep}_\Ct}

\newcommand{\Ann}{\mathrm{Ann}}

\newcommand{\Sp}{{S^\circ}}

\newcommand{\Mfld}{\operatorname{Mfld}}
\newcommand{\Disk}{\operatorname{Disk}}

\newcommand{\rightloop}{%
           \mathrel{\raisebox{.1em}{%
           \reflectbox{\rotatebox[origin=c]{-90}{$\circlearrowright$}}}}}

\newcommand{\Br}{G^\circ}
\newcommand{\ot}{\otimes}
\newcommand{\op}{\mathrm{op}}

\newcommand{\C}{\mathbb{C}}

\newcommand{\Q}{\mathbb{Q}}

\newcommand{\Z}{\mathbb{Z}}

\newcommand{\cA}{\mathcal{A}}

\newcommand{\cC}{\mathcal{C}}
\newcommand{\cD}{\mathcal{D}}

\newcommand{\D}{\mathcal{D}}
 
\newcommand{\cE}{\mathcal{E}}

\newcommand{\Ct}{\mathcal{K}}
\newcommand{\cM}{\mathcal{M}}
\renewcommand{\O}{\mathcal{O}}
\newcommand{\cO}{\mathcal{O}}
\newcommand{\Rt}{\mathcal{R}}
\newcommand{\cV}{\mathcal{V}}
\newcommand{\cW}{\mathcal{W}}

\newcommand{\cZ}{\mathcal{Z}}

\newcommand{\Dist}{\operatorname{Dist}}

\newcommand{\T}{\mathrm{T}}

\newcommand{\g}{\mathfrak g}
\renewcommand{\b}{\mathfrak b}

\newcommand{\gl}{\mathfrak{gl}}
\newcommand{\glN}{\mathfrak{gl}_N}

\newcommand{\bt}{\boxtimes}

\newcommand{\Tr}{\operatorname{Tr}}
\newcommand{\inv}{^{-1}}
\newcommand{\Der}{\mathrm{Der}}
\newcommand{\Hom}{\text{\rm Hom}}
\newcommand{\Mat}{\text{\rm Mat}}

\newcommand{\GL}{\text{\rm GL}}

\newcommand{\SL}{\text{\rm SL}}

\newcommand{\Lie}{\mathrm{Lie}}
\newcommand{\into}{\hookrightarrow}

\DeclareMathOperator{\Spec}{Spec}
\newcommand{\Proj}{\mathrm{Proj}}
\newcommand{\Sym}{\text{\rm Sym}}

\newcommand{\id}{\text{\rm Id}}

\newcommand{\Fun}{\text{\rm Fun}}
\newcommand{\End}{\text{\rm End}}

\newcommand{\Fr}{\text{\rm Fr}}
\newcommand{\dS}{\Big{/}\!\!\Big{/}}
\newcommand{\sS}{\Big{/}}


\newcommand{\MQdf}{\cM_{fr}(Q,\dv)}
\newcommand{\MatQd}{\Mat(Q, \mathbf{d})}

\newcommand{\DqMatMN}{\cD_q\left(\overset{N}{\bullet}\rightarrow\overset{M}{\bullet}\right)}
\newcommand{\OqMatMN}{\O_q\left(\overset{N}{\bullet}\rightarrow\overset{M}{\bullet}\right)}
\newcommand{\OqMatNM}{\O_q\left(\overset{M}{\bullet}\rightarrow\overset{N}{\bullet}\right)}
\newcommand{\Dqloop}{\cD_q\left(\overset{N}{\bullet}\rightloop\right)}
\newcommand{\DqMate}{\mathcal{D}_q(\Mat(e))}
\newcommand{\DqQ}{\mathcal{D}_q(\Mat(Q,\dv))}

\newcommand{\RepqGLd}{\Repq(\GL_{\dv})}
\newcommand{\MQd}{\cM(Q,\dv,\xi,\theta)}
\newcommand{\MQdtriv}{\cM(Q,\dv,\xi,0)}
\newcommand{\MQds}{\cM^s(Q,\dv,\xi,\theta)}

\newcommand{\DqMatQd}{\mathcal{D}_q(\Mat(Q, \mathbf{d}))}
\newcommand{\DRMatQd}{\mathcal{D}_\Rt(\Mat(Q, \mathbf{d}))}

\newcommand{\Kr}{_\text{\rm Kr}}

\newcommand{\Vect}{\mathsf{Vect}}

\newcommand{\detq}{\text{\rm det}_q}

\newcommand{\ad}{\text{\rm ad}} 

\newcommand{\dv}{\mathbf{d}}

\theoremstyle{plain}
\newtheorem{theorem}{Theorem}[section]
\newtheorem{prop}[theorem]{Proposition}

\newtheorem{lemma}[theorem]{Lemma}
\newtheorem{cor}[theorem]{Corollary}

\newtheorem{assumption}[theorem]{Assumption}

\theoremstyle{definition}
\newtheorem{definition}[theorem]{Definition}
\newtheorem{example}[theorem]{Example}
\newtheorem{rmk}[theorem]{Remark}

\numberwithin{equation}{section}

\title{The quantum Frobenius for character varieties and multiplicative quiver varieties}
\author{Iordan Ganev \and David Jordan \and Pavel Safronov}
\begin{document} 
\maketitle
\begin{abstract}
We prove that quantized multiplicative quiver varieties and quantum character varieties define sheaves of Azumaya algebras over the corresponding classical moduli spaces, and we prove that the Azumaya locus of the Kauffman bracket skein algebras contains the smooth locus, proving a strong form of the Unicity Conjecture of Bonahon and Wong.

The proofs exploit a strong compatibility between quantum Hamiltonian reduction and the quantum Frobenius homomorphism as it arises in each setting.  We therefore introduce the concepts of Frobenius quantum moment maps and their Hamiltonian reduction, and of Frobenius Poisson orders. We use these tools to construct canonical central subalgebras of quantum algebras, and explicitly compute the resulting Azumaya loci we encounter, using a natural nondegeneracy assumption.
\end{abstract}

\setcounter{tocdepth}{2}
\tableofcontents

\section{Introduction}\label{sec:intro}

In this paper we study \emph{quantized character varieties}, and \emph{quantized multiplicative quiver varieties}, when the quantization parameter is a root of unity.  Our main results describe these quantized moduli spaces as Azumaya algebras -- meaning that, {\' e}tale-locally, they are sheaves of matrix algebras -- over an explicitly given open locus on the spectrum of their centers.  In the quiver examples, this spectrum is the classical multiplicative quiver variety, while for character variety examples it is the classical character variety.

As a key application, we also treat the so-called Kauffman bracket skein algebra of a surface.  The `Unicity Conjecture' of Bonahon-Wong asserts that the Kauffman bracket skein algebra is Azumaya over some non-empty open subset\cite{BonahonWongI,BonahonWongII,BonahonWongIII}; in this form it was first proved by Frohman, Kania-Bartoszynska, and L\^{e} \cite{FrohmanUnicity}, who showed that the set of Azumaya points was non-empty and open but did not determine the locus precisely.  We improve upon their result by showing that the Azumaya locus in fact contains the smooth locus.

Both classes of quantizations are constructed via the process of quantum Hamiltonian reduction, and in both cases the theory of Poisson orders equips the quantization with a connection over an open symplectic leaf.  Our primary techniques, therefore, exploit several remarkable compatibilities between Lusztig's quantum Frobenius homomorphism on the quantum group, the theory of Poisson orders, and the procedure of quantum Hamiltonian reduction.  To this end, we develop the frameworks of Frobenius Poisson orders and their Frobenius quantum Hamiltonian reduction. We exploit the Frobenius Poisson order to determine the Azumaya points before reduction, and then we show that Frobenius Poisson orders descend to Poisson orders on the Hamiltonian reduction, and hence conclude that the Azumaya algebras we constructed before reduction descend to Azumaya algebras on the Hamiltonian reduction.  A further remarkable feature of both classes of examples is that they are \emph{non-degenerate} Hamiltonian $G$-Poisson spaces, which allows us to describe the symplectic leaves very explicitly in terms of the classical multiplicative moment map.

We first recall our two motivating classes of examples, then state our main results concerning them, and then discuss the general results and techniques in more detail.

\subsection{Character varieties and their quantization}

Given a reductive group $G$, the $G$-character stack $Ch_G(S)$ of a connected topological surface $S$ (possibly with boundary) is the moduli space of $G$-local systems on $S$, equivalently of representations $\pi_1(S)\to G$, modulo the conjugation action.  To construct $Ch_G(S)$, one considers first the \emph{framed} character variety, $Ch_G^{fr}(S)$, consisting of a $G$-local system with a fixed trivialization at one point; this is an affine variety, equipped with a $G$-action by changes of framing. The framed character variety carries the Fock--Rosly Poisson structure, and admits a multiplicative moment map $Ch_G^{fr}(S) \rightarrow G$.  The character variety $Ch_G(S)$ is then the quotient of $Ch_G^{fr}(S)$ by the $G$-action.

Given now a \emph{closed} surface $S$ of genus $g$, let us denote by $\Sp$ the surface obtained by removing some small disk.  An important observation is that the character variety of $S$ is obtained from the character variety of $\Sp$ via a procedure of group-valued Hamiltonian reduction \cite{AlekseevLiegroupvalued1998}, \cite{AlekseevQuasiPoisson}, \cite{SafronovQuasiHamiltonian}.  This means that $Ch^{fr}_G(\Sp)$ carries a multiplicative moment map valued in the group $G$, which records the holonomy around the boundary.  The multiplicative Hamiltonian reduction then computes the joint effect of attaching the disk and quotienting the $G$-action.  The character variety obtained in this way is complicated -- it is singular in general, and may have several irreducible components.  There is an important open subset of the character variety called the `good locus'; the good locus consists of the closed orbits whose stabilizer is the center.  It is empty in genus one, but non-empty for genus greater than one (and in fact dense for $G=\GL_N, \SL_N$).

Functorial quantizations of character varieties were introduced in \cite{Ben-ZviIntegratingquantumgroups2018}; it was also proved there that the framed quantizations of punctured surfaces could be described algebraically via (mild generalizations of) certain ``moduli'' algebras $A_S$ defined combinatorially by Alekseev, Grosse, and Schomerus \cite{AlekseevCombinatorialquantizationHamiltonian1996}.  In the case of closed surfaces, the resulting quantized character varieties were shown in \cite{Ben-ZviQuantumcharactervarieties2018} to admit a description via quantum Hamiltonian reduction of the algebras $A_S$, echoing the classical construction.  In particular, there was introduced in \cite{Ben-ZviIntegratingquantumgroups2018} a ``distinguished object'' -- a non-commutative stand-in for the structure sheaf -- whose endomorphism algebra gives a quantization of the affine character variety, and which is computed via quantum Hamiltonian reduction.

In the case $G=\SL_2$, the Kauffman bracket skein algebra provides another celebrated quantization of the character variety.  The Kauffman bracket skein algebra $K_A(S)$ with parameter $A\in\C^\times$ is the vector space spanned by isotopy classes of links $\langle L \rangle$ drawn in the cylinder $S\times I$ over the surface, modulo the relations,
$$\left\langle\KPA\right\rangle=1,\qquad  \left\langle L \cup \KPA\right\rangle=(-A^{2}-A^{-2})\langle L\rangle$$
$$\left\langle\KPB\right\rangle=
  A\left\langle\KPC\right\rangle + A^{-1} \left\langle \KPD \right\rangle$$
where the diagrams represent links which are as depicted in some oriented 3-ball, and identical outside of it.  The algebra structure on $K_A(S)$ is obtained by vertically stacking links in $S$.  It was shown by Turaev \cite{TuraevSkein} that the Kauffman bracket skein algebra provides a deformation quantization of the $\SL_2(\C)$-character variety of $S$. 

\subsection{Multiplicative quiver varieties and their quantization}
Let $Q$ be a quiver with dimension vector $\dv$. The  multiplicative quiver variety is a moduli space of representations of a doubled quiver satisfying certain moment map relations, first introduced in \cite{Crawley-BoeveyMultiplicativepreprojectivealgebras2006}.  It is constructed by first recalling that the collection of representations of the doubled quiver of $Q$ with  dimension vector $\dv$ forms a product of matrix spaces. The framed representation variety $\MQdf$ is an open locus of this product of matrix spaces, defined by the non-vanishing of certain determinants, and admits a multiplicative moment map to the gauge group  $\mathbb{G}^\dv = \prod \GL_{d_v}$, where the product runs over the set of vertices of $Q$.  The multiplicative quiver variety $\MQd$  is the multiplicative GIT Hamiltonian reduction of the framed representation variety $\MQdf$ by the gauge group $\mathbb{G}^\dv$ at a moment parameter $\xi$, and with stability parameter $\theta$  (see  \cite{VandenBerghDoublePoissonalgebras2008, YamakawaGeometryMultiplicativePreprojective2008}):
$$\MQd = \MQdf \dS_{\!\!\!\xi,\theta} \mathbb{G}^\dv.$$
According to \cite{Crawley-BoeveyMultiplicativepreprojectivealgebras2006}, certain special cases of multiplicative quiver varieties yield moduli spaces of $GL_N$-connections with irregular singularities or, equivalently, moduli spaces of representations of $\pi_1(S)$, with prescribed monodromies around the punctures.

Quantizations of multiplicative quiver varieties were introduced in \cite{JordanQuantizedmultiplicativequiver2014}. The construction involved first quantizing $\MQdf$ via an algebra $\D_q(\MatQd)$, then quantizing the moment map, and defining $\MQd$ as a quantum Hamiltonian reduction.  For a more thorough recollection about multiplicative quiver varieties and their quantization, see Section~\ref{sec:DqMat}. The algebras $\D_q(\MatQd)$ give deformations of $\MQdf$, which are flat over the ring $\C[q,q^{-1}]$.  In \cite{JordanQuantizedmultiplicativequiver2014} it was shown that the quantum Hamiltonian reductions of $\D_q(\MatQd)$ are formally flat (i.e., they are flat when tensored over the ring $\C\llbracket\hbar\rrbracket$, where $\hbar=\log q$) whenever  the classical multiplicative moment map is flat; by \cite{Crawley-BoeveyMultiplicativepreprojectivealgebras2006} this can be read off from the dimension vector and the moment map parameters. However, flatness of the quantum Hamiltonian reduction over $\C[q,q^{-1}]$ remains unsettled.

\subsection{Main results: examples and applications} Our main results in the context of the preceding examples are as follows:

\begin{theorem}
Let $G$ be a connected reductive group and $q$ a primitive $\ell$-th root of unity, which together satisfy Assumption \ref{EllAssumption}. Let $S$ be a closed topological surface of genus $g$, and let us denote by $\Sp$ the surface obtained by removing some open disk from $S$. Then:
\begin{enumerate}
\item The moduli algebra $A_\Sp$ is finitely generated over its center, which is isomorphic to the coordinate ring of the classical framed $G$-character variety $Ch_G^{fr}(\Sp)\cong G^{2g}$.
\item Moreover, the Azumaya locus of the moduli algebra $A_\Sp$ coincides with the preimage of open cell $\Br\subset G$ under the classical moment map $Ch_G^{fr}(\Sp) \to G$.
\item The quantized character variety of the closed surface $S$ is finitely generated over its center, which is isomorphic to the coordinate ring of the classical character variety.  It may be constructed as a Frobenius quantum Hamiltonian reduction of $A_\Sp$.
\item The quantized character variety of the closed surface $S$ is Azumaya over the entire `good locus' of $Ch_G(S)$.
\end{enumerate}\label{ChG-thm-intro}
\end{theorem}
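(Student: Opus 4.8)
The plan is to treat the four assertions in sequence; most of the content sits in (2), while (1) is a structural preliminary and (3)--(4) are an application of the Frobenius quantum Hamiltonian reduction formalism developed in the body of the paper. For (1), I would recall from \cite{Ben-ZviIntegratingquantumgroups2018} that $A_\Sp$ is the endomorphism algebra of the distinguished object of $\int_\Sp\Repq(G)$, and that since $\Sp$ is homotopy equivalent to a wedge of $2g$ circles this is an iterated braided (fused) tensor power of $2g$ copies of the reflection-equation algebra, a form of $\O_q(G)$. Under Assumption \ref{EllAssumption}, Lusztig's quantum Frobenius supplies a central embedding $\O(G)\hookrightarrow\O_q(G)$ over which $\O_q(G)$ is a free, finite module; one then checks that the fused $2g$-fold algebra is again finite and free over the resulting central subalgebra $\O(G)^{\otimes 2g}=\O(G^{2g})$, whose spectrum is $Ch_G^{fr}(\Sp)\cong G^{2g}$. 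That this subalgebra is the \emph{entire} center I would deduce using part (2): $A_\Sp$ is prime and generically Azumaya, so its center is an integral birational extension of the normal ring $\O(G^{2g})$, hence equal to it.

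For part (2), I would equip $A_\Sp$ with the Frobenius Poisson order structure over $\O(G^{2g})$ produced by our general construction from the quantum Frobenius on each handle, so that the classical multiplicative moment map $Ch_G^{fr}(\Sp)\to G$ becomes a Frobenius quantum moment map in our sense; crucially, $Ch_G^{fr}(\Sp)\to G$ is a \emph{non-degenerate} Hamiltonian $G$-Poisson space. By the Poisson order dichotomy, the Azumaya locus of $A_\Sp$ is Zariski open, is a union of symplectic leaves of the associated Poisson bracket on $G^{2g}$, and coincides with the locus of maximal PI degree. Non-degeneracy is exactly what makes these symplectic leaves explicitly computable in terms of $\mu$ and a choice of Borel, and a direct calculation then identifies the union of those leaves carrying maximal PI degree with the preimage $\mu^{-1}(\Br)$ of the open cell. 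To upgrade ``maximal PI degree'' to genuine Azumaya-ness over $\mu^{-1}(\Br)$ I would produce a local model over the big cell presenting $A_\Sp$ as a matrix algebra over a quantum torus, transporting the Azumaya property of $\O_q(G)$ over its big cell through the fused product; conversely, over a point of $G\setminus\Br$ I would exhibit an $A_\Sp$-module of strictly smaller dimension, so the Azumaya locus is no larger.

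For parts (3) and (4): capping the removed disk corresponds, quantum-mechanically, to the Frobenius quantum Hamiltonian reduction of $A_\Sp$ at the trivial value of its moment map, followed by the gauge group $G$, recovering the quantized character variety of $S$ in the sense of \cite{Ben-ZviQuantumcharactervarieties2018}. Our theorem that Frobenius Poisson orders descend to Poisson orders under Hamiltonian reduction then applies: the reduction is finite over its center, which is identified with $\O$ of the classical reduction $\mu^{-1}(1)\dS G=Ch_G(S)$. For (4), since $1$ lies in the open cell $\Br$, the whole fiber $\mu^{-1}(1)$ lies in the Azumaya locus of $A_\Sp$ found in (2); over the good locus of $Ch_G(S)$ the projectivized gauge group $G/Z(G)$ acts freely with closed orbits, and quantum Hamiltonian reduction of an Azumaya algebra along a free action is again Azumaya, so the quantized character variety of $S$ is Azumaya over the entire good locus.

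The main obstacle is the exact determination of the Azumaya locus in (2). The ``union of symplectic leaves'' statement is formal, but matching the maximal-PI-degree locus precisely with $\mu^{-1}(\Br)$ requires both the explicit leaf description afforded by the non-degeneracy hypothesis and the local big-cell model that witnesses genuine Azumaya-ness rather than merely maximal PI degree; a secondary subtlety running throughout is the verification that the image of the quantum Frobenius exhausts the full center, both for $A_\Sp$ and, after reduction, for the quantized character variety of $S$.
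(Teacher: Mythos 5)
Your overall architecture (Frobenius Poisson order on $A_\Sp$, nondegeneracy of the Fock--Rosly structure, leaves described via the moment map, Brown--Gordon, then Frobenius quantum Hamiltonian reduction) matches the paper, but the two steps you lean on hardest have genuine gaps. First, in part (2) the paper's essential non-formal input is a single explicit Azumaya point: the fiber of $A_\Sp$ at the trivial local system is computed to be a matrix algebra (Lemma \ref{lm:smallquantumgroupfactorizable}) by identifying it with the $g$-fold braided power of the elliptic double of $\uq(\g)$, which is a Heisenberg double, hence $\End(\uq(\g))$ — this is exactly where factorizability of $\uq(\g)$ (Theorem \ref{thm:smallquantumgroupfactorizable}, i.e.\ Assumption \ref{EllAssumption}) enters. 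Brown--Gordon (Theorem \ref{thm:browngordon}) then spreads this fiber over the open leaf $\mu^{-1}(\Br)$, and Lemma \ref{lm:wholecenter} gives that $\O(G^{2g})$ is the whole center. You replace this by an unproved primeness assertion plus a ``local model over the big cell, transporting the Azumaya property of $\Oq(G)$ through the fused product.'' That transport cannot produce the claimed locus: the relevant cell is $\mu^{-1}(\Br)$ for the \emph{total} moment map $\mu=\prod[x_i,y_i]$, and Azumaya loci of the individual handle algebras do not multiply through fusion to give this set (neither containment holds between $\mu^{-1}(\Br)$ and the intersection of per-factor loci). There is also a circularity: you invoke the Poisson order machinery in (2) before knowing that $\O(G^{2g})$ is closed under the induced bracket, which in the paper follows from it being the whole center (Hayashi's Lemma \ref{lm:PoissonOrderAutomatic}) — and you only deduce ``whole center'' afterwards, from (2). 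Finally, your converse inclusion (``exhibit a smaller module off $\Br$'') is only a hope; the paper does this concretely via the $q$-central element $\mu_q(K^{-\rho})$ generating a proper two-sided ideal in the fiber (Lemma \ref{lem:BrAzumaya}).

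Second, in parts (3)--(4) you treat the reduction as reduction by the group alone (``reduction of an Azumaya algebra along a free $G/Z(G)$-action is again Azumaya''). The actual reduction is by $\UqL(\g)$, performed in stages: first by $\uq(\g)$, where Proposition \ref{prop:matrixreduction} together with the isomorphism $o_q(G;e)\cong\uq(\g)$ (again factorizability) is needed to show that the $\uq(\g)$-reduction of a matrix algebra is again a matrix algebra; then generic flatness of the coinvariants, then descent along the $\overline{G}$-torsor over the good locus, then symplecticity of the good locus (Proposition \ref{prop:symplecticquotient}) and Brown--Gordon once more to pass from ``generically Azumaya on the reduction'' to ``Azumaya over the entire good locus'' (this is Theorem \ref{thm:frobtwistedAzumaya}, whose surjectivity hypothesis is checked via good filtrations on $\OR(G)^{\otimes 2g}$ — another verification missing from your sketch). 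Without the $\uq(\g)$-stage and these checks, the conclusion in (4) does not follow.
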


We remark that the proofs of Statements 3 and 4 apply identically to the \emph{twisted character variety} of $S$, where we take $G=\GL_N$, and we require that that the holonomy around the boundary of $\Sp$ is a primitive $N$th root of unity \cite{Hausel}.  In this case all points are `good' in the above sense and the quotient is smooth; we obtain in this way an Azumaya algebra defined over the entire twisted character variety.

Our techniques apply, in particular, to skein algebras such as the Kauffman bracket skein algebra.  In a series of papers \cite{BonahonWongI,BonahonWongII,BonahonWongIII} it was proved that the Kauffman bracket skein algebra at roots of unity has a central subalgebra isomorphic to the functions on the classical $\SL_2(\C)$-character variety, and in \cite{FrohmanUnicity} it was proved that this is in fact the whole center and moreover that the Azumaya locus is open and dense.  We prove the following theorem in Section \ref{Kauffman-section}:

\begin{theorem}\label{thm-Kauffman-intro}
Suppose that $\ell>2$ is an odd integer, and that $\zeta$ is a primitive $\ell$th root of unity. Then the skein algebra $K_\zeta(S)$ is Azumaya over the whole smooth locus of $Ch_{\SL_2}(S)$.
\end{theorem}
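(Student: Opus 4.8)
\medskip

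\noindent\textbf{Proof sketch.}
The plan is to deduce the theorem from Theorem~\ref{ChG-thm-intro}, specialized to $G=\SL_2$, by identifying the Kauffman bracket skein algebra with the quantized $\SL_2$-character variety appearing there, and then comparing the ``good locus'' of $Ch_{\SL_2}(S)$ with its smooth locus. Throughout I assume $S$ has genus $g\geq 2$; the cases $g=0$ (where $Ch_{\SL_2}(S)$ is a point and $K_\zeta(S)=\C$) and $g=1$ (where the good locus is empty, but $K_\zeta(T^2)$ can be analyzed directly from the Frohman--Gelca presentation) are degenerate and are dispatched separately.

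First I would make precise the passage to the Frobenius quantum Hamiltonian reduction framework. The ribbon category underlying the Kauffman bracket is a sign-twisted form of $\Repq(\SL_2)$: its braiding and ribbon structure differ from the ones built from Lusztig's $R$-matrix by an explicit sign, and this twist is a ribbon equivalence, hence alters neither the center nor the Azumaya property of any algebra obtained from the moduli-algebra construction of \cite{AlekseevCombinatorialquantizationHamiltonian1996, Ben-ZviIntegratingquantumgroups2018}. Transporting the ``distinguished object''/quantum Hamiltonian reduction description of a closed surface across this equivalence presents $K_\zeta(S)$ as a Frobenius quantum Hamiltonian reduction of the moduli algebra $A_{\Sp}$, at a quantum parameter $q$ which, since $\gcd(4,\ell)=1$, is again a primitive $\ell$-th root of unity. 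One then checks that $(\SL_2,q)$ satisfies Assumption~\ref{EllAssumption} --- for $\SL_2$ this amounts exactly to $\ell$ being odd and greater than $2$ --- and, crucially, that the canonical central subalgebra produced by the Frobenius reduction coincides with the Chebyshev/Bonahon--Wong central subalgebra \cite{BonahonWongI}. By \cite{FrohmanUnicity} the latter is the entire center, so the center of $K_\zeta(S)$ is $\C[Ch_{\SL_2}(S)]$ and Theorem~\ref{ChG-thm-intro}(4) applies, giving that $K_\zeta(S)$ is Azumaya over the good locus of $Ch_{\SL_2}(S)$.

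It then remains to show that the smooth locus of $Ch_{\SL_2}(S)$ is contained in the good locus. For $G=\SL_2$ a polystable representation is good precisely when it is irreducible: an irreducible representation has closed orbit and, by Schur's lemma, stabilizer the center $\{\pm 1\}$, whereas a reducible polystable representation $\chi\oplus\chi^{-1}$ has strictly larger stabilizer. So it suffices to prove that every reducible polystable character is a singular point of $Ch_{\SL_2}(S)$ when $g\geq 2$. This is a standard Luna-slice computation: the germ of $Ch_{\SL_2}(S)$ at $\chi\oplus\chi^{-1}$ is the affine GIT quotient, by the stabilizer, of the quadratic obstruction cone in $H^1(\pi_1 S,\mathfrak{sl}_2)$ cut out by the cup product composed with the Lie bracket; decomposing $\mathfrak{sl}_2$ into the Cartan line and the two weight lines, one finds a determinantal-type quotient singularity as soon as the relevant first cohomology groups have dimension $\geq 2$, which happens exactly for $g\geq 2$. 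Conversely, at an irreducible representation $H^0(\pi_1 S,\mathfrak{sl}_2)$ and $H^2(\pi_1 S,\mathfrak{sl}_2)$ both vanish, the obstruction cone is all of $H^1$, on which the stabilizer $\{\pm 1\}$ acts trivially, so the germ is smooth; thus for $g\geq 2$ the smooth locus equals the good locus, and the theorem follows for such $S$.

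The step I expect to be the main obstacle is the first one: showing that the Kauffman bracket skein algebra --- which a priori lives outside the world of $\Repq(\SL_2)$-module categories and carries idiosyncratic sign conventions --- genuinely arises as a Frobenius quantum Hamiltonian reduction to which Theorem~\ref{ChG-thm-intro} applies, and in particular that Lusztig's quantum Frobenius for $\SL_2$ transports under the sign twist to the Chebyshev homomorphism and not to some a priori different copy of $\C[Ch_{\SL_2}(S)]$. Once the skein algebra is correctly situated within the theory of Frobenius Poisson orders with this central subalgebra, the Azumaya statement over the good locus is formal, and the remaining comparison of loci is classical.
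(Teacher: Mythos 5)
Your strategy is genuinely different from the paper's, and it hinges on a step that is a real gap rather than a routine check: the identification of $K_\zeta(S)$ with the quantized character variety of Theorem \ref{ChG-thm-intro}, i.e.\ with $\End(\Dist_S)$ in $\int_S\Repq(\SL_2)$, or equivalently the realization of the skein algebra as a Frobenius quantum Hamiltonian reduction of the moduli algebra $A_{\Sp}$. You flag this yourself as ``the main obstacle,'' and nothing in this paper (nor in the literature it invokes) supplies it: relating skein algebras to the factorization-homology quantization requires an excision/skein-category comparison theorem that is not proved here, together with a careful treatment of the Kauffman-bracket sign and ribbon conventions and of the compatibility of the Chebyshev (Bonahon--Wong) central subalgebra with the Frobenius one; asserting that the sign twist ``alters neither the center nor the Azumaya property'' does not substitute for constructing that identification. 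In addition, your reduction through Theorem \ref{ChG-thm-intro}(4) is vacuous in genus $1$, where the good locus is empty but the smooth locus is not, so that case needs a genuine separate argument which you only gesture at.

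The paper's proof in Section \ref{Kauffman-section} deliberately avoids the quantum character variety machinery and works directly with the skein algebra: freeness of $K_A(S)$ over $\C[A^{\pm 1}]$ (Theorem \ref{thm:skein-basis}) together with Proposition \ref{prop:PoissonOrderDegeneration} and Lemma \ref{lm:PoissonOrderAutomatic} makes $(K_\zeta(S),K_1(S))$ a Poisson order, using the Bonahon--Wong Chebyshev homomorphism (Theorem \ref{thm:BWqTr}) and the Frohman--Kania-Bartoszynska--L\^{e} theorem that its image is the whole center (Theorem \ref{thm:BWqTrIsomorphism}); the quantum trace embedding into quantum tori shows the induced bracket on $K_1(S)\cong\cO(Ch_{\SL_2}(S))$ agrees up to scale with the Fock--Rosly/Goldman bracket independently of $\ell$ (Proposition \ref{prop:independent}); generic Azumayaness from \cite{FrohmanUnicity} plus Brown--Gordon (Theorem \ref{thm:browngordon}) then gives the Azumaya property on the open symplectic leaf; and the leaf is identified with the smooth locus by direct computation in genus $0,1$ and by Bellamy--Schedler/Kaledin (Proposition \ref{prop:bellamyschedler}) for $g\geq 2$. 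Your Luna-slice comparison of the smooth and good loci is a reasonable substitute for that last step when $g\geq 2$, but it does not repair the missing identification on which the rest of your argument rests.
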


\begin{rmk}
While Theorem \ref{thm-Kauffman-intro} fits naturally in the broader framework we develop in the paper, those readers who are only interested in the proof of Theorem \ref{thm-Kauffman-intro} (i.e. in the precise determination of the non-singular locus), and who are already familiar with \cite{FrohmanUnicity} may wish to skip directly to Section \ref{Kauffman-section}, referring back to Sections \ref{sec:Poisson} and \ref{sec:frobenius} only as needed.
\end{rmk}

It is remarkable that the proof of the corresponding `Unicity theorem' for quantum character varieties proved in Theorem \ref{ChG-thm-intro} is uniform in all groups, and follows from functoriality and generalities about quantum Hamiltonian reduction, in contrast to the hands on algebraic methods in the skein literature.  Precise determination of the Azumaya locus, as well as an extension of the unicity theorem to more general groups in this way was a major motivation for this work.

Turning now to the quiver examples, we have:

\begin{theorem} Let $\ell>1$ be an odd integer, and  $q$  a primitive $\ell$-th root of unity. Then:
\begin{enumerate}
\item The algebra $\D_q(\MatQd)$ is finitely generated over a central subalgebra, which is isomorphic to the coordinate ring $\O(\MQdf)$ of the classical framed multiplicative quiver variety.
\item Moreover, $\D_q(\MatQd)$ is Azumaya over the preimage in $\MQdf$ of the big cell $\Br\subset G$ under the multiplicative moment map $\MQdf\to G$. 
\item Frobenius quantum Hamiltonian reduction defines a coherent sheaf of algebras over the classical multiplicative quiver variety $\MQd$, which is Azumaya over the locus $\MQds$ of $\theta$-stable representations.
\end{enumerate}\label{quiver-thm-intro}
\end{theorem}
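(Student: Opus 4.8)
The plan is to run the same three-step strategy as for the character variety in Theorem~\ref{ChG-thm-intro}, but now carried out concretely on the algebra $\D_q(\MatQd)$ of quantum differential operators on the representation space of the doubled quiver, whose construction I take from Section~\ref{sec:DqMat}. For Statement~1, I would produce the central subalgebra from the quantum Frobenius homomorphism on the elementary building blocks $\D_q(\Mat(e))$ attached to the arrows: at a primitive $\ell$-th root of unity with $\ell$ odd, each $\D_q(\Mat(e))$ carries a Frobenius embedding of the classical coordinate ring of the relevant (open, multiplicative) matrix space as a central, module-finite subalgebra, generated by $\ell$-th powers of the standard generators. Since $\D_q(\MatQd)$ is assembled from these blocks by an amalgamated/braided tensor product that is $\mathbb{G}^\dv$-equivariant, these Frobenius subalgebras amalgamate to a central subalgebra of $\D_q(\MatQd)$; identifying it with $\O(\MQdf)$ is then a matter of matching the $\ell$-th power central elements with the defining determinantal (non-vanishing) equations of $\MQdf$ inside the product of matrix spaces, and module-finiteness over it follows from module-finiteness of each block together with a PBW-type filtration argument.

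For Statement~2 I would invoke the (Frobenius) Poisson order formalism of Sections~\ref{sec:Poisson} and~\ref{sec:frobenius}. The embedding $\O(\MQdf)\hookrightarrow\D_q(\MatQd)$, with the Poisson bracket on $\O(\MQdf)$ induced from commutators of lifts, exhibits $\D_q(\MatQd)$ as a Frobenius Poisson order over $\MQdf$; the resulting connection along symplectic leaves makes the fibre dimension of $\D_q(\MatQd)$ constant along each leaf, so the Azumaya locus is an open union of symplectic leaves, namely those on which the fibre is the expected central simple algebra. The crucial input is then that $\MQdf$ is a \emph{nondegenerate} Hamiltonian $\mathbb{G}^\dv$-Poisson space, which yields an explicit description of its symplectic leaves in terms of the multiplicative moment map $\mu\colon\MQdf\to\mathbb{G}^\dv$; feeding this into the fibre criterion identifies the Azumaya locus precisely with $\mu^{-1}(\Br)$, and a direct look at the degenerate fibres over leaves outside $\mu^{-1}(\Br)$ (already visible on the building blocks $\D_q(\Mat(e))$) confirms that it is not larger.

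For Statement~3, ``Frobenius quantum Hamiltonian reduction'' means reducing $\D_q(\MatQd)$ along the Frobenius part of the quantum moment map: one quotients by the ideal generated by $\mu^{*}(f)-f(\xi)$ for $f\in\O(\mathbb{G}^\dv)$, passes to $\mathbb{G}^\dv$-invariants, and sheafifies over the GIT quotient with stability $\theta$. Because the reduction is taken at the classical parameter $\xi\in\mathbb{G}^\dv$ rather than at a quantum character, the reduction of the central subalgebra is \emph{literally} the classical multiplicative Hamiltonian reduction, i.e. $\O(\MQd)$; exactness of invariants for the reductive group $\mathbb{G}^\dv$ together with module-finiteness before reduction then produces a coherent sheaf of algebras over $\MQd$. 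This is precisely the step where the formalism bypasses the still-open question of flatness of the quantum Hamiltonian reduction over $\C[q,q^{-1}]$. For the Azumaya assertion I would use that $\xi$ is a tuple of invertible scalar matrices, hence lies in $\Br$, so $\mu^{-1}(\xi)\subset\mu^{-1}(\Br)$ and $\D_q(\MatQd)$ is already Azumaya there by Statement~2; then I would invoke the descent result of Section~\ref{sec:frobenius}, that Frobenius Poisson orders descend to Poisson orders under Hamiltonian reduction, together with the fact that over the stable locus $\MQds$ the stabilizers in $\mathbb{G}^\dv$ are exactly the (trivially acting) diagonal $\C^\times$, so that there the reduction is an honest quotient by a free $\mathbb{G}^\dv/\C^\times$-action and the Azumaya property descends.

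I expect the main obstacle to be precisely what the new machinery is built for: the exact determination of the Azumaya locus in Statement~2 --- showing that the symplectic-leaf criterion, combined with the nondegenerate leaf description, returns exactly $\mu^{-1}(\Br)$ rather than something larger or smaller, which requires both the structural input on nondegenerate Hamiltonian spaces and a genuine computation on the blocks $\D_q(\Mat(e))$ --- and, in Statement~3, the descent of Azumaya-ness through Hamiltonian reduction, where one must check that passing to $\mathbb{G}^\dv$-invariants is compatible with the semistable localization and that no centre is lost. By contrast, Statement~1 and the coherence half of Statement~3 should be essentially formal once the quantum Frobenius for $\D_q(\Mat(e))$ is established.
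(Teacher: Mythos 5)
Your overall strategy (Frobenius central subalgebra generated by $\ell$-th powers, Poisson order plus nondegeneracy to identify the open leaf with $\mu^{-1}(\Br)$, then descent through reduction) is the paper's, but two steps as you describe them would not go through. First, in Statement 2 the Brown--Gordon machinery only transports the Azumaya property \emph{along} the open symplectic leaf; it gives nothing until you exhibit one point of $\mu^{-1}(\Br)$ whose fiber is a matrix algebra. You gesture at a ``fibre criterion'' and ``a genuine computation on the blocks,'' but the actual content here is a specific and lengthy computation: the fiber of $\DqMatQd$ at the \emph{zero} representation (which maps to the identity, hence into $\Br$) is shown to be a matrix algebra by an explicit irreducibility/leading-index argument (Theorem \ref{thm:Azumayazero}); this single point also feeds Lemma \ref{lm:wholecenter} to show the $\ell$-th power subalgebra is the whole center, and it is what makes the leaf argument start. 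Without identifying and proving such a distinguished Azumaya fiber, your Statement 2 is incomplete. (Relatedly, identifying the induced classical moment map with the explicit multiplicative moment map \eqref{moment-map-formula} is not just ``matching generators'': the paper needs the surjectivity $(A_\Rt)^{\ULR(\g)}\to(\Aq)^{\UqL(\g)}$, checked via good filtrations, and then the uniqueness of moment maps on nondegenerate Poisson $G$-varieties, Proposition \ref{prop:momentmapunique}.)

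Second, your Statement 3 reduces by the wrong ideal. ``Frobenius quantum Hamiltonian reduction'' in the paper does \emph{not} mean quotienting only by $\mu^*(f)-f(\xi)$ for $f\in\cO(\mathbb{G}^\dv)$ and then taking $\mathbb{G}^\dv$-invariants; it is the quantum Hamiltonian reduction by the full quantum moment ideal $A_q\mu_q(\ker\xi_q)$ for a character $\xi_q$ of $\Oq(\GL_\dv)$ lifting $\xi$, with $\UqL(\gl_\dv)$-invariants, performed in stages: first along $\uq(\gl_\dv)$ and then along $\GL_\dv$ (equation \eqref{eq:reductioninstages}). Skipping the quantum stage changes the object (it is no longer the quantized multiplicative quiver variety) and, more importantly, removes the mechanism by which the reduced fibers are matrix algebras: over the stable locus the paper pulls back along the $\overline{G}$-torsor $U\to\cM^\xi$, uses factorizability of $\uq(\g)$ so that $o_q(G;e)\cong\uq(\g)$, and applies Proposition \ref{prop:matrixreduction} ($\End_R(V)\dS H\cong\End_R((V^*)_{H'})^{\op}$) together with generic flatness, and then propagates from the generic point using that $\cM^\xi$ is symplectic (Proposition \ref{prop:symplecticquotient}) and Brown--Gordon again, all packaged in Theorem \ref{thm:frobtwistedAzumaya}. ``The action is free on the stable locus, so the Azumaya property descends'' is not a substitute: invariants of an Azumaya algebra under a positive-dimensional group are not Azumaya without the quantum moment map reduction, and this is exactly the point of Propositions \ref{prop:matrixreduction} and \ref{prop:FrobeniusPoissonReduction}. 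To repair your argument you should take the reduction at $\xi_q$, verify the hypotheses of Theorem \ref{thm:frobtwistedAzumaya} (nondegeneracy, triviality of the diagonal $\C^\times$-action via the root-lattice/degree-zero grading, and the Azumaya point over $\Br$), and let that theorem do the descent.
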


In particular, these theorems identify the center of the algebra $\D_q(\MatQd)$ (resp.\ $A_{\Sp}$) with functions on $\MQdf$ (resp.\ $\overline{Ch_G(S)}$), and likewise identify the center of the quantized multiplicative quiver variety (resp.\ quantum character variety) with the affinization of $\MQd$ (resp.\ of the classical character variety).  It is already difficult to compute such centers directly, and our results imply, by the flatness in $q$, that the center is trivial away from roots of unity, a fact which is again not easy to see directly.

The Azumaya property asserts moreover that each sheaf is \'etale-locally the endomorphism algebra of a vector bundle on the classical variety, or equivalently, that the fiber of the algebra at each point is a matrix algebra over $\C$. In each statement, the second assertion is derived from the first via the process of Frobenius quantum Hamiltonian reduction, which is described in the next subsection.

\begin{rmk}
Modules over an Azumaya algebra form an invertible sheaf of categories which is locally trivial for the \'{e}tale topology, i.e., a gerbe. It would be interesting to understand this gerbe more conceptually. In a related direction, we expect that modules over the quantum group at a root of unity form a factorizable category relative to its M\"{u}ger center. Using the results of Gwilliam--Scheimbauer \cite{GwilliamScheimbauerMorita}, one may prove that this implies that this category defines an invertible object in the Morita category of braided monoidal categories relative to the classical representation category of the group, i.e., it might be interpreted as a ``higher Azumaya algebra''. In particular, this will formally imply that its factorization homology over a topological surface $S$ forms an invertible sheaf of categories over the classical character variety $Ch_G(S)$.
\end{rmk}

\subsection{Main results: methods and general results}\label{subsec:methods}
The proofs of our main results are rooted in a collection of  beautiful ideas emerging from the literature on quantum groups and geometric representation theory, most notably the seminal paper \cite{BezrukavnikovCherednikalgebrasHilbert2006} where the Hamiltonian reduction of Azumaya algebras in characteristic $p$ was first carried out for differential operators with applications to Cherednik algebras, and \cite{VaragnoloDoubleaffineHecke2010}, where a $q$-analog was developed to study $q$-difference operators and double affine Hecke algebras at roots of unity.  Similar techniques were used in the study of hypertoric varieties in positive characteristic \cite{STADNIK2013}, and of their $q$-analogs, quantum multiplicative hypertoric varieties \cite{GanevQuantizationsmultiplicativehypertoric2018a}.

A significant technical portion of the paper develops a framework generalizing these examples to our setting.  Let us therefore review some of the ingredients here, many of which are well-known, and some which are new.

\paragraph{Integral forms of the quantum group}

Let $G$ be a connected reductive group, with $\g$ its Lie algebra, $U = U(\g)$ is universal enveloping algebra. For the remainder of the paper we will reserve the letter $q$ to denote a complex root of unity, and $\ell$ to denote its order. We will assume $G$ and $\ell$ satisfy a number of mild assumptions (see Assumption \ref{EllAssumption}).

We shall require several related forms of the quantum group associated to $\g$.  Our basic reference is \cite{ChariGuideQuantumGroups1995}. For us, the quantum group at generic parameters refers to the Drinfeld--Jimbo rational form of the quantum group, defined over the base ring $\Ct = \C(t)$ of rational functions in a variable $t$ with coefficients in $\C$; we will denote this rational form of the quantum group by $\UK$.  It is generated by the quantum Cartan subalgebra, isomorphic to the group algebra of the coweight lattice of $G$, and by the Serre generators $E_\alpha$ and $F_\alpha$ for each positive simple root $\alpha$. We do not recall the relations in detail for general $\g$ (see instead \cite[Chapter 9]{ChariGuideQuantumGroups1995}), because we will only use some essential functorial properties, which we detail later in this section.

In addition to the rational form of the quantum group, we consider the so-called divided powers integral from of the quantum group, introduced by Lusztig \cite{LusztigQuantumdeformationscertain1988}, and defined as follows. Let $\Rt = \C[t^{\pm 1}]$ denote the subring of $\Ct$ consisting of Laurent polynomials in $t$, and consider the $\Rt$-subalgebra $\ULR$ of $\UK$ generated by  the quantum Cartan generators and an integer family of \emph{divided powers} $E_\alpha^{(r)}=\frac{E_\alpha^r}{[r]!}$ for each Serre generator.  Here $[r]$ denotes the quantum integer, and $[r]!$ the quantum factorial, in the variable $t$.  We reserve the notation $\UqL$ for the base-changed algebra,
$$\UqL = \ULR \ot_\Rt \C,$$
via the algebra homomorphism from $\Rt$ to $\C$ given by $t\mapsto q$, our chosen root of unity.  Finally, we denote by $\uq$ the small quantum group, which we regard as a subalgebra of $\UqL$ generated by the ``undivided powers'' $E_\alpha^{(1)}$, $F_{\alpha}^{(1)}$, together with the quantum Cartan generators. 

\begin{definition}\label{def:repqG} Let $\RepK(G)$, $\RepR(G)$, $\Rep_q(G)$, $\Rep(\uq(\g))$, and $\Rep(G)$ denote the categories of locally finite-dimensional modules, respectively, of the rational form $\UK$, Lusztig's integral form $\ULR$, its specialization $\UqL$, the small quantum group $\uq$, and the classical enveloping algebra $U$, such that the weights of the Cartan subalgebra lie in the weight lattice of $G$.\end{definition}

\paragraph{The quantum Frobenius}

An important feature of Lusztig's integral form is that it admits a homomorphism of Hopf algebras,
$$\text{\rm Fr} : \UqL \rightarrow U,$$
uniquely defined so that for all simple roots $\alpha$, we have:
$$\Fr(E_\alpha^{(n)}) = \begin{cases} E_\alpha^{(n/\ell)},& \ell \mid n\\ 0,& \ell \nmid n\end{cases}, \qquad \Fr(F_\alpha^{(n)}) = \begin{cases} F_\alpha^{(n/\ell)},& \ell \mid n\\ 0,& \ell \nmid n\end{cases}.$$
The ``quantum Frobenius'' map $\Fr$ is a surjective homomorphism of quasi-triangular Hopf algebras, whose kernel is the two-sided ideal generated by the augmentation ideal $\ker(\epsilon)$ of the small quantum group $\uq(\g)$. Thus, we have Lusztig's resulting ``short exact sequence'' of Hopf algebras:
\begin{equation}\uq \to \UqL \xrightarrow{\Fr} U\end{equation}
Basic references for the quantum Frobenius include \cite{ChariGuideQuantumGroups1995}, \cite{LusztigQuantumgroupsroots1990}, \cite{LusztigIntroductionQuantumGroups2010}, \cite{lentner_frobenius_2015},\cite{lentner_factorizable_2017}, \cite{arkhipov_another_2003}, \cite{kremnizer_proof_2006}, \cite{NegronLogModular}.

This setup gives rise to a remarkable adjoint pair of braided tensor functors,
$$\Fr^*:\Rep(G) \to \Repq(G), \qquad \Fr_*: \Repq(G)\to \Rep(G),$$
the pull-back via $\Fr$, and the passage to $\uq$-invariants, respectively. We note that $\Fr_* \circ \Fr^*$ is the identity functor on $\Rep(G)$; in particular, the functors $(\Fr^*, \Fr_*)$ form an adjoint pair.  The functor $\Fr^*$ is braided monoidal, and maps into the M\"uger center of $\Repq(G)$. That is, for any object $V$ in $\Rep(G)$ and any object $W$ in $\Rep_q(G)$, the two  braidings,
$$\sigma_{\Fr^*(V),W} \textrm{ and } \sigma_{W,\Fr^*(V)}^{-1}: \Fr^*(V) \ot W \rightarrow W \ot \Fr^*(V),$$
coincide.  In fact, each is the switch-of-factors map on the underlying vector space.  

\paragraph{Frobenius Poisson orders}

The notion of a Poisson order, introduced in \cite{BrownPoissonorderssymplectic2003a}, consists of a non-commutative algebra $A$, a central subalgebra $Z$ of $A$, a Poisson bracket on $Z$, and a linear map from $Z$ to $\Der(A)$. The general and elegant formalism developed in \textit{loc. cit.} produces isomorphisms between fibers of any two points in the same symplectic core of $\Spec(Z)$, in particular on any two points of the open symplectic leaf. For instance, if $A$ is generically Azumaya over $Z$, we get that it is in fact Azumaya over the whole open symplectic leaf.  Poisson orders have been applied more recently to the theory of discriminants of PI algebras \cite{BrownYakimov}, \cite{NgyTramYak} and Sklyanin algebras \cite{YakWalWang}. 

The general method to obtain Poisson orders is as follows. Suppose $A_\Rt$ is a family of associative algebras parametrized by $t\in\Rt=\C[t, t^{-1}]$ together with a central subalgebra $Z\subset A_q$ at the special value $t=q$. Then under a mild assumption (see Proposition \ref{prop:PoissonOrderDegeneration}) we get the structure of a Poisson order on the pair $(A_q, Z)$. For instance, this assumption is satisfied when $Z$ is the whole center of $A_q$ (see Lemma \ref{lm:PoissonOrderAutomatic} which goes back to Hayashi \cite{HayashiSugawara}).

In our examples $A_\Rt$ is a family of algebras in $\RepR(G)$, so we combine the quantum Frobenius map and the notion of a Poisson order into the notion of a Frobenius Poisson order. Namely, a Frobenius Poisson order consists of an algebra $A_q\in \Repq(G)$, a Poisson algebra $Z\in\Rep(G)$ and a central embedding $\Fr^*(Z)\subset A_q$ such that $(A_q, Z)$ form a Poisson order. If $A_\Rt$ is a flat family of associative algebras in $\RepR(G)$ and $Z\in\Rep(G)$ is a central subalgebra at the special value $t=q$, then under the same mild assumption $(A_q, Z)$ form a Frobenius Poisson order, see Proposition \ref{prop:FrobeniusPoissonOrderDegeneration}.

\paragraph{Frobenius quantum Hamiltonian reduction} If $G$ is a Poisson-Lie group and $\cZ=\Spec Z$ is a Poisson $G$-variety equipped with a classical moment map $\mu\colon \cZ\rightarrow G$ (we recall the relevant formalism of group-valued moment maps in Section \ref{sec:Poisson}), the affine quotient $\cZ \dS G=\mu^{-1}(e) / G = \Spec \cO(\mu^{-1}(e))^G$ carries a natural structure of a Poisson variety, see Proposition \ref{prop:poissonreduction}.

Similarly, if $\Aq$ is an algebra in $\Repq(G)$ equipped with a quantum moment map $\mu_q\colon \Oq(G)\rightarrow \Aq$ from the reflection equation algebra $\Oq(G)$ (see Sections \ref{sec:REA} and \ref{sect:quantummomentmaps} for the relevant definitions), then we can form the quantum Hamiltonian reduction $\Aq \dS \UqL(\g)$ which is still an associative algebra (see Proposition \ref{prop:qhamreduction}).

We are interested in obtaining a Poisson order structure on the Hamiltonian reduction of $(\Aq, Z)$, so we need to assume the two moment maps are compatible with each other: this leads to the notion of a Frobenius quantum moment map for a Frobenius Poisson order $(\Aq, Z)$. Namely, we assume that the composite $\cO(G)\subset \Oq(G)\xrightarrow{\mu_q} \Aq$ factors through the central subalgebra $Z\subset \Aq$ which gives rise to a classical moment map $\cZ\rightarrow G$.

Our first result in this direction is that given a Frobenius Poisson order $(\Aq, Z)$ equipped with such a Frobenius quantum moment map satisfying some extra compatibilities (see Definition \ref{def:HamiltonianFrobeniusPoissonOrder} for the notion of a $G$-Hamiltonian Frobenius Poisson order) the Hamiltonian reduction $(\Aq \dS \UqL(\g), Z \dS G)$ becomes a Poisson order (see Proposition \ref{prop:FrobeniusPoissonReduction}).

Our second result in this direction is the following mechanism for obtaining Hamiltonian Frobenius Poisson orders (see Proposition \ref{prop:StrongHamiltonianOrder}). Suppose $A_\Rt\in\RepR(G)$ is a flat family of algebras such that the map on invariants $(A_\Rt)^{\ULR(\g)}\rightarrow (\Aq)^{\UqL(\g)}$ is surjective, $Z\subset \Aq$ is a central subalgebra and $\mu_\Rt\colon \OR(G)\rightarrow A_\Rt$ is a quantum moment map. Then $(\Aq, Z)$ becomes a Hamiltonian Frobenius Poisson order. In examples we check the surjectivity assumption using a good filtration on $A_\Rt$ (see Section \ref{sect:goodfiltrations}).

\paragraph{Distinguished fibers}
In each of our examples there is a distinguished point -- corresponding to the trivial local system, and the trivial quiver representation, respectively, where the Azumaya property can be checked directly:  for character varieties this proceeds by reducing to the small quantum group, while for quiver varieties it requies a long computation.

\paragraph{Non-degenerate $G$-Hamiltonian varieties}
In order to fully exploit the method of Poisson orders, we require a description of the open symplectic leaf of the framed moduli space. It turns out in our examples these can be described purely in terms of the moment map, because both of our examples are \emph{non-degenerate} Poisson $G$-varieties as defined in \cite{AlekseevQuasiPoisson}.  For framed character varieties, this is proved in {\it loc. cit.}, while for framed quiver varieties, we use a convenient characterization of nondegeneracy due to Li-Bland--Severa \cite{LiBlandSevera} to prove that they are nondegenerate by a direct computation in coordinates.  We show (Theorem \ref{thm:nondegenerateleaves}) that given a nondegenerate Poisson $G$-variety $X$ equipped with a moment map $\mu\colon X\rightarrow G$, the open symplectic leaf of $G$ is given by the preimage $\mu^{-1}(\Br)$ of the big cell $\Br=B_+B_-\subset G$.

\paragraph{Langlands duality at even roots of unity}

Throughout the paper we make simplifying assumptions on $G$ and the order $\ell$ of the root of unity $q$, e.g. that $G$ in the semisimple case is of adjoint type and $\ell$ is odd. This ensures that the M\"{u}ger center of $\Repq(G)$ is identified with the symmetric monoidal category $\Rep(G)$ equipped with the obvious braiding. This no longer holds if we relax the assumptions on $G$ and $\ell$.

Let us now assume that $\ell$ is divisible by $4$ and $G=\mathrm{Spin}(2n+1)$ for $n\geq 1$. Then the M\"{u}ger center of $\Repq(G)$ coincides with $\Repq({}^LG)$, where ${}^LG = \mathrm{PSp}(2n)$, the adjoint group of type $C_n$. Therefore, in this case the quantum character variety for $G$ forms a sheaf of algebras over the classical character variety for ${}^LG$.

Note that in these cases one still has a factorizable braided monoidal category $\Vect\otimes_{\Rep({}^LG)} \Repq(G)$, but it is not given by modules over a Hopf algebra. We expect that our approach nevertheless admits a minor modification which would prove that the quantum character variety is Azumaya over the classical character variety for a more general class of $G$ and $\ell$.

In the case of skein algebras, we note that the results of \cite{FrohmanUnicity} also apply in the case of even order roots of unity, and so we can apply the method of Poisson orders there as well.  We hope to return to this in future work as well.

\subsection{Outline}

We briefly describe the contents of this paper. In Section \ref{sec:Poisson}, we recall factorizable Poisson-Lie groups (Section \ref{subsec:factorizable}), multiplicative moment maps (Section \ref{subsec:mmm}), and nondegenerate Poisson $G$-varieties (Section \ref{subsec:nondegen}). The main result therein (Theorem \ref{thm:nondegenerateleaves}, Section \ref{subsec:symplecticleaves}) is a description of symplectic leaves of a nondegenerate Poisson $G$-variety $X$ as the preimages under the multiplicative moment map $\mu : X \rightarrow G$ of the $G^*$-orbits on $G$ (i.e. orbits under the dressing action), where $G^*$ is the Poisson-Lie dual group of $G$. 

In Section \ref{sec:frobenius}, we first consider reflection equation algebras, for generic parameters, for Lusztig's integral form, and at a root of unity (Section \ref{sec:REA}). We then set up the notions of Frobenius Poisson orders and Frobenius quantum moment maps (Section \ref{subsec:Frobeniuspairs}), and discuss degenerations of quantum groups and quantum algebras in light of these notions (Section \ref{subsec:degenqalgebras}). We also formulate the procedure of Frobenius quantum Hamiltonian reduction and prove that Azumaya algebras descend to Azumaya algebras under this procedure (Theorem \ref{thm:frobtwistedAzumaya}, Section \ref{sec:template-for-results}).

In Section \ref{sec:CharVar}, we apply the techniques developed in Section \ref{sec:frobenius} to the setting of character varieties. After recalling the construction of quantum character varieties and stacks, we show that the framed character variety, together with its ``quantum character sheaf'', form a Frobenius Poisson order (Section \ref{subsec:CharVarFrobPoissonOrder}). We exhibit a Frobenius quantum moment map in this setting (Section \ref{subsec:CharVarFrobqmm}), and run the procedure of Frobenius quantum Hamiltonian reduction (Section \ref{subsec:CharVarFrobqHred}) to obtain Azumaya algebras over  classical affine character varieties (Theorem \ref{ChG-thm-body}). 

In Section \ref{sec:DqMat}, we turn our attention to multiplicative quiver varieties and their quantizations at a root of unity.  We then recall in Section \ref{subsec:mqv} the construction of the multiplicative quiver variety, as a Hamiltonian reduction of the framed multiplicative quiver variety.  We recall in Section \ref{subsec:quantizationofQ} the quantization of the framed multiplicative quiver variety, and in Section \ref{subsec:quantizationofQ} we identify the central subalgebra, establish the existence of good filtrations, and finally construct the Frobenius Poisson order, in Theorem \ref{thm:quiver-center}.  In Section \ref{subsec:qmqvnondeg} we show that the resulting Poisson $G$-variety is nondegenerate (Theorem \ref{thm:qmqvnondeg}), and in Section \ref{subsec:qmqvazumaya} we prove that the zero representation is an Azumaya point.  In Section \ref{subsec:qmqvmom-map}, we construct the Frobenius quantum moment map (Theorem \ref{thm:mmgeneral}).  In Section \ref{subsec:QMQVrootofunity}, we define the quantized multiplicative quiver variety associated to an arbitrary GIT parameter $\theta$ via Frobenius quantum Hamiltonian reduction.  Finally, in Section \ref{subsec:sheaf}, we bring all the elements together to establish the Azumaya property on the smooth locus of the quantum multiplicative quiver variety (Theorem \ref{quiver-thm-intro}).

\subsection{Acknowledgments}

We would like to thank David Ben-Zvi and Kobi Kremnitzer for their guidance and encouragement throughout our work, Nicholas Cooney for many helpful conversations, Michael Gr\"ochenig for discussions about the Langlands dual case, Pavel Etingof for remarks about the small quantum group, Florian Naef for helpful remarks about non-degenerate quasi-Poisson structures, and Thang Le for assistance navigating skein theory literature.

The work of I.G. was supported by the Advanced Grant ``Arithmetic and Physics of Higgs moduli spaces'' No. 320593 of the European Research Council.  The work of D.J. was supported by the European Research Council (ERC) under the European Union's Horizon 2020 research and innovation programme [grant agreement no. 637618]. The work of P.S. was supported by the NCCR SwissMAP grant of the Swiss National Science Foundation.

\section{Multiplicative Hamiltonian reduction} \label{sec:Poisson}

The goal of this section is to collect some useful results on moment maps for factorizable Poisson-Lie groups. Throughout this section, we assume that $G$ is an arbitrary connected reductive group. For $x\in\g$ we denote by $x^L,x^R\in\T_G$ the left- and right-invariant vector fields with value $x$ at the unit. We assume the reader is familiar with the theory of quasi-Poisson groups and quasi-Poisson spaces, see e.g. \cite{AlekseevManinPairs} and \cite[Section 4.1]{SafronovQuantumMoment}. Let us just recall that a Poisson $G$-variety is a $G$-variety $X$ equipped with a Poisson structure such that the action map $G\times X\rightarrow X$ is Poisson. All varieties we consider in this section are affine.

\subsection{Factorizable Poisson-Lie groups}\label{subsec:factorizable}

Fix a nondegenerate element $c\in\Sym^2(\g)^G$. Let $\phi=[c_{12}, c_{13}]\in\wedge^3(\g)^G$. With respect to a basis $\{e_a\}$ of $\g$ we may write,
\[ [e_a, e_b] = \sum_c f^c_{ab} e_c, \qquad c = \sum_{a, b} c^{ab}e_a\otimes e_b,\qquad \phi = \frac{1}{6} \sum_{a, b, c, d, e} c^{ae} c^{bd} f_{ed}^c e_a\wedge e_b\wedge e_c.\]
We have the following quasi-Poisson structures:
\begin{itemize}
\item Denote by $G_\phi$ the quasi-Poisson group $G$ equipped with the zero bivector and trivector $\phi$.

\item Consider $G$ as a $G$-variety under conjugation and equip it with the bivector
\[\pi_{STS, \phi} = \sum_{a, b} c^{ab} e_a^R\wedge e_b^L.\]
By \cite[Proposition 3.1]{AlekseevQuasiPoisson} we get a quasi-Poisson $G_\phi$-variety that we denote by $G_{STS, \phi}$.
\end{itemize}

\begin{definition}
A classical twist is an element $t\in\wedge^2(\g)$ satisfying the equation
\[\frac{1}{2}[t, t] = -\phi.\]
\end{definition}

Equivalently, we may say that the classical $r$-matrix $r = t+c$ satisfies the classical Yang--Baxter equation
\[[r_{12}, r_{13}] + [r_{12}, r_{23}] + [r_{13}, r_{23}] = 0.\]
\begin{example}
In the case $\g=\glN$ we use the classical $r$-matrix
\[r = \frac{1}{2}\sum_i \cE^i_i\otimes \cE^i_i + \sum_{i>j} \cE^j_i\otimes \cE^i_j,\]
where $\cE_i^j$ is the elementary matrix with one in the $i$-th row and $j$-th column.
\end{example}

Using $t$ we may perform the following twists:
\begin{itemize}
\item Twist the quasi-Poisson structure $G_\phi$ into a Poisson-Lie structure
\[\pi_{Sk} = t^L - t^R\]
on $G$ that we denote by $G_{Sk}$. We call Poisson-Lie structures obtained in this manner \emph{factorizable}.

\item Twist the quasi-Poisson $G_\phi$-variety $G_{STS, \phi}$ to a Poisson $G_{Sk}$-variety $G_{STS}$ with the Poisson bivector
\[\pi_{STS} = \pi_{STS, \phi} - t^{\ad},\]
where $(-)^{\ad}$ is the map $\wedge^\bullet(\g)\rightarrow \Gamma(G, \wedge^\bullet \T_G)$ given by differentiating the conjugation action of $G$ on itself. The Poisson structure $\pi_{STS}$ was introduced in \cite{SemenovTianShansky}.
\end{itemize}

The classical twist gives $\g$ the structure of a Lie bialgebra and provides an embedding of Lie algebras $\g^*\rightarrow \g\oplus \g$. In particular, the action of $\g\oplus \g$ on $G$ by left and right translations induces a $\g^*$-action on $G$ which is known as the dressing action. Since the diagonal subalgebra $\g\subset \g\oplus \g$ and $\g^*\subset \g\oplus \g$ are transverse Lagrangians, the dressing $\g^*$-action on $G$ is free at the unit $e\in G$. In particular, there is an open orbit $\Br\subset G$ of $e\in G$. The following is shown in \cite[Section 4]{AlekseevMalkin}.
\begin{prop}
The symplectic leaves of $G_{STS}$ are the intersections of the conjugacy classes of $G$ with the dressing $\g^*$-orbits.
\label{prop:STSleaves}
\end{prop}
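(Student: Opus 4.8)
The plan is to read off the symplectic foliation of $(G,\pi_{STS})$ from its anchor map $\pi_{STS}^\sharp\colon T^*G\rightarrow TG$: the symplectic leaf through a point $g$ is the maximal connected integral submanifold through $g$ of the (singular) distribution $g'\mapsto\mathrm{Im}(\pi_{STS}^\sharp|_{g'})$, so it suffices to identify this image pointwise with $T_g\mathcal{C}_g\cap T_g\mathcal{D}_g$, where $\mathcal{C}_g$ denotes the conjugacy class and $\mathcal{D}_g$ the dressing $\g^*$-orbit through $g$.

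First I would pass to the right trivialization $T_gG\cong\g$, $T^*_gG\cong\g^*$. Then $T_g\mathcal{C}_g$ becomes $(1-\mathrm{Ad}_g)\g$, while $T_g\mathcal{D}_g$ becomes the image of a map $\g^*\rightarrow\g$ built from the two operators $r_\pm\colon\g^*\rightarrow\g$ attached to the classical $r$-matrix $r=t+c$ — recall $r_+-r_-$ is the isomorphism $c^\sharp$ — because by construction the dressing action is the restriction, along the embedding $\g^*\hookrightarrow\g\oplus\g$, $\xi\mapsto(r_+\xi,r_-\xi)$, of the action of $\g\oplus\g$ on $G$ by left and right translations. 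Next I would compute $\pi_{STS}^\sharp$ itself in this trivialization, starting from $\pi_{STS}=\pi_{STS,\phi}-t^{\mathrm{ad}}$ with $\pi_{STS,\phi}=\sum_{a,b}c^{ab}e_a^R\wedge e_b^L$; this recovers the Semenov--Tian--Shansky formula for the anchor, a linear combination of $r_+$, $r_-$ and their $\mathrm{Ad}_g$-conjugates.

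From that formula the two inclusions should follow. For $\mathrm{Im}(\pi_{STS}^\sharp)\subseteq T_g\mathcal{C}_g$: the twist term $t^{\mathrm{ad}}$ is tangent to conjugacy classes by construction, and a short computation — using that $c\in\Sym^2(\g)^G$, so $c^\sharp$ is $\mathrm{Ad}$-equivariant — shows $\mathrm{Im}(\pi_{STS,\phi}^\sharp)\subseteq T_g\mathcal{C}_g$ as well. For $\mathrm{Im}(\pi_{STS}^\sharp)\subseteq T_g\mathcal{D}_g$: the explicit anchor is assembled from $r_\pm$ and $\mathrm{Ad}_g$ in exactly the combination that produces dressing vector fields. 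Equality $\mathrm{Im}(\pi_{STS}^\sharp|_g)=T_g\mathcal{C}_g\cap T_g\mathcal{D}_g$ is then forced by a rank count — equivalently, one verifies that a covector annihilating $\mathrm{Im}(\pi_{STS}^\sharp)$ is conormal to $T_g\mathcal{C}_g\cap T_g\mathcal{D}_g$ — at least on the dense open locus where the conjugacy class and the dressing orbit through $g$ meet cleanly.

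To conclude, since $\mathcal{C}_g$ is a conjugation orbit and $\mathcal{D}_g$ a dressing orbit, each is an immersed submanifold integrating its own tangent distribution; hence (on the clean locus) each connected component of $\mathcal{C}_g\cap\mathcal{D}_g$ is an integral submanifold of $g'\mapsto\mathrm{Im}(\pi_{STS}^\sharp|_{g'})$ through $g$, and therefore is the symplectic leaf. In the situations where we apply this result the relevant intersections are connected, so the statement holds as phrased; in general ``the symplectic leaf'' should be read as ``the connected component of the intersection''. The delicate point, and the one I expect to be the main obstacle, is precisely the explicit identification of $\mathrm{Im}(\pi_{STS}^\sharp)$ and the attendant bookkeeping — left versus right trivialization, the placement of the operators $r_\pm$, the $\mathrm{Ad}_g$-conjugations, and signs; this computation is carried out in detail in \cite[Section~4]{AlekseevMalkin}, whose treatment we follow.
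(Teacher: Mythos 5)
The paper offers no proof of this proposition at all --- it simply quotes \cite[Section 4]{AlekseevMalkin} --- and your sketch is precisely the anchor-map argument carried out in that reference (computing $\mathrm{Im}\,\pi_{STS}^\sharp$ in a trivialization and matching it with the tangent spaces of the conjugacy class $\mathcal{C}_g$ and the dressing orbit $\mathcal{D}_g$), to which you also ultimately defer the bookkeeping, so the two approaches coincide. One remark: the identity $\mathrm{Im}(\pi_{STS}^\sharp|_g)=T_g\mathcal{C}_g\cap T_g\mathcal{D}_g$ is a pointwise linear-algebra statement valid at every $g$, so your ``clean locus'' hedge is only relevant for the global identification of leaves with (connected components of) the intersections; the pointwise form is in fact the only way the proposition is used later in the paper, in the proof of Theorem \ref{thm:nondegenerateleaves}.
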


We will assume that the Lie algebra $\g^*$ integrates to a group $G^*$ and that there is a $\g$-action on $G^*$ such that the map $G^*\rightarrow G$ is $\g$-equivariant.

\begin{example}
Suppose $G$ is a connected reductive group equipped with a choice of a maximal torus $T\subset G$ and a Borel subgroup $B_+\subset G$ containing it. Let $c\in\Sym^2(\g)^G$ be a nondegenerate element. Let $\Delta^+$ be the set of positive roots and pick an orthonormal basis $\{k_i\}$ of $\Lie(T)$. Then $\{e_\alpha, f_\alpha, k_i\}$ is a basis of $\g$, where $e_\alpha$ for $\alpha\in\Delta^+$ is a standard basis of $\Lie(B_+)$. Then we have the \emph{standard} $r$-matrix
\[r_{std} = \frac{1}{2}\sum_i k_i\otimes k_i + \sum_{\alpha\in\Delta^+} d_\alpha f_\alpha\otimes e_\alpha.\]
Let $B_-$ be the opposite Borel subgroup. Denote by $p_\pm\colon B_\pm\rightarrow T$ the abelianization maps. Then the dual Poisson-Lie group is given by
\[G^* = \{(b_+, b_-)\in B_+\times B_-\ |\ p_+(b_+)p_-(b_-) = 1\}.\]
The open subscheme $\Br\subset G$, the image of $G^*\subset G$, is given by the subscheme $B_+B_-\subset G$ which is isomorphic to the big Bruhat cell. Note that in the case $G=\GL_n$ we have $Z(\GL_n)\subset \Br$.
\label{ex:standardrmatrix}
\end{example}

\subsection{Multiplicative moment maps}\label{subsec:mmm}

Fix a factorizable Poisson-Lie structure on $G$. Pick a basis $\{e_i\}$ of $\g$ and let $\{e^i\}$ be the dual basis of $\g^*$. Denote by $\tilde{a}\colon \g^*\rightarrow \Gamma(G, \T_G)$ the dressing $\g^*$-action on $G$.

\begin{definition}
Let $X$ be a Poisson $G$-variety. A $G$-equivariant map $\mu\colon X\rightarrow G$ is a moment map if for every $h\in \cO(G)$ and $a\in\cO(X)$ we have
\begin{equation}
\{\mu(h), a\} = \sum_i \mu(\tilde{a}(e^i).h) e_i.a.
\label{eq:classicalmomentmap}
\end{equation}
In this case we say that $X$ is a Hamiltonian $G$-Poisson variety.
\label{def:classicalmomentmap}
\end{definition}

\begin{prop}
Suppose $X$ is a Poisson $G$-variety and $\mu\colon X\rightarrow G$ is a moment map. Then $\mu\colon X\rightarrow G_{STS}$ is a Poisson morphism.
\label{prop:momentmapPoisson}
\end{prop}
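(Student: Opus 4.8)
The plan is to unwind both definitions --- that of the Poisson bivector $\pi_{STS}$ on $G_{STS}$ and that of a moment map in Definition~\ref{def:classicalmomentmap} --- and to check directly that $\mu$ intertwines the Poisson brackets. Recall that $\mu\colon X\rightarrow G$ is a Poisson morphism precisely when, for every pair $h_1, h_2 \in \cO(G)$, one has $\mu(\{h_1, h_2\}_{G_{STS}}) = \{\mu(h_1), \mu(h_2)\}_X$. Since $\cO(G)$ is generated by matrix coefficients and both sides are biderivations, it suffices to verify this on generators, or equivalently to work infinitesimally: the key point is to express $\{\mu(h_1),\mu(h_2)\}_X$ using the moment map property~\eqref{eq:classicalmomentmap} applied twice.

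The first step is to apply \eqref{eq:classicalmomentmap} with $a = \mu(h_2)$, which gives $\{\mu(h_1), \mu(h_2)\} = \sum_i \mu(\tilde{a}(e^i).h_1)\, e_i.\mu(h_2)$. Now $\mu$ is $G$-equivariant for the conjugation action on $G$, so the vector field $e_i$ acting on the pulled-back function $\mu(h_2)$ equals $\mu$ applied to the infinitesimal conjugation action $e_i^{\ad}.h_2$. Substituting, the right-hand side becomes $\sum_i \mu\big((\tilde{a}(e^i).h_1)\cdot(e_i^{\ad}.h_2)\big)$, i.e. $\mu$ applied to a bivector on $G$ built from the dressing action in the first slot and the adjoint action in the second slot. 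The second step is to identify this bivector with $\pi_{STS}$: one must show that, for the factorizable structure at hand, the operator $\sum_i \tilde{a}(e^i)\otimes e_i^{\ad}$ (suitably antisymmetrized, which is automatic once paired against the Poisson bracket) coincides with $\pi_{STS} = \pi_{STS,\phi} - t^{\ad} = \sum_{a,b} c^{ab} e_a^R\wedge e_b^L - t^{\ad}$. This is a purely Lie-theoretic identity: the dressing action $\tilde a$ is defined via the embedding $\g^*\hookrightarrow \g\oplus\g$, $e^i \mapsto$ (left part, right part) determined by $r = t + c$, so $\sum_i \tilde a(e^i) \otimes (\text{image of } e^i \text{ under the anchor})$ unpacks into left- and right-invariant vector fields with coefficients read off from $c$ and $t$; collecting the symmetric part $c$ yields $\pi_{STS,\phi}$ and the antisymmetric part $t$ yields the correction $-t^{\ad}$.

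The main obstacle will be bookkeeping in this second step --- getting the dressing action written explicitly enough in terms of $r = t+c$, and tracking the left/right vector fields and signs so that the $c$-terms and $t$-terms land exactly as in the stated formula for $\pi_{STS}$. It is essentially the computation underlying \cite[Section 4]{AlekseevMalkin} and \cite{SemenovTianShansky}, so I would either cite the relevant identity from there for the comparison of bivectors, or carry it out in a fixed basis using the structure constants $f^c_{ab}$ and $c^{ab}$ displayed at the start of Section~\ref{subsec:factorizable}. Once that identity is in hand, the Poisson-morphism property follows immediately, since we will have shown $\{\mu(h_1),\mu(h_2)\}_X = \mu(\pi_{STS}(dh_1, dh_2)) = \mu(\{h_1,h_2\}_{G_{STS}})$, and a map of affine varieties that respects brackets of functions is a Poisson morphism.
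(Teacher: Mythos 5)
Your strategy is sound and it is genuinely different from the paper's proof. The paper does not compute anything directly: it invokes \cite[Proposition 4.18]{SafronovQuantumMoment} to identify Definition~\ref{def:classicalmomentmap} with the Alekseev--Kosmann-Schwarzbach notion of moment map, twists $X$ by $-t$ into a quasi-Poisson $G_\phi$-variety, cites \cite[Proposition 3.3]{AlekseevQuasiPoisson} to get that the moment map is quasi-Poisson into $G_{STS,\phi}$, and twists back. You instead stay entirely within the paper's own Definition~\ref{def:classicalmomentmap}: applying \eqref{eq:classicalmomentmap} with $a=\mu^*h_2$ and using $G$-equivariance of $\mu$ to rewrite $e_i.(\mu^*h_2)=\mu^*(e_i^{\ad}.h_2)$ correctly reduces the proposition to a single universal identity on $G$, namely $\sum_i \tilde{a}(e^i)\otimes e_i^{\ad}=\pi_{STS}$ as sections of $\T_G\otimes\T_G$ --- equivalently, the statement that the identity map $G_{STS}\rightarrow G$ is itself a moment map in the sense of Definition~\ref{def:classicalmomentmap}. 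That reduction is a nice observation and does real work; but be aware that this remaining identity is exactly where all the content of the proposition sits, and your proposal only asserts it ("cite or compute in a basis"). It is true, and can be sourced from \cite[Section 4]{AlekseevMalkin} or from the quasi-Poisson example in \cite{AlekseevQuasiPoisson} (which is how the paper effectively obtains it), but if you carry it out yourself you must fix conventions very carefully: how $\g^*\hookrightarrow\g\oplus\g$ is cut out of $r=t+c$, which half acts by left- versus right-invariant fields, and the sign in $e_i^{\ad}$; with careless choices the symmetric part $c$ does not collect into $\sum c^{ab}e_a^R\wedge e_b^L$ and the identity appears to fail. Also note that on $G$ the tensor $\sum_i\tilde{a}(e^i)\otimes e_i^{\ad}$ is not obviously antisymmetric --- its antisymmetry is part of what the identity asserts, not something you get for free from pairing against a bracket. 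In short: your route is more self-contained and avoids the quasi-Poisson twisting formalism entirely, at the cost of one convention-sensitive Lie-theoretic verification which the paper outsources to the literature; either finish it by citing the precise statement (identity map as moment map for $G_{STS}$) or by a fully explicit basis computation.
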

\begin{proof}
By \cite[Proposition 4.18]{SafronovQuantumMoment}, our definition of moment maps agrees with the definition of Alekseev and Kosmann-Schwarzbach introduced in \cite{AlekseevManinPairs}. Now, let $X'$ be the quasi-Poisson $G_\phi$-variety obtained by twisting $X$ using $-t\in\wedge^2(\g)$. Then by \cite[Proposition 3.3]{AlekseevQuasiPoisson} the map $\mu\colon X'\rightarrow G_{STS, \phi}$ is quasi-Poisson and hence after twisting back we get that $X\rightarrow G_{STS}$ is Poisson.
\end{proof}

Using a moment map we may construct a Hamiltonian reduction of $X$.

\begin{definition}
Let $\xi\in G$ be an element in the center and $X$ a Poisson $G$-variety equipped with a moment map $\mu\colon X\rightarrow G$. The Hamiltonian reduction is
\[X \dS_{\xi} G = \Spec \cO(\mu^{-1}(\xi))^G.\] 
\end{definition}

Hamiltonian reduction carries a natural Poisson structure constructed in the following way. Let $I\subset \cO(X)$ be the ideal defining $\mu^{-1}(\xi)$. Since $G$ is reductive, the natural morphism
\[\cO(X)^G / I^G\longrightarrow (\cO(X) / I)^G\]
is an isomorphism. So, it is enough to construct a Poisson structure on $\cO(X)^G / I^G$.

\begin{prop}
$\cO(X)^G\subset \cO(X)$ is a Poisson subalgebra and $I^G\subset \cO(X)^G$ is a Poisson ideal.
\label{prop:poissonreduction}
\end{prop}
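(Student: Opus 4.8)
The plan is to verify the two claims directly, using the moment map identity \eqref{eq:classicalmomentmap} together with the hypothesis that $X$ is a Poisson $G$-variety (so the $G$-action map is Poisson) and that $\xi$ is central in $G$. First I would recall what centrality of $\xi$ buys us: the ideal $I\subset\cO(X)$ defining $\mu^{-1}(\xi)$ is generated by elements of the form $\mu(h) - h(\xi)\cdot 1$ for $h\in\cO(G)$, and since $\xi$ is central the evaluation $h\mapsto h(\xi)$ is a character of $\cO(G)$ that is invariant under the dressing action at $\xi$ in the relevant sense; more precisely, for $\xi$ central the function $\tilde a(e^i).h$ vanishes at $\xi$ when evaluated appropriately. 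I would make this precise by noting that the dressing action fixes central elements, so $(\tilde a(\eta).h)(\xi) = 0$ for all $\eta\in\g^*$.

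The first claim — that $\cO(X)^G\subset\cO(X)$ is a Poisson subalgebra — is the easier half: it follows immediately from the fact that the $G$-action is Poisson, hence the Poisson bracket of two $G$-invariant functions is again $G$-invariant (the bracket is a map of $G$-representations $\cO(X)\otimes\cO(X)\to\cO(X)$, so it carries invariants to invariants). For the second claim, that $I^G$ is a Poisson ideal in $\cO(X)^G$, I would take $f\in I^G = I\cap\cO(X)^G$ and $a\in\cO(X)^G$ and show $\{f,a\}\in I^G$. That $\{f,a\}$ is $G$-invariant is again automatic from the first part. To see it lies in $I$, I would write $f$ as a combination of generators $\mu(h)-h(\xi)$ and compute, using the Leibniz rule and \eqref{eq:classicalmomentmap}: for a single generator,
\[
\{\mu(h)-h(\xi),\, a\} \;=\; \{\mu(h),a\} \;=\; \sum_i \mu\bigl(\tilde a(e^i).h\bigr)\,e_i.a.
\]
Now $a$ is $G$-invariant, so $e_i.a = 0$ for every $i$, and the entire right-hand side vanishes. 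For a general element of $I^G$, written as $\sum_j b_j\bigl(\mu(h_j)-h_j(\xi)\bigr)$ with $b_j\in\cO(X)$, one applies Leibniz once more: $\{\sum_j b_j(\mu(h_j)-h_j(\xi)),\,a\}$ expands into a sum of terms, each containing either a factor $\mu(h_j)-h_j(\xi)\in I$ or a factor $\{\mu(h_j)-h_j(\xi),a\}$ which we just showed vanishes — so the whole bracket lies in $I$, and combined with $G$-invariance it lies in $I^G$.

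The main subtlety — and the step I would take the most care over — is the reduction to the case $a\in\cO(X)^G$ being genuinely enough, i.e. confirming that it suffices to check the Poisson-ideal property against invariant $a$ only (which is exactly what the statement asks, since $I^G$ sits inside $\cO(X)^G$), and the clean vanishing $e_i.a=0$ that makes the moment-map term disappear. One should double-check that no $G$-invariance of $h$ is needed and that centrality of $\xi$ is used only to guarantee $\mu^{-1}(\xi)$ is $G$-stable (so that $I$ is a $G$-submodule and $I^G$ makes sense as the invariants of a subrepresentation); this is where $\xi\in Z(G)$ enters. Given the isomorphism $\cO(X)^G/I^G \xrightarrow{\sim} (\cO(X)/I)^G$ already noted in the text, this establishes the Poisson structure on the Hamiltonian reduction and completes the proof.
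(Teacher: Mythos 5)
Your handling of the second claim is essentially the paper's argument: bracket an invariant $a$ against a generator $\mu^*(h)$ with $h(\xi)=0$, invoke the moment map identity \eqref{eq:classicalmomentmap}, and kill the right-hand side using $e_i.a=0$; your Leibniz expansion over a general element of $I$ is the same bookkeeping the paper performs by computing $\{a,b\mu(h)\}$ directly. The genuine gap is in your first claim. You justify the invariance of $\{a,b\}$ for $a,b\in\cO(X)^G$ by asserting that the bracket is a map of $G$-representations $\cO(X)\otimes\cO(X)\to\cO(X)$. That premise is false in this setting: $G$ carries the nontrivial (Sklyanin) Poisson-Lie structure, and ``the action map $G\times X\to X$ is Poisson'' with respect to it does \emph{not} make the Poisson bivector on $X$ invariant under the $G$-action -- already $G_{STS}$ itself, a Poisson $G$-variety under conjugation, has a bivector containing the non-invariant term $t^{\ad}$, and the Fock--Rosly structure on $G^{2g}$ is likewise not $G$-invariant. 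The correct infinitesimal consequence of the action being Poisson is the cobracket-twisted Leibniz rule
\[x.\{a,b\} \;=\; \{x.a,b\} + \{a,x.b\} + (x_{[1]}.a)(x_{[2]}.b),\qquad \delta(x)=x_{[1]}\otimes x_{[2]},\]
and one must then observe that all three terms on the right vanish when $a$ and $b$ are invariant -- this is exactly how the paper proves the first claim. Your conclusion survives, but your stated reason does not; and since your second claim uses the first to upgrade $\{a,f\}\in I$ to $\{a,f\}\in I^G$, the flaw propagates there as well until it is repaired.

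A smaller point: your side remark that the dressing action fixes central elements, so $(\tilde{a}(\eta).h)(\xi)=0$ for all $\eta\in\g^*$, is also false -- at $\xi=e$ the dressing action is infinitesimally free, with open orbit $\Br$ -- but you never actually use it. As you correctly note at the end, centrality of $\xi$ enters only to make $\mu^{-1}(\xi)$ stable under the (conjugation-equivariant) moment map, so that $I$ is a $G$-submodule; the vanishing in the main computation comes from $e_i.a=0$, not from any vanishing of the dressing vector fields at $\xi$.
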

\begin{proof}
Suppose $a,b\in\cO(X)^G$. Then for every $x\in\g$
\[x.\{a, b\} = \{x.a, b\} + \{a, x.b\} + (x_{[1]}.a)(x_{[2]}.b),\]
where $\delta(x) = x_{[1]}\otimes x_{[2]}$ is the Lie cobracket. All three terms vanish due to $G$-invariance of $a$ and $b$.

By the first claim it is enough to show that $\{\cO(X)^G, I\}\subset I$. Consider $a\in\cO(X)^G$, $b\in\cO(X)$ and $h\in\cO(G)$ such that $h(\xi)=0$. Then
\begin{align*}
\{a, b\mu(h)\} &= \{a, b\}\mu(h) + \{a, \mu(h)\} b \\
&= \{a, b\} \mu(h) - b\sum_i \mu(\tilde{a}(e^i).h) e_i.a,
\end{align*}
where we use the moment map equation \eqref{eq:classicalmomentmap} in the last line. Since $a$ is $G$-invariant, the last term is zero. So, $\{a, I\}\subset I$.
\end{proof}

Therefore, we obtain a natural Poisson structure on $\cO(X)^G / I^G$, i.e. $X\dS G$ is a Poisson variety.

\subsection{Nondegenerate Poisson $G$-varieties}\label{subsec:nondegen}

Let $a\colon \g\rightarrow \Gamma(X, \T_X)$ be the $\g$-action on a $G$-variety $X$. The following was introduced in \cite[Definition 9.1]{AlekseevQuasiPoisson}.

\begin{definition}
Let $G$ be a quasi-Poisson group and $(X, \pi_X)$ a smooth quasi-Poisson $G$-variety. $X$ is nondegenerate if the map
\[(a, \pi_X^\sharp)\colon \g\oplus \T^*_X\longrightarrow \T_X\]
is surjective.
\end{definition}

\begin{rmk}
Clearly, if $X$ is a Poisson $G$-variety, the locus where the Poisson structure is symplectic is contained in the nondegeneracy locus.
\end{rmk}

\begin{lemma}
Let $G$ be a quasi-Poisson group, $(X, \pi_X)$ a quasi-Poisson $G$-variety, and  $t\in\wedge^2(\g)$. Then $(X, \pi_X)$ is nondegenerate if, and only if, its twist $(X, \pi_X - a(t))$ is nondegenerate.
\end{lemma}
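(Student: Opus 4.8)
The plan is to show that the nondegeneracy condition is insensitive to twisting by $t\in\wedge^2(\g)$, by observing that the twist changes only the ``$\pi_X$-part'' of the map $(a,\pi_X^\sharp)$ and does so by a correction landing in the image of the ``$\g$-part'' $a$. First I would recall that twisting a quasi-Poisson $G$-variety $(X,\pi_X)$ by $t$ produces the quasi-Poisson structure $\pi_X - a(t)$, where we extend $a\colon\g\to\Gamma(X,\T_X)$ to $a\colon \wedge^\bullet(\g)\to\Gamma(X,\wedge^\bullet\T_X)$ as an algebra homomorphism; so $a(t)\in\Gamma(X,\wedge^2\T_X)$, and its sharp map $a(t)^\sharp\colon \T^*_X\to\T_X$ factors through $a$, namely $a(t)^\sharp = a\circ \tau_t$ where $\tau_t\colon \T^*_X\to\g$ is the bundle map obtained by contracting against $t$ (viewing $t\in\wedge^2\g$ and using the two legs of $a$).

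The key step is then the following linear-algebra observation: consider the two maps
\[
(a,\pi_X^\sharp)\colon \g\oplus\T^*_X\longrightarrow \T_X,\qquad (a,\pi_X^\sharp - a(t)^\sharp)\colon \g\oplus\T^*_X\longrightarrow \T_X.
\]
They are related by the automorphism $\Phi$ of $\g\oplus\T^*_X$ given by $\Phi(x,\alpha) = (x + \tau_t(\alpha),\alpha)$, which is an automorphism of vector bundles (with inverse $(x,\alpha)\mapsto(x-\tau_t(\alpha),\alpha)$). Indeed, $(a,\pi_X^\sharp)\circ\Phi(x,\alpha) = a(x+\tau_t(\alpha)) + \pi_X^\sharp(\alpha) = a(x) + (\pi_X^\sharp + a(t)^\sharp)(\alpha)$, which (replacing $t$ by $-t$ throughout, as in the statement of the lemma) is exactly $(a,\pi_X^\sharp - a(-t)^\sharp)(x,\alpha)$ for the twist. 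Since $\Phi$ is an isomorphism, $(a,\pi_X^\sharp)$ is surjective if and only if $(a,(\pi_X-a(t))^\sharp)$ is, which is the claim.

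The only point requiring care — and the one I expect to be the main (minor) obstacle — is getting the identity $a(t)^\sharp = a\circ\tau_t$ with the correct sign and normalization, i.e.\ verifying that $\sharp$ of the bivector field $a(t) = \tfrac12\sum a(e_a)\wedge a(e_b)\, t^{ab}$ really does coincide with the composite that first contracts a cotangent vector into one leg of $t$ (producing an element of $\g$) and then applies $a$. This is a standard unwinding of the definition of the musical map on a decomposable bivector $u\wedge v$ (namely $(u\wedge v)^\sharp(\alpha) = \alpha(u)v - \alpha(v)u$), together with the fact that $a$ is a morphism of vector bundles so commutes with these contractions; the factor of $\tfrac12$ and the antisymmetry conspire to give exactly $a\circ\tau_t$. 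Once this identity is in hand, the proof is the one-line application of the bundle automorphism $\Phi$ above, and in particular no smoothness or reductivity hypothesis beyond what is assumed is needed — the argument is pointwise on $X$.
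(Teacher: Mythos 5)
Your proof is correct and rests on the same observation as the paper's one-line argument: since $a(t)^\sharp$ factors through $a$, the maps $(a,\pi_X^\sharp)$ and $(a,(\pi_X-a(t))^\sharp)$ have the same image. Your explicit automorphism $\Phi$ is just a more detailed packaging of that fact, so the approach is essentially identical.
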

\begin{proof}
Indeed, the images of $\pi_X^\sharp$ and $\pi_X^\sharp - a(t)^\sharp$ differ by the image of $a$.
\end{proof}

We have the following convenient criterion to determine that a Poisson $G$-variety is nondegenerate, proven in \cite[Theorem 3]{LiBlandSevera}.

\begin{prop}
Let $r$ be a classical $r$-matrix defining a factorizable Poisson-Lie structure on $G$ and suppose $(X, \pi_X)$ is a Poisson $G$-space equipped with a moment map $\mu\colon X\rightarrow G$. Denote $\widetilde{\pi}_X = \pi_X + a(r)\in\Gamma(X, \T_X\otimes \T_X)$ and $\widetilde{\pi}_X^\sharp\colon \T^*_X\rightarrow \T_X$ the induced map. Then $(X, \pi_X)$ is nondegenerate if, and only if, $\widetilde{\pi}_X^\sharp\colon \T^*_X\rightarrow \T_X$ is an isomorphism.
\label{prop:LiBlandSevera}
\end{prop}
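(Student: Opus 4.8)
The plan is to reduce this to the linear-algebra fact underlying \cite[Theorem 3]{LiBlandSevera}; the statement is pointwise on the smooth locus, so I would work at a fixed $x\in X$ with its infinitesimal action $a\colon\g\to\T_{X,x}$, bivector $\pi_X^\sharp\colon\T^*_{X,x}\to\T_{X,x}$, and the differential of $\mu$ at $x$.

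\emph{The direction that is elementary.} Expanding $r=\sum_k r'_k\otimes r''_k$, the shift $a(r)^\sharp\colon\T^*_X\to\T_X$ is $\xi\mapsto\sum_k\langle\xi,a(r''_k)\rangle\,a(r'_k)$, which factors as $a\circ r^\sharp\circ a^\ast$ through the action map $a$ (here $r^\sharp\colon\g^\ast\to\g$ is contraction in the second slot and $a^\ast\colon\T^*_X\to\underline\g$ is the transpose of $a$). Hence $\widetilde\pi_X^\sharp=\pi_X^\sharp+a\circ r^\sharp\circ a^\ast$ has image contained in the image of $(a,\pi_X^\sharp)$. On the smooth locus of $X$ the bundles $\T_X$ and $\T^*_X$ are locally free of the same rank, so $\widetilde\pi_X^\sharp$ is an isomorphism as soon as it is surjective; and its surjectivity forces $(a,\pi_X^\sharp)$ to be surjective. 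So ``$\widetilde\pi_X^\sharp$ an isomorphism $\Rightarrow$ $X$ nondegenerate'', using neither the moment map nor the Yang--Baxter equation.

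\emph{The direction that uses the moment map.} Conversely, assume $X$ nondegenerate. From $\pi_X^\sharp=\widetilde\pi_X^\sharp-a\circ r^\sharp\circ a^\ast$ one gets $\mathrm{im}(a,\pi_X^\sharp)\subseteq\mathrm{im}(\widetilde\pi_X^\sharp)+\mathrm{im}(a)$, so it suffices to prove the single inclusion $\mathrm{im}(a)\subseteq\mathrm{im}(\widetilde\pi_X^\sharp)$: then $\T_X=\mathrm{im}(a,\pi_X^\sharp)\subseteq\mathrm{im}(\widetilde\pi_X^\sharp)$, whence $\widetilde\pi_X^\sharp$ is surjective and an isomorphism. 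This inclusion is where the moment map equation \eqref{eq:classicalmomentmap} is indispensable: rewritten invariantly it says $\pi_X^\sharp\circ\mu^\ast=a\circ\mu^\ast\circ\tilde a^\ast$ on one-forms (with $\tilde a^\ast\colon\Omega^1_G\to\underline\g$ the transpose of the dressing action), so that for $\omega\in\Omega^1_G$ one computes $\widetilde\pi_X^\sharp(\mu^\ast\omega)=a\bigl(\mu^\ast(\tilde a^\ast\omega+r^\sharp a_G^\ast\omega)\bigr)$, where $a_G^\ast$ is the transpose of the conjugation action on $G$; the classical Yang--Baxter equation for $r=t+c$ and the transversality of $\g$ and $\g^\ast$ inside $\g\oplus\g$ (freeness of the dressing action near $e$) are then exactly what is needed to see that this expression sweeps out all of $\mathrm{im}(a)$ on the nondegeneracy locus. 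The cleanest packaging of this bookkeeping is the Dirac-geometric one of Li-Bland--\v{S}evera: a Hamiltonian Poisson $G$-variety together with its $r$-matrix data is the same as a Lagrangian subbundle $L\subset\T_X\oplus\fd$ relative to the Manin triple $(\fd=\g\oplus\g,\ \g_\Delta,\ \g_r)$ attached to $r$, and $L$ is the graph of a bivector on $X$ --- necessarily $\widetilde\pi_X$ --- precisely when $L\cap\T_X=0$, which unwinds to nondegeneracy. I would simply invoke their Theorem 3 after recording this dictionary.

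\emph{Expected main obstacle.} The whole content sits in the inclusion $\mathrm{im}(a)\subseteq\mathrm{im}(\widetilde\pi_X^\sharp)$ on the nondegeneracy locus, i.e.\ in showing that the infinitesimal $G$-directions are visible to the corrected bivector. A naive pairing argument is hopeless here --- over $\C$ one only extracts that $a^\ast\xi$ is $c$-isotropic, which is vacuous --- so one genuinely needs the structural input: either the explicit use of factorizability together with \eqref{eq:classicalmomentmap} sketched above, or, more efficiently, the Dirac/Manin-triple formalism of \cite{LiBlandSevera}, which I would adopt rather than reprove.
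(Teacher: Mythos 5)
Your proposal is correct, and at the crux it coincides with the paper's treatment: the paper gives no argument of its own but simply quotes \cite[Theorem 3]{LiBlandSevera} for exactly this statement, which is also what you do for the essential implication. Your added content — the elementary observation that $\widetilde{\pi}_X^\sharp=\pi_X^\sharp+a\circ r^\sharp\circ a^*$ forces nondegeneracy whenever $\widetilde{\pi}_X^\sharp$ is onto, and the reduction of the converse to the single inclusion $\mathrm{im}(a)\subseteq\mathrm{im}(\widetilde{\pi}_X^\sharp)$ before invoking Li-Bland--\v{S}evera — is sound, but it does not change the fact that the hard direction is delegated to the same reference the paper cites.
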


Let us now show that nondegeneracy is preserved by fusion. Consider the element
\[\psi = \sum_{a, b} r^{ab} e_a^1\wedge e_b^2\in\wedge^2(\g\oplus \g),\]
where $r$ is the classical $r$-matrix.

\begin{prop}
Suppose $(X, \pi_X)$ is a Poisson $G\times G$-variety and consider its \emph{fusion} $X_{fus} = (X, \pi_X - a(\psi))$. Then:
\begin{enumerate}
\item $X_{fus}$ is a Poisson $G$-variety with respect to the diagonal $G$-action.

\item Suppose $\mu_1\times \mu_2\colon X\rightarrow G\times G$ is a moment map. Then $\mu_{fus} = \mu_1\mu_2\colon X_{fus}\rightarrow G$ is a moment map for the diagonal $G$-action.

\item Suppose $X$ is equipped with a moment map and is nondegenerate. Then $X_{fus}$ is nondegenerate.
\end{enumerate}
\label{prop:Poissonfusion}
\end{prop}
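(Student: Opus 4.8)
The plan is to reduce all three claims to the quasi-Poisson setting by twisting, where the corresponding statements are part of the standard fusion formalism of \cite{AlekseevQuasiPoisson}. Recall that a Poisson $G\times G$-variety, with $G\times G$ carrying the factorizable Poisson-Lie structure built from $r$ on each factor, is the same as a quasi-Poisson $(G\times G)_\phi$-variety twisted by $-t\oplus(-t)\in\wedge^2(\g\oplus\g)$, where $t=r-c$. The fusion $\psi=\sum r^{ab}e_a^1\wedge e_b^2$ is precisely the element whose effect, combined with $-t\oplus(-t)$, reproduces the diagonal twist $-t_{\mathrm{diag}}$ on $\wedge^2(\g)\subset\wedge^2(\g\oplus\g)$: indeed, expanding $r_{\mathrm{diag}}=(r^1+r^2+\psi+\psi^{21})$ and using $r=t+c$ together with the cross-terms bookkeeping shows the bivector we subtract in forming $X_{fus}$ agrees with the bivector coming from the diagonal classical twist. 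So the strategy is: untwist $X$ to a quasi-Poisson $(G\times G)_\phi$-variety $X'$, invoke the quasi-Poisson fusion lemma to get a quasi-Poisson $G_\phi$-variety $X'_{fus}$ with the diagonal action and fused moment map $\mu_1\mu_2$, and then retwist by $-t$ on the diagonal to land back at $X_{fus}$ as a Poisson $G_{Sk}$-variety.

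For item (1), the content is purely the quasi-Poisson fusion statement \cite[Section 5]{AlekseevQuasiPoisson}: subtracting $a(\psi)$ from a $(G\times G)_\phi$-quasi-Poisson bivector yields a $G_\phi$-quasi-Poisson bivector for the diagonal action, the $\phi$-anomaly on the diagonal being corrected exactly by the failure of $\psi$ to be $G$-invariant. Twisting a quasi-Poisson $G$-variety by an element of $\wedge^2(\g)$ preserves the quasi-Poisson property (the trivector changes by $\phi\mapsto\phi+\tfrac12[t,t]+\dots$, which vanishes for a classical twist), so retwisting by $-t$ gives the Poisson $G_{Sk}$-variety $X_{fus}$. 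For item (2), one checks that the fused map $\mu_{fus}=\mu_1\mu_2$ is the quasi-Poisson moment map for $X'_{fus}$, again a standard compatibility in \cite{AlekseevQuasiPoisson} (multiplicativity of moment maps under fusion), and that twisting source and target by matching elements of $\wedge^2(\g)$ takes moment maps to moment maps, by the argument already used in the proof of Proposition \ref{prop:momentmapPoisson}; combining, $\mu_{fus}\colon X_{fus}\to G$ satisfies Definition \ref{def:classicalmomentmap}.

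Item (3) is where the real work is, and it is the one place the hypothesis that $X$ carries a moment map is actually used. Here the natural tool is Proposition \ref{prop:LiBlandSevera}. For the $G\times G$-variety $X$ with moment map $\mu_1\times\mu_2$, nondegeneracy is equivalent to $\widetilde\pi_X^\sharp=(\pi_X+a(r^1)+a(r^2))^\sharp\colon\T^*_X\to\T_X$ being an isomorphism. For the fused variety $X_{fus}$ with diagonal moment map, the criterion asks that $(\pi_{X_{fus}}+a_{\mathrm{diag}}(r))^\sharp$ be an isomorphism, where $a_{\mathrm{diag}}(r)$ uses the diagonal $\g$-action. The key computation is that these two ``corrected'' bivectors coincide: $\pi_{X_{fus}}+a_{\mathrm{diag}}(r)=\pi_X-a(\psi)+a_{\mathrm{diag}}(r)$, and expanding $a_{\mathrm{diag}}(r)$ over $\g\oplus\g$ produces $a(r^1)+a(r^2)+a(\psi)+a(\psi^{21})$; the term $a(\psi)$ cancels, and one must check the surviving discrepancy $a(\psi^{21})-$ (the antisymmetrization already folded into $\pi_{X_{fus}}$) is zero — i.e. that $\psi^{21}$ contributes trivially once one accounts for the antisymmetry of bivectors, since $r^{ab}e_b^2\wedge e_a^1 = -\psi$ and the bivector $\pi_X$ is genuinely a section of $\wedge^2\T_X$. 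Thus $\widetilde\pi_{X_{fus}}^\sharp=\widetilde\pi_X^\sharp$ as maps $\T^*_X\to\T_X$, and one isomorphism holds iff the other does. The main obstacle, then, is not conceptual but bookkeeping: getting the factor-of-two and the $r$ versus $r^{21}$ conventions right in the identity $a_{\mathrm{diag}}(r)=a(r^1)+a(r^2)+a(\psi)+a(\psi^{21})$ and in matching it against the antisymmetrized $-a(\psi)$ appearing in the definition of fusion, so that everything cancels on the nose.
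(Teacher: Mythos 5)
Your treatment of items (1) and (2) is essentially the paper's own argument: twist $X$ by $t$ into a quasi-Poisson $G_\phi\times G_\phi$-variety, invoke the quasi-Poisson fusion of \cite{AlekseevQuasiPoisson} (including fusion of moment maps), and twist back by $-t$ along the diagonal, checking that the resulting bivector is exactly $\pi_X-a(\psi)$. That part is fine.

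Item (3), however, contains a genuine gap. Your key claim is that the Li--Bland--\v{S}evera tensors coincide, $\widetilde\pi_{X_{fus}}=\widetilde\pi_X$, because the cross terms produced by $a_{diag}(r)$ are cancelled by $-a(\psi)$ ``once one accounts for the antisymmetry of bivectors.'' This is false, precisely because $\widetilde\pi$ is \emph{not} a bivector: it is a section of $\T_X\otimes\T_X$, since $r=t+c$ has a nonzero symmetric part. Writing $a_i$ for the two actions, one has
\[
a_{diag}(r)=\sum_{a,b}r^{ab}\bigl(a_1(e_a)+a_2(e_a)\bigr)\otimes\bigl(a_1(e_b)+a_2(e_b)\bigr),
\]
whose cross terms are $\sum r^{ab}\,a_1(e_a)\otimes a_2(e_b)+\sum r^{ab}\,a_2(e_a)\otimes a_1(e_b)$, while $a(\psi)$ is the \emph{antisymmetrization} $\sum r^{ab}\bigl(a_1(e_a)\otimes a_2(e_b)-a_2(e_b)\otimes a_1(e_a)\bigr)$. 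Subtracting, one finds
\[
\widetilde\pi_{X_{fus}}=\widetilde\pi_X+2\sum_{a,b}c^{ab}\,a_2(e_a)\otimes a_1(e_b),
\]
i.e.\ the symmetric part $c$ of $r$ survives in the cross position, and since $c$ is nondegenerate this extra term does not vanish in general. So the two corrected tensors do not agree ``on the nose,'' and the equivalence of invertibility of $\widetilde\pi_X^\sharp$ and $\widetilde\pi_{X_{fus}}^\sharp$ does not follow from your computation; establishing it directly would amount to redoing the nondegeneracy-under-fusion argument. The paper avoids this by staying on the quasi-Poisson side: after twisting $X$ to the quasi-Poisson variety $(X,\pi')$, it cites the result of \cite[Section 10]{AlekseevQuasiPoisson} that fusion preserves nondegeneracy of quasi-Poisson $G$-varieties, and then uses the earlier lemma that twisting by an element of $\wedge^2(\g)$ does not change the nondegeneracy locus. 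Either adopt that route, or supply an actual argument that adding $2(a_2\otimes a_1)(c)$ preserves invertibility; as written, the proof of (3) does not go through.
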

\begin{proof}
Let $a_1, a_2\colon \g\rightarrow \Gamma(X, \T_X)$ be the two $\g$-actions on $X$ and let $a_{diag} = a_1 + a_2$ be the diagonal $\g$-action.

Applying the twist by $t$, we can turn $X$ into a quasi-Poisson $G_\phi\times G_\phi$-variety with the bivector $\pi' = \pi + a_1(t) + a_2(t)$. Let $\psi' = \sum_{a, b} c^{ab} e_a^1\wedge e_b^2$. Then $X$ with the bivector $\pi'_{fus} = \pi' - a(\psi')$ is the fusion in the sense of quasi-Poisson varieties \cite[Section 5.1]{AlekseevQuasiPoisson}. In particular, $(X, \pi'_{fus})$ is a quasi-Poisson $G_\phi$-variety with respect to the diagonal $G$-action. Applying the twist we get that $(X, \pi'_{fus} - a_{diag}(t))$ is a Poisson $G$-variety with respect to the diagonal $G$-action. We have
\[\pi'_{fus} - a_{diag}(t) = \pi + a_1(t) + a_2(t) - a_{diag}(t) - a(\psi'),\]
where
\[a_1(t) + a_2(t) - a_{diag}(t) = -\frac{1}{2} \sum_{a, b} t^{ab} e_a^1\wedge e_b^2 - \frac{1}{2} \sum_{a, b}t^{ab} e_a^2\wedge e_b^1 = -\sum_{a, b} t^{ab} e_a^1\wedge e_b^2.\]
Therefore, $\pi'_{fus} - a_{diag}(t) = \pi_X - a(\psi)$ which proves the first claim.

The last two claims follow in the same way from \cite[Proposition 5.1]{AlekseevQuasiPoisson} and \cite[Section 10]{AlekseevQuasiPoisson} respectively applied to $(X, \pi')$.
\end{proof}

\subsection{Symplectic leaves of nondegenerate Poisson $G$-varieties}\label{subsec:symplecticleaves}

Nondegenerate Poisson $G$-varieties equipped with moment maps have an easy description of symplectic leaves. Let $a_G\colon \g\rightarrow \Gamma(G, \T_G)$ be the adjoint action.

\begin{theorem}
Let $X$ be a nondegenerate Poisson $G$-variety equipped with a moment map $\mu\colon X\rightarrow G$. Then the symplectic leaves of $X$ are given by the preimages of the dressing $\g^*$-orbits $G_\alpha\subset G$.
\label{thm:nondegenerateleaves}
\end{theorem}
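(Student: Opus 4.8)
The plan is to reduce the statement to Proposition~\ref{prop:STSleaves}, which already describes the symplectic leaves of $G_{STS}$ as intersections of conjugacy classes with dressing orbits, by identifying the symplectic leaves of $X$ with the preimages under $\mu$ of the symplectic leaves of $G_{STS}$. The two inputs I expect to use are: (i) Proposition~\ref{prop:momentmapPoisson}, which says $\mu\colon X\rightarrow G_{STS}$ is a Poisson morphism; and (ii) the nondegeneracy hypothesis, which should force $\mu$ to be ``transverse'' to the leaf structure in a strong enough sense that symplectic leaves pull back to symplectic leaves. Note that the theorem as stated actually asserts the leaves are exactly the preimages $\mu^{-1}(G_\alpha)$ of the \emph{dressing orbits} $G_\alpha$ alone (not intersected with conjugacy classes), so part of the work is to show that on the nose the conjugacy-class constraint is automatically absorbed.

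First I would unwind what nondegeneracy gives pointwise. Fix $x\in X$ with image $g=\mu(x)$. Nondegeneracy says $(a,\pi_X^\sharp)\colon \g\oplus\T^*_{X,x}\to \T_{X,x}$ is surjective; equivalently, $\T_{X,x} = \mathrm{im}(a_x) + \mathrm{im}(\pi_{X,x}^\sharp)$, i.e. the tangent space to the $G$-orbit together with the tangent space to the symplectic leaf of $\pi_X$ through $x$ spans everything. Meanwhile the symplectic leaf of $G_{STS}$ through $g$ has tangent space $\mathrm{im}(\pi_{STS,g}^\sharp)$, and by Proposition~\ref{prop:STSleaves} this is $T_g(\text{conj. class}) \cap T_g(\text{dressing orbit})$ — but more usefully, since the leaf of $G_{STS}$ is open in its dressing orbit (the $STS$ bivector's $\g^*$-part generates the dressing directions), I would argue $\mathrm{im}(\pi_{STS,g}^\sharp)$ together with $\mathrm{im}(a_{G,g})$, the tangent to the conjugacy class, spans $T_g(\text{dressing orbit})\cdot$(conjugation directions). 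The key computation is then: because $\mu$ is $G$-equivariant and Poisson, $d\mu_x$ carries $\mathrm{im}(a_x)$ onto $\mathrm{im}(a_{G,g})$ and $\mathrm{im}(\pi_{X,x}^\sharp)$ into $\mathrm{im}(\pi_{STS,g}^\sharp)$; combined with surjectivity of $(a_x,\pi_{X,x}^\sharp)$ this shows $d\mu_x$ is a submersion onto the dressing orbit through $g$ (using that the dressing orbit is generated by the conjugation-orbit directions plus the $STS$-leaf directions — one may cite Proposition~\ref{prop:STSleaves} here, or the transversality of $\g$ and $\g^*$ in $\g\oplus\g$). Hence $\mu^{-1}(G_\alpha)$ is a smooth locally closed submanifold and the fibers of $\mu$ over a fixed dressing orbit are exactly the level sets cut out inside a leaf.

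Next I would show that $\mu^{-1}(G_\alpha)$ is a single symplectic leaf of $X$, i.e. that it is connected-leafwise and that $\pi_X$ restricts to a symplectic form on it. For the tangent-space count: $T_x\mu^{-1}(G_\alpha) = d\mu_x^{-1}(T_g G_\alpha)$; using the Poisson-morphism property one checks $\mathrm{im}(\pi_{X,x}^\sharp)\subseteq T_x\mu^{-1}(G_\alpha)$ (the image of the Poisson anchor always lands inside the preimage of the leaf through the image point, which sits inside the dressing orbit), and conversely nondegeneracy plus the moment-map equation~\eqref{eq:classicalmomentmap} — which controls exactly how the $\g$-action interacts with $\mu$ — should give the reverse inclusion, so that $\mathrm{im}(\pi_{X,x}^\sharp) = T_x\mu^{-1}(G_\alpha)$. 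This identifies the symplectic-leaf foliation of $X$ with the partition by preimages of dressing orbits. Finally connectedness of each piece follows since dressing orbits are connected and $\mu$ restricted to each such preimage is a submersion with connected fibers (the fibers are symplectic-leaf-like level sets), or one can simply observe that the distribution $\mathrm{im}(\pi_X^\sharp)$ is exactly the tangent distribution to these submanifolds and invoke the general integrability of the symplectic foliation.

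The main obstacle I anticipate is the tangent-space bookkeeping in the middle step: showing precisely that nondegeneracy forces $\mathrm{im}(\pi_{X,x}^\sharp)$ to \emph{equal}, not merely be contained in, $d\mu_x^{-1}(T_g G_\alpha)$ — in other words that the moment map does not ``lose'' any symplectic directions. This is where the moment map equation~\eqref{eq:classicalmomentmap} must be used quantitatively, relating $\{\mu(h),-\}$ to the infinitesimal action, rather than just as the qualitative statement that $\mu$ is Poisson; one has to check that the $\g$-directions that become ``redundant'' under $(a,\pi_X^\sharp)$ are exactly those already visible through the Poisson anchor after pushing forward by $\mu$. A clean way to organize this is to pass through the twist: twisting $X$ by $-t$ and $G$ by the corresponding quantities (as in the proof of Proposition~\ref{prop:momentmapPoisson} and Lemma on twists preserving nondegeneracy) reduces everything to the quasi-Poisson picture of \cite{AlekseevQuasiPoisson}, where $\widetilde\pi_X^\sharp = \pi_X^\sharp + a(r)^\sharp$ being an isomorphism (Proposition~\ref{prop:LiBlandSevera}) makes the dimension count transparent. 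I would likely present the argument in that twisted form and then twist back.
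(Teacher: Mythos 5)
Your overall skeleton does match the paper's: one inclusion (symplectic leaves lie inside the preimages $\mu^{-1}(G_\alpha)$) comes from the moment-map/Poisson-morphism diagrams, and the reverse inclusion is where nondegeneracy must enter. But that reverse inclusion --- the pointwise identity $\mathrm{im}\,\pi_{X,x}^\sharp = (\mathrm{d}\mu_x)^{-1}\bigl(\mathrm{im}\,\tilde{a}\bigr)$ --- is the entire content of the theorem, and your proposal does not prove it: you write that nondegeneracy plus the moment map equation \eqref{eq:classicalmomentmap} ``should give'' it, flag it yourself as the main obstacle, and defer to an unexecuted dimension count via Proposition \ref{prop:LiBlandSevera} in the twisted quasi-Poisson picture. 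The paper closes exactly this gap with a short argument for which you had named every ingredient but did not assemble them: by nondegeneracy any $v\in \T_{X,x}$ with $\mathrm{d}\mu_x(v)$ tangent to the dressing orbit decomposes as $v = a(y) + \pi_X^\sharp(\alpha)$, and since $\mathrm{d}\mu_x(\pi_X^\sharp\alpha)\in\mathrm{im}\,\tilde{a}$ already by the moment-map diagram, everything reduces to showing that $a(y)\in\mathrm{im}\,\pi_X^\sharp$ whenever $a_G(y)\in\mathrm{im}\,\tilde{a}$. The observation you were missing is that $a_G(y)$ is \emph{automatically} tangent to the conjugacy class through $\mu(x)$, so Proposition \ref{prop:STSleaves} places it in $\mathrm{im}\,\pi_{STS}^\sharp$, and the Poisson-morphism square of Proposition \ref{prop:momentmapPoisson} (i.e.\ $\pi_{STS}^\sharp = \mathrm{d}\mu_x\circ\pi_X^\sharp\circ\mu^*$) then lifts it back into $\mathrm{im}\,\pi_X^\sharp$. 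This is also precisely how the conjugacy-class constraint you worried about gets ``absorbed'': the only directions at issue are action directions, whose images are conjugacy-class directions by equivariance.

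A secondary problem: your intermediate claims that the $G_{STS}$-leaf is open in its dressing orbit and that $\mathrm{d}\mu_x$ is consequently a submersion onto the dressing orbit are false in general. The leaf tangent space is $\mathrm{im}\,\pi_{STS}^\sharp = \T_g(\text{conjugacy class})\cap \T_g(\text{dressing orbit})$, which typically has strictly smaller dimension than the dressing orbit --- e.g.\ over the open cell $\Br$ the dressing orbit is open in $G$ while the leaf has the dimension of a conjugacy class. Fortunately nothing in the correct argument requires $\mu$ to be submersive onto $G_\alpha$: once the distribution $\mathrm{im}\,\pi_X^\sharp$ is identified with $(\mathrm{d}\mu_x)^{-1}(\mathrm{im}\,\tilde{a})$ at every point, the symplectic foliation is by definition cut out by the preimages of the dressing orbits, which is all the theorem asserts.
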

\begin{proof}
The moment map equation is equivalent to the commutativity of the diagram
\[
\xymatrix{
\T^*_{X, x} \ar^{-\pi^\sharp_X}[r] \ar^{a^*}[d] & \T_{X, x} \ar^{\mathrm{d}\mu_x}[d] \\
\g^* \ar^{\tilde{a}}[r] & \T_{G, \mu(x)}
}
\]
In particular, symplectic leaves of $X$ are contained in the preimages $\mu^{-1}(G_\alpha)$. Since $X$ is nondegenerate, it is enough to prove that if $y\in\g$ is such that $a_G(y)$ is in the image of $\tilde{a}$, then $a(y)$ is in the image of $\pi_X^\sharp$.

By Proposition \ref{prop:momentmapPoisson} the moment map $\mu\colon X\rightarrow G_{STS}$ is Poisson, so we have a commutative diagram
\[
\xymatrix{
\T^*_{X, x} \ar^{\pi_X^\sharp}[r] & \T_{X, x} \ar^{\mathrm{d}\mu_x}[d] \\
\T^*_{G, \mu(x)} \ar_{\mu^*}[u] \ar^{\pi_{STS}^\sharp}[r] & \T_{G, \mu(x)}
}
\]

Since $a_G(y)$ is in the image of $\tilde{a}$, by Proposition \ref{prop:STSleaves} the element $a_G(y)$ lies in the image of $\pi_{STS}^\sharp\colon \T^*_{G, \mu(x)}\rightarrow \T_{G, \mu(x)}$ and hence $a(y)$ is in the image of $\pi_X^\sharp$ by the above commutative diagram.
\end{proof}

In particular, under the above assumptions $\mu^{-1}(\Br)\subset X$ is an open symplectic leaf. We will now use this observation to show that the Hamiltonian reduction of a nondegenerate Poisson $G$-variety is symplectic. Denote by $\pi\colon \mu^{-1}(\xi)\rightarrow X\dS G$ the projection.

\begin{prop}
Let $X$ be a nondegenerate Poisson $G$-variety equipped with a moment map $\mu\colon X\rightarrow G$ and $\xi\in\Br\subset G$ a point lying in the center. Suppose $U\subset \mu^{-1}(\xi)$ and $V\subset X\dS G$ are open subsets such that $\pi\colon U\rightarrow V$ is a $G$-torsor. Then $V\subset X\dS G$ is a smooth symplectic variety.
\label{prop:symplecticquotient}
\end{prop}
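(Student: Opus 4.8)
The plan is to recognize this as an instance of Marsden--Weinstein symplectic reduction, with Theorem~\ref{thm:nondegenerateleaves} supplying the symplectic leaf. Since $\xi\in\Br$ we have $\mu^{-1}(\xi)\subset\mu^{-1}(\Br)$, and by Theorem~\ref{thm:nondegenerateleaves} the open subset $\mu^{-1}(\Br)\subset X$ is an (open) symplectic leaf; thus $\pi_X^\sharp\colon\T^*_{X,x}\to\T_{X,x}$ is an isomorphism at each $x\in\mu^{-1}(\Br)$, and we write $\omega$ for the associated symplectic form there. It then remains to: (i) show $\mu$ is submersive along $U$, so that $\mu^{-1}(\xi)$ is smooth there and $U$, hence $V=U/G$, are smooth; (ii) show $\mu^{-1}(\xi)$ is coisotropic inside $(\mu^{-1}(\Br),\omega)$ with characteristic distribution tangent to the $G$-orbits; and (iii) descend $\omega$ along $\pi\colon U\to V$ and identify the result with the Poisson structure of Proposition~\ref{prop:poissonreduction}.

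For (i), I would fix $x\in U$ and exploit that $\pi\colon U\to V$ is a $G$-torsor, so $G$ acts freely on $U$ and $a_x\colon\g\to\T_{X,x}$ is injective, hence $a_x^*\colon\T^*_{X,x}\to\g^*$ is surjective. Since $\xi\in\Br$ and $\Br$ is the open dressing orbit of $e$, the dressing orbit through $\xi$ equals $\Br$ and is open, so $\tilde a_\xi\colon\g^*\to\T_{G,\xi}$ is an isomorphism. The moment map equation provides the commuting square
\[
\xymatrix{
\T^*_{X, x} \ar^{-\pi^\sharp_X}[r] \ar^{a_x^*}[d] & \T_{X, x} \ar^{\mathrm{d}\mu_x}[d] \\
\g^* \ar^{\tilde{a}_\xi}[r] & \T_{G, \xi},
}
\]
and since $\pi_X^\sharp$ is surjective, $\operatorname{image}(\mathrm{d}\mu_x)=\operatorname{image}(\tilde a_\xi\circ a_x^*)=\T_{G,\xi}$, so $\mu$ is submersive at $x$. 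Hence $\mu^{-1}(\xi)$ is smooth near $U$ with $\T_x\mu^{-1}(\xi)=\ker\mathrm{d}\mu_x$; as $G$ acts freely on the smooth variety $U$, the quotient $V$ is smooth.

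For (ii), I would dualize the square, using the skew-symmetry $(\pi_X^\sharp)^*=-\pi_X^\sharp$ and $a_x^{**}=a_x$, to get $\pi_X^\sharp\circ\mathrm{d}\mu_x^*=a_x\circ\tilde a_\xi^*$. Then in $(\T_{X,x},\omega)$,
\[
(\ker\mathrm{d}\mu_x)^{\perp_\omega}=\pi_X^\sharp\big(\operatorname{image}(\mathrm{d}\mu_x^*)\big)=a_x\big(\operatorname{image}(\tilde a_\xi^*)\big)=\operatorname{image}(a_x)=\T_x(G\cdot x),
\]
using that $\tilde a_\xi$ is an isomorphism. Since $\xi$ is central, the orbit $G\cdot x$ lies in $\mu^{-1}(\xi)$, so $\T_x(G\cdot x)\subset\ker\mathrm{d}\mu_x$: thus $\mu^{-1}(\xi)$ is coisotropic along $U$, and the kernel of $\omega|_{\mu^{-1}(\xi)}$ at $x$ is $\ker\mathrm{d}\mu_x\cap(\ker\mathrm{d}\mu_x)^{\perp_\omega}=\T_x(G\cdot x)$, i.e. exactly the tangent to the fibres of $\pi$. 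For (iii), the closed $G$-invariant form $\omega|_U$ therefore descends to a closed nondegenerate $2$-form $\omega_V$ on $V$, and a routine comparison on functions shows $\omega_V^{-1}$ agrees with the restriction to $V$ of the Poisson structure of Proposition~\ref{prop:poissonreduction}; this gives the claim.

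The main obstacle is step (i): establishing that $\mu$ is submersive along $U$. This is exactly where we use nondegeneracy of $X$ (forcing $\pi_X^\sharp$ to be an isomorphism on the open leaf $\mu^{-1}(\Br)$), freeness of the $G$-action on the torsor $U$, and the assumption $\xi\in\Br$ (making $\tilde a_\xi$ an isomorphism), combined through the moment map equation. Steps (ii) and (iii) are then the standard symplectic reduction package.
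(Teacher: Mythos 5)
Your argument is correct, and its first half is exactly the paper's: at $x\in U$ you use the moment map square, the fact that $\mu^{-1}(\Br)$ is an open symplectic leaf (so $\pi_X^\sharp$ is invertible), surjectivity of $a_x^*$ from freeness, and invertibility of $\tilde a_\xi$ for $\xi\in\Br$ to conclude $\mathrm d\mu_x$ is surjective, hence $U$ is smooth; just note that smoothness of $V$ is not a consequence of freeness alone but of the hypothesis that $\pi\colon U\to V$ is a $G$-torsor (the paper makes this explicit by splitting the torsor over an \'etale cover and descending smoothness). Where you diverge is the symplectic part: you run classical Marsden--Weinstein coisotropic reduction, computing $(\ker\mathrm d\mu_x)^{\perp_\omega}=\operatorname{image}(a_x)$ from the dualized square, descending $\omega|_U$ to a closed nondegenerate $2$-form $\omega_V$, and then asserting that $\omega_V^{-1}$ coincides with the reduced Poisson structure of Proposition \ref{prop:poissonreduction}. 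The paper instead never constructs $\omega_V$: it assembles the two short exact sequences of (co)tangent spaces at $x$ with the middle vertical isomorphism $\pi_X^\sharp$, observes that the reduced Poisson map $\T^*_{[x]}V\to\T_{[x]}V$ of Proposition \ref{prop:poissonreduction} is precisely the connecting map of this diagram, and invokes the snake lemma to see it is an isomorphism. The two routes are the same linear algebra in different packaging, but the paper's formulation buys you the identification with the reduced Poisson structure for free, whereas in your version that identification (your ``routine comparison on functions,'' which needs that Hamiltonian vector fields of $G$-invariant functions are tangent to $\mu^{-1}(\xi)$, as in the proof of Proposition \ref{prop:poissonreduction}) is genuinely part of the proof and should be spelled out rather than deferred; with that supplied, your proof is complete.
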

\begin{proof}
Consider $x\in U\subset \mu^{-1}(\xi)$. The moment map condition gives a commutative diagram
\[
\xymatrix{
\T^*_{X, x} \ar^{-\pi^\sharp_X}[r] \ar^{a^*}[d] & \T_{X, x} \ar^{\mathrm{d}\mu_x}[d] \\
\g^* \ar^{\tilde{a}}[r] & \T_{G, \xi}
}
\]

Since $\xi$ lies in an open $G^*$-orbit, the bottom map is an isomorphism. By Theorem \ref{thm:nondegenerateleaves} $x$ lies in an open sympletic leaf, so the top map is an isomorphism as well. Since the $G$-action on $U$ is free, the vertical map on the left is surjective. Therefore, the vertical map on the right is surjective as well. In other words, $U$ is smooth.

Let $Y\rightarrow V$ be an \'{e}tale cover over which $U\rightarrow V$ splits. In other words, $U\times_V Y\cong Y\times G$. Since $U$ is smooth, so is $U\times_V Y$ and hence $Y$ is smooth. By descent this implies that $V$ is smooth as well.

Denote by $[x]\in V$ the image of $x\in U$. We have a diagram of tangent and cotangent spaces
\[
\xymatrix{
&&& 0 \ar[d] & \\
& 0 \ar[d] && \T^*_{[x]} V \ar[d] & \\
0 \ar[r] & \g \ar[r] \ar[d] & \T^*_x X \ar[r] \ar^{\sim}[d] & \T^*_x U \ar[r] \ar[d] & 0 \\
0 \ar[r] & \T_x U \ar[r] \ar[d] & \T_x X \ar[r] & \g^* \ar[r] \ar[d] & 0 \\
& \T_{[x]}V \ar[d] && 0& \\
& 0 &&&
}
\]
The Poisson structure on $V$, i.e. the map $\T^*_{[x]} V\rightarrow \T_{[x]} V$, by Proposition \ref{prop:poissonreduction} is given by traversing this diagram. The snake lemma shows that this map is in fact an isomorphism.
\end{proof}

In most examples we will consider the $G$-action on $X$ has a generic stabilizer, so let us explain how to deal with that.

\begin{prop}
Let $G$ be a factorizable Poisson-Lie group, $X$ a Poisson $G$-variety, $G_{stab}\subset G$ is a normal subgroup which acts trivially on $X$. We assume the pairing on $\g$ restricts to a nondegenerate pairing on $\g_{stab}$. Denote $\overline{G} = G / G_{stab}$.
\begin{enumerate}
\item If $\mu\colon X\rightarrow G$ is a moment map for the $G$-action on $X$, then $\overline{\mu}\colon X\rightarrow G\rightarrow \overline{G}$ is a moment map for the $\overline{G}$-action on $X$.

\item If $X$ is a nondegenerate Poisson $G$-variety, then it is a nondegenerate Poisson $\overline{G}$-variety.
\end{enumerate}
\label{prop:momentmapstabilizer}
\end{prop}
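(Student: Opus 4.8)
The plan is to reduce both assertions to the single structural observation that, since the invariant pairing restricts nondegenerately to the ideal $\g_{stab}$, the orthogonal complement $\g_{stab}^\perp$ is again an ideal, $\g = \g_{stab}\oplus\g_{stab}^\perp$ as Lie algebras, and correspondingly $\bar\g := \Lie(\overline G)\cong\g_{stab}^\perp$, with $\bar\g^*\cong\Ann(\g_{stab})\subset\g^*$; moreover the Casimir $c$ and the trivector $\phi$ are block-diagonal for this splitting, so that the factorizable Poisson-Lie structure on $\overline G$ is the one attached to the $r$-matrix $\bar r = t^\perp + c^\perp$ obtained by projecting $r$ to $\g_{stab}^\perp\otimes\g_{stab}^\perp$. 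Granting this, part (2) is immediate: the infinitesimal action $a\colon\g\to\Gamma(X,\T_X)$ kills $\g_{stab}$, hence factors through $\bar a\colon\bar\g\to\Gamma(X,\T_X)$ with $\mathrm{im}(a) = \mathrm{im}(\bar a)$; the two maps $(a,\pi_X^\sharp)$ and $(\bar a,\pi_X^\sharp)$ out of $\g\oplus\T^*_X$ and $\bar\g\oplus\T^*_X$ therefore have the same image, so nondegeneracy for $G$ and for $\overline G$ are equivalent. (That $X$ is a Poisson $\overline G$-variety at all is formal, since $G\to\overline G$ is a surjective Poisson submersion and the $G$-action on $X$ is Poisson.)

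For part (1), first $\overline\mu = \mathrm{pr}\circ\mu\colon X\to\overline G$ is well-defined and $\overline G$-equivariant for conjugation: choosing a lift $g$ of $\bar g$, one has $\overline\mu(\bar g\cdot x) = \mathrm{pr}(g\,\mu(x)\,g^{-1}) = \bar g\,\overline\mu(x)\,\bar g^{-1}$, independent of the lift because $G_{stab}$ acts trivially on $X$. It remains to verify \eqref{eq:classicalmomentmap}. I would fix a basis $\{e_i\} = \{s_\alpha\}\cup\{z_\beta\}$ of $\g$ adapted to $\g = \g_{stab}^\perp\oplus\g_{stab}$, with dual basis $\{s^\alpha\}\cup\{z^\beta\}$, so that $s^\alpha\in\Ann(\g_{stab}) = \bar\g^*$. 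For $h\in\cO(\overline G)\subset\cO(G)$ and $f\in\cO(X)$, the moment-map equation for $\mu$ reads
\[\{\mu(h), f\} = \sum_\alpha \mu\big(\tilde a(s^\alpha).h\big)\, s_\alpha.f + \sum_\beta \mu\big(\tilde a(z^\beta).h\big)\, z_\beta.f,\]
and the second sum vanishes term-by-term because $z_\beta\in\g_{stab}$ acts trivially on $X$. Since $\mu(h) = \overline\mu(h)$ and $s_\alpha.f$ is by definition the action of the image $\bar s_\alpha\in\bar\g$, the identity \eqref{eq:classicalmomentmap} for $\overline\mu$ will follow as soon as $\tilde a(s^\alpha).h$ is identified with $\tilde a_{\overline G}(s^\alpha).h$, the dressing action of $\bar\g^*$ on $\overline G$.

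This last point is the one genuine step, and where I expect the real work to lie. Writing the dressing action through the embedding $\g^*\hookrightarrow\g\oplus\g$, $\xi\mapsto(r^\sharp\xi,(r^{21})^\sharp\xi)$, and the left/right translation action of $\g\oplus\g$ on $G$, one must show that for $\xi\in\Ann(\g_{stab})$ the components $r^\sharp\xi$ and $(r^{21})^\sharp\xi$ agree, \emph{modulo} $\g_{stab}$, with $\bar r^\sharp\xi$ and $(\bar r^{21})^\sharp\xi$ — which holds because $c$ is block-diagonal (so $c^\sharp$ carries $\Ann(\g_{stab})$ into $\g_{stab}^\perp$) while the mixed part of the twist $t$ contributes only correction terms valued in $\g_{stab}$. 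Since $G_{stab}$ is \emph{normal}, left- and right-$G_{stab}$-invariance of functions on $G$ coincide, both equal to $\cO(\overline G)$, and the vector fields generated by left or right translation by elements of $\g_{stab}$ annihilate $\cO(\overline G)$; consequently those $\g_{stab}$-valued corrections act trivially on $h\in\cO(\overline G)$, and $\tilde a(s^\alpha)$ and $\tilde a_{\overline G}(s^\alpha)$ act identically there. The main obstacle, then, is purely the bookkeeping of the two dual decompositions $\g = \g_{stab}\oplus\g_{stab}^\perp$ and $\g^* = \Ann(\g_{stab}^\perp)\oplus\Ann(\g_{stab})$ together with the use of normality to discard the $\g_{stab}$-directions; everything else is formal. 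As a consistency check one can also observe that $G_{STS}\to\overline G_{STS}$ is a Poisson map — the STS bivector $\pi_{STS} = \sum_{a,b} c^{ab} e_a^R\wedge e_b^L - t^{\ad}$ pushes forward along $G\to\overline G$ to the STS bivector of $\bar r$, again because $c$ is block-diagonal and $\g_{stab}$-translation and $\g_{stab}$-adjoint vector fields are vertical — so that $\overline\mu$ is at least a Poisson morphism to $G_{STS}$ by Proposition~\ref{prop:momentmapPoisson}; but since the equivariant-Poisson-map condition is a priori weaker than the moment-map condition, the explicit verification above is what actually establishes (1).
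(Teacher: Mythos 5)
Your proposal is correct and follows essentially the same route as the paper: the paper pushes the $r$-matrix forward along $\g\to\overline{\g}$ (equivalently, your restriction to $\g_{stab}^\perp$) to get the factorizable structure on $\overline{G}$, encodes your dressing-action compatibility as the commutativity of the square $\overline{\g}^*\to\overline{\g}\oplus\overline{\g}$ versus $\g^*\to\g\oplus\g$, and proves (2) exactly as you do, by noting the images of $\g$ and $\overline{\g}$ in $\Gamma(X,\T_X)$ coincide. Your adapted-basis bookkeeping and the explicit vanishing of the $\g_{stab}$-terms just spell out what the paper leaves implicit in its one-line equivariance statement.
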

\begin{proof}$ $
\begin{enumerate}
\item Let $r\in\g\otimes \g$ be the classical $r$-matrix defining the factorizable Poisson-Lie structure on $G$ and denote by $\overline{r}\in\overline{\g}\otimes\overline{\g}$ its image. Then it is still a classical $r$-matrix and by assumption its symmetric part is nondegenerate. Therefore, $\overline{G}$ inherits the structure of a factorizable Poisson-Lie group.

Moreover, the natural diagram
\[
\xymatrix{
\overline{\g}^* \ar[d] \ar[r] & \overline{\g}\oplus \overline{\g} \\
\g^* \ar[r] & \g\oplus \g \ar[u]
}
\]
is commutative. Therefore, $\cO(\overline{G})\rightarrow \cO(G)$ is $\overline{\g}$-equivariant which implies the first claim.

\item The image of $\overline{\g}\rightarrow \Gamma(X, \T_X)$ coincides with the image of $\g\rightarrow \Gamma(X, \T_X)$ which implies the second claim.
\end{enumerate}
\end{proof}

Recall that the usual moment map on a symplectic $G$-variety is unique up to an addition of a character. A similar statement is true for multiplicative moment maps on nondegenerate Poisson $G$-varieties.

\begin{prop}
Let $X$ be a connected nondegenerate Poisson $G$-variety and $\mu_1, \mu_2\colon X\rightarrow G$ two moment maps. Then there is an element $g\in G^*$ landing in the center $Z(G)\subset G$ under the projection $G^*\rightarrow G$ such that $\mu_2(x) = g\mu_1(x)$.
\label{prop:momentmapunique}
\end{prop}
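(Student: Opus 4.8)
The plan is to compare the two moment maps by studying the map $x \mapsto \mu_2(x)\mu_1(x)^{-1}$ and showing it is constant, then identifying the constant with an element of $G^*$ mapping to the center. First I would observe that both $\mu_1$ and $\mu_2$ are $G$-equivariant with respect to the conjugation action on $G$, so the ratio $\nu(x) := \mu_2(x)\mu_1(x)^{-1}$ — more precisely, I should be careful about left versus right translates, so write $\nu(x) = \mu_1(x)^{-1}\mu_2(x)$ — is again $G$-equivariant for conjugation; if I can show $\nu$ is locally constant, then since $X$ is connected it is constant, equal to some $g \in G$, and $G$-equivariance forces $g$ to be central. The substance of the argument is therefore to show $\mathrm{d}\nu_x = 0$ for all $x$, i.e. that $\nu$ is constant along every tangent vector.

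The key step is to exploit nondegeneracy. By Theorem \ref{thm:nondegenerateleaves}, through each point $x$ the symplectic leaf is $\mu_1^{-1}(G_\alpha)$ for the dressing orbit $G_\alpha \ni \mu_1(x)$, and by nondegeneracy the tangent space $\T_{X,x}$ is spanned by the image of $\pi_X^\sharp$ together with the image of the infinitesimal $\g$-action $a$. On the image of $a$: both $\mu_1$ and $\mu_2$ are $G$-equivariant, so $\mathrm{d}\mu_i(a(y)) = a_G(y)|_{\mu_i(x)}$; a direct computation (using that $a_G(y)$ is the fundamental vector field of conjugation) shows that $\nu$ is $G$-invariant along orbit directions, so $\mathrm{d}\nu_x$ vanishes on $\mathrm{Im}(a)$. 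On the image of $\pi_X^\sharp$: here I would use the moment map equation \eqref{eq:classicalmomentmap} for $\mu_1$ and for $\mu_2$ separately. Pairing $\mathrm{d}\nu_x$ against a Hamiltonian vector field $\pi_X^\sharp(\mathrm{d}a)$ for $a \in \cO(X)$ reduces, via \eqref{eq:classicalmomentmap}, to an identity purely among functions on $G$: for any $h \in \cO(G)$ one computes $\{\mu_2(h), a\} - (\text{analogous term with } \mu_1)$, and the moment map equations express both sides in terms of the dressing action $\tilde{a}$ on $\cO(G)$, which depends only on the Poisson-Lie structure of $G$, not on the choice of moment map. The upshot is that $\nu$ Poisson-commutes with all of $\cO(X)$ in the sense that its differential annihilates $\mathrm{Im}(\pi_X^\sharp)$. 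Combining the two cases, $\mathrm{d}\nu_x = 0$, so $\nu$ is constant.

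Finally I would pin down where the constant $g$ lives. Since $\nu \equiv g$ and $\nu$ is conjugation-invariant, $g \in Z(G)$. To see that $g$ is the image of an element of $G^*$: $\mu_1$ and $\mu_2 = (\text{left mult by }g)\circ\mu_1$ must both land in the same connected union of dressing orbits — more to the point, the relation between $\mu_1$ and $\mu_2$ being a moment map is that translation by $g$ must be a Poisson automorphism of $G_{STS}$ commuting with the dressing action, and one checks that the group of such translations is exactly the image of $Z(G^*)$, equivalently the elements of $G^*$ lying over $Z(G)$; alternatively, one notes directly that $G_{Sk}$ being a Poisson-Lie group means translation by $g$ is Poisson iff $g$ is a group-like element for the dual structure, i.e. $g \in G^*$. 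I expect the main obstacle to be the bookkeeping in the second case — verifying cleanly that the difference of the two moment map equations forces $\mathrm{d}\nu_x$ to kill Hamiltonian vector fields — since this requires carefully tracking how $\mathrm{d}(\mu_2\mu_1^{-1})$ interacts with the non-abelian group multiplication and the dressing action; the connectedness and centrality conclusions are then formal.
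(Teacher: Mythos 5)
Your proposal has two genuine gaps, one of which is fatal as written. First, the step killing the orbit directions is circular. Equivariance of $\mu_1,\mu_2$ gives $\nu(h\cdot x)=h\,\nu(x)\,h^{-1}$, so for $y\in\g$ the derivative $\mathrm{d}\nu_x(a(y))$ is the value at $\nu(x)$ of the fundamental vector field of the conjugation action of $y$ on $G$; this vanishes for all $y$ exactly when $\nu(x)$ is central, which is the conclusion you are trying to establish, not a consequence of a ``direct computation''. Second, the Hamiltonian directions are precisely where the real content sits, and your sketch does not resolve it: equation \eqref{eq:classicalmomentmap} expresses $\mathrm{d}\mu_1(\pi_X^\sharp\mathrm{d}a)$ and $\mathrm{d}\mu_2(\pi_X^\sharp\mathrm{d}a)$ as the same linear combination of dressing vector fields, but evaluated at the two \emph{different} points $\mu_1(x)$ and $\mu_2(x)$. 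To deduce $\mathrm{d}\nu_x=0$ you would need to know how dressing vector fields transport under translation by the a priori arbitrary element $\nu(x)$, which is not ``an identity purely among functions on $G$'' independent of the moment maps; it is exactly the bookkeeping you flag as the obstacle, and it does not go through in $G$ itself. Finally, your last paragraph only produces a constant in $Z(G)$ and then asserts a lift to $G^*$ (``one checks that the group of such translations is exactly the image of $Z(G^*)$''); the proposition requires an actual element of $G^*$, and this lifting claim is not proved.

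The paper's proof circumvents all three points at once. It restricts to the open dense subset $U=\mu_1^{-1}(\Br)\cap\mu_2^{-1}(\Br)$, where by Theorem \ref{thm:nondegenerateleaves} the Poisson structure is symplectic, so the image of $\pi_X^\sharp$ is all of $\T_X$ and orbit directions never need separate treatment. It then lifts the moment maps along a connected \'{e}tale cover to maps $\tilde\mu_i\colon\tilde U\rightarrow G^*$ and uses the reformulation of \eqref{eq:classicalmomentmap} in terms of the left-invariant Maurer--Cartan form $\theta$ on $G^*$, namely $a(v)=\pi^\sharp_{\tilde U}(\tilde\mu_i^*\langle\theta,v\rangle)$. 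Injectivity of $\pi^\sharp$ on the symplectic locus forces $\tilde\mu_1^*\langle\theta,v\rangle=\tilde\mu_2^*\langle\theta,v\rangle$ for all $v$, and left-invariance of $\theta$ then makes the $G^*$-valued ratio defined by $\tilde\mu_2(x)=g(x)\tilde\mu_1(x)$ locally constant, hence constant on the connected cover; this produces $g$ directly as an element of $G^*$, extends to all of $X$ by density, and centrality of its image in $G$ follows from the same equivariance argument you give, using $\g$-equivariance of $G^*\rightarrow G$. If you want to salvage your approach, you should abandon the attempt to prove $\mathrm{d}\nu=0$ in all directions on all of $X$ and instead work on $U$ with the $G^*$-lift, where the comparison of the two moment map equations becomes a statement about a fixed, point-independent form.
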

\begin{proof}
Consider an open dense subset $U = \mu_1^{-1}(\Br)\cap \mu_2^{-1}(\Br)\subset X$. It will be enough to establish the claim on this open subset.

We have $\mu_1, \mu_2\colon U\rightarrow \Br$. We may choose a connected \'{e}tale cover $\tilde{U}\rightarrow U$ and lift the moment maps as
\[
\xymatrix{
\tilde{U} \ar[d] \ar^{\tilde{\mu}_i}[r] & G^* \ar[d] \\
U \ar^{\mu_i}[r] & \Br
}
\]

Let $\theta\in\Omega^1(G^*;\g^*)$ be the left-invariant Maurer--Cartan form on $G^*$. It is shown in \cite[Example 5.1.3]{AlekseevManinPairs} (see also \cite[Lemma 4.22]{SafronovQuantumMoment}) that the moment map equation \eqref{eq:classicalmomentmap} reduces to
\[a(v) = \pi_{\tilde{U}}^\sharp(\tilde{\mu}_i^* \langle \theta, v\rangle),\]
where $v\in\g$ and $a\colon \g\rightarrow \Gamma(\tilde{U}, \T_{\tilde{U}})$ is the action map. By Theorem \ref{thm:nondegenerateleaves} $\tilde{U}$ is symplectic, so the above equation implies that
\[\tilde{\mu}_1^* \langle\theta, v\rangle = \tilde{\mu}_2^* \langle\theta, v\rangle.\]
If we write $\tilde{\mu}_2(x) = g(x)\tilde{\mu}_1(x)$ for some function $g\colon \tilde{U}\rightarrow G^*$, then the above equation implies that $g$ is locally constant. Since $\tilde{U}$ is connected, $g$ is constant. Therefore, we obtain that $\mu_2(x) = g\mu_1(x)$ for some $g\in G^*$. Since both $\mu_1$ and $\mu_2$ are $G$-equivariant and the map $G^*\rightarrow G$ is $\g$-equivariant, we obtain that the image of $g\in G^*$ under $G^*\rightarrow G$ is invariant under conjugation, i.e. it lies in the center $Z(G)\subset G$.
\end{proof}

\section{Frobenius compatibilities}\label{sec:frobenius}
In this section we spell out a number of compatibility conditions between the quantum Frobenius homomorphism and each of the following:  quantum moment maps, Poisson orders, and Hamiltonian reductions.  We use these compatibilities to sheafify the quantum Hamiltonian reduction over the classical multiplicative Hamiltonian reduction, and  show that the reduction procedure preserves the Azumaya property on fibers. Throughout this section we fix a connected reductive group $G$ and a primitive $\ell$-th root of unity $q$.

\subsection{Factorizability of the small quantum group}

\begin{assumption}
The group $G$ and the number $\ell$ satisfy the following conditions:
\begin{itemize}
\item $\ell$ is odd.

\item $G$ is a product of groups of the following kinds: $\GL_n$ and a simple group whose determinant of the Cartan matrix is coprime to $\ell$.

\item If $G$ contains a factor of type $\mathrm{G}_2$, $\ell$ is not divisible by $3$.
\end{itemize}
\label{EllAssumption}
\end{assumption}

The reason we consider these restrictions is due to the following statement.

\begin{theorem}
Suppose $(G, \ell)$ satisfy Assumption \ref{EllAssumption}. Then there is an equivalence between $\Rep(G)$ and the M\"{u}ger center of $\Repq(G)$ such that the functor $\Rep(G)\rightarrow \Repq(G)$ is ribbon. Moreover, there is a factorizable Hopf algebra $\uq(\g)$, the small quantum group, together with an equivalence of ribbon categories $\Rep(\uq(\g))\cong \Vect\otimes_{\Rep(G)} \Repq(G)$.
\label{thm:smallquantumgroupfactorizable}
\end{theorem}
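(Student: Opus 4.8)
The plan is to reduce the statement to three known inputs about Lusztig's integral form and its specialization at $q$. First, the existence of the ribbon functor $\Rep(G)\to\Repq(G)$ landing in the M\"uger center: this is precisely the structure of $\Fr^*$ reviewed in the introduction. To identify its essential image with the \emph{whole} M\"uger center under Assumption~\ref{EllAssumption}, I would argue that an object $W\in\Repq(G)$ lies in the M\"uger center if and only if the monodromy (double braiding) with every object is trivial; by factorizability-type arguments for $\UqL$ (equivalently, nondegeneracy of the $\uq$-part of the $R$-matrix — this is where the hypotheses on $\ell$ and on the Cartan determinant enter), triviality of the double braiding with all objects forces $W$ to be a trivial $\uq(\g)$-module, hence to be pulled back along $\Fr$ from a $U(\g)$-module, i.e.\ $W\cong\Fr^*(V)$ for some $V\in\Rep(G)$. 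Fully faithfulness of $\Fr^*$ then upgrades this to an equivalence $\Rep(G)\simeq Z_2(\Repq(G))$, and the ribbon structure is inherited from $\Fr^*$ being a ribbon functor.

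Next, I would construct the factorizable Hopf algebra $\uq(\g)$ and the identification of its representation category with the relative tensor product. The category $\Vect\otimes_{\Rep(G)}\Repq(G)$ is by definition the category of modules in $\Repq(G)$ over the algebra object $\cO(G)=\Fr^*(\cO(G))$ (the regular representation, viewed inside the M\"uger center). De-equivariantization along the M\"uger center: since $\cO(G)$ is a commutative algebra in the symmetric center, the category of $\cO(G)$-modules in $\Repq(G)$ is again braided monoidal, and I would identify it with $\Rep(\uq(\g))$ by exhibiting $\uq(\g)$ as the small quantum group — concretely, as the subalgebra of $\UqL$ generated by the undivided powers and the Cartan part, whose representation category is a fiber of $\Repq(G)$ over $\Rep(G)$. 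This is the statement that the ``short exact sequence'' $\uq\to\UqL\xrightarrow{\Fr}U$ of Hopf algebras de-equivariantizes $\Repq(G)$ to $\Rep(\uq(\g))$. Finally, factorizability of $\uq(\g)$: the $R$-matrix of $\UqL$ descends to an $R$-matrix on $\uq(\g)$ after killing the M\"uger center, and the claim is that the induced map is nondegenerate; under Assumption~\ref{EllAssumption} this is exactly the content of the cited results of Lentner and others on factorizable small quantum groups, so I would invoke those.

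The main obstacle is pinning down the precise role of Assumption~\ref{EllAssumption}: for general $G$ and $\ell$ the M\"uger center of $\Repq(G)$ is \emph{larger} than $\Rep(G)$ (as the Langlands-dual discussion in the introduction makes clear), so the key step is to show that the hypotheses — $\ell$ odd, simple factors with Cartan determinant coprime to $\ell$, and no $3\mid\ell$ in type $\mathrm{G}_2$ — are exactly what guarantees that the only transparent objects are the pulled-back ones. I expect this to come down to a lattice computation: the M\"uger center is controlled by the kernel of the pairing $\tfrac{1}{\ell}(\text{weight lattice})/(\text{root lattice})$ induced by the symmetrized Cartan form, and the stated conditions ensure this kernel is no bigger than what $\Fr^*$ already accounts for. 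Rather than redo this, I would cite \cite{lentner_frobenius_2015}, \cite{lentner_factorizable_2017} (and \cite{arkhipov_another_2003}, \cite{kremnizer_proof_2006} for the quantum Frobenius itself) for both the M\"uger center computation and the factorizability of $\uq(\g)$, and spend the remaining effort only on assembling these into the stated equivalence of ribbon categories.
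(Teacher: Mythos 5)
Your proposal follows essentially the same route as the paper: use $\Fr^*$ as the ribbon functor into the M\"uger center, identify the center with its image by a lattice-theoretic nondegeneracy argument in which Assumption \ref{EllAssumption} enters, and then obtain $\Rep(\uq(\g))\cong \Vect\otimes_{\Rep(G)}\Repq(G)$ together with factorizability of $\uq(\g)$ by de-equivariantization, citing \cite{arkhipov_another_2003}, \cite{NegronLogModular} and related work. The one substantive difference is where you stop and cite: the actual content of the paper's proof is the explicit verification that the lattice $X^M=\{\lambda\ |\ 2D(\lambda,\mu)\in\ell\Z\ \forall\mu\}$ controlling the transparent simples (via \cite[Theorem 4.3]{NegronLogModular}) equals $\ell X$, carried out case by case for $\GL_n$, simply-connected simple groups (via the inverse Cartan matrix, with the $\mathrm{G}_2$ exclusion appearing exactly here), and then arbitrary simple groups by comparing $X$ with $X_{sc}$; the braided and ribbon properties of $\Fr^*$ also require a short argument (choice of $v$ with $v^D=q$, $v^\ell=1$, vanishing of $q^{(\mu,\nu)}$ on $X^M$, and an argument as in \cite[Lemma 6.1]{NegronLogModular}) rather than being automatic from the introduction. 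Outsourcing this to \cite{lentner_frobenius_2015}, \cite{lentner_factorizable_2017} is reasonable in spirit, but the paper notes these references contain only related results, not the precise statement for the class of $(G,\ell)$ in Assumption \ref{EllAssumption} (in particular $\GL_n$ and non-simply-connected forms), which is why the computation is done by hand; so your plan is correct provided you either locate those exact statements or supply the lattice check yourself.
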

\begin{proof}
Let $X$ be the weight lattice of $G$ and let $\alpha_i\in X$ be the simple roots. Fix a $\Q$-valued symmetric bilinear pairing on $X$ such that $2\frac{(\alpha_i, \alpha_j)}{(\alpha_i, \alpha_i)}=a_{ij}$ is the Cartan matrix. We normalize $(-, -)$ so that the norm squared of a short root is $2$. Let
\[d_i = \frac{(\alpha_i, \alpha_i)}{2}.\]

In general the pairing $(-, -)$ is valued in $\frac{1}{D}\Z$, where $D$ divides the determinant of the Cartan matrix. By assumptions $\ell$ is coprime to $D$. Thus, by the Chinese remainder theorem we may choose $v\in\C$ such that $v^D = q$ and $v^\ell = 1$. For two weights $\mu,\nu\in X$ we set by definition $q^{(\mu, \nu)} = v^{D(\mu, \nu)}$. The braiding $\sigma_{W_1, W_2}$ for $W_1,W_2\in \Repq(G)$ is given by
\[\sigma_{W_1, W_2}(w_1\otimes w_2) = q^{-(\mu, \nu)} \Theta(w_1\otimes w_2),\]
where $\Theta$ is the so-called quasi $R$-matrix (see \cite[Chapter 4]{LusztigIntroductionQuantumGroups2010}) and $w_i\in W_i$ are vectors of weights $\mu,\nu\in X$ respectively.

The simple objects in $\Repq(G)$ are parametrized by dominant highest weights $\lambda\in X$. It is shown in \cite[Theorem 4.3]{NegronLogModular} that the M\"{u}ger center is the tensor category generated by simples with highest weight in the sublattice $X^M\subset X$ defined in the following way:
\[X^M = \{\lambda\in X\ |\ 2D(\lambda, \mu)\in \ell \Z\ \forall \mu\in X\}.\]

Suppose $X^M = \ell X$. Then Lusztig's quantum Frobenius map \cite[Chapter 35]{LusztigIntroductionQuantumGroups2010} defines a monoidal functor $\Rep(G)\rightarrow \Repq(G)$ where objects in $\Rep(G)$ have weights in the lattice $X^M$ which by the above result of \cite{NegronLogModular} lands in the M\"{u}ger center.

From the explicit formula for the quasi $R$-matrix $\Theta$ it is clear that it annihilates all representations of the form $\Fr^*(V)$. Therefore, $\Fr^*\colon \Rep(G)\rightarrow \Repq(G)$ is a braided monoidal functor iff $q^{(\mu, \nu)} = 1$ for every $\mu,\nu\in X^M$. But $\ell$ is coprime to $2D$, so $2D(\lambda, \mu) \in \ell\Z$ implies that $(\lambda, \mu)\in\ell\Z$ for every $\lambda,\mu\in X^M$. The fact that under the condition $X^M = \ell X$ the functor $\Fr^*\colon \Rep(G)\rightarrow \Repq(G)$ is ribbon is proved as in \cite[Lemma 6.1]{NegronLogModular}.

We will now check the conditions $X^M = \ell X$ in our cases.

\begin{itemize}
\item Suppose $G=\GL_n$. The weight lattice is $X=\Z^n$ with generators $\{e_i\}$ such that $(e_i, e_j) = \delta_{ij}$. The simple roots are $\alpha_i = e_i - e_{i+1}$ for $1\leq i< n$. Let $\lambda = \sum n_i e_i$. Then $2(\lambda, e_j) = n_j$. So, $X^M = \ell X$.

\item Suppose $G$ is a simple simply-connected group. Then the weight lattice $X$ is generated by the fundamental weights $\omega_i$. Let $b_{ij}$ be the inverse of the Cartan matrix $a_{ij}$. Consider
\[\Omega_{ij} = (\omega_i, \omega_j) = \sum_{k,l} b_{ik} b_{jl} d_k a_{kl}.\]
Let $\Omega^{-1}_{ij}$ be its inverse. Note that if $G$ is simply-laced (i.e. $d_k=1$), we get $\Omega_{ij} = b_{ij}$ and $\Omega^{-1}_{ij} = a_{ij}$.

Consider $\lambda = \sum_i n_i\omega_i$. Then $\lambda\in X^M$ iff
\[2(\lambda, \omega_j) = 2\sum_i n_i \Omega_{ij} = \ell m_j\]
for some integers $m_j$, i.e.
\[n_i = \frac{\ell}{2} \sum_j \Omega^{-1}_{ij} m_j.\]
By an explicit calculation, one sees that $\Omega^{-1}_{ij}\in \frac{1}{4}\Z$ if $G$ is not of type $G_2$ and $\Omega^{-1}_{ij}\in\frac{1}{3}\Z$ if $G$ is of type $G_2$. In these cases $\ell$ is coprime to the denominators of $\Omega^{-1}_{ij}$, and hence $n_i$ is divisible by $\ell$, i.e. $X^M\subset \ell X$.

Since $\ell$ is coprime to $D$, $2D\ell(\lambda, \mu)$ is divisible by $\ell$ for every $\lambda,\mu\in X$. Therefore, $\ell X\subset X^M$.

\item Suppose $G$ is an arbitrary simple group. The inclusion $\ell X\subset X^M$ is proved identically to the simply-connected case.

Now consider the symmetric pairing $A\colon X / \ell X\otimes X/\ell X\rightarrow \Z/\ell Z$ given by
\[A(\lambda, \mu) = 2D(\lambda, \mu)\]
which is well-defined due to the inclusion $\ell X\subset X^M$. Then $X^M = \ell X$ iff $A$ is nondegenerate.

Let $X_{sc}\supset X$ be the lattice spanned by fundamental weights. The index of $X\subset X_{sc}$ divides $D$ and hence is coprime to $\ell$ by assumption. Therefore, the inclusion $X / \ell X\rightarrow X_{sc} / \ell X_{sc}$ is an isomorphism. This means that the pairing $A$ is nondegenerate for $X$ iff $A$ is nondegenerate for $X_{sc}$ which we have already checked above.
\end{itemize}

The equivalence $\Vect\otimes_{\Rep(G)} \Repq(G)\cong\Rep(\uq(\g))$ of ribbon categories is the content of \cite{arkhipov_another_2003}, \cite{AngionoDeequivariantization} and \cite[Section 5]{NegronLogModular}.
\end{proof}

Note that some results related to the previous theorem were previously obtained in \cite{RossoQuantumGroups}, \cite[Chapter XI.6.3]{TuraevQuantumInvariants}, \cite{lentner_factorizable_2017}. The case of even $\ell$ is considered in \cite{NegronLogModular}.

\subsection{Good filtrations} \label{sect:goodfiltrations}
We will require at a few places the notion of a good filtration, also known as a dual-Weyl, or co-Weyl filtration on a $\UqL(\g)$-module.  The reader may safely skip this material at a first read, and may consult \cite{ParadowskiFiltrations} \cite{JantzenRepresentations} for further details.

Let ${\ULR}(\b)$ denote the quantum Borel subalgebra, generated by the $E_\alpha^{(k)}$'s and the quantum Cartan subalgebra.
\begin{definition}  Let $\lambda$ denote a dominant integral weight, regarded as a character of ${\ULR}^+(\g).$  We have:
\begin{enumerate}
\item The Verma module $M(\lambda)$ is the $\ULR$-module induced from $\ULR(\b)$ to $\ULR(\g)$, i.e.
$$M(\lambda) = \ULR(\g)\otimes_{\ULR(\b)} \C_\lambda.$$
\item The Weyl module $\Delta(\lambda)$ is the universal finitely generated $\Rt$-module quotient of $M(\lambda)$, i.e. for any finitely other generated $\Rt$-module quotient of $M(\lambda)\to V$, we have a unique factorization $M(\lambda)\to \Delta(\lambda)\to V$.
\item The dual Weyl module $\nabla(\lambda)$ is the ${\ULR}(\g)$-module coinduced from $\ULR(\b)$ to $\ULR(\g)$, i.e.
$$\nabla(\lambda) = Hom_{\ULR(\b)}(M(0),\lambda),$$
with left $\ULR(\g)$-action obtained by precomposing the evident right $\ULR(\g)$-action by the antipode $S$.
\item A Weyl filtration on a $\ULR(\g)$ module $M$ is a filtration $F_\bullet M$ with successive quotients $F_i/F_{i-1}$ isomorphic to some $\Delta(\lambda)$.
\item A good (a.k.a. dual-Weyl) filtration on a $\ULR(\g)$ module $M$ is a filtration $F_\bullet M$ with successive quotients $F_i/F_{i-1}$ isomorphic to some $\nabla(\lambda)$.
\end{enumerate}
\end{definition}

\begin{prop}[\cite{ParadowskiFiltrations}, see also {\cite[Section A.9]{VaragnoloDoubleaffineHecke2010}}]\label{prop:goodfiltrations}
We have the following:
\begin{enumerate}
\item We have an isomorphism, $\nabla(\lambda)^* \cong \Delta(-\omega_0\lambda)$.	
\item The tensor product of finite dimensional modules with good (resp. Weyl) filtration admits a good (resp. Weyl) filtration.
\item If $V_\Rt$ is a module with good filtration, then the specialization map $(V_\Rt)^{\ULR} \to (V_q)^{\UqL}$ is a surjection.
\end{enumerate}
\end{prop}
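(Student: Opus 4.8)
The three assertions are of different character, and I would establish them in turn, invoking the literature (\cite{ParadowskiFiltrations}, Andersen--Polo--Wen, \cite[II.2--II.4]{JantzenRepresentations}) for the deepest inputs and keeping the arguments at the level of a sketch. \emph{Part (1)} is formal (contravariant) duality. By definition $M(\lambda)$ is induced from $\ULR(\b)$ and $\nabla(\lambda)$ is coinduced from $\ULR(\b)$, and the contravariant functor $V\mapsto V^*$ (with $\ULR(\g)$ acting through the antipode $S$) interchanges induction and coinduction from the Borel while acting on $\ULR(\b)$-characters by $\C_\mu\mapsto \C_{-\mu}$. Hence $\nabla(\lambda)^*$ is the induced module with lowest weight $-\lambda$; since the dual of a finitely generated highest-weight module is again finitely generated, and $-\lambda$ is the lowest weight of the highest-weight module $\Delta(-\omega_0\lambda)$ (the $\omega_0$ bookkeeping simply records that $-\omega_0\lambda$ is the dominant representative), one identifies $\nabla(\lambda)^*\cong\Delta(-\omega_0\lambda)$. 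Alternatively this follows from the contravariant form on these modules as in \cite[II.2--II.4]{JantzenRepresentations}.

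\emph{Part (2)} is the quantum analogue of the theorem that modules with a good filtration are closed under tensor product. The plan is to reduce it to quantum Kempf vanishing via the tensor identity: writing $\nabla(\lambda) = \mathrm{ind}_{\ULR(\b)}^{\ULR(\g)}\C_\lambda$ one has $\nabla(\lambda)\otimes\nabla(\mu)\cong \mathrm{ind}_{\ULR(\b)}^{\ULR(\g)}\!\big(\C_\lambda\otimes\nabla(\mu)|_{\ULR(\b)}\big)$. The restriction $\nabla(\mu)|_{\ULR(\b)}$ admits a filtration by one-dimensional $\ULR(\b)$-modules whose weights appear in a suitable order (the quantum analogue of van der Kallen's excellent filtrations), and applying $\mathrm{ind}_{\ULR(\b)}^{\ULR(\g)}$ to such a filtered module produces a good $\ULR(\g)$-filtration once one knows $R^{>0}\mathrm{ind}_{\ULR(\b)}^{\ULR(\g)}\C_\nu = 0$ for $\nu$ dominant --- quantum Kempf vanishing (Andersen--Polo--Wen, \cite{ParadowskiFiltrations}). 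The Weyl-filtration statement then follows by dualizing via Part (1). Everything here is stated and proved over $\ULR$ and passes to the specialization $\UqL$.

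\emph{Part (3)} is the statement needed in the body, and I would prove it by a base-change argument resting on cohomological orthogonality. Note first that $(-)^{\ULR(\g)} = \mathrm{Hom}_{\ULR(\g)}(\mathbf{1},-)$ and $\mathbf{1} = \Delta(0) = \nabla(0)$, and recall the standard orthogonality $\mathrm{Ext}^i_{\ULR(\g)}(\Delta(\mu),\nabla(\lambda)) = \delta_{i,0}\,\delta_{\mu,\lambda}\,\Rt$, valid over the integral form (\cite[II.4]{JantzenRepresentations}, Andersen--Polo--Wen); in particular $\mathrm{Ext}^{>0}_{\ULR(\g)}(\mathbf{1},\nabla(\lambda)) = 0$, while $\mathrm{Hom}_{\ULR(\g)}(\mathbf{1},\nabla(\lambda))$ is $\Rt$ if $\lambda = 0$ and $0$ otherwise. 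Because the subquotients $\nabla_\Rt(\lambda_i)$ of a good filtration of $V_\Rt$ are free over $\Rt$, the filtration splits $\Rt$-linearly, so each term is $\Rt$-free and the filtration base-changes to a good $\UqL$-filtration of $V_q = V_\Rt\otimes_\Rt\C$ with the same subquotients $\nabla_q(\lambda_i)$. Feeding the filtration into the long exact $\mathrm{Ext}_{\ULR(\g)}(\mathbf{1},-)$-sequences and inducting, the vanishing above propagates to give $\mathrm{Ext}^{>0}_{\ULR(\g)}(\mathbf{1},V_\Rt) = 0$ and exhibits $(V_\Rt)^{\ULR(\g)}$ as a free $\Rt$-module of rank equal to the multiplicity of the weight $0$ among the $\lambda_i$; the identical argument at the root of unity (the orthogonality persists there) gives the analogous statements for $V_q$ over $\UqL(\g)$. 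To compare, I would compute rational cohomology with the cobar complex $C^\bullet = V_\Rt\otimes\OR(G)^{\otimes\bullet}$, a complex of free $\Rt$-modules with $C^\bullet\otimes_\Rt\C$ the cobar complex of $V_q$; since $\Rt$ is a PID and $H^{>0}(C^\bullet) = 0$, all the coboundary modules are $\Rt$-free, the short exact sequences of the complex split after $-\otimes_\Rt\C$, and hence $H^0$ commutes with base change: $(V_\Rt)^{\ULR(\g)}\otimes_\Rt\C\xrightarrow{\sim}(V_q)^{\UqL(\g)}$. As the map in the statement factors through this isomorphism, it is surjective.

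The main obstacle is not the structure of the argument --- which is formal once the inputs are in hand --- but packaging those inputs correctly over the integral form $\Rt = \C[t^{\pm1}]$: one must quote precisely the $\Rt$-form of the Weyl/dual-Weyl orthogonality and of the quantum Kempf and excellent-filtration theorems, and check that the resolutions computing rational cohomology can be taken $\Rt$-free so that specialization at $t = q$ is transparent. All of these are available in Andersen--Polo--Wen, \cite{ParadowskiFiltrations}, and \cite[Appendix A]{VaragnoloDoubleaffineHecke2010}.
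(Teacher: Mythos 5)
The paper does not actually prove this proposition---it is quoted from \cite{ParadowskiFiltrations} and \cite[Section A.9]{VaragnoloDoubleaffineHecke2010}---and your sketch reconstructs precisely the arguments found in those references: contravariant duality exchanging induction and coinduction for (1), the tensor identity plus quantum Kempf vanishing and excellent filtrations for (2), and integral $\Ext$-orthogonality of Weyl against dual Weyl modules combined with universal-coefficient base change over the PID $\Rt$ for (3), from which surjectivity of $(V_\Rt)^{\ULR}\to (V_q)^{\UqL}$ follows. The only caveats are ones of bookkeeping rather than substance: the key input to (2) (an excellent $\ULR(\b)$-filtration of $\nabla(\mu)|_{\ULR(\b)}$) is itself essentially as deep as the statement and must genuinely be quoted from Andersen--Polo--Wen/Paradowski, and in (3) the good filtrations actually used in the paper (e.g.\ on $\OR(G)^{\otimes 2g}$) are infinite, so one should remark that the filtration is exhaustive and the $\Hom$/$\Ext$ arguments pass to the colimit---both points are handled in the cited sources.
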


\subsection{Reflection equation algebras} \label{sec:REA}
In the geometric representation theory of quantum groups, an important role is played by a canonical ad-equivariant quantization of the coordinate algebra of $G$, the so-called ``reflection equation'' algebra.  In particular, the reflection equation algebra is the domain for quantum moment maps, so its behavior at a root of unity will be important for the notion of Frobenius quantum moment maps and their Hamiltonian reduction.

\subsubsection{Generic parameters} Before discussing the somewhat more complicated behavior of the reflection equation algebra at a root of unity, we recall the situation for generic parameters. Specifically, we discuss the relations between the three equivalent constructions of the rational form $\O_t(G)$ of the reflection equation algebra: (1) the functorial/co-end construction of Lyubashenko-Majid \cite{LyubashenkoBraidedGroupsQuantum1994}, (2) the matrix coefficient/braided dual formulation of Donin-Kulish-Mudrov \cite{DoninUniversalSolutionReflection2003}, and finally (3) the identification with a subalgebra of the quantized enveloping algebra of ad-locally finite elements, via the so-called Rosso isomorphism, due to Joseph-Letzter and Rosso \cite{JosephLocalfinitenessadjoint1992}, \cite{RossoAnalogues}.  We follow the exposition of \cite{Ben-ZviIntegratingquantumgroups2018}. 

The functorial/co-end construction works for any presentable rigid braided tensor category $\cC$. Let $T:\cC\bt\cC\to\cC$ be the tensor multiplication functor, which we will always assume to be cocontinuous. The braiding on $\cC$ equips $T$ moreover with the structure of a tensor functor, while the cocontinuity of $T$ endows it with a right adjoint $T^R$, which is lax monoidal. It follows that the associated co-end, $TT^R(\mathbf{1}_\cC)$, is a bialgebra object in $\cC$. Explicitly, it may be realized as a quotient of the sum of $V^*\otimes V$, where $V$ ranges over all objects of $\cC$. In addition, for any object $W\in\cC$ there is a ``field-goal'' isomorphism \cite[Corollary 4.6]{Ben-ZviQuantumcharactervarieties2018}
\[\tau_W\colon TT^R(\mathbf{1}_\cC)\otimes W\xrightarrow{\sim} W\otimes TT^R(\mathbf{1}_\cC)\]
given diagrammatically by
\begin{equation}
\tau_W \quad = \quad
\tik{
	\node[label=above:$W$] at (0,0) {};
	\node[label=above:$V^*$] at (1.5,0) {};
	\node[label=above:$V$] at (2.5,0) {};

	\node[label=below:$W$] at (2.5,-1) {};
	\node[label=below:$V^*$] at (0,-1) {};
	\node[label=below:$V$] at (1,-1) {};

	\fieldgoal{0}{0};
}.
\label{eq:fieldgoal}
\end{equation}

When $\cC$ is the category of locally finite modules for a quasi-triangular Hopf algebra $H$, then classic results of Majid-Lyubashenko \cite{LyubashenkoBraidedGroupsQuantum1994} identify the co-end algebra with so called braided dual $\widetilde{H}$.  This is the subalgebra of $H^*$ spanned by matrix coefficients of finite-dimensional $H$-modules, with a natural multiplication structure which invokes the braiding on $\cC$, hence the universal $R$-matrix for $H$.

In the case that $H=\UK(\g)$ is the rational form of the quantum group (see Section \ref{subsec:methods} above), we have that $\cC$ is the category $\RepK(G)$ of locally finite modules for the quantum group $\UK$, the co-end/braided dual algebra is denoted $\OK(G)$.  Regarded as an algebra in vector spaces equipped with a compatible $\UK$-action, the algebra $\OK(G)$ is spanned by matrix coefficients for the irreducible representations, and admits a Peter-Weyl decomposition,
$$\OK(G) \cong \bigoplus_{\lambda} V_\lambda^* \ot V_\lambda,$$
where the sum is over the dominant integral weights.

If we take $H=\ULR(\g)$, Lusztig's integral form, we have a natural coaction $\Delta\colon \OR(G)\rightarrow \ULR(\g)\otimes \OR(G)$ realizing $\OR(G)$ as a $\ULR(\g)$-comodule algebra (see e.g. \cite{KolbStokmanREA}). Applying the natural evaluation homomorphism $\OR(G)\rightarrow \Rt$ to the second factor, we obtain the Rosso homomorphism $\OR(G)\rightarrow \ULR(\g)$ \cite{JosephLocalfinitenessadjoint1992}. For $h\in\OR(G)$ we denote the coaction by $\Delta(h) = h_{(1)}\otimes h_{(2)}$. Then the ``field-goal'' isomorphism for $v\in V$ is given by
\[\tau_V(h\otimes v) = (h_{(1)}\triangleright v)\otimes h_{(2)}.\] 

Finally, let us recall that at generic parameters, the category $\RepK(G)$ is \emph{factorizable}.  This implies, among other things, that the map $\OK(G)\rightarrow \UK(\g)$ is injective, embedding $\OK(G)$ as a sub-algebra of ad-locally finite elements of the quantum group.  We note that this is a strong self-duality property, which holds only in the case that $t$ is generic; the failure of this property at roots of unity, and the precise way in which it fails, will be important point for the sequel.

\subsubsection{Root of unity parameters}

Let us now consider the divided powers quantum group $\UqL(\g)$, specialized to an odd root of unity $q^\ell=1$, with $\ell>1$. We denote by $\Oq(G)$ the co-end algebra of $\Repq(G)$, defined functorially as in the generic case.  Two very special phenomena occur at this specialization.

The first is that $\Oq(G)$ develops a large $\ell$-center, in addition to its Harish-Chandra center.  This is best understood as a consequence of the quantum Frobenius homomorphism, as follows:  the natural tensor structure on $\Fr^*$ induces an isomorphism $\Fr^*\circ T \cong T\circ (\Fr^*\bt\Fr^*)$ (where we abuse notation and write $T$ for the tensor multiplication in both tensor categories), and hence determines by standard adjunctions a bialgebra monomorphism in $\Repq(G),$
$$i:\Fr^*(\O(G)) \into \Oq(G).$$
Because the image of $\Fr^*$ lies in the M\"uger center of $\Repq(G)$, it follows  that $\Fr^*(\O(G))$ is a central subalgebra of $\Oq(G)$.  It is therefore natural to view $\Oq(G)$ as a sheaf of algebras over $G$.

The second special phenomenon at a root of unity is that factorizability breaks down.  For this reason, the Rosso map $\Oq(G)\to\UqL(\g)$ is not injective. Rather, it factors as a surjection onto the small quantum group $\uq(\g)\subset \UqL(\g)$, and its kernel is the augmentation ideal of $\cO(G)$.

In a similar way, the $\UqL(\g)$-coaction on $\Oq(G)$ is induced from a $\uq(\g)$-coaction on $\Oq(G)$. Moreover, the $\uq(\g)$-coaction on $\cO(G)\subset \Oq(G)$ is trivial. So, for a point $\xi\in G$ we have a $\uq(\g)$-comodule algebra
\[o_q(G; \xi) = \Oq(G)\otimes_{\cO(G)} \C,\]
where the $\cO(G)$-action on $\C$ is via the character $\xi^*\colon \cO(G)\rightarrow \C$.

\begin{rmk}
We note that for any representation $V$, of $\UqL(\g)$, the representation $V^*\ot V$ has weights entirely in the root lattice.  In particular, since $\Oq(G)$ is a co-end of such representations, its weights are also contained in the root lattice.
\label{rmk:REAcenteraction}
\end{rmk}

We record the following very important modification of the Peter-Weyl theorem.

\begin{theorem}[\cite{JantzenRepresentations}, \cite{ParadowskiFiltrations}]
The algebra $\Oq(G)$ admits a good filtration with successive quotients,
$$\nabla(-\omega_0\lambda)\ot\nabla(\lambda).$$
\label{thm:REAgoodfiltration}
\end{theorem}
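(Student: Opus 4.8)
The plan is to reduce the statement to known results about good filtrations on the integral form $\OR(G)$ and then specialize to the root of unity. First I would recall the Peter--Weyl-type decomposition of the rational reflection equation algebra: over the base ring $\Ct = \C(t)$ we have $\OK(G) \cong \bigoplus_\lambda V_\lambda^* \ot V_\lambda$, the sum running over dominant integral weights. Passing to Lusztig's integral form, the algebra $\OR(G)$ carries the analogous $\Rt$-lattice structure, and the point is that the matrix coefficient block associated to $\lambda$ becomes, integrally, not a tensor product of irreducibles but a tensor product of a dual Weyl module and a Weyl module. Concretely, one wants to establish that $\OR(G)$ admits an exhaustive filtration (ordered by the dominance, or any linear refinement of the dominance, order on weights) whose associated graded pieces are $\Delta(-\omega_0\lambda) \ot \nabla(\lambda)$, or equivalently, after applying Proposition~\ref{prop:goodfiltrations}(1) which identifies $\nabla(\mu)^* \cong \Delta(-\omega_0\mu)$, pieces of the form $\nabla(-\omega_0\lambda)^* \ot \nabla(\lambda)$. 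This is precisely the content of the cited results of Paradowski and Jantzen: the coordinate algebra (and hence, via the Rosso/braided-dual picture, the reflection equation algebra, since the two differ only by a twist of the module structure and not the underlying coalgebra/filtration) has a good filtration by such tensor squares.

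Next I would invoke Proposition~\ref{prop:goodfiltrations}(2): the tensor product of two finite-dimensional modules with good filtration again has a good filtration. Applying this to $\nabla(-\omega_0\lambda) \ot \nabla(\lambda)$, each graded piece $\Delta(-\omega_0\lambda)\ot\nabla(\lambda)$ — or rather its reexpression using part (1) — itself admits a good filtration, and splicing these together refines the weight filtration on $\OR(G)$ to an honest good filtration in the sense of Definition above (successive quotients isomorphic to individual $\nabla(\mu)$'s). For the precise statement in the theorem, one keeps the coarser filtration whose graded pieces are the tensor squares $\nabla(-\omega_0\lambda)\ot\nabla(\lambda)$; this is what is recorded.

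Finally, I would specialize at $t \mapsto q$. Since the filtration on $\OR(G)$ is by $\Rt$-submodules and each graded piece $\nabla(-\omega_0\lambda)\ot\nabla(\lambda)$ is a free (or at least flat) $\Rt$-module — Weyl and dual Weyl modules have $\Rt$-bases compatible with specialization — the base change $- \ot_\Rt \C$ along $t\mapsto q$ is exact on the filtration, so $\Oq(G) = \OR(G)\ot_\Rt\C$ inherits a filtration with associated graded $\bigoplus_\lambda \nabla(-\omega_0\lambda)\ot\nabla(\lambda)$, where now $\nabla(\lambda)$ denotes the specialized dual Weyl module for $\UqL(\g)$. One should double-check that the functorial co-end definition of $\Oq(G)$ used in this paper agrees with the specialization of $\OR(G)$; this follows because the co-end is computed as a colimit which commutes with the base change, and the weights lie in the root lattice (Remark~\ref{rmk:REAcenteraction}), so no subtlety with the choice of lattice $X$ versus $X_{sc}$ enters.

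The main obstacle I anticipate is not any single deep step but rather the bookkeeping of identifying the three incarnations of the reflection equation algebra (co-end, braided dual, Rosso image) at the integral level and checking that the good filtration is transported correctly between them — in particular that twisting the $\UR(\g)$-module structure by the antipode, which distinguishes the reflection equation algebra from the naive coordinate algebra, does not disturb the existence or shape of the filtration. Since the filtration is defined on the underlying coalgebra (the matrix-coefficient grading by $\lambda$) and the module twist only affects how $\ULR(\g)$ acts, not the $\Rt$-module structure or the subquotients as vector spaces, this should go through, but it is the step requiring the most care. Everything else is a direct appeal to \cite{ParadowskiFiltrations} and \cite{JantzenRepresentations} together with the formal properties of good filtrations recorded in Proposition~\ref{prop:goodfiltrations}.
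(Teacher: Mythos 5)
Your proposal ultimately rests on the same citation as the paper's own proof, which is simply the observation that the characteristic-$p$ argument of Jantzen (II.4.20) for the coordinate ring carries over verbatim to $\UqL(\g)$ at a root of unity, following Paradowski; the detour through the integral form $\OR(G)$ and specialization at $t\mapsto q$ is not needed, though it is not harmful in principle. The genuine problem is your rendering of what the cited result says. The Donkin--Koppinen/Paradowski filtration of the (quantized) coordinate algebra has successive quotients $\nabla(-\omega_0\lambda)\ot\nabla(\lambda)$ --- a tensor product of two \emph{dual} Weyl modules, exactly as in the statement of Theorem~\ref{thm:REAgoodfiltration}. You instead assert that the integral block is $\Delta(-\omega_0\lambda)\ot\nabla(\lambda)$, and your ``equivalent'' rewriting is also off: Proposition~\ref{prop:goodfiltrations}(1) gives $\nabla(\lambda)^*\cong\Delta(-\omega_0\lambda)$, hence $\nabla(-\omega_0\lambda)^*\cong\Delta(\lambda)$, not $\Delta(-\omega_0\lambda)$, so the two descriptions you offer are not even consistent with each other.

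This is not mere bookkeeping, because your next step --- applying Proposition~\ref{prop:goodfiltrations}(2) to conclude that each graded piece itself admits a good filtration --- fails for pieces of the form $\Delta(\mu)\ot\nabla(\lambda)$: Weyl modules do not admit good filtrations in general at a root of unity, and indeed $\Ext^i(\C,\Delta(\mu)\ot\nabla(\lambda))\cong\Ext^i(\nabla(-\omega_0\mu),\nabla(\lambda))$ can be nonzero for $i>0$, whereas any module with a good filtration has vanishing higher $\Ext$ from the trivial module. Consequently a filtration of $\OR(G)$ with $\Delta\ot\nabla$ subquotients, even if it existed and specialized flatly, could not be refined to a good filtration of $\Oq(G)$ and would not prove the theorem. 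The repair is simply to quote the correct statement: both factors are costandard, $\nabla(-\omega_0\lambda)\ot\nabla(\lambda)$, already at the root of unity, after which Proposition~\ref{prop:goodfiltrations}(2) applies as you intend. Your remaining points --- flatness of the subquotients under specialization, and the identification of the co-end/braided dual with the coordinate coalgebra (the twist of the module structure does not disturb the filtration) --- are sensible and address a step the paper leaves implicit, but they do not compensate for the misstated key input.
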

\begin{proof}
In the parallel setting of algebraic groups defined in characteristic $p$, this is proved in \cite[Proposition 4.20]{JantzenRepresentations}.  Following the discussion in \cite{ParadowskiFiltrations}, the proof is identical for the quantum group at a root of unity. 
\end{proof}

\subsection{Quantum moment maps}
\label{sect:quantummomentmaps}

Let $H$ be a Hopf algebra over a commutative ring $k$ and $H'$ a $k$-algebra equipped with an $H$-comodule and an $H$-module structure which preserve the algebra structure. In the paper we will be interested in the following three cases:
\begin{enumerate}
\item $k = \Rt$, $H=\ULR(\g)$ and $H'=\OR(G)$.

\item $k=\C$, $H=\UqL(\g)$ and $H'=\Oq(G)$.

\item $k=\C$, $H=\uq(\g)$ and $H'=o_q(G; \xi)$, where $\xi\in G$ is a point in the center.
\end{enumerate}

\begin{definition}
Let $A$ be an $H$-module algebra. A map $\mu\colon H'\rightarrow A$ of $H$-module algebras is a quantum moment map if for every $h\in H'$ and $a\in A$ we have
\begin{equation}
\mu(h)a = (h_{(1)}\triangleright a)\mu(h_{(2)}),
\label{eq:quantummomentmap}
\end{equation}
where $h_{(1)}\otimes h_{(2)}\in H\otimes H'$ is the $H$-coaction on $h\in H'$.
\label{def:quantummomentmap}
\end{definition}

\begin{rmk}
In the case $H=\ULR(\g)$, $H'=\OR(G)$ and $A\in\RepR(G)$ we may rephrase the moment map condition as the commutativity of the diagram
\[
\xymatrix{
\OR(G)\otimes A_\Rt \ar^-{\mu\otimes \id}[r] \ar^{\tau_A}[dd] & A_\Rt\otimes A_\Rt \ar^{m}[dr] & \\
&& A_\Rt \\
A_\Rt \otimes \OR(G) \ar^-{\id\otimes \mu}[r] & A_\Rt \otimes A_\Rt \ar_{m}[ur] &
}
\]
as explained in \cite[Theorem 3.10]{SafronovQuantumMoment}.
\label{rmk:momentmapfieldgoal}
\end{rmk}

\begin{rmk} The first notion of quantum moment maps was due to Lu \cite{LuQuantum}, and is the case $H'=H$ of Definition \ref{def:quantummomentmap}.  The notion was generalized in \cite[Section 1.5]{VaragnoloDoubleaffineHecke2010}, to allow $H$ to be replaced by its coideal subalgebra $H'$ -- the main example of interest there was the coideal subalgebra of ad-locally finite elements.

We take the viewpoint that the natural codomain of a quantum moment map is the braided dual algebra $H'=\OR(G)$, or its specialization $\Oq(G)$.  At generic parameters, $\OK(G)$ is indeed identified via the Rosso isomorphism with the subalgebra of locally finite elements, and so Varagnolo--Vasserot's notion suffices.  At roots of unity, the Rosso homomorphism still exists but is no longer an embedding, hence we do not wish to require $H'$ to be a coideal.
\end{rmk}

We will also need a construction of fusion of quantum moment maps. Suppose $A, B$ are two algebras in $\RepR(G)$. Then $A\otimes B$ is naturally an algebra in $\Repq(G)$ with the multiplication
\begin{equation}
(A\otimes B)\otimes (A\otimes B)\xrightarrow{\id\otimes \sigma^{-1}_{A, B}\otimes \id}(A\otimes A)\otimes (B\otimes B)\xrightarrow{m_A\otimes m_B} A\otimes B,
\label{eq:braidedproduct}
\end{equation}
where $\sigma_{A, B}$ is the braiding in $\RepR(G)$. The tensor product of algebras equipped with quantum moment maps also carries a natural quantum moment map, as we show in the following:

\begin{prop}
Suppose $A$ and $B$ are two algebras in $\RepR(G)$ equipped with quantum moment maps $\mu_A\colon \OR(G)\rightarrow A$ and $\mu_B\colon \OR(G)\rightarrow B$. Then
\[\OR(G)\xrightarrow{\Delta} \OR(G)\otimes \OR(G)\xrightarrow{\mu_A\otimes \mu_B} A\otimes B\]
is a quantum moment map for $A\otimes B$.
\label{prop:momentmapfusion}
\end{prop}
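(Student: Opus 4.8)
The plan is to check directly the two requirements of Definition~\ref{def:quantummomentmap} for the composite $\mu := (\mu_A\otimes\mu_B)\circ\Delta$, with $H=\ULR(\g)$ and $H'=\OR(G)$: first that $\mu$ is a morphism of $H$-module algebras into $A\otimes B$ equipped with its braided product \eqref{eq:braidedproduct}, and second that $\mu$ satisfies the moment map identity \eqref{eq:quantummomentmap}. The first point is formal. Since $\OR(G)$ is a bialgebra object in the braided category $\RepR(G)$, its coproduct $\Delta\colon\OR(G)\to\OR(G)\otimes\OR(G)$ is a morphism of $H$-module algebras, where the target carries exactly the braided product \eqref{eq:braidedproduct} (for $A=B=\OR(G)$). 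Moreover, in any braided category the tensor product of two algebra morphisms intertwines the braided multiplications, by naturality of the braiding; hence $\mu_A\otimes\mu_B$ is an algebra morphism from the braided product $\OR(G)\otimes\OR(G)$ to $A\otimes B$, and it is $H$-linear as a tensor product of $H$-linear maps. Thus $\mu$ is a morphism of $H$-module algebras.

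For the moment map identity I would pass to the reformulation of Remark~\ref{rmk:momentmapfieldgoal}: \eqref{eq:quantummomentmap} for $(A\otimes B,\mu)$ is equivalent to the equality of the two maps $\OR(G)\otimes(A\otimes B)\to A\otimes B$ given by $m_{A\otimes B}\circ(\mu\otimes\id)$ and by $m_{A\otimes B}\circ(\id\otimes\mu)\circ\tau_{A\otimes B}$, where $\tau$ is the field-goal map of the coend $\OR(G)$. This is then a string-diagram computation with the following standard inputs (see \cite[Section~4]{Ben-ZviQuantumcharactervarieties2018}): (a) $\tau_{W_1\otimes W_2}=(\id_{W_1}\otimes\tau_{W_2})\circ(\tau_{W_1}\otimes\id_{W_2})$, so that $\tau_{A\otimes B}$ slides $h$ past $A$ and then past $B$; (b) the coproduct $\Delta$ of $\OR(G)$ is compatible with $\tau$, in the sense that sliding $\Delta(h)\in\OR(G)\otimes\OR(G)$ past an object equals sliding $h$ past it twice; (c) the field-goal squares of $\mu_A$ and $\mu_B$ individually; (d) naturality of the braiding $\sigma$ of $\RepR(G)$; and (e) the explicit formula \eqref{eq:braidedproduct} for $m_{A\otimes B}$, which inserts a single $\sigma^{-1}_{A,B}$. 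Unwinding $m_{A\otimes B}\circ(\id\otimes\mu)\circ\tau_{A\otimes B}$ by (a) and (b), applying the moment map identities (c) of $\mu_A$ and $\mu_B$ to the two resulting strands, and reassembling using (d) and (e) recovers $m_{A\otimes B}\circ(\mu\otimes\id)$.

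The main obstacle is purely the bookkeeping of crossings: both $\tau_{A\otimes B}$ (through $\tau_A$ and $\tau_B$) and the braided multiplication \eqref{eq:braidedproduct} introduce braidings of the category $\RepR(G)$, and one must verify that, after invoking the moment map identities for $\mu_A$ and $\mu_B$, these assemble into precisely the crossings appearing in $m_{A\otimes B}\circ(\mu\otimes\id)$. This is transparent diagrammatically but tedious in Sweedler notation, where one additionally has to track the coproduct of $H$ (which governs the $H$-action on the tensor product $A\otimes B$), the $H$-comodule structure on $\OR(G)$, and the coproduct of $\OR(G)$, together with their mutual compatibilities. We note that the statement is the $q$-analogue of the classical fusion of moment maps (compare the multiplicativity $\mu_{fus}=\mu_1\mu_2$ in Proposition~\ref{prop:Poissonfusion}), and amounts to the fusion operation on quantum moment maps; a reader may alternatively extract it along the lines of \cite{SafronovQuantumMoment}.
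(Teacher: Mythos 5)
Your proposal is correct and follows essentially the same route as the paper: the paper's proof is precisely the string-diagram verification you outline, using the field-goal reformulation of Remark \ref{rmk:momentmapfieldgoal}, the moment map identities for $\mu_A$ and $\mu_B$, naturality of the braiding, and the braided product \eqref{eq:braidedproduct}. The only difference is that the paper carries out the crossing bookkeeping explicitly in diagrams, which is exactly the mechanical step you defer.
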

\begin{proof}
We can represent equation \eqref{eq:quantummomentmap} following Remark \ref{rmk:momentmapfieldgoal} diagrammatically as follows.  In place of an arbitrary $h\in\OR(G)$, we write $V^*\ot V$, and in place of the element $a$ in $A$ we consider the entire algebra $A$.  With this notation, the moment map equation \eqref{eq:quantummomentmap} reads:
\[
	\mu(h) (a) \quad = \quad\tik{
		\node[label=above:$A$] at (.5,0.5) {};
		\straight{0.5}{-0.5}
		\ap{0}{0}
		\lmove{-1}{1}
		\coupon{-1.5}{2}{1}{$\mu$}\straight{-1.5}{3}\straight{-0.5}{3}\straight{1}{1}\straight{1}{2}\straight{1}{3}
		\node[label=below:$V^*$] at (-1.5,-4) {};
		\node[label=below:$V$] at (-.5,-4) {};
		\node[label=below:$A$] at (1,-4) {};
	}
	\quad=\quad
	\tik{
		\node[label=above:$A$] at (.5,0.5) {};
		\straight{0.5}{-0.5}
		\ap{0}{0}
		\rmove{1}{1}
		\coupon{1.5}{2}{1}{$\mu$}
		\straight{0}{1}\straight{0}{2}\fieldgoal{0}{3}
		\straight{0}{4}\straight{1}{4}\straight{2.5}{4};
		\node[label=below:$V^*$] at (0,-5) {};
		\node[label=below:$V$] at (1,-5) {};
		\node[label=below:$A$] at (2.5,-5) {};
	} \quad=\quad (h_{(1)}\triangleright a) \mu(h_{(2)}).
\]

We now compute:
\begin{align*}
	(h_{(1)}\triangleright (a\ot b)) \mu(h_{(2)})\quad = \quad \tik{
		\node[label=above:$A$] at (1,2.5) {};
		\node[label=above:$B$] at (3,2.5) {};
		\straight{1}{-2.5}\straight{3}{-2.5}
		\ap{.5}{-2}
		\ap{2.5}{-2}
		\negative{1.5}{-1}\lmovesm{0}{-1}  \rmove{3.5}{-1}
		\straight{1.5}{0}\straight{1.5}{1}\fieldgoal{1.5}{2}
		\straight{0}{0}\straight{0}{1}\straight{0}{2}\fieldgoal{0}{3}
		\rmove{2}{1}\lmove{4}{1}\up{3}{1}
		\coupon{2}{0}{1}{$\mu$}
		\coupon{4}{0}{1}{$\mu$}
		\straight{0}{4}\straight{1}{4}\straight{2.5}{4}\straight{4}{3}\straight{4}{4};
		\node[label=below:$V^*$] at (0,-5) {};
		\node[label=below:$V$] at (1,-5) {};
		\node[label=below:$A$] at (2.5,-5) {};
		\node[label=below:$B$] at (4,-5) {};
	}
\quad = \quad
\tik{
		\node[label=above:$A$] at (2,1.5) {};
		\node[label=above:$B$] at (4,1.5) {};
		\straight{2}{-1.5}\straight{4}{-1.5}
		\ap{1.5}{-1}
		\ap{3.5}{-1}
		\straight{3.5}{0}\straight{3.5}{1}\lmove{3}{2}\rmovesm{3.5}{2}
		\straight{1.5}{0}\straight{1.5}{1}\straight{1.5}{2}\straight{2}{2}\fieldgoalsm{.5}{3}
		\straight{2}{1}\lmove{4}{1}\up{3}{1}
		\coupon{2}{0}{1}{$\mu$}
		\coupon{4}{0}{1}{$\mu$}
		\straight{0.5}{4}\straight{1.5}{4}\straight{3}{4}\straight{4}{3}\straight{4}{4};
		\node[label=below:$V^*$] at (0.5,-5) {};
		\node[label=below:$V$] at (1.5,-5) {};
		\node[label=below:$A$] at (3,-5) {};
		\node[label=below:$B$] at (4,-5) {};
	}\\
\qquad\qquad=\quad
\tik{
		\node[label=above:$A$] at (2,1.5) {};
		\node[label=above:$B$] at (4,1.5) {};
		\straight{2}{-1.5}\straight{4}{-1.5}
		\ap{1.5}{-1}
		\ap{3.5}{-1}
		\straight{4}{1}\up{2}{1}
		\straight{2.5}{0}\straight{2.5}{1}\lmovelg{2}{2}\rmove{2.5}{2}
		\straight{1}{1}\straight{1}{2}
		\coupon{1}{0}{1}{$\mu$}
		\coupon{3}{0}{1}{$\mu$}
		\straight{4.5}{0}\straight{4.5}{1}\straight{4.5}{2}
		\node[label=below:$V^*$] at (1,-3) {};
		\node[label=below:$V$] at (2,-3) {};
		\node[label=below:$A$] at (3.5,-3) {};
		\node[label=below:$B$] at (4.5,-3) {};
	}
\quad=\quad
\tik{
		\node[label=above:$A$] at (3,2.5) {};
		\node[label=above:$B$] at (5,2.5) {};
		\straight{3}{-2.5}\straight{5}{-2.5}
		\ap{2.5}{-2}
		\ap{4.5}{-2}
		\negative{3.5}{-1}\lmove{1.5}{-1}
		\straight{4.5}{0}\straight{4.5}{1}\straight{4.5}{2}\lmovelg{2}{2}
		\straight{1}{1}\straight{1}{2}
		\straight{4}{1}\up{2}{1}
		\coupon{1}{0}{1}{$\mu$}
		\coupon{3}{0}{1}{$\mu$}
		\straight{5.5}{-1}\straight{5.5}{0}\straight{5.5}{1}\straight{5.5}{2}
		\node[label=below:$V^*$] at (1,-3) {};
		\node[label=below:$V$] at (2,-3) {};
		\node[label=below:$A$] at (4.5,-3) {};
		\node[label=below:$B$] at (5.5,-3) {};
	}\quad = \quad \mu(h) (a\ot b).
\end{align*}
\end{proof}

As for classical moment maps, quantum moment maps are used to construct a Hamiltonian reduction of $A$. Fix a map of $H$-module algebras $\xi\colon H'\rightarrow k$ and denote $I'=\ker(\xi)$, the corresponding $H$-invariant two-sided ideal.

\begin{definition}
Let $A$ be an $H$-module algebra equipped with a quantum moment map $\mu\colon H'\rightarrow A$. The quantum Hamiltonian reduction is
\[A \dS  H = (A / A\mu(I'))^H.\]
\end{definition}

Note that $I_q=A\mu(I')\subset A$ is only a left ideal, so $A / A\mu(I')$ is merely an $A$-module. Nevertheless, $A\dS H$ carries a natural algebra structure that we explain presently. For $a \in A$, we write $[a]$ for the image of $a$ in $A/I_q$.

\begin{prop}
 Let $a, b \in A$, and suppose $[a]$ and $[b]$ lie in $A\dS H$.  Then  $[ab]$ also lies in $A\dS H$. There is a well-defined algebra structure on $A\dS H$ given by $$A\dS H \ot A\dS H \to A\dS H, \qquad [a]\ot [b] \mapsto [ab].$$
\label{prop:qhamreduction}
\end{prop}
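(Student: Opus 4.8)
The plan is to show that the left ideal $I_q = A\mu(I')$ is in fact a two-sided ideal \emph{after restricting to the $H$-invariant subalgebra} $A^H$, and then to use the well-known general fact that for a two-sided ideal $J$ of an algebra $R$, the quotient $R/J$ is an algebra. Concretely, since $A\dS H = (A/I_q)^H$, the statement really amounts to: for $a\in A$ with $[a]\in A\dS H$, one has $[a]\, I_q \subseteq I_q$; equivalently, $a\,\mu(I')\,A \subseteq A\,\mu(I')$ modulo the appropriate invariance, so that right multiplication by $[b]$ is well-defined on the invariant quotient. The key input is the quantum moment map equation \eqref{eq:quantummomentmap}, which lets one commute $\mu$ past elements of $A$ at the cost of the $H$-coaction, combined with the fact that $I'=\ker(\xi)$ is $H$-invariant so its coaction stays inside $I'$.

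The key steps, in order, are as follows. First I would observe that it suffices to prove $\mu(I')\, A \subseteq A\,\mu(I') + (\text{something that dies on $H$-invariants})$; more precisely, I will show that for $h\in I'$ and $a\in A$, the element $\mu(h)a$ can be rewritten, using \eqref{eq:quantummomentmap}, as $(h_{(1)}\triangleright a)\mu(h_{(2)})$ with $h_{(1)}\otimes h_{(2)} \in H\otimes I'$ — here the crucial point is that the $H$-coaction on $H'$ preserves the invariant ideal $I'$ (since $\xi\colon H'\to k$ is a map of $H$-comodules, $\ker\xi$ is a subcomodule), so $h_{(2)}$ still lies in $I'$. Hence $\mu(h)a \in A\,\mu(I') = I_q$, i.e. $I_q$ is a \emph{two-sided} ideal of $A$ — wait, that is too strong in general; what is actually true and all we need is that $I_q$ is stable under left multiplication by the subalgebra $A^H$ of $H$-invariants, because only then is $h_{(1)}\triangleright a = \epsilon(h_{(1)}) a$ trivializable. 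So the second step: restrict attention to $a\in A^H$ (the case relevant to $[a]\in A\dS H$), note $h_{(1)}\triangleright a$ need not simplify, but work at the level of the quotient: $[a][b] := [ab]$ is well-defined provided that changing the representative $b$ by an element of $I_q$ changes $ab$ by an element of $I_q$, i.e. $a\, I_q \subseteq I_q$, which is exactly left-multiplication stability and follows since $I_q$ is a left ideal. Third step: changing the representative $a$ by $a' \in I_q$ must change $ab$ by an element of $I_q$, i.e. $I_q\, b \subseteq I_q$ for all $b\in A$ with $[b]\in A\dS H$; here is where the moment map identity genuinely enters — write $a' = \sum x_j \mu(h_j)$ with $h_j \in I'$, then $a' b = \sum x_j \mu(h_j) b = \sum x_j (h_{j,(1)}\triangleright b)\mu(h_{j,(2)})$, and since $h_{j,(2)}\in I'$ this lies in $A\mu(I') = I_q$. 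Fourth step: associativity and unitality of the resulting multiplication are inherited from $A$ (the unit is $[1]$, which is $H$-invariant and well-defined in the quotient), and the product lands in $A\dS H$ because $A^H$ is a subalgebra of $A$ so $[ab]$ is $H$-invariant.

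The main obstacle — and the one genuine computation — is the third step: verifying that $I_q$ is stable under right multiplication by representatives of $A\dS H$, which is precisely where the quantum moment map condition \eqref{eq:quantummomentmap} is used in an essential way together with the $H$-subcomodule property of $I' = \ker\xi$. Everything else is formal bookkeeping: that $\ker\xi$ is a subcomodule is immediate from $\xi$ being a morphism of $H$-comodules, and the passage from "$I_q$ is a left ideal stable under right multiplication by the relevant elements" to "the quotient is an algebra" is the standard argument. I would also remark that this is the exact $q$-analogue of the classical statement (Proposition \ref{prop:poissonreduction}) that $I^G$ is a Poisson ideal in $\cO(X)^G$, and that, unlike the classical case where reductivity of $G$ is used to identify $\cO(X)^G/I^G$ with $(\cO(X)/I)^G$, here the operation $(-)^H$ of passing to $H$-invariants is applied directly to the $A$-module $A/I_q$, with no exactness needed for the algebra structure itself.
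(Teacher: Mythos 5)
Your central step is based on a false claim. You assert that $I'=\ker\xi$ is a subcomodule of $H'$ ``since $\xi$ is a map of $H$-comodules,'' and you use this to conclude that for $h\in I'$ one has $h_{(1)}\otimes h_{(2)}\in H\otimes I'$, hence $\mu(h)b=(h_{(1)}\triangleright b)\mu(h_{(2)})\in A\mu(I')=I_q$. But $\xi$ is only assumed to be a map of $H$-\emph{module} algebras, not of comodules, and $\ker\xi$ is an $H$-stable two-sided ideal but not a subcomodule. Already in the basic example $H'=H$ with $\xi=\epsilon$ (Lu's moment maps), $\Delta(\ker\epsilon)\not\subseteq H\otimes\ker\epsilon$: applying $\id\otimes\epsilon$ to $\Delta(h)$ returns $h$, so $\Delta(h)\in H\otimes\ker\epsilon$ would force $h=0$. (In the paper's setting the composite $(\id\otimes\xi)\circ\Delta\colon H'\to H$ is even required to be an isomorphism in Proposition \ref{prop:matrixreduction}, which is incompatible with your claim.) Note also that your argument for $I_qb\subseteq I_q$ never uses the hypothesis $[b]\in A\dS H$; if it were correct it would show $I_q$ is a two-sided ideal of $A$, which you yourself flagged as too strong. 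The correct argument, which is the one genuine computation in the paper, is: for $h\in I'$, write $\mu(h)b=(h_{(1)}\triangleright b)\mu(h_{(2)})$, use the hypothesis on $[b]$ to write $h_{(1)}\triangleright b=\epsilon(h_{(1)})b+\sum_i x_i\mu(h_i)$ with $h_i\in I'$, and then $\mu(h)b=b\mu(h)+\sum_i x_i\mu(h_ih_{(2)})$; the first term lies in $I_q$ because $h\in I'$, and the second because $I'$ is a two-sided ideal, so $h_ih_{(2)}\in I'$ \emph{whether or not} $h_{(2)}$ does. No subcomodule property is needed.

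There is a second gap in your final step: you conclude that $[ab]$ is $H$-invariant ``because $A^H$ is a subalgebra.'' But $[a]\in A\dS H=(A/I_q)^H$ does not mean the representative $a$ lies in $A^H$, and one cannot in general choose invariant representatives (surjectivity of $A^H\to (A/I_q)^H$ is exactly the kind of statement the paper has to work for elsewhere, via good filtrations, and is not available here). The invariance of $[ab]$ must be proved directly: $h\triangleright(ab)=(h_{(1)}\triangleright a)(h_{(2)}\triangleright b)\in(\epsilon(h_{(1)})a+I_q)(\epsilon(h_{(2)})b+I_q)=\epsilon(h)ab+aI_q+I_qb+I_q$, and then one uses that $I_q$ is a left ideal (so $aI_q\subseteq I_q$) together with the key lemma $I_qb\subseteq I_q$ above. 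Your well-definedness bookkeeping (steps two and the final representative-independence check) is fine once these two points are repaired.
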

\begin{proof}
To show that the element $[ab] \in A/I_q$ lies in $A\dS H$, first note that, for any $h \in H$, we have that $h \triangleright a \in \epsilon(h)a + I_q$, and similarly for $b$. Consequently, 
\begin{equation*}\label{eq:hab} h \triangleright ab = (h_{(1)}\triangleright a)(h_{(2)}\triangleright b) \in (\epsilon(h_{(1)}) a + I_q)(\epsilon(h_{(2)})b + I_q) = \epsilon(h) ab +  a I_q +  I_q b +  I_q.\end{equation*}
Since $I_q$ is a left ideal, we have that $a I_q \subseteq I_q$. We show below that $I_q b \subseteq I_q$.  From this  it follows that $h\triangleright ab \in \epsilon(h) ab  +  I_q$, and so $[ab]$ lies in $A\dS H$. 

Next, we prove that $I_q b \subseteq I_q$. It suffices to show that $\mu(h)b \in I_q$ for all $h \in I^\prime$. Fix $h \in I^\prime$ and recall the  moment map equation: $\mu(h)b = (h_{(1)}\triangleright b)\mu(h_{(2)}).$ The fact that $[b]$ lies in $A\dS H$ implies that $h_{(1)} \triangleright b  = \epsilon(h_{(1)} ) b + x$ for some $x \in I_q$. We write $x = \sum_i x_i \mu(h_i)$ for some $x_i \in A$ and $h_i \in I^\prime$. We have:
\begin{align*} \mu(h)b &= (h_{(1)}\triangleright b)\mu(h_{(2)}) =  \epsilon(h_{(1)} ) b \mu(h_{(2)})  + \sum_i x_i \mu(h_i)\mu(h_{(2)}) \\  &= b \mu(h)  + \sum_i x_i \mu(h_i h_{(2)}). \end{align*}
In the last expression, the first term lies in $I_q$ since $h \in I^\prime$, and the second term lies in $I_q$ since each $h_i$ is in the two-sided ideal $I^\prime$. We conclude that $I_q b \subseteq I_q$. 

It remains to show that the product map is well-defined, that is, that the element $[ab]$ is independent of the lifts of $[a], [b] \in A/I_q$ to $A$. Indeed, if $a^\prime, b^\prime \in A$ are such that $[a] = [a^\prime]$ and $[b]  = [b^\prime]$, then $a^\prime b^\prime$ is contained in $(a + I_q)(b + I_q)$ $ = ab + a I_q + I_q b + I_q = ab + I_q,$ and so $[a^\prime b^\prime] = [ab].$
\end{proof}

\subsection{Frobenius Poisson orders and their quantum Hamiltonian reduction}
\label{subsec:Frobeniuspairs}

In this section we combine Poisson and associative algebras into the notion of a Poisson order and, correspondingly, classical and quantum moment maps into the notion of a Hamiltonian Frobenius Poisson order.  Finally, we explain that the quantum Hamiltonian reduction of a Hamiltonian Frobenius Poisson order carries naturally the structure of a Poisson order.

\begin{definition}
A Poisson order is given by the following data:
\begin{itemize}
\item An algebra $A$.

\item A commutative algebra $Z$ equipped with a Poisson structure. We denote $\cZ=\Spec Z$.

\item A central embedding $Z\subset A$ such that $A$ is finitely generated as a $Z$-module.

\item A map $D\colon Z\rightarrow \Der(A)$ which satisfies
\[D(z_1)(z_2) = \{z_1, z_2\}\]
for every $z_1, z_2\in Z$.
\end{itemize}
\end{definition}

\begin{rmk}
We will identify Poisson order structures where $D$ differs by inner derivations. See Proposition \ref{prop:FrobeniusPoissonOrderDegeneration} for our source of Poisson orders where the corresponding structure $D$ is only defined modulo inner derivations.
\end{rmk}

We have the following important theorem about Poisson orders, see \cite[Theorem 4.2]{BrownPoissonorderssymplectic2003a}.

\begin{theorem}
Suppose $(A, Z=\cO(\cZ))$ is a Poisson order with $\cZ$ smooth. Moreover, assume $\cZ_0\subset \cZ$ is an open symplectic leaf. Then $A|_{\cZ_0}$ is locally free and its fibers at any two closed points are isomorphic as algebras.
\label{thm:browngordon}
\end{theorem}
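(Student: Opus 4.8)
The plan is to deduce everything from a single fiberwise statement: that the isomorphism class of the fiber $A_x := A/\mathfrak{m}_x A$ is independent of the closed point $x\in\cZ_0$. First I would restrict the Poisson order to the open symplectic leaf. Since $\cZ_0\subset\cZ$ is open, $A|_{\cZ_0}:=A\otimes_Z\cO(\cZ_0)$ is still finite over $\cO(\cZ_0)$, the embedding $\cO(\cZ_0)\hookrightarrow A|_{\cZ_0}$ is still central, and each $D(z)$ extends uniquely to $A|_{\cZ_0}$ by the Leibniz rule; thus $(A|_{\cZ_0},\cO(\cZ_0))$ is again a Poisson order, now with smooth base and \emph{nondegenerate} (symplectic) Poisson bracket. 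From now on I work on $\cZ_0$, which is connected because it is a symplectic leaf.

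The key mechanism is that Hamiltonian vector fields lift to derivations of $A|_{\cZ_0}$. For $z\in\cO(\cZ_0)$ the derivation $D(z)$ covers the Hamiltonian field $X_z=\{z,-\}$; and since $\cO(\cZ_0)$ is central, $fD(z)$ is again a derivation of $A|_{\cZ_0}$ covering $fX_z$ for any $f\in\cO(\cZ_0)$. As the bracket is symplectic, the $X_z$ span every tangent space, so every vector field on $\cZ_0$ admits a (non-canonical) lift to a derivation of $A|_{\cZ_0}$. I would then show $A_x\cong A_y$ whenever $x,y$ lie on a common integral curve $\gamma$ of some $X_z$ (necessarily contained in $\cZ_0$): restricting to the formal or henselian neighbourhood of $\gamma$, $D(z)$ becomes a derivation $\delta$ of the module-finite $\cO(\gamma)$-algebra $A|_\gamma$ covering the nowhere-zero field $X_z|_\gamma$; rescaling so that $\delta$ acts as $d/dt$ for a local coordinate $t$ and solving $\delta(a)=0$ order by order in $t$ (using completeness and finiteness over $\cO(\gamma)$) identifies $A_x$ with the algebra of $\delta$-flat sections, whence an algebra isomorphism $A_x\xrightarrow{\sim}A_y$. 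Since $\cZ_0$ is connected and symplectic, any two closed points are joined by a finite chain of such integral curves, so the isomorphism class of $A_x$ is constant on $\cZ_0$.

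Granting the fiberwise statement, local freeness is immediate: $\dim_{\C}A_x$ is then constant on the connected, reduced (being smooth) scheme $\cZ_0$, and a coherent sheaf with locally constant fiber dimension on a reduced scheme is locally free; hence $A|_{\cZ_0}$ is locally free, and all of its fibers are isomorphic as algebras, as claimed.

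The main obstacle is making the ``transport along Hamiltonian flow lines'' argument rigorous: integral curves of algebraic vector fields are only formal or analytic objects, so one must be careful both about module-finiteness of $A$ along such a curve and about concatenating flows. The clean remedy — which is exactly the content of \cite[Theorem 4.2]{BrownPoissonorderssymplectic2003a}, and which we may simply invoke — is to work throughout with the completion $\widehat{\cO}_{\cZ_0,x}$: by formal Darboux this is a power series ring with the standard symplectic bracket, a formal symplectomorphism of it relating the local structures at $x$ and at a nearby point lifts through the derivations $D(z)$ to an isomorphism $\widehat{A}_x\cong\widehat{A}_y$ compatible with the change of coordinates, and reducing modulo the maximal ideals gives $A_x\cong A_y$.
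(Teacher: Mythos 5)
Your proposal is consistent with the paper, which offers no independent argument for this statement and simply invokes \cite[Theorem 4.2]{BrownPoissonorderssymplectic2003a}; your sketch of lifting Hamiltonian vector fields through $D$ and transporting fibers formally (plus the constant-fiber-dimension argument for local freeness) is exactly the Brown--Gordon mechanism behind that citation. Since you correctly flag the algebraic-integral-curve issue and resolve it by invoking the same reference the paper relies on, there is nothing to correct.
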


\begin{definition}
A Frobenius Poisson order is given by the following data:
\begin{itemize}
\item An algebra $A_q\in \Repq(G)$.

\item A Poisson algebra $Z$ equipped with a Poisson $G$-action. As before, we have a Poisson $G$-variety $\cZ=\Spec Z$.

\item A central embedding $\Fr^*(Z)\subset A_q$ in $\Repq(G)$.

\item A linear map $D\colon Z\rightarrow \Der(A_q)$ giving $(A_q, Z)$ the structure of a Poisson order. \end{itemize}
\end{definition}

\begin{rmk}
Since the $R$-matrix on $\UqL(\g)$ is expressed via the generators of the small quantum group, the subalgebra $\Fr^*(Z)\subset A_q$ is central in $\Repq(G)$ if, and only if, it is central in the category of vector spaces.
\end{rmk}

We may now define the notion of a moment map for a Frobenius Poisson order.

\begin{definition}
Let $(A_q, Z)$ be a Frobenius Poisson order. We say it is weakly $G$-Hamiltonian if there is a quantum moment map $\mu_q\colon \Oq(G)\rightarrow A_q$ (Definition \ref{def:quantummomentmap}) and a classical moment map $\mu\colon \cZ\rightarrow G$ (Definition \ref{def:classicalmomentmap}) such that the diagram
\[
\xymatrix{
Z \ar^{i}[r] & A_q \\
\cO(G) \ar^{\mu^*}[u] \ar[r] & \Oq(G) \ar^{\mu_q}[u]
}
\]
commutes.
\end{definition}

\begin{definition}
A $G$-Hamiltonian Frobenius Poisson order is a weakly $G$-Hamiltonian Frobenius Poisson order which satisfies the following properties:
\begin{enumerate}
\item $D(z)(\mu_q(h)) = 0$ for every $z\in Z^G$ and $h\in\Oq(G)$.

\item $D(z)(-)\colon A_q\rightarrow A_q$ is $\UqL(\g)$-equivariant for every $z\in Z^G$.
\end{enumerate}
\label{def:HamiltonianFrobeniusPoissonOrder}
\end{definition}

An important observation for us is that, given a weakly $G$-Hamiltonian Frobenius Poisson order, its quantum Hamiltonian reduction contains a natural central subalgebra, which identifies with the classical Hamiltonian reduction of $\cZ$ by $G$.

In more details, choose a character $\xi_q$ of $\Oq(G)$, and denote its restriction to $\O(G)$ by $\xi$. Then $\xi_q$ induces a character of $o_q(G; \xi) = \Oq(G)\otimes_{\cO(G)} \C$. Denote by $I\subset Z$ the moment map ideal consisting of elements of the form $z\mu(h)$ where $z\in Z$ and $h\colon G\rightarrow \C$ which vanishes at $\xi$. Denote by $I_q\subset \Aq$ the quantum moment map ideal consisting of elements of the form $a\mu_q(h)$ where $a\in \Aq$ and $h\in\Oq(G)$ such that $\xi_q(h)=0$. We may identify
\begin{equation}
\Aq \dS  \UqL(\g) = (\Aq/I_q)^{\UqL(\g)}\cong (((\Aq/I)\otimes_{o_q(G;\xi)} \C)^{\uq(\g)})^G.
\label{eq:reductioninstages}
\end{equation}
In other words, we may perform the quantum Hamiltonian reduction of $\Aq$ in stages, first with respect to $\uq(\g)$ and then with respect to $G$.

$\Aq/I$ defines a sheaf of algebras over $\mu^{-1}(\xi)$. Denote by $\cE^\xi$ the sheaf of algebras over $\mu^{-1}(\xi)$ obtained as the quantum Hamiltonian reduction
\[\cE^\xi = (\Aq/I) \dS  \uq(\g).\]
Denote by $\cA^\xi$ the sheaf of algebras on $\cZ\dS G$ defined by $\Aq \dS  \UqL(\g)$. By \eqref{eq:reductioninstages} we have an isomorphism of algebras
\[\cA^\xi\cong \pi_*(\cE^\xi)^G\]
over $\cZ\dS G$.

Furthermore, we obtain a Poisson structure on $Z\dS G$ by Proposition \ref{prop:poissonreduction}, an algebra structure on $A_q\dS \UqL(\g)$ by Proposition \ref{prop:qhamreduction} and a central embedding $Z\dS G \subset A_q\dS \UqL(\g)$. We will now show that this can be enhanced to a structure of a Poisson order.

\begin{prop}
Suppose $(A_q, Z)$ is a $G$-Hamiltonian Frobenius Poisson order where $Z$ is Noetherian. Then $(A_q\dS \UqL(\g), Z\dS G)$ is a Poisson order.
\label{prop:FrobeniusPoissonReduction}
\end{prop}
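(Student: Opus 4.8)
The plan is to add the three pieces of data that are still missing from what has already been assembled above. We have the central embedding $Z\dS G\subset A_q\dS\UqL(\g)$, the algebra structure on $A_q\dS\UqL(\g)$ from Proposition~\ref{prop:qhamreduction}, and the Poisson structure on $Z\dS G$ from Proposition~\ref{prop:poissonreduction}; it remains to (i) prove that $A_q\dS\UqL(\g)$ is finitely generated as a module over $Z\dS G$, (ii) construct a linear map $D^{\mathrm{red}}\colon Z\dS G\to\Der(A_q\dS\UqL(\g))$, and (iii) verify $D^{\mathrm{red}}(\bar z_1)(\bar z_2)=\{\bar z_1,\bar z_2\}$ for $\bar z_1,\bar z_2\in Z\dS G$.

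For (i) I would argue through the reduction in stages \eqref{eq:reductioninstages}. Since $A_q$ is finitely generated over the central subalgebra $\Fr^*(Z)$, and since the image of $\Fr^*(I)$ in $A_q$ is contained in $I_q$ — because, by the weakly Hamiltonian square, $\mu^*(\ker\xi)\subset\cO(G)$ is carried into $\mu_q(\ker\xi_q)$ — the left $A_q$-module $A_q/I_q$ is a quotient of $A_q\otimes_{\Fr^*(Z)}\Fr^*\!\bigl(\cO(\mu^{-1}(\xi))\bigr)$, hence finitely generated over the Noetherian ring $\cO(\mu^{-1}(\xi))\cong\Fr^*(Z/I)$, on which $\uq(\g)$ acts trivially. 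Taking $\uq(\g)$-invariants keeps it finitely generated (it is then a submodule of a Noetherian module, $\uq(\g)$ being finite-dimensional), so $\cE^\xi$ is coherent over $\mu^{-1}(\xi)$; finally $\cA^\xi\cong\pi_*(\cE^\xi)^G$ is coherent over $\cO(\mu^{-1}(\xi))^G=Z\dS G$ because $G$ is reductive and acts rationally. In our examples $\cZ$ is of finite type, so this last step is Hilbert's classical finiteness theorem.

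For (ii)--(iii), choose a $\C$-linear section $s\colon Z\dS G\to Z^G$ of the surjection $Z^G\twoheadrightarrow Z^G/I^G=Z\dS G$, which exists since $G$ is reductive. For $z\in Z^G$, property~(1) of Definition~\ref{def:HamiltonianFrobeniusPoissonOrder} gives $D(z)(\mu_q(h))=0$ for all $h$, so the Leibniz rule forces $D(z)(I_q)\subseteq I_q$ and $D(z)$ descends to $A_q/I_q$; property~(2) says this descent is $\UqL(\g)$-equivariant, so it restricts to an operator $\bar D(z)$ on $(A_q/I_q)^{\UqL(\g)}=A_q\dS\UqL(\g)$. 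Using the product $[a][b]=[ab]$ and the fact that $[D(z)(a)]=\bar D(z)([a])$ is again invariant, one checks directly that $\bar D(z)$ is a derivation of $A_q\dS\UqL(\g)$. Put $D^{\mathrm{red}}:=\bar{(-)}\circ D\circ s$. Then for $\bar z_1,\bar z_2\in Z\dS G$ the Poisson-order axiom $D(z_1)(z_2)=\{z_1,z_2\}$ gives $D^{\mathrm{red}}(\bar z_1)(\bar z_2)=[D(s\bar z_1)(s\bar z_2)]=[\{s\bar z_1,s\bar z_2\}]$, and $\{s\bar z_1,s\bar z_2\}\in Z^G$ reduces to $\{\bar z_1,\bar z_2\}$ in $Z\dS G$ because $Z^G$ is a Poisson subalgebra and $I^G$ a Poisson ideal by (the proof of) Proposition~\ref{prop:poissonreduction}. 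This produces the Poisson order.

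The step I expect to be the main obstacle is the finiteness in (i): it is where the Noetherian hypothesis on $Z$, the two-stage description of the reduction, and the reductivity of $G$ all enter, and where one must take some care with the invariant theory. The only other subtlety is that $D^{\mathrm{red}}$ depends on the choice of section $s$; since in our setting $D$ itself is canonical only modulo inner derivations (it arises from a degeneration, cf.\ Proposition~\ref{prop:FrobeniusPoissonOrderDegeneration} and the remark following the definition of a Poisson order), this is immaterial, but if one wanted a cleaner statement one could check that $\bar D(z)$ is an inner derivation of $A_q\dS\UqL(\g)$ for $z\in I^G$, using the quantum moment map equation and the fact that $\mu_q$ of the classical moment ideal lands in the central subalgebra $\Fr^*(Z)$.
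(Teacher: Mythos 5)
Your proposal is correct and follows essentially the same route as the paper's proof: Conditions (1) and (2) of Definition \ref{def:HamiltonianFrobeniusPoissonOrder} give $D(z)(I_q)\subseteq I_q$ and equivariance, hence descent of $D$ to $(A_q/I_q)^{\UqL(\g)}$ compatibly with the reduced Poisson bracket, while finiteness is obtained exactly as in the paper via $\Fr^*(I)\subset I_q$, Noetherianity of $Z/I$, and reductivity of $G$. Your extra care with the section $s$ and the inner-derivation ambiguity is a legitimate elaboration of a point the paper leaves implicit (the paper simply cites surjectivity of $Z^G\to Z^G/I^G$, relying on the convention that $D$ is only defined modulo inner derivations), not a different argument.
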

\begin{proof}
Let $I\subset \cO(\cZ)$ be the ideal of functions vanishing on $\mu^{-1}(\xi)$ and $I_q\subset A_q$ the left ideal of elements of the form $a\mu_q(h)$ with $a\in A_q$ and $h\in \Oq(G)$ with $\xi_q(h)=0$.

Recall from Proposition \ref{prop:poissonreduction} that the Poisson structure on $\cO(\cZ\dS G)$ is uniquely characterized by the property that $\cO(\cZ\dS G)\cong Z^G / I^G\leftarrow Z^G\subset Z$ is a diagram of Poisson algebras.

Consider $z\in Z^G$ and $a\mu_q(h)\in I_q$. Then
\[D(z)(a\mu_q(h)) = D(z)(a)\mu_q(h) + aD(z)(\mu_q(h)) = D(z)(a)\mu_q(h).\]
Therefore, $D(z)(I_q)\subset I_q$ and hence $D$ descends to a map
\[D\colon Z^G\otimes A_q/I_q\longrightarrow A_q/I_q.\]

Consider an element $a\in A_q$ and $[a]\in (A_q/I_q)^{\UqL(\g)}$ whose images in $A_q/I_q$ coincide. Then for $h\in \UqL(\g)$ we have $h\triangleright a = \epsilon(h)a + I_q$. For $z\in Z^G$ we therefore get
\begin{align*}
h\triangleright D(z)(a) &= D(z)(h\triangleright a) \\
&= \epsilon(h) D(z)(a) + D(z)(I_q) \\
&= \epsilon(h) D(z)(a) + I_q.
\end{align*}
In other words, $D(z)(a)$ descends to $(A_q/I_q)^{\UqL(\g)}$ and so $D$ gives a linear map
\[D\colon Z^G\otimes (A_q/I_q)^{\UqL(\g)}\longrightarrow (A_q/I_q)^{\UqL(\g)}.\]

By construction $D$ defines a linear map $D\colon Z^G\rightarrow \Der(A_q\dS \UqL(\g))$ and restricts to the Poisson bracket $Z^G\otimes Z^G/I^G\rightarrow Z^G/I^G$ on $Z^G/I^G\cong (Z/I)^G\subset A_q \dS  \UqL(\g)$. Since $Z^G\rightarrow Z^G/I^G$ is surjective, this defines the required structure of a Poisson order.

By assumption $A$ is finitely generated over $Z$. Therefore, $A/I_q$ is finitely generated over $Z/I$. Since $Z/I$ is Noetherian, $(A/I_q)^{\uq(\g)}\subset A/I_q$ is also finitely generated over $Z/I$. Since $G$ is reductive, we obtain that $((A/I_q)^{\uq(\g)})^G=A\dS \UqL(\g)$ is finitely generated over $(Z/I)^G=Z\dS G$.
\end{proof}

\subsection{Classical degenerations of quantum algebras}\label{subsec:degenqalgebras}

The notion of a $G$-Hamiltonian Frobenius Poisson order requires a strong compatibility between several structures.  In this section we show that these compatibilities arise very naturally when specializing algebras from $\RepR(G)$ to $\Repq(G)$. We begin by showing that degenerating $\ULR(\g)$ at a root of unity induces extra structures on $G$, such as the Poisson-Lie structure (see \cite[Section 8]{DeConciniQuantumFunctionAlgebra}). Recall from \cite[Chapter 32]{LusztigIntroductionQuantumGroups2010} that to construct the braiding on $\RepR(G)$ we need to fix a bilinear pairing on $(-, -)$ on the character lattice. We normalize it so that the short roots have square length $2$.

Let $V_\Rt, W_\Rt\in\RepR(G)$ be two representations flat over $\Rt$ and let
\[R_{V_\Rt, W_\Rt}\colon V_\Rt\otimes W_\Rt\longrightarrow V_\Rt\otimes W_\Rt\]
be the isomorphism given by the braiding in $\RepR(G)$ precomposed with the tensor flip. Let $\pi\colon V_\Rt\rightarrow V_q$ be the specialization map taking $t$ to $q$ and similarly for $W_\Rt$. Moreover, assume we have two representations $V, W\in\Rep(G)$ and embeddings $\Fr^*(V)\subset V_q$ and $\Fr^*(W)\subset W_q$.

Choose an element $v\otimes w\in V_\Rt\otimes W_\Rt$ such that $\pi(v)\otimes \pi(w)\in V\otimes W\subset V_q\otimes W_q$. Then
\[\left.R_{V_\Rt, W_\Rt}(v\otimes w)\right|_{t=q} = \pi(v)\otimes \pi(w)\]
since $\Fr^*\colon \Rep(G)\rightarrow \Repq(G)$ is braided monoidal and the braiding on $\Rep(G)$ is the tensor flip.

\begin{prop}
With the above assumptions we have
\[r(\pi(v)\otimes \pi(w)) = \pi\left(\frac{R_{V_\Rt, W_\Rt}(v\otimes w) - v\otimes w}{t-q}\right),\]
where $r = -2\ell^2 q^{-1} r_{std}\in \g\otimes \g$ and $r_{std}$ is the standard $r$-matrix from Example \ref{ex:standardrmatrix} defined by the bilinear pairing $\ell^2(-, -)$ on $\g$.
\label{prop:RMatrixDegeneration}
\end{prop}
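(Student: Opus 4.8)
The plan is to compute both sides explicitly, using the description of the braiding on $\RepR(G)$ from \cite[Chapter 32]{LusztigIntroductionQuantumGroups2010}. On a pair of $\Rt$-flat representations the braiding, precomposed with the tensor flip, factors as $R_{V_\Rt,W_\Rt} = \kappa\circ\Theta$, where $\kappa$ is the diagonal operator scaling the weight-$(\mu,\nu)$ subspace of $V_\Rt\otimes W_\Rt$ by $t^{-(\mu,\nu)}$ (a genuine monomial after adjoining a root of $t$, which is harmless since we only evaluate at $t=q$), and $\Theta=\sum_{\beta}\Theta_\beta$ is Lusztig's quasi-$R$-matrix, with $\Theta_0=\id$ and each $\Theta_\beta$ expanded in the divided-power PBW basis. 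The weights of $\Fr^*(V),\Fr^*(W)$ lie in $\ell X$, on which the pairing takes values in $\ell^2\cdot\tfrac1D\Z$ with $q$ of order $\ell$ coprime to $D$; hence $\kappa$ acts as the identity on $\Fr^*(V)\otimes\Fr^*(W)$ at $t=q$. Since $\Fr^*$ is braided monoidal with the flip as braiding on $\Rep(G)$, so does $R_{V_\Rt,W_\Rt}$, and therefore so does $\Theta$.

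To extract the first-order term I would write $R_{V_\Rt,W_\Rt}-\id=(\kappa-\id)\circ\Theta+(\Theta-\id)$ and treat the two pieces separately. The first contributes, on a weight-$(\mu,\nu)=(\ell\mu_0,\ell\nu_0)$ vector, the scalar $\tfrac{d}{dt}t^{-(\mu,\nu)}\big|_{t=q}=-(\mu,\nu)q^{-1}=-\ell^2 q^{-1}(\mu_0,\nu_0)$, which is the action of the Cartan part of $-2\ell^2 q^{-1}r_{std}$ on $\pi(v)\otimes\pi(w)$. For the second piece, a PBW monomial $u^+\otimes u^-$ occurring in $\Theta_\beta$ acts on $\Fr^*(W)$ through $\Fr(u^-)$, which vanishes unless every root-vector exponent of $u^-$ is a multiple of $\ell$; so only $\beta\in\ell\cdot(\text{root lattice})^+$ survive, and for $\beta=\ell\delta$ only the products $\prod_\gamma E_\gamma^{(\ell n_\gamma)}\otimes\prod_\gamma F_\gamma^{(\ell n_\gamma)}$ with $\sum n_\gamma\gamma=\delta$ do. The coefficient of such a monomial is a product over $\gamma$ of rank-one coefficients $c^\gamma_\ell(t)$, each vanishing at $t=q$ because it carries the factor $[\ell]_{t^{d_\gamma}}!$; hence any monomial supported on two or more positive roots has coefficient vanishing to order $\ge 2$ and drops out of the first-order term, leaving only the single-root monomials $E_\gamma^{(\ell)}\otimes F_\gamma^{(\ell)}$, $\gamma\in\Delta^+$. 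Since $\Fr(E_\gamma^{(\ell)})=E_\gamma$ and $\Fr(F_\gamma^{(\ell)})=F_\gamma$ are the classical root vectors, the first-order contribution is $\sum_{\gamma\in\Delta^+}\big(\tfrac{d}{dt}c^\gamma_\ell(t)\big|_{t=q}\big)\,(E_\gamma\cdot\pi(v))\otimes(F_\gamma\cdot\pi(w))$.

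It then remains to evaluate $\tfrac{d}{dt}c^\gamma_\ell(t)\big|_{t=q}$. By the Lusztig braid-group symmetry $c^\gamma_\ell$ is obtained from the simple-root case by the substitution $t\mapsto t^{d_\gamma}$, so a single computation suffices: writing $c^{\alpha_i}_\ell(t)$ as a unit times $[\ell]_t!=[\ell]_t\,[\ell-1]_t!$, using $[\ell]_q=0$ and $\tfrac{d}{dt}[\ell]_t|_{t=q}=2\ell q^{-1}/(q-q^{-1})$, together with the cyclotomic evaluation $\prod_{k=1}^{\ell-1}(q^{k}-q^{-k})=\ell$ (valid since $\ell$ is odd and $q^{d_\gamma}$ remains a primitive $\ell$-th root of unity under Assumption \ref{EllAssumption}), one finds $\tfrac{d}{dt}c^\gamma_\ell(t)|_{t=q}=-2\ell^2 d_\gamma q^{-1}$. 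Assembling the two contributions and matching Lusztig's PBW ordering (which places the lowering operators on the first tensor factor) against the normalization of Example \ref{ex:standardrmatrix}, the total first-order term is the action of $-2\ell^2 q^{-1}\big(\tfrac12\sum_i k_i\otimes k_i+\sum_{\gamma\in\Delta^+}d_\gamma f_\gamma\otimes e_\gamma\big)=-2\ell^2 q^{-1}r_{std}$, as claimed. The main obstacle is precisely the bookkeeping in the last two paragraphs: checking that multi-root PBW monomials contribute nothing to first order, reducing $c^\gamma_\ell$ to the rank-one case so that one derivative computation covers all positive roots, and — most delicately — tracking the competing normalization conventions (Lusztig's placement of raising versus lowering operators, the square-length normalization of the pairing, and the precise scalar $-2\ell^2q^{-1}$ together with its relation to the form $\ell^2(-,-)$) so that the final answer matches $r_{std}$ exactly.
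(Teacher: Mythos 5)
Your proposal follows essentially the same route as the paper's proof: both write the braiding as the weight-pairing factor $t^{-(\mu,\nu)}$ composed with Lusztig's quasi-$R$-matrix in its product/PBW form over positive roots, observe that on Frobenius pullbacks only the single-root terms $F_\gamma^{(\ell)}\otimes E_\gamma^{(\ell)}$ contribute to first order in $(t-q)$, and evaluate the same two derivatives, giving the Cartan contribution $-q^{-1}(\mu,\nu)$ and the coefficients $-2\ell^2 d_\gamma q^{-1}$ that assemble into $-2\ell^2 q^{-1} r_{std}$. The computation and normalization bookkeeping agree with the paper's, so the proposal is correct and essentially identical in approach.
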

\begin{proof}
Let us recall the precise construction of the braiding on $\RepR(G)$ from \cite[Chapter 32]{LusztigIntroductionQuantumGroups2010}. Fix a reduced expression for the longest element of the Weyl group which gives rise to an ordering of the set $\Delta_+$ of positive roots. For a positive root $\alpha\in \Delta_+$ let $d_\alpha = (\alpha, \alpha)/2$. Consider the quasi $R$-matrix (see \cite[Chapter 4]{LusztigIntroductionQuantumGroups2010}, \cite{LevendorskiiR} and \cite{KirillovReshetikhin})
\[\Theta = \prod_{\alpha\in \Delta_+} \sum_n (-1)^n t^{-n(n-1)d_\alpha/2}\{n\}_\alpha F_\alpha^{(n)}\otimes E_\alpha^{(n)},\]
where
\[\{n\}_\alpha = \prod_{a=1}^n(t^{ad_\alpha} - t^{-ad_\alpha}).\]
If we assume $v\in V_\Rt, w\in W_\Rt$ have weights $\mu$ and $\nu$ respectively, then
\[R_{V_\Rt, W_\Rt}(v\otimes w) = t^{-(\mu, \nu)} \Theta(v\otimes w).\]
By our assumption on the group $G$, under the quantum Frobenius map the character lattice of $G$ used in the definition of $\Rep(G)$ is the $\ell$-scaled character lattice used in the definition of $\Repq(G)$. Therefore, $q^{-(\mu, \nu)} = 1$.

Since $V,W\in\Rep(G)$, the only terms in $\Theta$ not vanishing to order $(t-q)$ occur for $n=\ell$. Therefore, the linear term in $(t-q)$ coming from $\Theta$ is
\[-\frac{2\ell^2}{q}\sum_{\alpha\in\Delta_+} \left(d_\alpha f_\alpha\otimes e_\alpha\right)(\pi(v)\otimes \pi(w)).\]

The linear term in $(t-q)$ coming from $t^{-(\mu, \nu)}$ is $-q^{-1} (\mu, \nu)$. Combining the two terms we exactly recover the formula for the standard $r$-matrix $r_{std}$.
\end{proof}

Next, we show that $\g$ carries a natural structure of a factorizable Lie bialgebra. Let $p\colon \ULR(\g)\rightarrow \UqL(\g)$ be the specialization map taking $t$ to $q$. Since $\ULR(\g)$ is free as an $\Rt$-module, we may choose an isomorphism $\ULR(\g)\cong \UqL(\g)\otimes_\C \Rt$ which is the identity at $t=q$. For an element $x\in \ULR(\g)$ such that $\Fr(p(x))\in \g\subset U(\g)$, we have that
\[\delta(\pi(x)) = \Fr\left(p\left(\frac{\Delta(x) - \Delta^{\op}(x)}{t-q}\right)\right)\]
defines a Lie cobracket on $\g$, see e.g. \cite[Proposition 9.1]{EtingofLecturesQuantumGroups}. Since the braiding on $\RepR(G)$ is an isomorphism of $\ULR(\g)$-modules, by Proposition \ref{prop:RMatrixDegeneration} we get
\[\delta(\pi(x)) = [r, \pi(x)\otimes 1 + 1\otimes \pi(x)],\]
i.e. $\delta$ defines a factorizable Lie bialgebra structure. In particular, we obtain a factorizable Poisson-Lie structure on $G$.

Now suppose $A_\Rt$ is an $\Rt$-algebra. Denote by $\pi\colon A_\Rt\rightarrow \Aq$ the evaluation at $q$.

\begin{prop}
Suppose $A_\Rt$ is flat as an $\Rt$-module and $Z\subset \Aq$ a central subalgebra. Then there is a linear map $D\colon Z\rightarrow \Der(\Aq)$ determined by the formula
\[D(\pi(z))(\pi(x)) = \pi\left(\frac{zx - xz}{t-q}\right)\]
for every  $z,x\in A_\Rt$ with $\pi(z)\in Z$. It is independent of the lift of $\pi(x)$ and changing the lift of $\pi(z)$ modifies $D$ by an inner derivation. If, in addition, $D(z_1)(z_2)\in Z$ for $z_1,z_2\in Z$, then $(\Aq, \Spec(Z))$ is a Poisson order.
\label{prop:PoissonOrderDegeneration}
\end{prop}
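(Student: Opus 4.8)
The plan is to build the derivation $D$ directly from the commutator in $A_\Rt$ divided by $(t-q)$, and then to transport the Jacobi identity of $A_\Rt$ to the bracket on $Z$. First I would record the elementary but essential observation that $\Rt=\C[t^{\pm 1}]$ is a principal ideal domain, so the flat $\Rt$-module $A_\Rt$ is torsion-free; since $\C[t^{\pm 1}]/(t-q)\cong\C$, multiplication by $(t-q)$ on $A_\Rt$ is injective. Hence whenever an element of $A_\Rt$ maps to $0$ under $\pi$ it is uniquely divisible by $(t-q)$. Now if $\pi(z)\in Z$ is central in $\Aq$ and $z,x\in A_\Rt$ are any lifts, then $\pi([z,x])=[\pi(z),\pi(x)]=0$, so $[z,x]\in(t-q)A_\Rt$ and $\tfrac{[z,x]}{t-q}\in A_\Rt$ is well-defined; we set $D(\pi(z))(\pi(x)):=\pi\!\left(\tfrac{[z,x]}{t-q}\right)$.

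Second, I would verify the asserted well-definedness and structure. Replacing $x$ by $x+(t-q)y$ changes $[z,x]$ by $(t-q)[z,y]$, so $\tfrac{[z,x]}{t-q}$ changes by $[z,y]$, whose image $[\pi(z),\pi(y)]$ vanishes by centrality of $\pi(z)$; thus $D(\pi(z))$ is independent of the lift of $x$. Replacing $z$ by $z+(t-q)w$ changes $\tfrac{[z,x]}{t-q}$ by $[w,x]$, so $D$ is altered by $\ad(\pi(w))$, an inner derivation — exactly the claimed indeterminacy. The identity $[z,xx']=[z,x]x'+x[z,x']$, divided by $(t-q)$ and pushed through $\pi$, gives the Leibniz rule, so $D(\pi(z))\in\Der(\Aq)$; additivity and $\C$-linearity in $z$ are immediate. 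This establishes the first two sentences.

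Third, assume $D(z_1)(z_2)\in Z$ for all $z_1,z_2\in Z$ and set $\{z_1,z_2\}:=D(z_1)(z_2)$, so that the compatibility $D(z_1)(z_2)=\{z_1,z_2\}$ required of a Poisson order holds by fiat. Antisymmetry follows since $\tfrac{[z_1,z_2]}{t-q}+\tfrac{[z_2,z_1]}{t-q}=0$ in $A_\Rt$. The Leibniz rule for $\{\,,\,\}$ is the restriction of the already-proven Leibniz rule for the derivation $D(z_1)$ to $Z$, which is closed under the bracket by hypothesis. For Jacobi, choose lifts $\tilde z_i$; since $D(z_1)$ does not depend on the chosen lift, I may compute $D(z_1)(\{z_2,z_3\})$ using the lift $\tfrac{[\tilde z_2,\tilde z_3]}{t-q}$, obtaining $\pi\!\left(\tfrac{[\tilde z_1,[\tilde z_2,\tilde z_3]]}{(t-q)^2}\right)$; this makes sense because $\tfrac{[\tilde z_2,\tilde z_3]}{t-q}$ lifts $\{z_2,z_3\}\in Z$, hence is central in $\Aq$, forcing $[\tilde z_1,[\tilde z_2,\tilde z_3]]\in(t-q)^2A_\Rt$. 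Summing the three cyclic permutations and invoking the Jacobi identity of the associative algebra $A_\Rt$, the numerator vanishes, whence $\{z_1,\{z_2,z_3\}\}+\{z_2,\{z_3,z_1\}\}+\{z_3,\{z_1,z_2\}\}=0$. Together with module-finiteness of $\Aq$ over $Z$ (assumed, as in the definition of a Poisson order, and checked separately in each application), this exhibits $(\Aq,\Spec Z)$ as a Poisson order.

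The computations are all routine; the only point requiring care is the chain of divisibility statements — that $[z,x]$, then $[\tilde z_i,\tfrac{[\tilde z_j,\tilde z_k]}{t-q}]$, and hence $[\tilde z_i,[\tilde z_j,\tilde z_k]]$ lie in the appropriate powers of $(t-q)A_\Rt$ — which is precisely where flatness of $A_\Rt$ and the extra hypothesis $D(z_j)(z_k)\in Z$ enter. This argument is essentially due to Hayashi and underlies \cite[Theorem 4.2]{BrownPoissonorderssymplectic2003a}.
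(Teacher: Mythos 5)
Your proof is correct and is exactly the standard degeneration argument this proposition relies on (the paper states it without proof, as a known result in the spirit of Hayashi and Brown--Gordon): flatness over the PID $\Rt$ gives torsion-freeness, hence unique divisibility by $(t-q)$, which yields well-definedness, the Leibniz rule, the inner-derivation ambiguity, and the Jacobi identity via the second-order divisibility of $[\tilde z_1,[\tilde z_2,\tilde z_3]]$ forced by the hypothesis $D(Z)(Z)\subset Z$. Your remark that finite generation of $\Aq$ over $Z$ is not among the stated hypotheses and must be supplied in each application is also an accurate reading of how the proposition is used in the paper.
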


The condition $D(z)(Z)\subset Z$ in the above Proposition is automatic if $Z$ is the whole center as shown by Hayashi \cite[Proposition 2.6 (2)]{HayashiSugawara}:

\begin{lemma}\label{lem:Hayashi}
Suppose $A_\Rt$ is a flat $\Rt$-algebra and $Z=Z(\Aq)$ be the center of its specialization at $t=q$. Then the map $D\colon Z\rightarrow \Der(\Aq)$ defined in Proposition \ref{prop:PoissonOrderDegeneration} satisfies $D(z_1)(z_2)\in Z$ for every $z_1, z_2\in Z$.
\label{lm:PoissonOrderAutomatic}
\end{lemma}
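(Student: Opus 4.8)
The plan is to follow Hayashi's argument \cite{HayashiSugawara} and verify centrality of $D(z_1)(z_2)$ by a direct application of the Jacobi identity in $A_\Rt$, exploiting flatness of $A_\Rt$ over $\Rt$; it is essential here that $Z$ is the \emph{full} center of $\Aq$.

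First I would record the consequences of flatness underlying Proposition~\ref{prop:PoissonOrderDegeneration}: since $\Rt=\C[t^{\pm 1}]$ is a domain and $A_\Rt$ is flat over it, $A_\Rt$ is torsion-free, so multiplication by $(t-q)$ is injective; and $\ker\bigl(\pi\colon A_\Rt\to\Aq\bigr)=(t-q)A_\Rt$. Hence any $a\in A_\Rt$ with $\pi(a)=0$ can be written uniquely as $a=(t-q)a'$, and $D$ is defined by $D(\pi(z))(\pi(x))=\pi(a')$ with $a=zx-xz$. I would also note that, although $D(z_1)$ is only defined up to an inner derivation depending on the lift of $z_1$, inner derivations annihilate central elements, so $D(z_1)(z_2)$ is an unambiguous element of $\Aq$ whenever $z_2\in Z$.

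The main step is as follows. Fix $z_1,z_2\in Z=Z(\Aq)$ and an arbitrary $w\in\Aq$; since $\pi$ is surjective it suffices to prove $[D(z_1)(z_2),w]=0$ in $\Aq$. Choose lifts $\tilde z_1,\tilde z_2,\tilde w\in A_\Rt$. As $z_1$ and $z_2$ are central, $[\tilde z_i,\tilde w]\in\ker\pi=(t-q)A_\Rt$, so $[\tilde z_i,\tilde w]=(t-q)u_i$ with $\pi(u_i)=D(z_i)(w)$; likewise $[\tilde z_1,\tilde z_2]=(t-q)c$ with $\pi(c)=D(z_1)(z_2)$. Now apply the Jacobi identity for the commutator bracket in $A_\Rt$:
$$[[\tilde z_1,\tilde z_2],\tilde w]=[\tilde z_1,[\tilde z_2,\tilde w]]-[\tilde z_2,[\tilde z_1,\tilde w]].$$
Substituting the relations above and using that $(t-q)$ is central in $A_\Rt$, this reads $(t-q)[c,\tilde w]=(t-q)\bigl([\tilde z_1,u_2]-[\tilde z_2,u_1]\bigr)$; cancelling the non-zero-divisor $(t-q)$ (legitimate by torsion-freeness) yields $[c,\tilde w]=[\tilde z_1,u_2]-[\tilde z_2,u_1]$ in $A_\Rt$. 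Applying $\pi$ gives $[D(z_1)(z_2),w]=[z_1,\pi(u_2)]-[z_2,\pi(u_1)]$, which vanishes because $z_1$ and $z_2$ are central in $\Aq$. Since $w$ was arbitrary, $D(z_1)(z_2)\in Z(\Aq)=Z$, as claimed.

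I do not anticipate any real obstacle: the content is entirely the Jacobi identity together with careful bookkeeping of the divisions by $(t-q)$, which is exactly where flatness of $A_\Rt$ enters, precisely as in Proposition~\ref{prop:PoissonOrderDegeneration}. The only points worth stating explicitly are torsion-freeness of $A_\Rt$ (so that $(t-q)$ may be cancelled) and the harmlessness of the inner-derivation ambiguity of $D$ on central elements.
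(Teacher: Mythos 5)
Your proof is correct and follows essentially the same route as the paper: both hinge on the Jacobi identity in $A_\Rt$ applied to lifts of central elements, with flatness (torsion-freeness) used to cancel the factor $(t-q)$ before specializing. The paper phrases the key step as the two right-hand terms vanishing to second order at $t=q$, which is exactly your explicit cancellation followed by applying $\pi$; your extra remarks on the inner-derivation ambiguity and uniqueness of the division by $(t-q)$ are harmless elaborations of the same argument.
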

\begin{proof}
Consider $z_1, z_2, a\in A_\Rt$ such that $\pi(z_i)\in Z$. We have
\[[a, [z_1, z_2]] = [[a, z_1], z_2] + [z_1, [a, z_2]].\]
Since $\pi(z_i)$ lie in the center of $\Aq$, both of the terms on the right-hand side vanish to the second order at $t=q$. Therefore,
\[[\pi(a), D(z_1)(z_2)] = 0\]
and hence $D(z_1)(z_2)\in Z(\Aq)$.
\end{proof}

\begin{prop}
Suppose $(A_\Rt, Z\subset \Aq)$ satisfy the conditions of Proposition \ref{prop:PoissonOrderDegeneration}. In addition, we make the following assumptions:
\begin{itemize}
\item $A_\Rt$ is an algebra in $\RepR(G)$.

\item $Z\subset \Aq$ is $\UqL(\g)$-equivariant and $\uq(\g)$-invariant.
\end{itemize}

Then $(\Aq, Z)$ is a Frobenius Poisson order.
\label{prop:FrobeniusPoissonOrderDegeneration}
\end{prop}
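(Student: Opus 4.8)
The plan is to assemble, out of the data already in place, the four ingredients of a Frobenius Poisson order $(\Aq, Z)$. Three of them are essentially immediate. Since $A_\Rt$ is an algebra in $\RepR(G)$, its specialization $\Aq$ is an algebra in $\Repq(G)$. Since $(A_\Rt, Z\subset\Aq)$ satisfies the hypotheses of Proposition~\ref{prop:PoissonOrderDegeneration}, we already have the map $D\colon Z\to\Der(\Aq)$, the Poisson bracket $\{z_1,z_2\}=D(z_1)(z_2)$ on $Z$, and the resulting Poisson order structure on $(\Aq,Z)$ (the finite-generation requirement being supplied by those hypotheses). So the real content is concentrated in two points: (a) exhibiting the central subalgebra $Z\subset\Aq$ as lying in the image of $\Fr^*$, compatibly with the $\Repq(G)$-structure; and (b) showing that $\cZ=\Spec Z$, with the Poisson structure just described, is a Poisson $G$-variety for the factorizable Poisson--Lie structure on $G$ produced earlier in this subsection.

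For (a): by hypothesis $Z\subset\Aq$ is a $\UqL(\g)$-submodule on which $\uq(\g)$ acts through the counit. Since $\ker(\Fr\colon\UqL(\g)\to U)$ is the two-sided ideal generated by the augmentation ideal of $\uq(\g)$, the $\UqL(\g)$-action on $Z$ factors through $\Fr$, realizing $Z$ as $\Fr^*(Z_0)$ for an object $Z_0$ of $\Rep(G)$ (the weight lattice condition defining $\Rep(G)$ is inherited from $\Aq\in\Repq(G)$); henceforth we identify $Z_0$ with $Z$. The inclusion $\Fr^*(Z_0)=Z\hookrightarrow\Aq$ is $\UqL(\g)$-equivariant by hypothesis, hence a morphism in $\Repq(G)$, and its image is central in the plain algebra $\Aq$ --- which, by the Remark following the definition of a Frobenius Poisson order, is equivalent to centrality in $\Repq(G)$. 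Finally, since $\Fr$ is a morphism of Hopf algebras and elements of $\g$ are primitive in $U$, the $\g$-action on $Z_0$ is by derivations of the product; hence $G$ acts on $\cZ$ by algebra automorphisms and $\cZ$ is an affine $G$-variety.

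For (b), the heart of the argument, it suffices to check the infinitesimal form of the Poisson $G$-variety condition (as used in the proof of Proposition~\ref{prop:poissonreduction}): for $\bar x\in\g$ and $z_1,z_2\in Z$,
\[\bar x.\{z_1,z_2\}=\{\bar x.z_1,z_2\}+\{z_1,\bar x.z_2\}+(\bar x_{[1]}.z_1)(\bar x_{[2]}.z_2),\]
where $\delta(\bar x)=\bar x_{[1]}\otimes\bar x_{[2]}$ is the Lie cobracket $\delta$ on $\g$ constructed above. I would prove this by lifting: choose $\tilde x\in\ULR(\g)$ with $\Fr(p(\tilde x))=\bar x$ and $\tilde z_1,\tilde z_2\in A_\Rt$ specializing to $z_1,z_2$, and analyze, after dividing by $(t-q)$ and specializing at $t=q$, the identity
\[\tilde x\triangleright(\tilde z_1\tilde z_2-\tilde z_2\tilde z_1)=\sum(\tilde x_{(1)}\triangleright\tilde z_1)(\tilde x_{(2)}\triangleright\tilde z_2)-\sum(\tilde x_{(1)}\triangleright\tilde z_2)(\tilde x_{(2)}\triangleright\tilde z_1)\]
coming from the $\ULR(\g)$-module algebra structure on $A_\Rt$. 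The left side, divided by $(t-q)$, specializes to $\bar x.\{z_1,z_2\}$, using the formula for $D$ together with part (a), which ensures that $\UqL(\g)$ acts on the central element $\{z_1,z_2\}\in Z$ through $\Fr$. On the right side I would split the difference as $P+Q$, where $P=\sum(\tilde x_{(1)}\triangleright\tilde z_1)(\tilde x_{(2)}\triangleright\tilde z_2)-\sum(\tilde x_{(2)}\triangleright\tilde z_1)(\tilde x_{(1)}\triangleright\tilde z_2)$ is the result of letting $(\Delta-\Delta^{\op})(\tilde x)$ act on $\tilde z_1\otimes\tilde z_2$ and then multiplying, while $Q$ is a sum of commutators in $\Aq$ arising from reordering the remaining products. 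Since $(\Delta-\Delta^{\op})(\tilde x)$ is divisible by $(t-q)$ with quotient mapping under $(\Fr\circ p)^{\otimes 2}$ to $\delta(\bar x)$, the term $P$ contributes $(\bar x_{[1]}.z_1)(\bar x_{[2]}.z_2)$; and since $\Fr\circ p$ is a Hopf map --- so $(\Fr\circ p)^{\otimes2}$ sends $\Delta(\tilde x)$ to $\Delta(\bar x)=\bar x\otimes1+1\otimes\bar x$ --- the commutators in $Q$ are each divisible by $(t-q)$ (the relevant elements of $\Aq$ lying in the commutative subalgebra $Z$), and applying $D$ to them produces $\{\bar x.z_1,z_2\}+\{z_1,\bar x.z_2\}$. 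Comparing the two sides yields the displayed identity, and hence the Poisson $G$-variety structure on $\cZ$.

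The step I expect to be the main obstacle is exactly this first-order expansion of the right side: one must verify that the order-$(t-q)^0$ term genuinely cancels (it does, because the elements $\tilde x_{(i)}\triangleright z_j$ specialize into the commutative algebra $Z$), that each commutator produced by reordering is $O(t-q)$ for the same reason, and --- most delicately --- that the two distinct ``divide by $(t-q)$'' procedures at play (the one defining $D$, hence $\{\cdot,\cdot\}$, and the one defining $\delta$) are bookkept with compatible normalizations, so that the three summands emerge with the correct coefficients. Everything else is formal: flatness of $A_\Rt$ over $\Rt$ makes the chosen lifts exist and makes the specialized first-order coefficients well-defined and independent of the choices, and the remaining clauses in the definition of a Frobenius Poisson order have been dispatched above.
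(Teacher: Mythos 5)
Your proposal is correct and follows essentially the same route as the paper: the paper's proof also reduces everything to the infinitesimal Poisson $\g$-algebra identity, applies the $\ULR(\g)$-module-algebra relation to the commutator $h\triangleright[a,b]$, notes it is trivial modulo $(t-q)$, and reads off the cocycle identity from the first-order term. Your split of the right-hand side into the $(\Delta-\Delta^{\op})$-part and the reordering commutators, together with the observation that $Z$ is a $\UqL(\g)$-submodule on which the action factors through $\Fr$, is just a more explicit bookkeeping of that same first-order extraction, and the preliminary identifications you record in part (a) are exactly what the paper treats as immediate.
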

\begin{proof}
Since $Z$ is $\uq(\g)$-invariant, it becomes an algebra in $\Rep(G)$. So, we have an infinitesimal action map $a\colon \g\rightarrow\Der(Z)$. To show that $\Spec(Z)$ is a Poisson $G$-variety, it is enough to work infinitesimally and show that $Z$ is a Poisson $\g$-algebra, i.e. for every $x\in \g$ and $a,b\in Z$ we have
\[x.\{a, b\} - \{x.a, b\} - \{a, x.b\} = (x_{[1]}.a)(x_{[2]}.b)\]
where $\delta(x) = x_{[1]}\otimes x_{[2]}$ is the Lie cobracket on $\g$.

The fact that $A_\Rt$ is an algebra in $\ULR(\g)$ gives the relation
\[h\triangleright(ab) = h_{(1)}\triangleright a\cdot h_{(2)}\triangleright b\]
for $h\in \ULR(\g)$ and $a,b\in A_\Rt$. Now suppose $\pi(a), \pi(b)\in Z$ and $\Fr(p(h))=x\in\g\subset U(\g)$. Taking the commutator we get
\[h\triangleright[a, b] = h_{(1)}\triangleright a\cdot h_{(2)}\triangleright b - h_{(1)}\triangleright b\cdot h_{(2)}\triangleright a.\]
This relation is trivial modulo $(t-q)$ while the $(t-q)$ term gives
\[x.\{\pi(a), \pi(b)\} = \{x.\pi(a), \pi(b)\} + \{\pi(a), x.\pi(b)\} + (x_{[1]}.\pi(a))(x_{[2]}.\pi(b)).\]
\end{proof}

We will now show that the degeneration at a root of unity is compatible with fusion. Suppose $A_\Rt, B_\Rt$ are two algebras in $\RepR(G)$. Then $A_\Rt\otimes B_\Rt$ is naturally an algebra in $\RepR(G)$. Let $p\colon A_\Rt\rightarrow \Aq$ and $p\colon B_\Rt\rightarrow \Bq$ be the maps given by evaluation at $t=q$.

Suppose $A_\Rt, B_\Rt$ are flat over $\Rt$. Suppose $Z_{\Aq}\subset \Aq$ and $Z_{\Bq}\subset \Bq$ are two central $\UqL(\g)$-equivariant and $\uq(\g)$-invariant subalgebras. Then $Z_{\Aq}\otimes Z_{\Bq}\subset \Aq\otimes \Bq$ is central. So, by Proposition \ref{prop:FrobeniusPoissonOrderDegeneration} we obtain a Frobenius Poisson order structure on $(\Aq\otimes \Bq, \Spec (Z_{\Aq}) \times \Spec (Z_{\Bq}))$.

\begin{prop}
The Poisson $G$-variety structure on $\Spec (Z_\Aq) \times \Spec (Z_\Bq)$ obtained by degenerating the algebra $A_\Rt\otimes B_\Rt$ coincides with the fusion of the Poisson $G$-varieties $\Spec (Z_\Aq)$ and $\Spec (Z_\Bq)$.
\label{prop:fusiondegeneration}
\end{prop}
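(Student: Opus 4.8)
The plan is to show that the two Poisson bivectors on $X \times Y := \Spec(Z_{\Aq}) \times \Spec(Z_{\Bq})$ agree: the bivector $\pi_{\mathrm{deg}}$ obtained by degenerating the braided-product algebra $A_\Rt \otimes B_\Rt$ (Proposition~\ref{prop:FrobeniusPoissonOrderDegeneration}), and the bivector $\pi_{\mathrm{fus}} = \pi_{X \times Y} - a(\psi)$ of the fusion (Proposition~\ref{prop:Poissonfusion} applied to the product Poisson $G \times G$-variety $X \times Y$), where $\psi = \sum_{a,b} r^{ab}\, e_a^1 \wedge e_b^2$ and $r$ is the $r$-matrix produced by the degeneration of $\UqL(\g)$ in Section~\ref{subsec:degenqalgebras}. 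Since both are biderivations and $Z_{\Aq} \otimes Z_{\Bq}$ is generated as an algebra by $Z_{\Aq} \otimes 1$ and $1 \otimes Z_{\Bq}$, it suffices to check equality on pairs of elements from these two subalgebras. As a preliminary remark I would note that, being $\uq(\g)$-invariant $\UqL(\g)$-submodules of $\Aq$ and $\Bq$, the subalgebras $Z_{\Aq}$ and $Z_{\Bq}$ lie in the image of $\Fr^*$ (a $\uq(\g)$-trivial $\UqL(\g)$-module is pulled back along $\Fr$, by the short exact sequence of Hopf algebras), so the braiding $\sigma_{A_\Rt, B_\Rt}$ restricts to the tensor flip on them at $t = q$ and Proposition~\ref{prop:RMatrixDegeneration} applies.

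For $u_1, u_2 \in Z_{\Aq}$, the braided multiplication \eqref{eq:braidedproduct} of $A_\Rt \otimes B_\Rt$ restricts on the slice $A_\Rt \otimes 1$ to the ordinary multiplication of $A_\Rt$ (the unit of $B_\Rt$ being transparent to the braiding), so $\pi_{\mathrm{deg}}$ applied to $d(u_1 \otimes 1), d(u_2 \otimes 1)$ is the original Poisson bracket of $u_1, u_2$ on $X$; on the fusion side $a(\psi)$ annihilates pairs of functions pulled back from the single factor $X$, so $\pi_{\mathrm{fus}}$ also returns $\pi_X$ there, and the $1 \otimes Z_{\Bq}$ case is identical. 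For the cross bracket, pick lifts $u' \in A_\Rt$, $v' \in B_\Rt$ of $u \in Z_{\Aq}$, $v \in Z_{\Bq}$. Unwinding \eqref{eq:braidedproduct} gives $(u' \otimes 1)(1 \otimes v') = u' \otimes v'$ and $(1 \otimes v')(u' \otimes 1) = \sigma^{-1}_{A_\Rt, B_\Rt}(v' \otimes u')$, so
\[
[u' \otimes 1,\ 1 \otimes v'] \;=\; u' \otimes v' - \sigma^{-1}_{A_\Rt, B_\Rt}(v' \otimes u').
\]
By Proposition~\ref{prop:RMatrixDegeneration}, $\sigma^{-1}_{A_\Rt, B_\Rt}(v' \otimes u') = u' \otimes v' - (t - q)\sum_{a,b} r^{ab}\,(e_a.u') \otimes (e_b.v') + O((t-q)^2)$, whence the commutator is $(t-q)\sum_{a,b} r^{ab}(e_a.u')\otimes(e_b.v') + O((t-q)^2)$ and Proposition~\ref{prop:PoissonOrderDegeneration} gives $\pi_{\mathrm{deg}}$ on $d(u \otimes 1), d(1 \otimes v)$ equal to $\sum_{a,b} r^{ab}(e_a.u)(e_b.v)$. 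On the fusion side $\pi_{X \times Y}$ contributes nothing across the two factors, while $-a(\psi)$ evaluated on $d(u \otimes 1), d(1 \otimes v)$ yields, up to the sign and normalization conventions, exactly $\sum_{a,b} r^{ab}(e_a.u)(e_b.v)$. Thus the two bivectors agree on generators, hence everywhere; both are equivariant for the same diagonal $G$-action, so the Poisson $G$-variety structures coincide.

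The step I expect to be the main obstacle is the cross-bracket computation — more precisely, the careful first-order expansion of $\sigma^{-1}_{A_\Rt, B_\Rt}$ and the bookkeeping needed to see that the resulting term matches $-a(\psi)$ on the nose, with the correct sign and with the \emph{same} normalization of $r$ that governs the Poisson--Lie structure on $G$ coming from the degeneration (so that ``fusion'' on both sides refers to identical data). A secondary technical point, needed to legitimize the use of Proposition~\ref{prop:RMatrixDegeneration}, is the identification of $Z_{\Aq}$ and $Z_{\Bq}$ with objects in the image of $\Fr^*$; once these are in place, the remainder is formal manipulation of the braided product and of biderivations.
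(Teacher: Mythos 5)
Your proposal is correct and follows essentially the same route as the paper's proof: both expand the braided commutator in $A_\Rt\otimes B_\Rt$ to first order in $(t-q)$ using Proposition~\ref{prop:RMatrixDegeneration} and identify the result with the fusion bivector $\pi_{X\times Y}-a(\psi)$, the only cosmetic difference being that you reduce to pairs of generators from $Z_{\Aq}\otimes 1$ and $1\otimes Z_{\Bq}$ while the paper computes the bracket of two arbitrary simple tensors in a single display. The sign and $r$-versus-$r_{21}$ bookkeeping you defer (coming from using the inverse braiding in \eqref{eq:braidedproduct}) is handled at the same level of detail in the paper, which likewise just reads off the first-order term and matches it against Proposition~\ref{prop:Poissonfusion}.
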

\begin{proof}
By definition, the Poisson structure on $Z_\Aq\otimes Z_\Bq$ is given as follows. Suppose $\pi(z_1), \pi(z_2)\in Z_\Aq$ and $\pi(w_1), \pi(w_2)\in Z_\Bq$. Then
\[\{\pi(z_1)\otimes \pi(w_1), \pi(z_2)\otimes \pi(w_2)\} = \pi\left(\frac{(z_1\otimes w_1)(z_2\otimes w_2) - (z_2\otimes w_2)(z_1\otimes w_1)}{t-q}\right).\]

The product on the right-hand side is the braided tensor product of algebras given by equation \eqref{eq:braidedproduct} defined using the braiding on $\RepR(G)$. Its first-order term in $(t-q)$ therefore can be read off from Proposition \ref{prop:RMatrixDegeneration} and we get
\begin{align*}
\{\pi(z_1)\otimes \pi(w_1), \pi(z_2)\otimes \pi(w_2)\} &= \{\pi(z_1), \pi(z_2)\}\otimes \pi(w_1)\pi(w_2) \\
&+ \pi(z_1)\pi(z_2)\otimes \{\pi(w_1), \pi(w_2)\} \\
&- \pi(z_1)\pi(w_2) \cdot a(r)(\pi(z_2)\otimes \pi(w_1)) \\
&+ \pi(z_2)\pi(w_1) \cdot a(r)(\pi(z_1)\otimes \pi(w_2))
\end{align*}
which coincides with the fusion as defined in Proposition \ref{prop:Poissonfusion}.
\end{proof}

Finally, a degeneration of quantum moment maps gives rise to classical moment maps.

\begin{prop}
Suppose $(A_\Rt, Z\subset \Aq)$ satisfy the conditions of Proposition \ref{prop:FrobeniusPoissonOrderDegeneration}. In addition, suppose there is a quantum moment map $\mu_\Rt\colon \OR(G)\rightarrow A_\Rt$ and a $G$-equivariant map $\mu\colon \cZ\rightarrow G$ such that the diagram
\[
\xymatrix{
\Oq(G) \ar^-{\mu_q}[r] & \Aq \\
\cO(G) \ar^-{\mu^*}[r] \ar[u] & Z\ar[u] 
}
\]
commutes. Then $\mu$ is a moment map for the Poisson $G$-action on $\cZ$ and $(\Aq, Z)$ is a weakly $G$-Hamiltonian Frobenius Poisson order.
\label{prop:HamiltonianFrobeniusPoissonOrderDegeneration}
\end{prop}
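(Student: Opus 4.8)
The plan is as follows. The second conclusion --- that $(\Aq, Z)$ is a weakly $G$-Hamiltonian Frobenius Poisson order --- is formal once the first is in hand: by Proposition \ref{prop:FrobeniusPoissonOrderDegeneration} the pair $(\Aq, Z)$ is already a Frobenius Poisson order and $\cZ = \Spec Z$ a Poisson $G$-variety, the specialization $\mu_q = \pi\circ\mu_\Rt\colon \Oq(G)\to\Aq$ of a quantum moment map is again a quantum moment map, and the square in the hypothesis is, under the central embedding $Z\subset\Aq$, exactly the defining compatibility square. So the whole content is the assertion that $\mu\colon\cZ\to G$ satisfies the classical moment map equation \eqref{eq:classicalmomentmap}, and I would deduce it as the first-order term, in the parameter $(t-q)$, of the quantum moment map equation \eqref{eq:quantummomentmap} for $\mu_\Rt$.

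Concretely, fix $h\in\cO(G)\subset\Oq(G)$ and $a_q\in Z\subset\Aq$, and lift them (the specialization maps being surjective quotients) to $\tilde h\in\OR(G)$ and $a\in A_\Rt$. Since the $\UqL(\g)$-coaction on $\cO(G)\subset\Oq(G)$ is trivial, the specialization of $\Delta(\tilde h) - 1\otimes\tilde h$ vanishes, so by flatness over $\Rt$ we may write exactly
\[ \Delta(\tilde h) \;=\; 1\otimes\tilde h \;+\; (t-q)\sum_j u_j\otimes g_j, \qquad u_j\in\ULR(\g),\ g_j\in\OR(G). \]
Substituting into \eqref{eq:quantummomentmap} and rearranging, the equation becomes
\[ \mu_\Rt(\tilde h)\,a - a\,\mu_\Rt(\tilde h) \;=\; (t-q)\sum_j (u_j\triangleright a)\,\mu_\Rt(g_j) \]
in $A_\Rt$. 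Dividing by $(t-q)$ and applying the specialization $\pi$: the left side becomes $D(\mu^*(h))(a_q) = \{\mu^*(h),\, a_q\}$, by Proposition \ref{prop:PoissonOrderDegeneration} and the definition of the Poisson bracket on $Z$ (note $\pi(\mu_\Rt(\tilde h)) = \mu_q(h) = \mu^*(h)\in Z$ is central, so the left side is divisible by $(t-q)$), while the right side becomes $\sum_j \bigl(\Fr(p(u_j))\triangleright a_q\bigr)\,\mu_q(\pi(g_j))$, the action of $p(u_j)$ on the $\uq(\g)$-invariant element $a_q$ factoring through $\Fr$.

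It remains to identify $\sum_j \bigl(\Fr(p(u_j))\triangleright a_q\bigr)\,\mu_q(\pi(g_j))$ with the right-hand side $\sum_i \mu^*(\tilde a(e^i).h)\,e_i.a_q$ of \eqref{eq:classicalmomentmap}; this is the one genuinely non-formal step, and I expect it to be the main obstacle. I would argue at the level of the half-braiding: by Remark \ref{rmk:momentmapfieldgoal} the $\ULR(\g)$-coaction on $\OR(G)$ is the ``field-goal'' map $\tau$ of \eqref{eq:fieldgoal}, assembled from the braiding of $\RepR(G)$, so the datum $\sum_j u_j\otimes g_j$ above is the $(t-q)$-linear part of $\tau$ restricted to $\cO(G)$; and by Proposition \ref{prop:RMatrixDegeneration} that braiding degenerates to first order in $(t-q)$ to the action of the classical $r$-matrix $r = -2\ell^2 q^{-1} r_{std}$. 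Because a co-end matrix coefficient contributing to $\cO(G)\subset\Oq(G)$ carries both a $V$-leg and a $V^*$-leg, the first-order term of $\tau$ on $\cO(G)\otimes\Aq$ is the sum of the two contractions of $r$ against those legs, with output again in $\cO(G)\otimes\Aq$; unwinding the definition of the dressing $\g^*$-action $\tilde a\colon\g^*\to\Gamma(G,\T_G)$ as the composite of the embedding $\g^*\hookrightarrow\g\oplus\g$ determined by $r$ (Section \ref{subsec:factorizable}) with the infinitesimal left- and right-translation actions on $G$, one recognizes this first-order term as precisely $\sum_i (e_i.(-))\otimes(\tilde a(e^i).(-))$. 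Hence $\sum_j \bigl(\Fr(p(u_j))\triangleright a_q\bigr)\,\mu_q(\pi(g_j)) = \sum_i (e_i.a_q)\,\mu^*(\tilde a(e^i).h)$, which is the right side of \eqref{eq:classicalmomentmap} since $\mu^*(\tilde a(e^i).h)\in Z$ is central; this exhibits the classical moment map equation as the first-order shadow of \eqref{eq:quantummomentmap}, in the same way the quantum moment map equation degenerates to the classical one in \cite{SafronovQuantumMoment}. This would establish that $\mu$ is a moment map for the Poisson $G$-action on $\cZ$, and hence, as noted above, that $(\Aq, Z)$ is a weakly $G$-Hamiltonian Frobenius Poisson order.
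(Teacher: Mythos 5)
Your proposal is correct and follows essentially the same route as the paper: one lifts $h\in\cO(G)$ and $z\in Z$ to $\OR(G)$ and $A_\Rt$, observes that the quantum moment map equation \eqref{eq:quantummomentmap} is trivial modulo $(t-q)$, and reads off the classical equation \eqref{eq:classicalmomentmap} from the linear term, identifying the first-order part of the coaction with the dressing $\g^*$-action paired with the $\g$-action (the paper handles this last identification by citing the analogous degeneration in \cite[Theorem 4.27]{SafronovQuantumMoment}, whereas you sketch it via Proposition \ref{prop:RMatrixDegeneration} and the field-goal map, which is the same mechanism). The remaining observation that the weakly $G$-Hamiltonian structure is then formal agrees with the paper as well.
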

\begin{proof}
Let us first check that $\mu$ satisfies the classical moment map condition \eqref{eq:classicalmomentmap}. Consider $z\in A_\Rt$ and $h\in \OR(G)$. The quantum moment map equation gives
\[\mu_\Rt(h)z = (h_{(1)}\triangleright z)\mu_\Rt(h_{(2)})\]
which is equivalent to
\[[\mu_\Rt(h), z] = (h_{(1)}\triangleright z - \epsilon(h_{(1)}) z)\mu_\Rt(h_{(2)}).\]
If we assume $\pi(z)\in Z\subset \Aq$ and $\pi(h)\in\cO(G)\subset \Oq(G)$, the above equation is trivial modulo $(t-q)$ while the linear $(t-q)$ term gives
\[\{\mu(\pi(h)), \pi(z)\} = \sum_i \mu(\tilde{a}(e^i).\pi(h)) e_i.\pi(z),\]
where we use the $\mu(\pi(h)) = \mu_q(\pi(h))$ by assumption and that the first-order term in $h_{(1)}\triangleright z - \epsilon(h_{(1)}) z$ gives the $\g^*$-action on $\cO(G)$ (see \cite[Theorem 4.27]{SafronovQuantumMoment} for a similar degeneration at $q=1$).
\end{proof}

Observe that the classical moment map $\mu\colon \cZ\rightarrow G$ in the above statement is unique if it exists and its existence boils down to the following property of the quantum moment map: $\mu_q(\cO(G))\subset Z$. We will now show that if $Z$ is the whole center, this condition is automatic.

\begin{lemma}
Suppose $\Aq$ is an algebra in $\Repq(G)$ equipped with a quantum moment map $\mu_q\colon \Oq(G)\rightarrow \Aq$. Then $\mu_q(\cO(G))\subset Z(\Aq)$.
\label{lm:HamiltonianPoissonOrderAutomatic}
\end{lemma}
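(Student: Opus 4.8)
The plan is to deduce the statement directly from the quantum moment map equation \eqref{eq:quantummomentmap} together with the triviality of the small quantum group coaction on the central subalgebra $\cO(G)\subset\Oq(G)$ recorded in Section~\ref{sec:REA}. First I would recall that, at a root of unity, the $\UqL(\g)$-coaction on $\Oq(G)$ is induced from a $\uq(\g)$-coaction along the inclusion $\uq(\g)\subset\UqL(\g)$, and that the $\uq(\g)$-coaction on $\cO(G)=\Fr^*(\O(G))\subset\Oq(G)$ is trivial. Hence, writing the $\UqL(\g)$-coaction on $\Oq(G)$ in Sweedler notation $h\mapsto h_{(1)}\otimes h_{(2)}\in\UqL(\g)\otimes\Oq(G)$, we have $h_{(1)}\otimes h_{(2)}=1\otimes h$ for every $h\in\cO(G)$.

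Then I would substitute this into \eqref{eq:quantummomentmap}: for $h\in\cO(G)$ and any $a\in\Aq$,
$$\mu_q(h)\,a = (h_{(1)}\triangleright a)\,\mu_q(h_{(2)}) = (1\triangleright a)\,\mu_q(h) = a\,\mu_q(h),$$
using that $\Aq$ is a $\UqL(\g)$-module algebra, so that $1\triangleright a = a$. Since $a\in\Aq$ is arbitrary, $\mu_q(h)$ commutes with every element of $\Aq$, that is, $\mu_q(h)\in Z(\Aq)$, which is exactly the claim. (One can note in passing that this is the mechanism making the condition $\mu_q(\cO(G))\subset Z$ automatic when $Z=Z(\Aq)$, as stated just before the lemma.)

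I do not expect any genuine obstacle here: the one nontrivial ingredient — that the $\uq(\g)$-coaction on $\cO(G)$ is trivial — has already been isolated as a structural property of the reflection equation algebra at a root of unity in Section~\ref{sec:REA}, and everything else is a one-line manipulation of Sweedler notation. The only points requiring minor care are the compatibility of the inclusion $\cO(G)\hookrightarrow\Oq(G)$ with the respective coactions, and the use of the $\UqL(\g)$-module (not merely vector-space) structure on $\Aq$, both of which are part of the standing setup.
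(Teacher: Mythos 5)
Your proof is correct and is essentially the paper's own argument in different notation: the paper phrases the moment map equation via the field-goal isomorphism (Remark \ref{rmk:momentmapfieldgoal}) and observes that on $\cO(G)\subset\Oq(G)$ this isomorphism is the tensor flip --- because $\cO(G)$ is generated by $V^*\otimes V$ with $V$ in the M\"uger center --- which is exactly your statement that the coaction on $\cO(G)$ is trivial, so the Sweedler-notation computation $\mu_q(h)a=(1\triangleright a)\mu_q(h)=a\mu_q(h)$ is the same step. The only cosmetic difference is that the paper re-derives this triviality from M\"uger-centrality inside the proof, whereas you cite it as the background fact recorded in Section \ref{sec:REA}.
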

\begin{proof}
Recall that for $W\in \Repq(G)$ we have the field-goal isomorphism $\tau_W\colon \Oq(G)\otimes W\rightarrow W\otimes \Oq(G)$ given by \eqref{eq:fieldgoal}. The subalgebra $\cO(G)\subset \Oq(G)$ is generated by $V^*\otimes V$ where $V$ lies in the M\"{u}ger center of $\Repq(G)$. Therefore, $\tau_W$ on $\cO(G)\subset \Oq(G)$ restricts to the braiding $\cO(G)\otimes W\rightarrow W\otimes \cO(G)$ in $\Repq(G)$. Since $\cO(G)$ lies in $\uq(\g)$-invariants of $\Oq(G)$, the above braiding in $\Repq(G)$ coincides with the tensor flip. Then the claim follows immediately from the interpretation of the quantum moment map equation using the field-goal isomorphism given by Remark \ref{rmk:momentmapfieldgoal}.
\end{proof}

We end with an important result which explains when a weakly $G$-Hamiltonian Frobenius Poisson order is in fact $G$-Hamiltonian.

\begin{prop}
Suppose $(A_\Rt, Z)$ satisfy the conditions of Proposition \ref{prop:HamiltonianFrobeniusPoissonOrderDegeneration}. In addition, suppose that the morphism $(A_\Rt)^{\ULR(\g)}\rightarrow (\Aq)^{\UqL(\g)}$ is surjective. Then $(A_\Rt, Z)$ is a $G$-Hamiltonian Frobenius Poisson order.
\label{prop:StrongHamiltonianOrder}
\end{prop}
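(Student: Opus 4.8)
The plan is to reduce everything to the two extra axioms in Definition~\ref{def:HamiltonianFrobeniusPoissonOrder}. By Proposition~\ref{prop:HamiltonianFrobeniusPoissonOrderDegeneration} the hypotheses already guarantee that $(\Aq, Z)$ is a weakly $G$-Hamiltonian Frobenius Poisson order, the quantum moment map $\mu_q$ being the specialization at $t=q$ of $\mu_\Rt\colon \OR(G)\to A_\Rt$, and by Proposition~\ref{prop:PoissonOrderDegeneration} the Poisson order structure is $D(\pi(z))(\pi(x)) = \pi\bigl((zx-xz)/(t-q)\bigr)$, well defined up to inner derivations. So it remains to produce a representative of $D$ with $D(z)(\mu_q(h)) = 0$ and with $D(z)(-)$ being $\UqL(\g)$-equivariant, for all $z\in Z^G$ and $h\in\Oq(G)$. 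The single new input is the surjectivity of $(A_\Rt)^{\ULR(\g)}\to(\Aq)^{\UqL(\g)}$: since the $\UqL(\g)$-action on $\Fr^*(Z)\subset\Aq$ factors through $\Fr\colon\UqL(\g)\to U(\g)$ and $G$ is connected, $Z^G = (\Fr^*Z)^{\UqL(\g)}\subset(\Aq)^{\UqL(\g)}$, so every $z\in Z^G$ admits a lift $\tilde z\in(A_\Rt)^{\ULR(\g)}$ that is \emph{genuinely} $\ULR(\g)$-invariant. We use such invariant lifts on $Z^G$ to define $D$; this is the essential point, and everything else is formal.

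For the first axiom, pick $h\in\Oq(G)$ and, using surjectivity of the specialization $\OR(G)\to\Oq(G)$, write $h=\pi(\tilde h)$ with $\tilde h\in\OR(G)$; then $\mu_q(h)=\pi(\mu_\Rt(\tilde h))$. Since $D$ is independent of the lift of its second argument, we may compute
\[
D(z)(\mu_q(h)) \;=\; \pi\!\left(\frac{\tilde z\,\mu_\Rt(\tilde h) - \mu_\Rt(\tilde h)\,\tilde z}{t-q}\right),
\]
so it suffices to show that $\tilde z$ and $\mu_\Rt(\tilde h)$ commute in $A_\Rt$. The quantum moment map equation \eqref{eq:quantummomentmap} for $\mu_\Rt$ gives $\mu_\Rt(\tilde h)\,\tilde z = (\tilde h_{(1)}\triangleright\tilde z)\,\mu_\Rt(\tilde h_{(2)})$, where $\tilde h_{(1)}\otimes\tilde h_{(2)}\in\ULR(\g)\otimes\OR(G)$ is the coaction; by $\ULR(\g)$-invariance of $\tilde z$ this equals $\epsilon(\tilde h_{(1)})\,\tilde z\,\mu_\Rt(\tilde h_{(2)}) = \tilde z\,\mu_\Rt(\tilde h)$ by the counit axiom. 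Hence $D(z)(\mu_q(h)) = 0$.

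For the second axiom, fix $u\in\UqL(\g)$ and $a\in\Aq$ and lift them to $\tilde u\in\ULR(\g)$ and $\tilde a\in A_\Rt$ (both specialization maps are surjective by freeness over $\Rt$). Since $z$ is central in $\Aq$ we have $[\tilde z,\tilde a]\in(t-q)A_\Rt$, so $D(z)(a)=\pi\bigl([\tilde z,\tilde a]/(t-q)\bigr)$, and the $\Rt$-linear operator $\tilde u\triangleright(-)$ commutes with division by $t-q$. Writing $\Delta(\tilde u)=\tilde u_{(1)}\otimes\tilde u_{(2)}$ for the coproduct of $\ULR(\g)$ and using that $A_\Rt$ is an algebra in $\RepR(G)$ together with $\ULR(\g)$-invariance of $\tilde z$ (so $\tilde u_{(i)}\triangleright\tilde z=\epsilon(\tilde u_{(i)})\tilde z$), one gets
\[
\tilde u\triangleright[\tilde z,\tilde a] \;=\; (\tilde u_{(1)}\triangleright\tilde z)(\tilde u_{(2)}\triangleright\tilde a) - (\tilde u_{(1)}\triangleright\tilde a)(\tilde u_{(2)}\triangleright\tilde z) \;=\; \tilde z\,(\tilde u\triangleright\tilde a) - (\tilde u\triangleright\tilde a)\,\tilde z \;=\; [\tilde z,\tilde u\triangleright\tilde a].
\]
Dividing by $t-q$ and applying $\pi$ yields $u\triangleright D(z)(a) = D(z)(u\triangleright a)$, which is the desired $\UqL(\g)$-equivariance.

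I expect the only genuine difficulty to be the bookkeeping around specialization: one must ensure that $\OR(G)\to\Oq(G)$, $\ULR(\g)\to\UqL(\g)$ and $A_\Rt\to\Aq$ are surjective with kernel generated by $t-q$ (freeness over $\Rt$ — for $\OR(G)$ this uses the good filtration of Theorem~\ref{thm:REAgoodfiltration}), and that $\mu_q$ is compatible with specialization, which is exactly how it is produced in Proposition~\ref{prop:HamiltonianFrobeniusPoissonOrderDegeneration}. The conceptual content is entirely in the existence of the $\ULR(\g)$-invariant lift $\tilde z$, supplied by the surjectivity hypothesis; once $\tilde z$ is invariant, both axioms follow purely from the comodule-algebra and module-algebra axioms together with the integral-form moment map equation.
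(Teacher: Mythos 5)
Your proposal is correct and follows essentially the same route as the paper: the key point in both is that the surjectivity hypothesis lets one compute $D$ on $Z^G$ using $\ULR(\g)$-invariant lifts $\tilde z$, after which equivariance follows from $h\triangleright[\tilde z,\tilde a]=[\tilde z,h\triangleright\tilde a]$ and the vanishing $D(z)(\mu_q(h))=0$ follows because the integral moment map equation forces $\mu_\Rt(\tilde h)\tilde z=\tilde z\,\mu_\Rt(\tilde h)$ identically. The extra bookkeeping you add about specializations is harmless but not needed beyond what the paper already records.
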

\begin{proof}
We have $Z^G\subset (\Aq)^{\UqL(\g)}$ and so we may consider lifts of elements of $Z^G$ to $\ULR(\g)$-invariant elements of $A_\Rt$ in the definition of $D$. Now consider $z\in (A_\Rt)^{\ULR(\g)}$ such that $\pi(z)\in Z^G$, $a\in A$ and $h\in \ULR(\g)$. Then
\[h\triangleright [z, a] = [z, h\triangleright a].\]
This shows that $D(\pi(z))(-)$ is $\UqL(\g)$-equivariant.

For $h\in\OR(G)$ we have
\[\mu_\Rt(h) z = z\mu_\Rt(h)\]
by the quantum moment map equation. Therefore, $D(\pi(z))(\mu_q(\pi(h))) = 0$.
\end{proof}

\subsection{Hamiltonian reduction of matrix algebras}
Before we state our main result on Azumaya algebras, we will prove two preliminary claims on matrix algebras and their reductions.

Let $k$ be a commutative ring, $H$ a Hopf $k$-algebra and $H'$ a $k$-algebra together with a structure of an $H$-module and an $H$-comodule as in Section \ref{sect:quantummomentmaps}. The following Proposition is a version of \cite[Lemma 3.4.4]{BezrukavnikovCherednikalgebrasHilbert2006} and \cite[Proposition 1.5.2]{VaragnoloDoubleaffineHecke2010}.

\begin{prop}
Let $R$ be a commutative $k$-algebra and $V$ a finitely generated projective $R$-module. Suppose $\xi\colon H'\rightarrow k$ is a map of $H$-module algebras such that the composite
\[H'\xrightarrow{\Delta} H\otimes H'\xrightarrow{\id\otimes\xi} H\]
is an isomorphism. Suppose $A=\End_R(V)$ is an $H$-module algebra equipped with a quantum moment map $\mu\colon H'\rightarrow \End_R(V)$. Denote by $(V^*)_{H'}$ the module of $\xi$-twisted $H'$-coinvariants. Then we have an isomorphism of $R$-algebras
\[\End_R(V) \dS  H \cong \End_R((V^*)_{H'})^{\op}.\]
\label{prop:matrixreduction}
\end{prop}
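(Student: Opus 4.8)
The plan is to identify the quantum Hamiltonian reduction $\End_R(V) \dS H$ with an endomorphism algebra by exhibiting the right module over it. The key idea is that the moment map $\mu\colon H' \to \End_R(V)$ makes $V$ into a module over $H'$, via $h \cdot v = \mu(h)(v)$, and this action is compatible with the $H$-comodule structure on $H'$ and the $H$-module structure on $V$ in the sense that $V$ becomes a module over the smash-product-type algebra built from $H'$ and $H$. Concretely, the quantum moment map equation \eqref{eq:quantummomentmap} says that left multiplication by $\mu(h)$ and the $H$-action interact exactly so that passing to $H'$-coinvariants and $H$-invariants are compatible operations.

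First I would set up the relevant modules. Since $\xi\colon H' \to k$ is a map of $H$-module algebras with $(\id\otimes\xi)\circ\Delta\colon H' \to H$ an isomorphism, the ideal $I' = \ker(\xi)$ behaves like an augmentation ideal, and $H'/H'I'_+ \cong k$ appropriately; more to the point, the category of $H$-equivariant $H'$-modules with the $\xi$-twist is controlled by $H$-modules. I would define on $V^*$ the natural right $\End_R(V)$-module structure and the right $H'$-action dual to $\mu$ (using the antipode of $H$ to convert, as in the definition of $\nabla(\lambda)$ earlier), and let $(V^*)_{H'}$ be the $\xi$-twisted coinvariants $V^* \otimes_{H'} k_\xi$. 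The claim is then that $\End_R(V)\dS H = (\End_R(V)/\End_R(V)\mu(I'))^H$ acts faithfully on $(V^*)_{H'}$ by right multiplication, realizing it as $\End_R((V^*)_{H'})^{\op}$.

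The main computation is to show this map $\End_R(V)\dS H \to \End_R((V^*)_{H'})^{\op}$ is an isomorphism. Since $V$ is finitely generated projective over $R$, this is local on $\Spec R$, so I would reduce to the case $V$ free, where $\End_R(V) = \Mat_n(R)$. Then $\End_R(V)\mu(I')$ is a left ideal, the quotient $\End_R(V)/\End_R(V)\mu(I')$ is identified with $V \otimes (V^*)_{H'}$ as an $\End_R(V)$-module (using that $\Mat_n(R) \cong V \otimes_R V^*$ and that quotienting by the left ideal generated by $\mu(I')$ kills exactly the $I'$-part of the right $V^*$ factor, thanks to the isomorphism $(\id\otimes\xi)\circ\Delta$ which guarantees $(V^*)_{H'}$ is again finitely generated projective of the expected rank), and then taking $H$-invariants of $V \otimes (V^*)_{H'}$ picks out $\End_R((V^*)_{H'})$ via the $H$-comodule-algebra compatibility — here one uses that $H$ acts on $V$ and, through $\mu$ and the coaction on $H'$, on $(V^*)_{H'}$, and that the reduction of the tensor product is the internal Hom. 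Matching the algebra structure from Proposition \ref{prop:qhamreduction} with composition in $\End_R((V^*)_{H'})$ accounts for the $\op$.

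The hard part will be bookkeeping the $H$-equivariance and the twist by $\xi$ correctly: verifying that $(V^*)_{H'}$ is finitely generated projective over $R$ of the right rank (so that the dimension count makes the surjection-or-injection argument go through), and carefully tracking how the $H$-coaction on $H'$ transports through $\mu$ to an $H$-structure on the coinvariants, so that "$H$-invariants of $V \otimes (V^*)_{H'}$" really computes $\End_R((V^*)_{H'})$ rather than some twisted variant. The hypothesis that $(\id\otimes\xi)\circ\Delta$ is an isomorphism is precisely what trivializes these twists — it is the analogue of $H'$ being "free of rank one" over the relevant augmentation — so the proof will consist of making that trivialization explicit and then running the free-module matrix computation as in \cite[Lemma 3.4.4]{BezrukavnikovCherednikalgebrasHilbert2006}.
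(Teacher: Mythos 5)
There is a genuine gap. Your plan for the second half --- reduce to $V$ free over $R$ and finish by a rank count (``so that the dimension count makes the surjection-or-injection argument go through'') --- requires $(V^*)_{H'}$ to be finitely generated projective over $R$ of a prescribed rank. That is neither a hypothesis of the proposition nor a consequence of the assumption that $h\mapsto h_{(1)}\xi(h_{(2)})$ is an isomorphism, which you claim ``guarantees'' it: that hypothesis says nothing about the size or projectivity of the coinvariants. Indeed, in the paper's own application of this proposition (the proof of Theorem \ref{thm:frobtwistedAzumaya}) the coinvariants $(\cV)^*_{\uq(\g)}$ are merely coherent and only \emph{generically} a vector bundle, so any argument that needs projectivity or a rank count of $(V^*)_{H'}$ cannot establish the statement as written. (Your localization to the free case is also delicate, since forming $H$-invariants need not commute with localization without finiteness assumptions on $H$; the actual proof avoids this entirely.)

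What is missing is the mechanism for taking $H$-invariants without knowing anything about $(V^*)_{H'}$, and this is where the paper's proof differs. The first half of your argument is essentially right and needs no freeness: since $V$ is finitely generated projective, $\End_R(V)\cong\End_R(V^*)^{\op}$, right multiplication by $\mu(h)$ becomes postcomposition with $\mu(h)^*$ (an action on the \emph{target} copy of $V^*$), and cocontinuity of $\Hom_R(V^*,-)$ gives $\End_R(V)\otimes_{H'}k\cong\Hom_R(V^*,(V^*)_{H'})$, which is your $V\otimes_R(V^*)_{H'}$. The second step --- the one your rank count was standing in for --- is to note that on this quotient the moment map equation reads $\mu(h)\,a=(h_{(1)}\triangleright a)\,\xi(h_{(2)})$, so the hypothesis that $h\mapsto h_{(1)}\xi(h_{(2)})$ is an isomorphism $H'\to H$ identifies $H$-invariants of $\End_R(V)\otimes_{H'}k$ with $\xi$-twisted invariants for left multiplication by $\mu(H')$, i.e.\ for the $H'$-action on the \emph{source} copy of $V^*$; since $\Hom_R(-,(V^*)_{H'})$ converts coinvariants into invariants, these invariants are exactly $\Hom_R((V^*)_{H'},(V^*)_{H'})$, yielding $\End_R(V)\dS H\cong\End_R((V^*)_{H'})^{\op}$ with no projectivity, freeness, or rank input. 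This is also the genuine role of the isomorphism hypothesis --- converting $H$-invariance into $H'$-invariance --- not trivializing a twist on the coinvariants as your proposal suggests.
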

\begin{proof}
We begin with the computation of the coinvariants $\End_R(V)\otimes_{H'}k$. The moment map $\mu\colon H'\rightarrow \End_R(V)$ gives $V$ the structure of an $H'$-module. Since we are considering the $H'$-action on $\End_R(V)$ given by the right multiplication, it is given by precomposition with the moment map, i.e. $H'$ acts on the source. Since $V$ is finitely generated projective, we may identify $\End_R(V)\cong \End_R(V^*)^{\op}$, where now the $H'$-action is on the target. Since $V^*$ is also finitely generated projective, $\Hom_R(V^*, -)$ preserves colimits, and hence
\[\End_R(V^*)\otimes_{H'}k\cong \Hom_R(V^*, V^*\otimes_{H'} k) = \Hom_R(V^*, (V^*)_{H'}).\]

For $a\in A\otimes_{H'}k$ the quantum moment map equation gives
\[\mu(h) a = (h_{(1)}\triangleright a)\xi(h_{(2)}).\]
Since the map $H'\rightarrow H$ given by $h\mapsto h_{(1)} \xi(h_{(2)})$ is an isomorphism by assumption, we may identify $H$-invariants with $\xi$-twisted $H'$-invariants where $H'$ acts on $A\otimes_{H'}k$ on the left. The functor $\Hom_R(-, (V^*)_{H'})$ sends coinvariants to invariants, so we conclude:
\[\End_R(V^*)^{\op}\dS H = (\End_R(V^*)^{\op}\otimes_{H'}k)^H\cong \Hom_R((V^*)_{H'}, (V^*)_{H'})^{\op}.\]
\end{proof}

Now suppose $\cC$ is a braided tensor category and $V$ a dualizable object. Then the internal endomorphism algebra $E(V) = V\otimes V^*$ carries a natural algebra structure, where $V^*$ is the left dual of $V$. We then have the following result \cite[Proposition 2.3]{VanOystaeyenBrauerGroupBraided1998}, which states that the braided tensor product of matrix algebras is again a matrix algebra.

\begin{prop}\label{prop:tensorend}
Let $\cC$ be a braided tensor category and $V,W\in\cC$ dualizable objects. Then the morphism
$$\phi = 1 \ot \sigma_{W \ot W^*, V^*} : V \ot W \ot W^* \ot V^* \stackrel{}{\longrightarrow} V \ot V^* \ot W \ot W^*  $$ defines an isomorphism of algebras $E( V \ot W) \simeq E(V) \ot E(W)$. \end{prop}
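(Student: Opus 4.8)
The plan is to check directly that $\phi$ is a morphism of algebras; it is automatically invertible since $\sigma_{W \ot W^*, V^*}$ is an isomorphism, so the only content lies in compatibility with units and with multiplications. First I would fix the algebra structures involved. For a dualizable object $X$, the multiplication on $E(X) = X \ot X^*$ is $m_{E(X)} = \id_X \ot \mathrm{ev}_X \ot \id_{X^*}$, contracting the inner pair $X^* \ot X$, and the unit is $\mathrm{coev}_X \colon \mathbf 1 \to X \ot X^*$. Using the standard identification $(V \ot W)^* \cong W^* \ot V^*$, under which $\mathrm{ev}_{V \ot W} = \mathrm{ev}_W \circ (\id_{W^*} \ot \mathrm{ev}_V \ot \id_W)$ and $\mathrm{coev}_{V \ot W} = (\id_V \ot \mathrm{coev}_W \ot \id_{V^*}) \circ \mathrm{coev}_V$, the algebra $E(V \ot W)$ is realized on the object $V \ot W \ot W^* \ot V^*$, its multiplication contracting the middle block $W^* \ot V^* \ot V \ot W$ of the square of this object by first applying $\mathrm{ev}_V$ to the inner $V^* \ot V$ and then $\mathrm{ev}_W$ to the surviving $W^* \ot W$. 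On the other side, by \eqref{eq:braidedproduct} the braided tensor product $E(V) \ot E(W)$ lives on $V \ot V^* \ot W \ot W^*$, and its multiplication first braids the $W \ot W^*$ tensorand of the left factor past the $V \ot V^*$ tensorand of the right factor and then applies $m_{E(V)} \ot m_{E(W)}$.

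Second, I would handle compatibility with units: one checks $\phi \circ \mathrm{coev}_{V \ot W} = \mathrm{coev}_V \ot \mathrm{coev}_W$ using naturality of the braiding along $\mathrm{coev}_W \colon \mathbf 1 \to W \ot W^*$ together with the fact that the braiding with the monoidal unit $\mathbf 1$ is the identity; concretely, the strand $V^*$ is slid past the newly created $\mathrm{coev}_W$, which it does trivially along the $\mathbf 1$-strand.

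The substance is compatibility with multiplication, i.e. $\phi \circ m_{E(V \ot W)} = m_{E(V) \ot E(W)} \circ (\phi \ot \phi)$, which I would establish by a string-diagram computation. Drawing both sides as morphisms $(V \ot W \ot W^* \ot V^*)^{\ot 2} \to V \ot V^* \ot W \ot W^*$: on the left one first performs the two cap contractions $\mathrm{ev}_V, \mathrm{ev}_W$ of $E(V \ot W)$ and then the single braiding $\phi = \id_V \ot \sigma_{W \ot W^*, V^*}$; on the right one first performs two copies of the braiding $\phi$, then the braiding of $W \ot W^*$ past $V \ot V^*$ coming from the braided product, and only then the two cap contractions. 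I would deform one diagram into the other using (i) naturality of $\sigma$ with respect to the evaluation morphisms $\mathrm{ev}_V$ and $\mathrm{ev}_W$, which lets the caps be dragged along braiding strands, and (ii) the hexagon axioms, which rewrite a strand braided across a tensor product $W \ot W^*$ as the composite of its braidings across $W$ and across $W^*$. After these moves both composites collapse to the same normal form, in which the surviving $V^* \ot V$ and $W^* \ot W$ pairs are contracted after a single instance of $\sigma_{W \ot W^*, V^*}$ on the outer strands.

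The step I expect to be the main obstacle is exactly (ii): a naive count of strands suggests the left-hand composite involves a single braiding whereas the right-hand composite involves three, and one must see how the hexagon identity — combined with the cancellation of the contracted strands by the evaluations — reconciles them. This is precisely the calculation of \cite[Proposition 2.3]{VanOystaeyenBrauerGroupBraided1998}, of which the above is a reorganization; in practice I would record the argument via explicit string diagrams, since that makes the relevant cancellations transparent.
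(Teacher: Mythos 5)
Your proposal is correct, and it is essentially the paper's proof: the paper gives no argument of its own here, but simply cites \cite[Proposition 2.3]{VanOystaeyenBrauerGroupBraided1998}, whose content is exactly the direct diagrammatic verification you outline (unit compatibility via naturality, multiplicativity via naturality of $\sigma$ against the evaluations plus the hexagon to assemble and cancel block crossings once strands are capped). The only point to track carefully in executing it is that the over/under conventions in $\phi=1\otimes\sigma_{W\otimes W^*,V^*}$ must be consistent with the $\sigma^{-1}_{A,B}$ used in the braided product \eqref{eq:braidedproduct}; with the paper's conventions all crossings have the $W$-strands passing the same way over the $V$-strands, and the cancellations you describe reduce the three-braiding side to the one-braiding side as claimed.
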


\subsection{Summary of results}\label{sec:template-for-results}
Let us now assemble all the tools above to state our main general result, a framework for constructing Azumaya algebras via quantum Hamiltonian reduction.  We fix the following data:
\begin{itemize}
\item $A_\Rt$ is an algebra in $\RepR(G)$ flat over $\Rt$. Denote by $\Aq$ its specialization at $t=q$.

\item $\mu_\Rt\colon \OR(G)\rightarrow A_\Rt$ is a quantum moment map.

\item $Z\subset \Aq$ is a central, $\UqL(\g)$-equivariant and $\uq(\g)$-invariant subalgebra.  Set $\cZ=Spec(Z)$. We assume that $\cZ$ is smooth and connected and $\Aq$ is a finitely generated projective $Z$-module.

\item $G_{stab}\subset G$ is a normal subgroup which acts trivially on $\cZ$ and such that the pairing on $\g$ restricts to a nondegenerate pairing on $\g_{stab}\subset \g$. Denote $\overline{G}=G/G_{stab}$.
 \end{itemize}

\begin{lemma}
Suppose $p\in\cZ$ is a closed point such that $A|_p$ is a matrix algebra. Then $Z$ coincides with the center of $\Aq$.
\label{lm:wholecenter}
\end{lemma}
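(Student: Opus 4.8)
The plan is to localize: I will show that $p$ lies in the Azumaya locus of $\Aq$ over $Z$, which is open, deduce that the center of $\Aq$ agrees with $Z$ over a basic open neighbourhood of $p$, and then propagate this equality to all of $\cZ$ using normality. Note in particular that the statement only involves $\Aq$, $Z$ and the fibre at $p$, not the auxiliary $\Rt$-family $A_\Rt$.

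First, since $\cZ$ is smooth and connected it is an integral variety, so $Z$ is a Noetherian, integrally closed domain; write $K$ for its fraction field. As $\Aq$ is finitely generated projective over $Z$ and $\cZ$ is connected, $\Aq$ has constant rank, necessarily $n^2$ where $A|_p \cong \Mat_n(\C)$. The canonical $Z$-algebra map $\Aq \otimes_Z \Aq^{\op} \to \End_Z(\Aq)$ is then a morphism of finitely generated projective $Z$-modules of equal rank $n^4$ whose base change to $\kappa(p) = \C$ is the analogous map for $\Mat_n(\C)$, an isomorphism. Since the locus where a map of vector bundles of equal rank is an isomorphism is open, it is an isomorphism over a basic open $U = \Spec Z_f \ni p$; that is, $\Aq|_U$ is Azumaya over $Z_f$, and in particular its center is $Z_f$.

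Next I would transfer this back to $\cZ$. The center $Z(\Aq)$ is a $Z$-submodule of $\Aq$, hence finitely generated over the Noetherian ring $Z$, so $Z \subseteq Z(\Aq)$ is a finite, in particular integral, extension. Because $Z \to Z_f$ is flat and $\Aq$ is finitely presented over $Z$, the natural map $\End_Z(\Aq) \otimes_Z Z_f \to \End_{Z_f}(\Aq|_U)$ is an isomorphism; tensoring the left-exact sequence $0 \to Z(\Aq) \to \Aq \to \End_Z(\Aq)$ with the flat module $Z_f$ therefore identifies $Z(\Aq) \otimes_Z Z_f$ with $Z(\Aq|_U) = Z_f$. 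Since $Z(\Aq) \subseteq \Aq$ is torsion-free over the domain $Z$, the localization map $Z(\Aq) \hookrightarrow Z(\Aq) \otimes_Z Z_f = Z_f \subseteq K$ is injective, exhibiting $Z(\Aq)$ as a subring of $K$ that is integral over $Z$. As $Z$ is integrally closed in $K$, this forces $Z(\Aq) \subseteq Z$, and hence $Z = Z(\Aq)$.

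The one step that deserves a little care is the interchange of center formation with the localization $Z \to Z_f$, which relies on finite presentation of $\Aq$ to pass $\otimes_Z Z_f$ through the $\Hom$; everything else is routine commutative algebra once the local Azumaya property near $p$ has been established. (One could phrase the last paragraph equivalently: the finite morphism $\Spec Z(\Aq) \to \cZ$ is birational and $\cZ$ is normal, hence it is an isomorphism.)
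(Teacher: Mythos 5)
Your argument is correct, and its first half is the same as the paper's: you establish that $p$ lies in the (open) Azumaya locus, so that on a neighbourhood of $p$ the centre of $\Aq$ is exactly the base ring. You make this openness explicit via the map $\Aq\otimes_Z\Aq^{\op}\to\End_Z(\Aq)$ between projective modules of equal rank, whereas the paper simply quotes openness of the Azumaya condition and, using irreducibility of $\cZ$, works over a dense open $\cZ^\circ$ rather than a basic open around $p$ --- an inessential difference. Where you genuinely diverge is in the globalization step. The paper takes a central element $z$, observes that its image in $A/Z$ vanishes over $\cZ^\circ$, and concludes $z\in Z$ from the assertion that $A/Z$ is flat (hence torsion-free) over $Z$. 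You instead note that $Z(\Aq)$ is a finite, hence integral, commutative extension of $Z$, show via flat base change of the sequence $0\to Z(\Aq)\to\Aq\to\End_Z(\Aq)$ that it localizes to $Z_f$, hence embeds into the fraction field $K$, and then invoke normality of $Z$ (available because $\cZ$ is smooth and connected) to force $Z(\Aq)\subseteq Z$; equivalently, a finite birational morphism onto a normal variety is an isomorphism. This buys you a fully justified final step from the stated hypotheses --- the flatness of $A/Z$ asserted in the paper is not obvious and essentially hides a torsion-freeness claim of the same nature as your normality argument --- at the cost of a slightly longer commutative-algebra passage (the compatibility of centre formation with localization, which you correctly reduce to finite presentation of $\Aq$ and flatness of $Z\to Z_f$). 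Both routes are valid; yours is the more self-contained.
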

\begin{proof}
By assumption $A$ is a finitely generated projective $Z$-module. The Azumaya condition is open, so there is an open dense subset $\cZ^\circ\subset \cZ$ such that $A|_{\cZ^\circ}$ is Azumaya over $\cZ^\circ$. In particular, the center of $A|_{\cZ^\circ}$ coincides with $\cO(\cZ^\circ)$.

Now suppose $z\in Z(\Aq)$. Then its image in $A|_{\cZ^\circ}$ is also in the center, i.e. it lies in $\cO(\cZ^\circ)$. The image of $z$ under $A\rightarrow A/Z$ is therefore zero generically on $\cZ$ and since $A/Z$ is flat over $Z$, it has to be zero.
\end{proof}

Since $Z=Z(\Aq)$, combining Lemma \ref{lm:PoissonOrderAutomatic} and Proposition \ref{prop:PoissonOrderDegeneration} we obtain the structure of a Poisson order on $(\Aq, Z)$. Moreover, combining Lemma \ref{lm:HamiltonianPoissonOrderAutomatic} and Proposition \ref{prop:HamiltonianFrobeniusPoissonOrderDegeneration} we see that $(\Aq, Z)$ is a weakly $G$-Hamiltonian Frobenius Poisson order with a classical moment map $\mu\colon \cZ\rightarrow G$. We fix the following additional data:
\begin{itemize}
\item $G_0\subset G$ is a subvariety such that the composite $G_0\rightarrow G\rightarrow \overline{G}$ is \'{e}tale and such that the moment map $\mu\colon \cZ\rightarrow G$ factors through $G_0$.

\item $\xi_q\colon \Oq(G)\rightarrow \C$ is a map of $\UqL(\g)$-module algebras. Denote by $\xi\in G$ the point corresponding to the composite $\O(G)\rightarrow \Oq(G)\xrightarrow{\xi}\C$. We assume $\xi$ lies in the intersection of the big cell $\Br\subset G$ and $G_0\subset G$.

\item $U\subset \mu^{-1}(\xi)$ is a non-empty open subset consisting of stable points (i.e, each point lies in a closed $G$-orbit) with stabilizer $G_{stab}$. Let $\cM^\xi\subset \cZ\dS G$ denote the image of $U$ under the projection $\mu^{-1}(\xi)\rightarrow \cZ\dS G$.  Note that by Luna's \'{e}tale slice theorem the projection $U\rightarrow \cM^\xi$ is a $\overline{G}$-torsor (see \cite[Proposition 5.7]{DrezetLuna}).
\end{itemize}

\begin{theorem}
\label{thm:frobtwistedAzumaya}
Consider $A_\Rt, \mu_q, \xi_q, Z, U$ as above and assume that he map $(A_\Rt)^{\ULR(\g)}\rightarrow (\Aq)^{\UqL(\g)}$ is surjective. Then $(A_q,\cZ)$ defines a $G$-Hamiltonian Frobenius Poisson order.  Assume further that:
\begin{enumerate}
\item The Poisson $G$-variety structure on $\cZ$ is nondegenerate.

\item $G_{stab}$ acts trivially on $(\Aq/I_q)^{\uq(\g)}$.

\item The point $p$ from Lemma \ref{lm:wholecenter} lies in $\mu^{-1}(\Br)$.
\end{enumerate}

Then $\Aq$ is a sheaf of Azumaya algebras over $\mu^{-1}(\Br)$ and its Hamiltonian reduction $\cA^\xi|_{\cM^\xi}$ is a sheaf of Azumaya algebras over $\cM^\xi\subset \cZ\dS G$.
\end{theorem}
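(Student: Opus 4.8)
The plan is to run the framework developed above in three stages: first upgrade $(\Aq,\cZ)$ to a $G$-Hamiltonian Frobenius Poisson order, then establish the Azumaya property \emph{before} reduction via the theory of Poisson orders, and finally descend it through Frobenius quantum Hamiltonian reduction. For the first stage: the given closed point $p\in\cZ$ with $A|_p$ a matrix algebra lets us apply Lemma \ref{lm:wholecenter} to get $Z=Z(\Aq)$; then Lemma \ref{lm:PoissonOrderAutomatic} supplies the hypothesis of Proposition \ref{prop:PoissonOrderDegeneration}, so $(\Aq,Z)$ is a Poisson order, and Proposition \ref{prop:FrobeniusPoissonOrderDegeneration} (using $A_\Rt\in\RepR(G)$ and the equivariance and $\uq(\g)$-invariance of $Z$) makes it a Frobenius Poisson order. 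Lemma \ref{lm:HamiltonianPoissonOrderAutomatic} gives $\mu_q(\cO(G))\subset Z$, hence a classical moment map $\mu\colon\cZ\to G$, and Proposition \ref{prop:HamiltonianFrobeniusPoissonOrderDegeneration} makes $(\Aq,Z)$ weakly $G$-Hamiltonian; finally the surjectivity hypothesis on invariants together with Proposition \ref{prop:StrongHamiltonianOrder} promote it to a $G$-Hamiltonian Frobenius Poisson order, which is the first assertion of the theorem.

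For the Azumaya property before reduction, assumption (1) and Theorem \ref{thm:nondegenerateleaves} identify $\mu^{-1}(\Br)\subset\cZ$ as an open symplectic leaf of the smooth variety $\cZ$. Since $p\in\mu^{-1}(\Br)$ by assumption (3) and $A|_p$ is a matrix algebra, Theorem \ref{thm:browngordon} shows $\Aq|_{\mu^{-1}(\Br)}$ is locally free with all fibers isomorphic as algebras to $A|_p$, hence matrix algebras over $\C$; thus $\Aq$ is a sheaf of Azumaya algebras over $\mu^{-1}(\Br)$.

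For the descent, Proposition \ref{prop:FrobeniusPoissonReduction} (with $Z$ Noetherian) makes $(\cA^\xi,\cZ\dS G)$ a Poisson order. Passing to $\overline G$ by Proposition \ref{prop:momentmapstabilizer} and using $\xi\in\Br$, Proposition \ref{prop:symplecticquotient} applied to the Luna torsor $U\to\cM^\xi$ shows $\cM^\xi$ is a smooth symplectic open subvariety of $\cZ\dS G$; restricting the Poisson order and applying Theorem \ref{thm:browngordon} again, $\cA^\xi|_{\cM^\xi}$ is locally free with all fibers isomorphic as algebras. It therefore suffices to exhibit a single point of $\cM^\xi$ with matrix-algebra fiber. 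Fix $x\in U$ with image $[x]\in\cM^\xi$. Reduction in stages \eqref{eq:reductioninstages}, together with assumption (2) and the $\overline G$-torsor structure of $U\to\cM^\xi$, identifies $\cA^\xi|_{[x]}$ with the fiber $\cE^\xi|_x$ of the $\uq(\g)$-reduction $\cE^\xi=(\Aq/I)\dS\uq(\g)$. Now $\Aq/I$ is the restriction of $\Aq$ to $\mu^{-1}(\xi)\subset\mu^{-1}(\Br)$, hence Azumaya, so étale-locally it is $\End_R(V)$ with $V$ projective; Proposition \ref{prop:matrixreduction}, applied with $k=\C$, $H=\uq(\g)$, $H'=o_q(G;\xi)$ and the character induced by $\xi_q$, shows $\cE^\xi$ is étale-locally $\End_R(M)^{\op}$ with $M$ projective, hence a sheaf of Azumaya algebras over $\mu^{-1}(\xi)$. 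In particular $\cE^\xi|_x$, and therefore $\cA^\xi|_{[x]}$, is a matrix algebra over $\C$. By Theorem \ref{thm:browngordon} every fiber of $\cA^\xi|_{\cM^\xi}$ is then a matrix algebra over $\C$, so $\cA^\xi|_{\cM^\xi}$ is a sheaf of Azumaya algebras over $\cM^\xi$.

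I expect the delicate step to be the invocation of Proposition \ref{prop:matrixreduction}: one must check that the composite $o_q(G;\xi)\xrightarrow{\Delta}\uq(\g)\otimes o_q(G;\xi)\xrightarrow{\id\otimes\xi_q}\uq(\g)$ is an isomorphism and that its output is genuinely an Azumaya algebra (equivalently, that the relevant module of twisted coinvariants is projective), so that forming the fiber at $x$ commutes with the non-exact $\uq(\g)$-Hamiltonian reduction. Both points rest on the factorizability of the small quantum group (Theorem \ref{thm:smallquantumgroupfactorizable}) and on working inside the Azumaya locus of $\Aq$ from the second stage, while the big-cell hypothesis $\xi\in\Br$ enters (via Theorem \ref{thm:nondegenerateleaves} and Proposition \ref{prop:symplecticquotient}) precisely to guarantee that $\mu^{-1}(\xi)$ meets the open symplectic leaf of $\cZ$ and that $\cM^\xi$ is symplectic.
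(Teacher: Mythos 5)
Your overall architecture is the same as the paper's: Proposition \ref{prop:StrongHamiltonianOrder} (fed by Lemmas \ref{lm:wholecenter}, \ref{lm:PoissonOrderAutomatic}, \ref{lm:HamiltonianPoissonOrderAutomatic} and the degeneration propositions) gives the $G$-Hamiltonian Frobenius Poisson order; Theorem \ref{thm:nondegenerateleaves} plus Theorem \ref{thm:browngordon} and the Azumaya point $p$ give the Azumaya property over $\mu^{-1}(\Br)$; and Propositions \ref{prop:momentmapstabilizer}, \ref{prop:symplecticquotient}, \ref{prop:FrobeniusPoissonReduction} reduce the descent step to producing matrix-algebra fibers of $\cE^\xi$ over $U$ via Proposition \ref{prop:matrixreduction}. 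However, there is a genuine gap in your last step, and it is exactly the point you flag but then resolve incorrectly. After splitting $\Aq/I$ \'{e}tale-locally as $\End_R(\cV)$, Proposition \ref{prop:matrixreduction} only identifies the $\uq(\g)$-reduction with $\End_R(\cW)^{\op}$ where $\cW=(\cV^*)_{\uq(\g)}$ is a \emph{coherent} sheaf that is not a priori projective; consequently $\cE^\xi$ is not a sheaf of Azumaya algebras over all of $\mu^{-1}(\xi)$, and the fiber of $\End_R(\cW)^{\op}$ at an \emph{arbitrarily fixed} point $x\in U$ need not be a matrix algebra (nor does forming the fiber commute with the reduction there). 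Neither factorizability of $\uq(\g)$ nor the fact that $x$ lies in the Azumaya locus of $\Aq$ gives local freeness of $\cW$ near $x$. The paper's resolution is different: since $(\cA^\xi,\cZ\dS G)$ is a Poisson order and $\cM^\xi$ is symplectic, it suffices to establish the Azumaya property \emph{generically} on $U$ (equivalently on $\cM^\xi$), and by generic flatness (EGA IV, Th\'{e}or\`{e}me 6.9.1) $\cW$ is a vector bundle on a dense open set, so $\cE$ is generically Azumaya there. Your argument is repaired simply by choosing the point $[x]$ inside that dense open locus rather than fixing $x$ in advance; as written, the step ``hence a sheaf of Azumaya algebras over $\mu^{-1}(\xi)$'' is unjustified.

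A second, smaller omission: to apply Proposition \ref{prop:matrixreduction} you need the composite $o_q(G;\xi)\to \uq(\g)\otimes o_q(G;\xi)\to\uq(\g)$, $h\mapsto h_{(1)}\xi_q(h_{(2)})$, to be an isomorphism. Factorizability (Theorem \ref{thm:smallquantumgroupfactorizable}) gives this directly only for the counit, i.e.\ that the Rosso map $o_q(G;e)\to\uq(\g)$ is an isomorphism. The paper first replaces $\Aq$ by $\Aq\otimes \C_{\xi_q^{-1}}$, twisting the moment map by the antipode-inverted character, so that the reduction along $\xi_q$ becomes a reduction along $\epsilon$ and one may assume $\xi=e$. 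You correctly identify this hypothesis as the delicate point, but you should include this twisting reduction (or an equivalent argument) rather than leaving it to factorizability alone.
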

\begin{proof}
By Proposition \ref{prop:StrongHamiltonianOrder} the pair $(\Aq, Z)$ is a $G$-Hamiltonian Frobenius Poisson order. By Theorem \ref{thm:nondegenerateleaves} $\mu^{-1}(\Br)\subset \cZ$ is an open symplectic leaf. Therefore, by Theorem \ref{thm:browngordon} the sheaf $\Aq$ is Azumaya over $\mu^{-1}(\Br)$.

Denote $\overline{\mu}\colon X\rightarrow G\rightarrow \overline{G}$ which is a moment map for the $\overline{G}$-action on $X$ by Proposition \ref{prop:momentmapstabilizer}. Let $\overline{\xi}\in\overline{G}$ be the image of $\xi$ under $G\rightarrow \overline{G}$. By assumption $\mu^{-1}(\xi)\rightarrow \overline{\mu}^{-1}(\overline{\xi})$ is \'{e}tale. Therefore, applying Proposition \ref{prop:symplecticquotient} to the $\overline{G}$-Hamiltonian reduction we deduce that the $G$-Hamiltonian reduction $\cM^\xi$ is a smooth symplectic variety.

By Proposition \ref{prop:FrobeniusPoissonOrderDegeneration} $\cA^\xi$ defines a Poisson order over $\cZ\dS G$. Since $\cM^\xi$ is symplectic, Theorem \ref{thm:browngordon} implies that to establish an Azumaya property it is enough to establish it generically. Since $\pi\colon U\rightarrow \cM^\xi$ is a $\overline{G}$-torsor, we have $\pi^*(\cA^\xi|_{\cM^\xi})\cong \cE|_U$. Therefore, the Azumaya property of $\cA^\xi$ over $\cM^\xi$ is equivalent to that of $\cE$ over $U$ which is therefore also enough to establish generically.

Let $\xi_q^{-1}$ be the character of $\Oq(G)$ obtained by precomposing $\xi_q$ with the antipode on $\Oq(G)$. Then the unit algebra $\C\in\Repq(G)$ carries a moment map $\xi_q^{-1}\colon \Oq(G)\rightarrow \C$. Let us denote it by $\C_{\xi_q^{-1}}$. Then we may identify the quantum Hamiltonian reduction of $A$ along $\xi_q$ with the quantum Hamiltonian reduction of $A\otimes \C_{\xi_q^{-1}}$ along the trivial character $\epsilon\colon \Oq(G)\rightarrow \C$. In particular, this allows us to assume $\xi_q = \epsilon$ and $\xi=e\in G$ is the identity element.

Choose an \'{e}tale cover $p\colon Y\rightarrow U$ over which $\Aq/I$ splits as $\End(\cV)$ for a vector bundle $\cV$ over $Y$. The pullback $p^*\cE$ is given by the Hamiltonian reduction $\End(\cV) \dS  \uq(\g)$. By Theorem \ref{thm:smallquantumgroupfactorizable} $\uq(\g)$ is a factorizable Hopf algebra, so the Rosso homomorphism $o_q(G; e)\rightarrow \uq(\g)$ is an isomorphism. Therefore, by Proposition \ref{prop:matrixreduction} $\End(\cV) \dS \uq(\g)\cong \End(\cW)$, where $\cW = (\cV)^*_{\uq(\g)}$. The sheaf $\cW$ is coherent over a smooth scheme, so by generic flatness (see \cite[Theor\`{e}me 6.9.1]{EGA42}) it is generically a vector bundle. Therefore, $p^*\cE$ is generically a matrix algebra over $Y$, so $\cE$ is generically an Azumaya algebra over $U$.
\end{proof}

\begin{lemma}\label{lem:BrAzumaya} Suppose $(A_q,Z)$ is a $G$-Hamiltonian Frobenius Poisson order.  Then the Azumaya locus of $A_q$ is contained within $\mu^{-1}(\Br)$.
\end{lemma}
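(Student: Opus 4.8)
The plan is to work fiberwise: a closed point $\mathfrak m\in\cZ$ lies in the Azumaya locus of $A_q$ exactly when the fiber $A_q\otimes_Z\kappa(\mathfrak m)$ is a matrix algebra over $\C$ (by Lemma \ref{lm:wholecenter} we may assume this locus is non-empty, so that $Z$ is the full center of $A_q$), so it suffices to show that this fiber is \emph{not} a matrix algebra whenever $\mu(\mathfrak m)\notin\Br$. Before attacking that, I would organize the geometry using the Poisson order structure. By the symplectic-core form of the Brown--Gordon theorem (\cite[Theorem 4.2]{BrownPoissonorderssymplectic2003a}, refining Theorem \ref{thm:browngordon}) the isomorphism type of $A_q\otimes_Z\kappa(\mathfrak m)$ depends only on the symplectic core of $\mathfrak m$ in $\cZ$, so the Azumaya locus is a union of symplectic cores. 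By Proposition \ref{prop:momentmapPoisson} the classical moment map $\mu\colon\cZ\to G_{STS}$ is Poisson; since $\Br$ is the open dressing $\g^*$-orbit, its complement $G\setminus\Br$ is a closed union of dressing orbits, hence a closed union of symplectic leaves of $G_{STS}$ (Proposition \ref{prop:STSleaves}), hence a closed Poisson subvariety of $G_{STS}$. Therefore $\mu^{-1}(G\setminus\Br)\subset\cZ$ is a closed Poisson subvariety, in particular a union of symplectic cores, so $\mu^{-1}(\Br)$ is a union of symplectic cores as well, and it is enough to rule out a \emph{single} point in each symplectic core $C\subseteq\mu^{-1}(G\setminus\Br)$.

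So fix such an $\mathfrak m$, write $\xi=\mu(\mathfrak m)\in G\setminus\Br$ and $\overline A=A_q\otimes_Z\kappa(\mathfrak m)$. Because $Z$ is $\uq(\g)$-invariant inside $A_q$, the left ideal $\mathfrak m A_q$ is $\uq(\g)$-stable, so $\overline A$ inherits the structure of a $\uq(\g)$-module algebra; composing $\mu_q\colon\Oq(G)\to A_q$ with $A_q\twoheadrightarrow\overline A$ and using Lemma \ref{lm:HamiltonianPoissonOrderAutomatic} (which places $\cO(G)$ in the center of $A_q$, acting on $\overline A$ through the character of $\xi$) yields a quantum moment map $\overline\mu\colon o_q(G;\xi)\to\overline A$ for the $\uq(\g)$-action, where $o_q(G;\xi)=\Oq(G)\otimes_{\cO(G)}\C$ is the fiber of the reflection equation algebra at $\xi$. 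The lemma thus reduces to the assertion that a matrix algebra cannot carry a $\uq(\g)$-module-algebra structure admitting a quantum moment map from $o_q(G;\xi)$ when $\xi\notin\Br$.

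The heart of the matter — and the step I expect to be the main obstacle — is the precise behavior of the fiber $o_q(G;\xi)$ of the reflection equation algebra at a root of unity as $\xi$ varies: namely that $o_q(G;\xi)$ is a form of the small quantum group $\uq(\g)$ (a $\uq(\g)$-Galois extension of $\C$), and that this form is \emph{split} — equivalently, Rosso's homomorphism descends to an isomorphism $o_q(G;\xi)\xrightarrow{\sim}\uq(\g)$ — precisely when $\xi\in\Br$, while for $\xi\notin\Br$ the form is nontrivial. Granting this, an argument converse in type to Proposition \ref{prop:matrixreduction} (applied with $H=\uq(\g)$, which is factorizable by Theorem \ref{thm:smallquantumgroupfactorizable}, and $H'=o_q(G;\xi)$) shows that the existence of the quantum moment map $\overline\mu$ into a matrix algebra $\overline A=\End(V)$ forces $o_q(G;\xi)$ to be the split form, contradicting $\xi\notin\Br$; concretely one analyzes the image of $\overline\mu$ together with the $\uq(\g)$-invariants of $\overline A$ via the double centralizer theorem to manufacture a trivialization of the torsor. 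I would establish the splitting dichotomy directly, tracing through Lusztig's quantum Frobenius and the quasi-$R$-matrix to relate the big cell $B_+B_-$ to nondegeneracy of the induced pairing on the fiber of the reflection equation algebra; this is where the genuine work lies, the rest being bookkeeping with structures already in place. (I also note that in every situation where the lemma is applied $\cZ$ is irreducible and $\mu^{-1}(\Br)$ is non-empty, hence dense, so that $\mu^{-1}(G\setminus\Br)$ is nowhere dense; since the Azumaya locus is open it is then automatically contained in $\mu^{-1}(\Br)$, and the fiberwise dichotomy is needed only for the statement in full generality.)
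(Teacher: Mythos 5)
Your reduction to the fiberwise statement is fine, and the symplectic-core bookkeeping is harmless (if unnecessary), but the argument has a genuine gap at precisely the point you yourself flag as ``the heart of the matter.''  The dichotomy that $o_q(G;\xi)$ is a $\uq(\g)$-Galois object which is split if and only if $\xi\in\Br$ is asserted, not proved, and so is the ``converse in type to Proposition~\ref{prop:matrixreduction}'' step by which a quantum moment map $o_q(G;\xi)\to\End(V)$ into a matrix algebra would force splitness.  Nothing in the paper supplies either claim: the paper only ever uses the fiber at $\xi=e$, where the Rosso homomorphism is an isomorphism onto $\uq(\g)$ by factorizability (Theorem~\ref{thm:smallquantumgroupfactorizable}), and for $\xi\notin\Br$ it is not even clear that $o_q(G;\xi)$ is a form of $\uq(\g)$ at all --- the expected behaviour off the big cell is that the relevant pairing degenerates, not that one obtains a nontrivial torsor.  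So as written your proof establishes the lemma only ``granting'' its main content.  Your closing parenthetical is also incorrect: openness of the Azumaya locus together with density of $\mu^{-1}(\Br)$ does \emph{not} give containment, since an open set need not lie inside a given dense open subset; there is no shortcut around the pointwise claim.  (A smaller quibble: invoking Lemma~\ref{lm:wholecenter} requires the projectivity/smoothness hypotheses of the template section, which are not part of the definition of a $G$-Hamiltonian Frobenius Poisson order, but this is not where the difficulty lies.)

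The paper's proof avoids all structure theory of the fibers of $\Oq(G)$ and is essentially one line: the element $K^{-\rho}\in\Oq(G)$ has the property that its $\ell$-th power lies in the central copy of $\cO(G)$ and generates the vanishing ideal of the complement $G\setminus\Br$.  Hence at any point $p$ with $\mu(p)\notin\Br$, the image $\mu_q(K^{-\rho})$ in the fiber of $A_q$ is a nonzero element which $q$-commutes with everything (so generates a two-sided ideal) and whose $\ell$-th power is zero (so the ideal is proper); a finite-dimensional matrix algebra has no nonzero proper two-sided ideals, so $p$ is not an Azumaya point.  If you wish to pursue your route you would first have to prove the splitting dichotomy for $o_q(G;\xi)$, which is substantially harder than the lemma itself; the explicit $q$-central nilpotent element is the efficient replacement for that analysis.
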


\begin{proof}
Recall from \cite{JordanWhiteREA, JosephLocalfinitenessadjoint1992, KlimykQuantumGroupsTheir1997} that we have an element denoted $K^{-\rho}$ in $\Oq(G)$, which acts diagonalizably on weight vectors $v_\lambda$ via $K^{-\rho}v_\lambda = q^{-\langle \rho,\lambda\rangle}v_\lambda$.  When $q$ is generic, this element corresponds to monomial in $U_q(\mathfrak{t})$ under the Rosso isomorphism, but it is well-defined as an element of $\Oq(G)$ for all $q$.  The element $K^{\ell\rho}$ lies in the central subalgebra $\O(G)\subset \Oq(G)$, where it generates the vanishing ideal of the complement to $\Br\subset G$.  Now let $p\in \Spec(Z)$ be any point in the preimage of the complement $G\setminus\Br$ of the Bruhat cell.  Then $\mu_q(K^{-\rho})$ defines a non-zero element of the fiber, which $q$-commutes with elements of $A$, and therefore defines a non-zero two-sided ideal in the fiber.  This ideal is proper, because $(K^{-\rho})^{\ell}$ is zero by assumption.  Since finite-dimensional matrix algebras cannot contain non-trivial two-sided ideals, we conclude that $p$ cannot lie in the Azumaya locus.
\end{proof}

\section{Quantum character varieties}\label{sec:CharVar}

In this section we implement the program of of Section \ref{sec:frobenius} to show that quantum character varieties of closed surfaces form Azumaya algebras over their classical degenerations.  We begin by recalling the construction of quantum character stacks and varieties, referring to \cite{Ben-ZviIntegratingquantumgroups2018} for more details.

Let $\Mfld^2_{or}$ denote the topological category with objects being oriented surfaces (possibly with boundary), and with morphisms being the space of oriented embeddings.  Let $\Disk^2_{or}$ denote full subcategory consisting of disjoint unions of disks; hence $\Disk^2_{or}$ is a model for the framed $E_2$-operad.

A ribbon tensor category $\cA$ determines a functor from $\Disk^2_{or}$ to the (2, 1)-category $\Pr$ of locally presentable linear categories. Examples include $\RepK(G)$ for generic quantum parameter, the category $\RepR(G)$ of representations of Lusztig's integral form, its specialization $\Repq(G)$ at root of unity parameter $q$, and finally $\Rep(G)$ in the classical case -- in this latter case the ribbon element is the identity natural isomorphism of the identity functor.  The factorization homology of oriented surfaces with coefficients in $\cA$ is the canonical ``left Kan'' extension,
$$\xymatrix{\Disk^2_{or} \ar[ddr]\ar^{\cA}[rr]&& \Pr\\
\\
& \Mfld^2_{or}\ar@{-->}_{S\mapsto \int_S\cA}[uur]}$$
For any surface $S$, the embedding of the empty surface into $S$ induces a distinguished object $\Dist_S$  in $\int_S\cA$.  In the case $\cA=\Rep(G)$, $\Dist_S$ is simply the structure sheaf, $\O(Ch_G^{fr}(S))$, of the character stack (recall that we regard quasi-coherent sheaves on a quotient stack as $G$-equivariant objects on the framed character variety).  Following \cite{Ben-ZviIntegratingquantumgroups2018}, in the case $\cA=\Repq(G)$, we call $\Dist_S$ the ``quantum structure sheaf''.  To alleviate notation, we will call the classical distinguished object $\Dist_S^{cl}$, and the quantum distinguished object $\Dist_S$.

If $\Sp$ is a surface with boundary, the designation of an interval on the boundary determines on $\int_\Sp\cA$ the structure of an $\cA$-module category, and allows us in particular to define internal homomorphisms.  We denote
\[A_\Sp = \underline{\End}(\O_{\Sp}^q)\in\cA,\]
where $\underline{\End}$ denotes the internal endomorphism algebra. The algebra $A_{\Sp}$ is a deformation quantization of the coordinate algebra of the framed character variety; it was computed in \cite{Ben-ZviIntegratingquantumgroups2018} in terms of Alekseev-Grosse-Schomerus algebras \cite{AlekseevCombinatorialquantizationHamiltonian1996}. We will denote by $A_{\Sp}^\Rt$ the $\Rt$-algebra obtained by replacing $\Repq(G)$ by $\RepR(G)$ in the above construction.

If $S$ is a closed surface, we consider the quantum character variety to be $\End(\Dist_S)$, the global algebra of endomorphisms in $\int_S \Repq(G)$.

\subsection{The Frobenius Poisson order}\label{subsec:CharVarFrobPoissonOrder}
By its construction as a left Kan extension, factorization homology is functorial both in the surface $S$ (under embeddings), and in the braided tensor category $\cA$ (under ribbon tensor functors).  Hence the quantum Frobenius functor induces a further functor
$$\int_S\Fr^*: \int_S\Rep(G)\to\int_S\Repq(G),$$
for any surface $S$. Moreover, this functor is pointed with respect to the distinguished objects $\Dist_S$:  it means that we obtain a canonical isomorphism in $\int_S\Repq(S)$,
$$\left(\int_S\Fr^*\right)\left(\Dist_S^{cl}\right)\cong \Dist_S.$$

For the remainder of this section, let us adopt the following convention: $S$ will henceforth denote a closed connected surface of genus $g$, and $\Sp=S\backslash D$ will denote the surface with boundary obtained by removing a disk $D$ from $S$.

By the above discussion, $\int_{\Sp}\Rep(G)$ is a $\Rep(G)$-module category while $\int_{\Sp}\Repq(G)$ is a $\Repq(G)$-module category. In addition, we obtain an embedding of algebra objects in $\Repq(G),$
\begin{equation}\label{CharVarFrobPair}\Fr^*(\O(Ch^{fr}_G(\Sp))\to A_\Sp.\end{equation}
It follows from the fact that $\Rep(G)$ lies in the M\"uger center of $\Repq(G)$ that the embedding \eqref{CharVarFrobPair} is central. Note that we may identify
\[\O(Ch^{fr}_G(\Sp))\cong \cO(G^{2g})\]
as a $G$-representation and
\[A^\Rt_\Sp\cong \OR(G)^{\otimes 2g}\]
as an object of $\RepR(G)$. In particular, $A^\Rt_\Sp$ is flat over $\Rt$.

\begin{lemma}
$A_\Sp$ is a finitely generated $\O(Ch^{fr}_G(\Sp))$-module.
\label{lm:charactervarietyfinitelygenerated}
\end{lemma}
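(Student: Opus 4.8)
The plan is to combine the explicit description $A^\Rt_\Sp \cong \OR(G)^{\otimes 2g}$ as an object of $\RepR(G)$ with the good-filtration machinery from Section~\ref{sect:goodfiltrations} and the central subalgebra $\O(Ch^{fr}_G(\Sp)) \cong \O(G^{2g})$ sitting inside $A_\Sp$ via \eqref{CharVarFrobPair}. First, I would reduce to the case $g=1$: by Proposition~\ref{prop:fusiondegeneration} and the general fact that the braided tensor product of algebras in $\Repq(G)$ reassembles $A_\Sp$ out of $2g$ copies of $\Oq(G)$, and since $\Fr^*(\O(G))^{\otimes 2g}\hookrightarrow \Oq(G)^{\otimes 2g}$ is the tensor product of the individual central embeddings, it suffices to show that $\Oq(G)$ is finitely generated over its central subalgebra $\Fr^*(\O(G))$. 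Indeed, if each factor $\Oq(G)$ is generated by finitely many elements over $\O(G)$, then the same finite sets (one per tensor factor) generate the braided tensor product $\Oq(G)^{\otimes 2g}$ over $\O(G)^{\otimes 2g}$, because the braiding isomorphisms \eqref{eq:braidedproduct} used to define the multiplication are $\Repq(G)$-morphisms and in particular move generators to $\O(G)$-linear combinations of generators (here one uses that $\Fr^*(\O(G))$ is central, so the braiding with it is the tensor flip).

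Next, for the single-factor claim I would invoke Theorem~\ref{thm:REAgoodfiltration}: $\Oq(G)$ admits a good filtration with successive subquotients $\nabla(-\omega_0\lambda)\otimes\nabla(\lambda)$, indexed by dominant weights $\lambda$. The key point is that $\O(G)\subset\Oq(G)$ is precisely the image of $\Fr^*$, which by the analogous classical Peter--Weyl decomposition has subquotients $\nabla(-\omega_0(\ell\nu))\otimes\nabla(\ell\nu)$ for dominant $\nu$ --- i.e. the "$\ell$-divisible" part of the filtration on $\Oq(G)$. Each $\nabla(\lambda)$ is a finite-dimensional $\UqL(\g)$-module, and the tensor-product property of good filtrations (Proposition~\ref{prop:goodfiltrations}(2)) together with the standard fact that $\nabla(\lambda)$ is an $\O(G)$-submodule-generated-by-finitely-many-weight-vectors over the $\ell$-th power part shows that $\Oq(G)$ is a finitely generated module over $\Fr^*(\O(G))$: concretely, finitely many highest-weight generators $\{v_{\lambda_1},\dots,v_{\lambda_r}\}$ with $\lambda_i$ running over a set of coset representatives of $X_+ / \ell X_+$ suffice, since every dominant $\lambda$ is $\lambda_i + \ell\nu$ for some $i$ and dominant $\nu$, and $\nabla(\lambda)$ is then an $\O(G)$-linear span of translates of $v_{\lambda_i}$ (using that $\O(G)$ surjects onto functions on $G$ whose matrix coefficients realize the $\ell\nu$-shifts). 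This is the standard "PI-degree finiteness" argument for quantum function algebras at roots of unity, and can be cited from \cite{DeConciniQuantumFunctionAlgebra} if one prefers not to reprove it.

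The main obstacle I anticipate is making the generation statement for the braided tensor product genuinely rigorous: while it is intuitively clear that stacking finitely many generators per factor works, one must check that the braided multiplication \eqref{eq:braidedproduct} does not "spread" a product of generators outside the $\O(G)^{\otimes 2g}$-span of the chosen finite set. This is where centrality of $\Fr^*(\O(G))$ is essential --- it guarantees that whenever a central element needs to be moved past a generator, nothing happens (the braiding is the flip), so the only braidings that act nontrivially are those among the non-central generators, and these land back in the finite-dimensional spaces $\nabla(-\omega_0\lambda_i)\otimes\nabla(\lambda_j)$ whose products are controlled by finitely many structure constants over $\O(G)^{\otimes 2g}$. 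I would therefore spell out the $g=1$ case carefully and then note that the reduction to $g=1$ is formal given Proposition~\ref{prop:fusiondegeneration} and the centrality of the Frobenius image in each tensor slot. Alternatively, and perhaps most cleanly, one can avoid the tensor-factor bookkeeping entirely by observing that $A_\Sp \cong \Oq(G)^{\otimes 2g}$ is, as a module over itself, a quotient of $\Oq(G^{2g})$ (the co-end of $\Repq(G^{2g})$) --- no, this is not quite right since the algebra structures differ, so I would stick with the factor-by-factor argument above.
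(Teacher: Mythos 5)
Your proposal is correct and follows essentially the same route as the paper: the paper simply cites \cite{DeConciniQuantumFunctionAlgebra} (Theorem 7.2) for the fact that $\Oq(G)$ is finitely generated over $\cO(G)$, and then observes that $A_\Sp\cong\Oq(G)^{\otimes 2g}$ as an $\cO(G)^{\otimes 2g}$-module is a tensor product of finitely generated modules, the point being exactly the one you identify, namely that the Frobenius image lies in the M\"uger center so the braided product does not disturb the factor-wise module structure. Your extra good-filtration sketch of the single-factor finiteness is unnecessary (and somewhat hand-wavy at the step producing generators from coset representatives of $X_+/\ell X_+$), but since you fall back on the citation, the argument stands.
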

\begin{proof}
By \cite[Theorem 7.2]{DeConciniQuantumFunctionAlgebra} $\Oq(G)$ is finitely generated over $\cO(G)$. But $A_\Sp\cong \Oq(G)^{\otimes 2g}$ considered as an $\cO(G)^{\otimes 2g}$-module is a tensor product of finitely generated modules, so it is also finitely generated.
\end{proof}

\begin{lemma}
The fiber of the algebra $A_\Sp$ at the trivial local system on $Ch^{fr}_G(S)$ is a matrix algebra.
\label{lm:smallquantumgroupfactorizable}
\end{lemma}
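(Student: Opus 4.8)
The plan is to compute this fiber by hand, ``reducing to the small quantum group'' as announced in the paragraph on distinguished fibers. Recall from the discussion above that $A^\Rt_\Sp\cong \OR(G)^{\otimes 2g}$, and hence $A_\Sp\cong \Oq(G)^{\otimes 2g}$ as an algebra in $\Repq(G)$, the product being the braided (``handle'') product \eqref{eq:braidedproduct}, and that the central subalgebra \eqref{CharVarFrobPair} is $\Fr^*(\O(Ch^{fr}_G(\Sp)))\cong \O(G)^{\otimes 2g}$, embedded factorwise. Since every object of the form $\Fr^*(V)$ lies in the M\"uger center of $\Repq(G)$, all braidings involving these central factors are tensor flips, so the restriction of the braided product to $\O(G)^{\otimes 2g}$ is the ordinary commutative product of $\O(G^{2g})$, and $\O(G)^{\otimes 2g}$ sits inside $A_\Sp$ as the evident $\O(G^{2g})$-module. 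The trivial local system is the identity point $(e,\dots,e)\in G^{2g}\cong Ch^{fr}_G(\Sp)$; base change along evaluation at this point therefore commutes with the $2g$ tensor factors, giving
\[A_\Sp\otimes_{\O(G^{2g})}\C\;\cong\; o_q(G;e)^{\otimes 2g},\]
the braided tensor product of $2g$ copies of $o_q(G;e)=\Oq(G)\otimes_{\O(G)}\C$, now formed inside the ribbon category $\Vect\otimes_{\Rep(G)}\Repq(G)\cong \Rep(\uq(\g))$ of Theorem \ref{thm:smallquantumgroupfactorizable} --- the braided product in $\Repq(G)$ descending to the braided product in $\Rep(\uq(\g))$ under the de-equivariantization functor.

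Next I would use factorizability. By Theorem \ref{thm:smallquantumgroupfactorizable} the small quantum group $H:=\uq(\g)$ is a finite-dimensional factorizable Hopf algebra, and (as already invoked in the proof of Theorem \ref{thm:frobtwistedAzumaya}) the Rosso homomorphism $o_q(G;e)\to H$ is an isomorphism, compatibly with the adjoint $H$-actions. Thus the fiber in question is isomorphic, as a $\C$-algebra, to the braided tensor power $\widetilde H^{\otimes 2g}$ of $2g$ copies of the braided dual $\widetilde H\cong H$ (with adjoint action) inside $\Rep(H)$: this is precisely the ``moduli algebra'' of the once-punctured genus-$g$ surface attached to the factorizable Hopf algebra $H$.

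It remains to see that $\widetilde H^{\otimes 2g}$ is a matrix algebra over $\C$. The crucial input is that the \emph{one-handle algebra} $\widetilde H^{\otimes 2}$ (the case $g=1$, i.e. $A_{\Sigma_{1,1}}$) is, as an algebra object in $\Rep(H)$, an internal endomorphism algebra $E(V_0)=V_0\otimes V_0^*$ for a suitable dualizable object $V_0\in\Rep(H)$ --- one may take $V_0=H$ with the adjoint action, the substance of the claim being that factorizability turns the braided tensor square of the braided dual into the Heisenberg double $H\#H^*$, which acts faithfully on $H$ and is therefore all of $\End_\C(H)$; equivalently, $\int_{\Sigma_{1,1}}\Rep(H)\simeq\Rep(H)$ as $\Rep(H)$-module categories, with the distinguished object corresponding to $V_0$. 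Granting this, decompose $\Sp$ into $g$ one-handled pieces glued along a disk, so that $\widetilde H^{\otimes 2g}\cong(\widetilde H^{\otimes 2})^{\otimes g}$ as a braided tensor product of $g$ copies; then $g-1$ applications of Proposition \ref{prop:tensorend} give $\widetilde H^{\otimes 2g}\cong E(V_0^{\otimes g})$, whose underlying $\C$-algebra is $\End_\C(V_0^{\otimes g})$, a matrix algebra. Hence $A_\Sp\otimes_{\O(G^{2g})}\C$ is a matrix algebra, as claimed.

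The main obstacle is exactly the emphasized statement above: identifying the one-handle algebra of the \emph{non-semisimple} factorizable Hopf algebra $\uq(\g)$ as a categorical matrix algebra. For semisimple modular categories this is classical (the moduli algebra is a direct sum of matrix blocks indexed by the simple objects), but here $\Rep(\uq(\g))$ is not semisimple, so one genuinely needs the non-semisimple version --- either via an explicit $H$-equivariant identification of the Heisenberg double $H\#H^*$ with $\End_\C(H)$ in the form $E(H_{\mathrm{ad}})$, or by citing the equivalence $\int_{\Sigma_{1,1}}\Rep(H)\simeq\Rep(H)$ from the theory of factorization homology over factorizable ribbon categories. Everything else --- base change along the central subalgebra, the Rosso isomorphism at the identity, and Proposition \ref{prop:tensorend} --- is bookkeeping.
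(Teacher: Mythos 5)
Your overall strategy is the same as the paper's: reduce the fiber at the identity to the genus-$g$ moduli algebra of the small quantum group $\uq(\g)$, use factorizability to identify the one-handle building block with $\End_\C(\uq(\g))$ realized as an internal endomorphism algebra in $\Rep(\uq(\g))$, and then conclude via Proposition \ref{prop:tensorend} that the braided tensor product of $g$ such pieces is again a matrix algebra. The paper phrases the first reduction slightly differently -- it observes that the fiber is itself a distinguished object of factorization homology with coefficients in $\Vect\otimes_{\Rep(G)}\Repq(G)\cong\Rep(\uq(\g))$ and invokes \cite{Ben-ZviIntegratingquantumgroups2018} to identify it with $E_{\uq(\g)}^{\widetilde\otimes g}$, whereas you compute the base change $A_\Sp\otimes_{\cO(G^{2g})}\C$ directly and use the Rosso isomorphism $o_q(G;e)\cong\uq(\g)$; that part of your argument is sound, modulo one imprecision: $A_\Sp\cong\Oq(G)^{\otimes 2g}$ only as an \emph{object} of $\Repq(G)$, and its multiplication is the Alekseev--Grosse--Schomerus handle-pattern product, not the iterated braided tensor product \eqref{eq:braidedproduct} of $2g$ copies of the reflection equation algebra (that would be the algebra of a sphere with $2g+1$ boundary circles). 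What is true, and what you in effect use later, is that $A_\Sp$ is the braided tensor product of $g$ copies of the one-handle (elliptic double) algebra, and this survives the base change.

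The genuine gap is exactly the step you flag and then assume (``Granting this\dots''): that for the non-semisimple factorizable Hopf algebra $H=\uq(\g)$ the one-handle algebra is an internal endomorphism algebra $E(H_{\mathrm{ad}})$, equivalently a matrix algebra $\End_\C(H)$. Identifying the needed statement is not the same as proving it, and without it your argument does not close. The paper supplies precisely this input by citation: the one-handle algebra is the elliptic double $E_{\uq(\g)}$ of \cite{BrochierFourier}, and \cite[Theorem 5.6]{BrochierFourier} shows that for a factorizable Hopf algebra the elliptic double is isomorphic to the Heisenberg double; since the Heisenberg double acts faithfully on $H$, a dimension count gives $E_{\uq(\g)}\cong\End(\uq(\g))$, $H$-equivariantly, after which Proposition \ref{prop:tensorend} finishes the proof exactly as you propose. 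So your proposal becomes complete once you replace ``granting this'' with that reference (or an equivalent explicit verification that the Heisenberg double $H\# H^*$, with its adjoint-equivariant faithful action on $H$, is $E(H_{\mathrm{ad}})$); as written, the crux of the lemma is left unestablished.
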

\begin{proof}
The main observation is that the fiber of $A_\Sp$ at the identity is itself an instance of a distinguished object in factorization homology, namely it is the algebra $A_{\Sp}^{sm}$, the internal endomorphism algebra of the distinguished object in the factorization homology category of $\Sp$ with coefficients in the braided tensor category $\Vect\otimes_{\Rep(G)} \Repq(G)$. By \cite{arkhipov_another_2003} this category is equivalent to $\Rep(\uq(\g))$.

Recall the notion of an elliptic double $E_H$ from \cite{BrochierFourier} associated to a quasi-triangular Hopf algebra $H$. The results of \cite{Ben-ZviIntegratingquantumgroups2018} therefore give an isomorphism
$$A_{\Sp}^{sm} \cong E_{\uq(\g)}^{\widetilde{\ot} g},$$
the braided tensor product in $\Rep(\uq(\g))$ of the elliptic double $E_{\uq(\g)}$. By \cite[Theorem 5.6]{BrochierFourier} the elliptic double $E_{\uq(\g)}$ is isomorphic to the Heisenberg double since $\uq(\g)$ is factorizable. By the definition of the Heisenberg double, $E_{\uq(\g)}$ acts faithfully on $\uq(\g)$ and hence (comparing dimensions) we have an isomorphism $$E_{\uq(\g)}\cong \End(\uq(\g)).$$
But the braided tensor product of matrix algebras is a matrix algebra by Proposition \ref{prop:tensorend}.
\end{proof}

In particular, by Lemma \ref{lm:wholecenter} we conclude that $\O(Ch^{fr}_G(\Sp))$ coincides with the center of $A_\Sp$. But then combining Lemma \ref{lm:PoissonOrderAutomatic} and Proposition \ref{prop:PoissonOrderDegeneration}, we obtain the structure of a Frobenius Poisson order on the pair $(A_\Sp, Ch_G^{fr}(\Sp))$.

\begin{prop}
The Poisson $G$-variety $Ch_G^{fr}(\Sp)$ is nondegenerate.
\label{prop:fockroslynondegenerate}
\end{prop}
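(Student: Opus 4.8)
The plan is to verify nondegeneracy of the Fock--Rosly Poisson structure on $Ch_G^{fr}(\Sp)\cong G^{2g}$ by reducing to two building blocks and then invoking the stability of nondegeneracy under fusion. First I would recall that, by construction of factorization homology, $\Sp = S\backslash D$ decomposes (up to the relevant gluing) into $2g$ one-handles, so that $Ch_G^{fr}(\Sp)$ is obtained by iterated fusion from $g$ copies of the surface $\Sigma_{1,1}$ (a once-punctured torus), or equivalently from $2g$ copies of the annulus-with-a-handle building block whose character variety is $G_{STS}$ with its conjugation moment map. Concretely, $A_\Sp^\Rt\cong \OR(G)^{\otimes 2g}$ and the Poisson structure on $G^{2g}$ obtained by degeneration (Proposition \ref{prop:fusiondegeneration}) is the iterated fusion of the $2g$ factors, each of which is $G_{STS}$ equipped with its standard multiplicative moment map $\mu=\mathrm{id}\colon G_{STS}\to G$ (the holonomy).

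The key step is therefore to check that the single building block $G_{STS}$, with moment map the identity, is a nondegenerate Poisson $G$-variety in the sense of Definition in Section \ref{subsec:nondegen}. For this I would apply the Li-Bland--\v Severa criterion (Proposition \ref{prop:LiBlandSevera}): one must show that $\widetilde\pi^\sharp = (\pi_{STS} + a_G(r))^\sharp\colon \T^*_G\to\T_G$ is an isomorphism, where $a_G$ is the adjoint action. Using $\pi_{STS} = \pi_{STS,\phi} - t^{\ad}$ and $r = t + c$, and that $\pi_{STS,\phi} = \sum_{a,b}c^{ab}e_a^R\wedge e_b^L$ while $a_G$ differentiates conjugation $e^{\ad} = e^L - e^R$, this is a finite linear-algebra computation at a single point of $G$ reducing to the fact that $c$ is nondegenerate; alternatively one can cite that $G_{STS}$ is the fusion-double / internal-$\mathrm{Hom}$ model for $\int_{\Sigma_{1,1}}$ whose nondegeneracy is classical (Alekseev--Malkin--Meinrenken), or that its symplectic leaves (Proposition \ref{prop:STSleaves}) fill out $G$ generically. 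Then Proposition \ref{prop:Poissonfusion}(3) propagates nondegeneracy through the $2g$-fold fusion, giving that $Ch_G^{fr}(\Sp)$ is nondegenerate as a Poisson $G\times\cdots\times G$-variety for the diagonal $G$-action. (One could equally phrase the base case as the double $D(G) = G\times G$ with its two moment maps, which is visibly symplectic on the big cell, and then fuse.)

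The main obstacle I anticipate is bookkeeping rather than conceptual: correctly identifying, at the level of Poisson geometry, the elementary building block that the factorization-homology presentation $A_\Sp^\Rt\cong\OR(G)^{\otimes 2g}$ degenerates to — i.e. matching the internal-endomorphism algebra $A_{\Sigma_{1,1}}$ against the moduli-algebra/AGS description and confirming its classical limit carries exactly the $\pi_{STS}$ structure with moment map the holonomy — and checking that the braided tensor product structure \eqref{eq:braidedproduct} degenerates to the fusion bivector $-a(\psi)$ with $\psi$ built from the \emph{correct} $r$-matrix (this is Proposition \ref{prop:fusiondegeneration}, already in hand). Once the base case and this identification are pinned down, the inductive step is immediate from Proposition \ref{prop:Poissonfusion}. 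I would also remark that this statement is already proved in \cite{AlekseevQuasiPoisson} for framed character varieties, so the cleanest writeup is simply to cite \emph{loc.\ cit.} for nondegeneracy of $G_{STS}$ and of the fusion product, and note that the Poisson structure arising here by degeneration agrees with the Fock--Rosly structure treated there.
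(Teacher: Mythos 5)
There is a genuine gap: your proposed base case is false. The variety $G_{STS}$, with the conjugation $G$-action and moment map the identity, is \emph{not} a nondegenerate Poisson $G$-variety. Indeed, the image of the action map $a\colon\g\to\T_G$ at a point $g$ is the tangent space to the conjugacy class of $g$, while the image of $\pi_{STS}^\sharp$ is the tangent space to the symplectic leaf through $g$, which by Proposition \ref{prop:STSleaves} is contained in that same conjugacy class; hence $(a,\pi_{STS}^\sharp)$ has image contained in the tangent space of the conjugacy class, which has codimension at least $\operatorname{rank}(G)$ at every point. (At $g=e$ both $\pi_{STS}$ and the conjugation vector fields vanish outright, so the Li-Bland--\v{S}evera matrix $\widetilde\pi^\sharp$ is zero there.) For the same reason your fallback claims do not hold: the symplectic leaves of $G_{STS}$ are never open, and \cite{AlekseevQuasiPoisson} does not assert nondegeneracy of $G$ under conjugation --- what is nondegenerate there is the \emph{double} $D(G)=G\times G$ with its $G\times G$-action (their Example 10.5), and that is the building block the paper actually uses.

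Relatedly, the decomposition you start from is not the right one: $A_\Sp^\Rt\cong\OR(G)^{\otimes 2g}$ only as an object of $\RepR(G)$, not as the naive $2g$-fold braided tensor product of reflection equation algebras. If the classical limit were the $2g$-fold fusion of $G_{STS}$ with moment map the identity, the resulting moment map on $G^{2g}$ would be $(x_1,\dots,x_{2g})\mapsto x_1\cdots x_{2g}$ (the many-holed sphere), contradicting Proposition \ref{prop:charvarmm}, which identifies the moment map as a product of commutators; moreover that fused structure is itself degenerate at $(e,\dots,e)$, since Proposition \ref{prop:Poissonfusion} only propagates nondegeneracy, it does not create it. The correct elementary piece is the cylinder, whose framed character variety is $D(G)$ as a quasi-Poisson $G\times G$-variety (classically, the degeneration of the handle algebra $A_{T^2\setminus D^2}$), and this is exactly how the paper argues: identify the degenerated structure with the Fock--Rosly one via Propositions \ref{prop:RMatrixDegeneration} and \ref{prop:fusiondegeneration}, write $\Sp$ as a fusion of cylinders, cite \cite{AlekseevQuasiPoisson} for nondegeneracy of $D(G)$, and conclude by Proposition \ref{prop:Poissonfusion}. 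Your parenthetical alternative (``phrase the base case as the double $D(G)$'') is the correct route, but note that ``visibly symplectic on the big cell'' is not enough either: nondegeneracy in the sense of Section \ref{subsec:nondegen} is a condition at every point, which is what the citation to Example 10.5 of \cite{AlekseevQuasiPoisson} supplies.
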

\begin{proof}
Recall from \cite{FockRosly} that $Ch_G^{fr}(\Sp)$ carries a natural Poisson structure coming from a classical $r$-matrix on $\g$. Using Proposition \ref{prop:RMatrixDegeneration}, one can generalize \cite[Theorem 7.3]{Ben-ZviIntegratingquantumgroups2018} to show that the Poisson structure on $Ch_G^{fr}(\Sp)$ obtained by Proposition \ref{prop:PoissonOrderDegeneration} coincides with the Fock--Rosly Poisson structure associated to the classical $r$-matrix in Proposition \ref{prop:RMatrixDegeneration}.

We may construct the surface $\Sp$ by fusing together cylinders. By Proposition \ref{prop:fusiondegeneration} this means the Poisson structure on $Ch_G^{fr}(\Sp)$ is given by the fusion of the framed character varieties for a cylinder.

The Fock--Rosly Poisson structure on the cylinder is twist equivalent to the quasi-Poisson $(G\times G)$-variety $D(G)$ (see \cite[Example 5.4]{AlekseevQuasiPoisson}) which is nondegenerate by \cite[Example 10.5]{AlekseevQuasiPoisson}.
\end{proof}

\subsection{The Frobenius quantum moment map}\label{subsec:CharVarFrobqmm}

Let us now recall how to compute the quantum character variety of a closed surface in terms of quantum Hamiltonian reduction following \cite{Ben-ZviQuantumcharactervarieties2018}. Let $\Ann$ be the annulus and consider the embedding $Ann\subset \Sp$ as a closed neighborhood of the boundary. We may identify $\int_{\Ann} \RepR(G)$ with the category of $\OR(G)$-modules in $\RepR(G)$. Moreover, $\int_{\Ann} \RepR(G)$ carries a natural monoidal structure coming from stacking of annuli. Moreover, embedding of the disk into the annulus gives a monoidal functor $\RepR(G)\rightarrow \int_{\Ann} \RepR(G)$.

By functoriality of factorization homology, we get the structure of a $\int_{\Ann} \RepR(G)$-module category on $\int_{\Sp} \RepR(G)$. In particular, taking the internal endomorphism algebras of the distinguished objects gives a map
\[\mu_\Rt\colon \OR(G)\longrightarrow A^\Rt_{\Sp}\]
of algebras in $\RepR(G)$. By \cite[Proposition 4.2]{Ben-ZviQuantumcharactervarieties2018} and \cite[Proposition 3.6]{SafronovQuantumMoment} it satisfies the quantum moment map equation \eqref{eq:quantummomentmap}.

Finally, by \cite[Theorem 5.4]{Ben-ZviQuantumcharactervarieties2018} we may identify the quantum character variety of the closed surface as a quantum Hamiltonian reduction:
\[\End(\Dist^\Rt_S)\cong A^\Rt_{\Sp} \dS \ULR(\g)\]
and similarly for its specialization at $t=q$.

Combining Lemma \ref{lm:HamiltonianPoissonOrderAutomatic} and Proposition \ref{prop:HamiltonianFrobeniusPoissonOrderDegeneration} we obtain that $(A_\Sp, Ch_G^{fr}(\Sp))$ is a weakly $G$-Hamiltonian Frobenius Poisson order. In particular, there is a classical moment map $\mu\colon Ch_G^{fr}(\Sp)\rightarrow G$. We will now identify this moment map explicitly.

\begin{prop}
\label{prop:charvarmm}
The classical moment map $\mu\colon Ch_G^{fr}(\Sp)\cong G^{2g}\rightarrow G$ is given by
\[\mu(x_1, y_1, \dots, x_g, y_g) = \prod_{n=1}^g [x_i, y_i].\]
\end{prop}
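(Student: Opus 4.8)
The plan is to identify $\mu$ through its characterization in the proof of Proposition~\ref{prop:HamiltonianFrobeniusPoissonOrderDegeneration}, and then to reduce the computation to a purely classical statement about boundary holonomy. By Lemma~\ref{lm:smallquantumgroupfactorizable} the fiber of $A_\Sp$ at the trivial local system is a matrix algebra, so Lemma~\ref{lm:wholecenter} shows that $Z := \O(Ch_G^{fr}(\Sp))$ is the full center of $A_\Sp$; hence Lemma~\ref{lm:HamiltonianPoissonOrderAutomatic} gives $\mu_q(\O(G))\subset Z$, and the classical moment map $\mu$ is \emph{uniquely} characterized by the condition that $\mu^*\colon\O(G)\to\O(Ch_G^{fr}(\Sp))$ is the restriction of $\mu_q\colon\Oq(G)\to A_\Sp$ to the central subalgebra $\Fr^*(\O(G))\subset\Oq(G)$, corestricted to $Z$. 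So it suffices to compute $\mu_q$ on $\Fr^*(\O(G))$.

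First I would exploit functoriality of factorization homology in the coefficient category. The quantum Frobenius $\Fr^*\colon\Rep(G)\to\Repq(G)$ is a ribbon tensor functor, so it induces compatible functors $\int_{\Ann}\Fr^*$ and $\int_\Sp\Fr^*$, the latter a module functor over the former, and these are pointed with respect to the distinguished objects (this is precisely what makes the central embedding \eqref{CharVarFrobPair} exist). Since $\mu_q$ is obtained by applying $\underline{\End}$ to the action of the distinguished object of $\int_{\Ann}\Repq(G)$ on that of $\int_\Sp\Repq(G)$, and likewise the classical moment map $\mu^{cl}\colon\O(Ch_G^{fr}(\Ann))=\O(G)\to\O(Ch_G^{fr}(\Sp))=\O(G^{2g})$ is obtained from the $\int_{\Ann}\Rep(G)$-module structure on $\int_\Sp\Rep(G)$, functoriality of $\underline{\End}$ under $\Fr^*$ yields a commuting square whose horizontal maps are $\mu^{cl}$ and $\mu_q$ and whose vertical maps are the Frobenius central embeddings $\Fr^*(\O(G))\hookrightarrow\Oq(G)$ and $\Fr^*(\O(G^{2g}))\hookrightarrow A_\Sp$ (the latter with image $Z$). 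Consequently $\mu$ equals $\mu^{cl}$ as a map of varieties, and the problem is reduced to the classical one.

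It then remains to identify $\mu^{cl}$. In the classical case $\cA=\Rep(G)$, factorization homology computes quasi-coherent sheaves on character stacks, the distinguished object is the structure sheaf, and the $\int_{\Ann}\Rep(G)$-module structure on $\int_\Sp\Rep(G)$ is restriction of local systems along the boundary collar $\Ann\hookrightarrow\Sp$; taking internal endomorphism algebras of the structure sheaves of the (affine) framed varieties turns $\mu^{cl}$ into pullback of functions along the boundary-holonomy map $Ch_G^{fr}(\Sp)\to G$. Using the standard CW presentation of the closed genus-$g$ surface $S$ (one $0$-cell, $2g$ $1$-cells $x_1,y_1,\dots,x_g,y_g$, one $2$-cell attached along $\prod_{i=1}^g[x_i,y_i]$), removing the open $2$-cell identifies $\pi_1(\Sp)$ with the free group $F_{2g}=\langle x_1,y_1,\dots,x_g,y_g\rangle$ and exhibits $\partial\Sp$ as freely homotopic to $\prod_{i=1}^g[x_i,y_i]$; under the resulting isomorphism $Ch_G^{fr}(\Sp)\xrightarrow{\ \sim\ }G^{2g}$, $\rho\mapsto(\rho(x_i),\rho(y_i))_i$, the boundary holonomy of $\rho$ is $\rho\!\big(\prod_i[x_i,y_i]\big)=\prod_i[\rho(x_i),\rho(y_i)]$, which is the asserted formula.

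The only genuinely delicate point is the second step: carefully invoking functoriality of factorization homology and of the internal-$\underline{\End}$ construction under $\Fr^*$, and matching the classical $\int_{\Ann}$-module structure with the geometric restriction-to-the-boundary map (this rests on \cite{Ben-ZviIntegratingquantumgroups2018,Ben-ZviQuantumcharactervarieties2018}). An alternative route that avoids $\Fr^*$-functoriality is to run everything through the fusion formalism: $Ch_G^{fr}(\Sp)$ is the $g$-fold fusion of the once-punctured-torus space $Ch_G^{fr}(T^\circ)\cong G\times G$ (the ``double'' of \cite{AlekseevLiegroupvalued1998}), whose moment map is $(x,y)\mapsto[x,y]$, and fusion multiplies moment maps by Propositions~\ref{prop:Poissonfusion}, \ref{prop:momentmapfusion} and \ref{prop:fusiondegeneration}; this reduces the claim to the base case $T^\circ$, again the above topological computation. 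Any residual ordering or orientation conventions hidden in the expression $\prod_i[x_i,y_i]$ are fixed by the chosen identification $Ch_G^{fr}(\Sp)\cong G^{2g}$, and the central ambiguity of Proposition~\ref{prop:momentmapunique} does not intervene, since $\mu$ has been pinned down exactly.
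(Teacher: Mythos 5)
Your proposal is correct and follows essentially the same route as the paper: use functoriality of factorization homology under the Frobenius functor to produce the commuting square relating $\mu_q$ on $\cO(G)\subset\Oq(G)$ to the classical map $\cO(G)\to\cO(G^{2g})$, then identify the latter as pullback along boundary monodromy, i.e.\ $\prod_i[x_i,y_i]$ via the standard presentation of $\pi_1(\Sp)$. The one ``delicate point'' you flag is handled in the paper exactly by the arrow-category device --- viewing $(\Rep(G)\to\Repq(G))$ as a single coefficient object in $\Fun(\Delta^1,\Pr)$, where factorization homology and the internal endomorphisms of the distinguished objects are computed pointwise --- which yields the commutative square you want without any further argument.
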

\begin{proof}
Consider the category $\Delta^1$ consisting of two objects and a single morphism between them. The $(2, 1)$-category $\Fun(\Delta^1, \Pr)$ carries a natural symmetric monoidal structure given by the pointwise tensor product. Then the braided tensor functor $(\Rep(G)\rightarrow \Repq(G))$ defines a functor $\Disk^2_{or}\rightarrow \Fun(\Delta^1, \Pr)$. Since colimits in $\Fun(\Delta^1, \Pr)$ are computed pointwise, the factorization homology is also computed pointwise.

By functoriality of factorization homology we get a morphism
\[
\int_{\Ann} (\Rep(G)\rightarrow \Repq(G))\rightarrow \int_{\Sp} (\Rep(G)\rightarrow \Repq(G))
\]
in $\Fun(\Delta^1, \Pr)$, i.e., a commutative diagram
\[
\xymatrix{
\int_{\Ann} \Repq(G) \ar[r] & \int_{\Sp} \Repq(G) \\
\int_{\Ann} \Rep(G) \ar[r] \ar[u] & \int_{\Sp} \Rep(G) \ar[u]
}
\]
of pointed categories. Computing the internal endomorphisms of the distinguished objects, we get the commutative diagram
\[
\xymatrix{
\Oq(G) \ar^{\mu_q}[r] & A_{\Sp} \\
\cO(G) \ar[r] \ar[u] & \cO(G^{2g}) \ar[u].
}
\]
Since the map $\cO(G)\hookrightarrow \Oq(G)$ is injective, we get that the classical moment map is given by the bottom morphism. But it is induced by the homomorphism $\pi_1(\Ann)\rightarrow \pi_1(\Sp)$ given by monodromy around the boundary.
\end{proof}

\subsection{Proof of Theorem \ref{ChG-thm-intro}}\label{subsec:CharVarFrobqHred}

Consider the open subvariety $Ch_G^{fr, good}(S)\subset Ch_G^{fr}(S)$ consisting of points whose $G$-orbit is closed and whose $G$-stabilizer is the center (see e.g. \cite{SikoraCharacter}). This locus is empty when $g=0,1$, but otherwise non-empty. Denote by $Ch_G^{good}(S)\subset Ch_G(S)$ its image under the quotient map.

We are now ready to prove our main theorem about character varieties, Theorem \ref{ChG-thm-intro}, which we recall here:

\begin{theorem}[Theorem \ref{ChG-thm-intro}]\label{ChG-thm-body}
Let $G$ be a connected reductive group and $q$ a primitive $\ell$-th root of unity, which together satisfy Assumption \ref{EllAssumption}. Let $S$ be a closed topological surface of genus $g$, and let us denote by $\Sp$ the surface obtained by removing some open disk from $S$. Then:
\begin{enumerate}
\item The moduli algebra $A_\Sp$ is finitely generated over its center, which is isomorphic to the coordinate ring of the classical framed $G$-character variety $Ch_G^{fr}(\Sp)\cong G^{2g}$.
\item Moreover, the Azumaya locus of the moduli algebra $A_\Sp$ coincides with the preimage of open cell $\Br\subset G$ under the classical moment map $Ch_G^{fr}(\Sp) \to G$. 
\item The quantized character variety of the closed surface $S$ is finitely generated over its center, which is isomorphic to the coordinate ring of the classical character variety.  It may be constructed as a Frobenius quantum Hamiltonian reduction of $A_\Sp$.
\item Moreover the quantized character variety of the closed surface $S$ is Azumaya over the entire `good locus' $Ch_G^{good}(S)\subset Ch_G(S)$.
\end{enumerate}
\end{theorem}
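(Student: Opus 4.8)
The plan is to treat all four statements as a single application of the machinery of Section~\ref{sec:frobenius}, and in particular of Theorem~\ref{thm:frobtwistedAzumaya}, to the flat family $A^\Rt_\Sp\cong\OR(G)^{\otimes 2g}$ in $\RepR(G)$ equipped with the quantum moment map $\mu_\Rt\colon\OR(G)\to A^\Rt_\Sp$ of Section~\ref{subsec:CharVarFrobqmm}. Statements~1--2 are the conclusions obtained \emph{before} Hamiltonian reduction, and Statements~3--4 those obtained \emph{after}.

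For Statement~1, I would combine Lemma~\ref{lm:charactervarietyfinitelygenerated} (finite generation of $A_\Sp$ over the central subalgebra $\Fr^*(\O(Ch_G^{fr}(\Sp)))\cong\O(G^{2g})$) with Lemma~\ref{lm:smallquantumgroupfactorizable} (the fiber at the trivial local system is a matrix algebra) and apply Lemma~\ref{lm:wholecenter} to conclude that this subalgebra is the whole center. For Statement~2, I would first upgrade $(A_\Sp,\O(Ch_G^{fr}(\Sp)))$ to a \emph{$G$-Hamiltonian} Frobenius Poisson order: it is weakly $G$-Hamiltonian by combining Lemmas~\ref{lm:PoissonOrderAutomatic} and~\ref{lm:HamiltonianPoissonOrderAutomatic} with Propositions~\ref{prop:PoissonOrderDegeneration} and~\ref{prop:HamiltonianFrobeniusPoissonOrderDegeneration}, and the extra hypothesis of Proposition~\ref{prop:StrongHamiltonianOrder} holds because $A^\Rt_\Sp\cong\OR(G)^{\otimes 2g}$ carries a good filtration (Theorem~\ref{thm:REAgoodfiltration} and the stability of good filtrations under tensor products, Proposition~\ref{prop:goodfiltrations}), so the specialization map on invariants is surjective. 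Since the Fock--Rosly structure on $Ch_G^{fr}(\Sp)$ is nondegenerate (Proposition~\ref{prop:fockroslynondegenerate}), Theorem~\ref{thm:nondegenerateleaves} identifies $\mu^{-1}(\Br)$ as the open symplectic leaf; here $\mu$ is the commutator product of Proposition~\ref{prop:charvarmm}, so the trivial local system lies in $\mu^{-1}(\Br)$, where the fiber is a matrix algebra. Theorem~\ref{thm:browngordon} then propagates the Azumaya property over the entire leaf $\mu^{-1}(\Br)$, while Lemma~\ref{lem:BrAzumaya} supplies the reverse containment, giving the stated equality.

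For Statement~3, I would use the identification $\End(\Dist_S)\cong A_\Sp\dS\UqL(\g)$ from \cite{Ben-ZviQuantumcharactervarieties2018} to realize the quantum character variety as a Frobenius quantum Hamiltonian reduction, then apply Proposition~\ref{prop:FrobeniusPoissonReduction} to the $G$-Hamiltonian Frobenius Poisson order above to see that $\bigl(A_\Sp\dS\UqL(\g),\ Z\dS G\bigr)$ is a Poisson order, with $Z=\O(Ch_G^{fr}(\Sp))$; in particular $\End(\Dist_S)$ is finitely generated over the central subalgebra $Z\dS G=\O(\mu^{-1}(e))^G$, the affinization of $Ch_G(S)$. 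For Statement~4 I would feed into Theorem~\ref{thm:frobtwistedAzumaya} the data $\xi_q=\epsilon$ (so $\xi=e\in\Br$), $G_{stab}=Z(G)$ (which acts trivially on $Ch_G^{fr}(\Sp)=G^{2g}$ by conjugation and on $A_\Sp$ since its weights lie in the root lattice, Remark~\ref{rmk:REAcenteraction}, so condition~(2) of the theorem is immediate, and the pairing on $\mathfrak z(\g)$ is nondegenerate under Assumption~\ref{EllAssumption}), and $U\subset\mu^{-1}(e)$ the locus of closed orbits with stabilizer exactly $Z(G)$: condition~(1) is Proposition~\ref{prop:fockroslynondegenerate}, and condition~(3) holds because the distinguished point of Lemma~\ref{lm:wholecenter} is the trivial local system with $\mu$-image $e\in\Br$. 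Since $\cM^e$ is by construction the good locus $Ch_G^{good}(S)$, the theorem yields that $\End(\Dist_S)$ is Azumaya there; and for $g\geq 2$, where the good locus is dense, the generic Azumaya property forces the central subalgebra above to be the full center by the flatness argument of Lemma~\ref{lm:wholecenter}, completing Statement~3.

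The substantive inputs --- the distinguished-fiber computation (Lemma~\ref{lm:smallquantumgroupfactorizable}), the nondegeneracy of the Fock--Rosly bracket (Proposition~\ref{prop:fockroslynondegenerate}), and the explicit moment map (Proposition~\ref{prop:charvarmm}) --- are all isolated beforehand, so the remaining work is essentially bookkeeping. I expect the main obstacle to be the careful verification of the hypotheses of Theorem~\ref{thm:frobtwistedAzumaya}, and in particular the surjectivity $(A^\Rt_\Sp)^{\ULR(\g)}\to(A_\Sp)^{\UqL(\g)}$, which rests on the good filtration on $\OR(G)$ and its behaviour under tensor products; secondarily, attention is needed for the identification of $\cM^e$ with the good locus (via Luna's \'etale slice theorem, so that $U\to\cM^e$ is a $\overline{G}$-torsor) and for the degenerate low-genus cases $g\le 1$, where the good locus is empty and the centre statement of~3 must be read accordingly.
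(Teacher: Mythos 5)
Your proposal is correct and follows essentially the same route as the paper: statement 1 via Lemma \ref{lm:charactervarietyfinitelygenerated} together with Lemmas \ref{lm:smallquantumgroupfactorizable} and \ref{lm:wholecenter}, the containment of the Azumaya locus in $\mu^{-1}(\Br)$ via Lemma \ref{lem:BrAzumaya}, and the remaining claims by feeding exactly the same data ($\xi_q=\epsilon$, $Z=\O(Ch_G^{fr}(\Sp))$, $G_{stab}=Z(G)$, $U$ the good locus, nondegeneracy from Proposition \ref{prop:fockroslynondegenerate}, surjectivity from good filtrations) into Theorem \ref{thm:frobtwistedAzumaya}. The only datum you leave unspecified is the subvariety $G_0$ required by that theorem; the paper takes $G_0=G_{der}$, observing that the moment map of Proposition \ref{prop:charvarmm} is a product of commutators and hence lands in $G_{der}$, and that $G_{der}\to G/Z(G)$ is \'{e}tale, which is what makes $\mu^{-1}(e)\to\overline{\mu}^{-1}(e)$ \'{e}tale and hence the reduced space $\cM^{e}$ smooth and symplectic.
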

\begin{proof}
The first statement is established in Lemma \ref{lm:charactervarietyfinitelygenerated}. The containment of the Azumaya locus within $\mu^{-1}(\Br)$ is Lemma \ref{lem:BrAzumaya}.  To confirm the remaining statements, we will use Theorem \ref{thm:frobtwistedAzumaya}, and hence we need only confirm the assumptions of that theorem.

We take for $A_\Rt$ the algebra $A^\Rt_\Sp$, for $\xi_q$ the counit $\epsilon_q$ of $\Oq(G)$, lying over the identity element $e\in G$ viewed as a character on $\cO(G)$.  For $Z$ we take $\O(Ch_G^{fr}(\Sp))$. For $G_{stab}$ we take the center $Z(G)\subset G$.

The moment map $\mu\colon Ch_G^{fr}(\Sp)\rightarrow G$ is a product of commutators, so it lands in the derived subgroup $G_{der}=G_0\subset G$. Note that $G_{der}\subset G\rightarrow G/Z(G)$ is \'{e}tale.

The classical character variety $Ch_G^{fr}(\Sp)$ is isomorphic to $G^{2g}$, so it is smooth and connected. Moreover, by Proposition \ref{prop:fockroslynondegenerate} it is a nondegenerate Poisson $G$-variety which verifies Assumption 1 of Theorem \ref{thm:frobtwistedAzumaya}.

For $U$ we take the locus of ``good representations" $Ch_G^{fr, good}(S)\subset \mu^{-1}(e)$. Note that the center $Z(G)=G_{stab}$ acts trivially on the whole character variety $Ch_G^{fr}(\Sp)$. By Remark \ref{rmk:REAcenteraction} the weights of $\Oq(G)$ lie in the root lattice. Since the quantum algebra $\cA_{\Sp}$ is a tensor product of the algebras $\Oq(G)$, the weights of $\cA_\Sp$ also lie in the root lattice. Therefore, the weights of $\cA_\Sp|_{\mu^{-1}(e)} \dS \uq(\g)$ also lie in the root lattice of $G$, so $Z(G)$ acts trivially which verifies Assumption 2 of Theorem \ref{thm:frobtwistedAzumaya}.

The surjectivity criterion, Assumption 1 of Theorem \ref{thm:frobtwistedAzumaya}, can be checked using the theory of good filtrations. Namely, $\OR(G)$ admits a good filtration by Theorem \ref{thm:REAgoodfiltration}. $\cA^\Rt_\Sp$ is a tensor product of $\OR(G)$, so by Proposition \ref{prop:goodfiltrations} it also admits a good filtration and hence satisfies the surjectivity criterion.

\end{proof}

\begin{rmk}
We note that the proof of Theorem \ref{ChG-thm-body} gives a mechanism to compute the fibers of the algebras $A_\Sp$ and its Hamiltonian reduction over the good locus, even when $q$ and $G$ do not satisfy Assumption \ref{EllAssumption}, i.e. $\Rep(\uq(\g))$ is not factorizable.  We hope to return to this in future work.  
\end{rmk}

Let us now show that the results are, in a sense, optimal when $G=\GL_n$ or $\SL_n$.

\begin{prop}
Suppose $G=\GL_n $ or $\SL_n$ and $g\geq 2$. Then $Ch_G^{good}(S)$ coincides with the smooth locus of $Ch_G(S)$.
\label{prop:bellamyschedler}
\end{prop}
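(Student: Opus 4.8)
The plan is to identify both $Ch_G^{good}(S)$ and the smooth locus of $Ch_G(S)$ with the stable locus of the character variety, and hence with each other, using the known singularity analysis of character varieties for $G = \GL_n$ and $\SL_n$. First I would recall that $Ch_G(S)$ is obtained by multiplicative Hamiltonian reduction of $Ch_G^{fr}(\Sp) \cong G^{2g}$ at the identity, i.e.\ it is the GIT quotient $\mu^{-1}(e)/\!\!/G$ where $\mu$ is the product-of-commutators map of Proposition~\ref{prop:charvarmm}. Over the good locus $U = Ch_G^{fr,good}(S) \subset \mu^{-1}(e)$, the $G$-action has stabilizer exactly $Z(G)$ and all orbits are closed, so by Luna's \'etale slice theorem (as already invoked before Theorem~\ref{thm:frobtwistedAzumaya}) the projection $U \to Ch_G^{good}(S)$ is a $\overline{G} = G/Z(G)$-torsor onto a smooth variety; this gives the inclusion $Ch_G^{good}(S) \subseteq (Ch_G(S))^{sm}$. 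The content of the proposition is the reverse inclusion: every singular-free point is good.

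For the reverse inclusion I would argue contrapositively: a point of $Ch_G(S)$ represented by a non-good closed orbit is singular. A closed orbit that is not good corresponds to a representation $\rho\colon \pi_1(\Sp) \to G$ with boundary holonomy $e$ whose stabilizer strictly contains $Z(G)$; since the orbit is closed, $\rho$ is a polystable (completely reducible) representation, so it decomposes as a direct sum of irreducibles with a nontrivial multiplicity pattern or with proper sub-blocks. For such a reducible polystable point the deformation complex has nonzero $H^0$ beyond the central part, and by the standard quadratic-cone local model for character varieties (the Goldman--Millson / symplectic-reduction picture: étale-locally $Ch_G(S)$ near $[\rho]$ looks like $H^1(\pi_1, \mathfrak{g}_\rho) /\!\!/ \Aut(\rho)$ with the cup-product quadratic moment map) the local model is a nontrivial GIT quotient of a symplectic vector space by a positive-dimensional reductive group, which is singular. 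Here is exactly where I would cite the precise result: for $G = \GL_n$ or $\SL_n$ and $g \geq 2$ the singular locus of $Ch_G(S)$ is precisely the non-irreducible-up-to-center locus — this is the Bellamy--Schedler analysis (the ``\texttt{bellamyschedler}'' label on the proposition already signals that \cite{BellamySchedler} is the intended reference), which shows the character variety for these groups in genus $\geq 2$ has symplectic singularities and that its smooth locus is exactly the stable (= good) locus. The hypothesis $g \geq 2$ is essential: it guarantees $\dim H^1(\pi_1, \mathfrak{g}_\rho)$ is large enough that the reduction at a reducible point is genuinely singular rather than accidentally smooth, and it guarantees the good locus is nonempty in the first place.

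The main obstacle is the local analysis at reducible polystable points: showing that the GIT quotient of $H^1$ by the reductive stabilizer really is singular and not merely a priori possibly-singular. One must rule out the low-rank coincidences (e.g.\ quotients like $\mathbb{C}^2/(\pm 1)$ that happen to be smooth, or products thereof) by a dimension count showing that for $g \geq 2$ and $G = \GL_n, \SL_n$ the relevant symplectic representation of the stabilizer is never one whose quotient is smooth. Concretely, if $\rho = \bigoplus_i \rho_i^{\oplus m_i}$ with the $\rho_i$ distinct irreducibles, the stabilizer is (up to center and the $\SL$-constraint) $\prod_i \GL_{m_i}$ and it acts on $H^1 \cong \bigoplus_{i,j} \Hom(\rho_i,\rho_j)\text{-valued part} \otimes (\text{multiplicity spaces})$, with each $\dim H^1(\pi_1,\Hom(\rho_i,\rho_j)) = (2g-2)\dim\rho_i\dim\rho_j + (\text{correction})$; for $g\geq 2$ this is large enough that $H^1/\!\!/\!\prod\GL_{m_i}$ carries the quadratic moment-map relations nontrivially and is singular at the origin unless $\rho$ is already irreducible-mod-center. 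Rather than reprove this, I would cite \cite{BellamySchedler} for the precise identification of the singular locus and combine it with the elementary first inclusion above to conclude $Ch_G^{good}(S) = (Ch_G(S))^{sm}$.
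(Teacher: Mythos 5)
Your overall shape (the easy inclusion $Ch_G^{good}(S)\subseteq Ch_G^{sm}(S)$ via Luna slices, then an appeal to Bellamy--Schedler for the hard direction) is reasonable, but the paper's argument runs along a genuinely different and cleaner axis: it never touches the Goldman--Millson quadratic-cone local model. Instead, it first uses Proposition~\ref{prop:symplecticquotient} to place $Ch_G^{good}(S)$ inside the \emph{open symplectic leaf} $Ch_G^{symp}(S)$; then it quotes \cite[Theorem 1.20]{BellamySchedler} to say $Ch_G(S)$ is an irreducible symplectic singularity, so that by Kaledin's theorem \cite[Theorem 2.3]{Kaledin} it has finitely many symplectic leaves and hence $Ch_G^{symp}(S)=Ch_G^{sm}(S)$; finally \cite[Proposition 8.5]{BellamySchedler} gives $Ch_G^{good}(S)=Ch_G^{symp}(S)$. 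The payoff of that route is exactly the step you flag as your ``main obstacle'': proving that every non-good polystable point is genuinely singular (ruling out accidentally smooth symplectic quotients of $H^1(\pi_1,\mathfrak{g}_\rho)$ by the stabilizer) is replaced wholesale by the abstract fact that a symplectic singularity's smooth locus is a single leaf. Be careful with your proposed citation: what you want to quote from \cite{BellamySchedler} is ``singular locus $=$ non-good locus'' as a stated theorem, and it is not clear that it appears in that form --- the paper has to assemble it from the symplectic-singularity statement plus Kaledin plus their Proposition~8.5. As written, your proof defers precisely the nontrivial local analysis to a reference that may not contain it, so either carry out that dimension-count argument in detail or switch to the leaf-theoretic argument above.
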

\begin{proof}
Let $Ch_G^{sm}(S)\subset Ch_G(S)$ be the smooth locus and $Ch_G^{symp}(S)\subset Ch_G^{sm}(S)$ be the open symplectic leaf. By Proposition \ref{prop:symplecticquotient} we moreover have $Ch_G^{good}(S)\subset Ch_G^{symp}(S)$.

By \cite[Theorem 1.20]{BellamySchedler} the variety $Ch_G(S)$ is an irreducible symplectic singularity. In particular, by \cite[Theorem 2.3]{Kaledin} $Ch_G(S)$ has finitely many symplectic leaves and hence $Ch_G^{symp}(S)=Ch_G^{sm}(S)$. Moreover, by \cite[Proposition 8.5]{BellamySchedler} $Ch_G^{good}(S) = Ch_G^{symp}(S)$.
\end{proof}

\subsection{Kauffman bracket skein algebras and the Unicity Conjecture}\label{Kauffman-section}

Let $S$ be an oriented surface and let $K_A(S)$ be the Kauffman bracket skein algebra which is a $\C[A, A^{-1}]$-algebra (see e.g. \cite[Section 3]{FrohmanUnicity} for recollections). The following is first proved in \cite{PrzytyckiSikora} (see also \cite[Theorem 3.1]{FrohmanUnicity}).

\begin{theorem}\label{thm:skein-basis}
$K_A(S)$ is free as a $\C[A, A^{-1}]$-module, with basis given by the set of isotopy classes of simple multi-curves on $S$.
\end{theorem}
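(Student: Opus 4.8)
\emph{Proof proposal.} Write $\mathcal{L}(S)$ for the free $\C[A,A^{-1}]$-module on isotopy classes of framed links in the cylinder $S\times I$ (blackboard framing), so that $K_A(S)$ is the quotient of $\mathcal{L}(S)$ by the submodule generated by the local Kauffman bracket relations. The assertion splits into two parts: the simple multi-curves span $K_A(S)$, and they are linearly independent over $\C[A,A^{-1}]$.

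\emph{Spanning.} I would argue by induction on the number of crossings of a generic projection. Given a framed link $L$, isotope it into a neighbourhood of a generic projection onto $S$ with finitely many transverse double points. Resolving one double point via $\langle\KPB\rangle = A\langle\KPC\rangle + A^{-1}\langle\KPD\rangle$ rewrites $\langle L\rangle$ as a $\C[A,A^{-1}]$-combination of two diagrams with one fewer crossing; iterating, $\langle L\rangle$ becomes a combination of crossingless diagrams, i.e.\ disjoint unions of embedded circles on $S$. Applying $\langle L'\cup\KPA\rangle = (-A^{2}-A^{-2})\langle L'\rangle$ to each null-homotopic component then reduces each such diagram to a simple multi-curve. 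Hence simple multi-curves span $K_A(S)$; note that the resolution terminates because each skein move strictly decreases the crossing number.

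\emph{Linear independence.} This is the substance of the theorem and is where I expect the real difficulty. The classical approach (\cite{PrzytyckiSikora}, cf.\ \cite[Theorem 3.1]{FrohmanUnicity}) is to cut $S$ along a system of arcs (a handle decomposition of $S$, or the edges of the standard fundamental polygon in the closed case) into disks, so that any diagram on $S$ becomes a tangle in $(\text{disk})\times I$ relative to marked boundary points. The relative Kauffman skein module of a disk is free on crossingless matchings of its boundary points — an essentially planar, hence classical, Temperley--Lieb-type fact — and one must show that the reassembly of the disk pieces along the cutting arcs is sufficiently compatible that a simple multi-curve $c$ is recorded, as a ``leading term'', by its geometric intersection data (Dehn--Thurston-type coordinates) with the cutting arcs. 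Since this data separates isotopy classes of simple multi-curves, one obtains a triangular change of basis between simple multi-curves and a manifestly independent set, proving independence. An alternative I would consider is to establish the result first for surfaces with boundary, by inductively attaching $1$-handles to a disk while maintaining a normal form for curves, and then deduce the closed case by treating the $2$-handle separately.

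The main obstacle is precisely the bookkeeping in the independence step: one needs a genuinely computable invariant (the leading-term intersection data) that is monotone under \emph{every} skein relation, and the skein relations can merge and split intersection points of a diagram with the cutting arcs, so the required monotonicity hinges on a careful normal-form argument for curves on $S$. The spanning half, by contrast, is a routine termination argument as above.
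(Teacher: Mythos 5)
The paper does not actually prove this statement: it is quoted verbatim from the literature, with the proof attributed to \cite{PrzytyckiSikora} (see also \cite[Theorem 3.1]{FrohmanUnicity}), so there is no in-paper argument to compare yours against beyond that citation. Judged on its own terms, your proposal is fine on the spanning half (resolve crossings, then delete trivial circles after pushing them to a level disjoint from the rest of the link; termination by crossing number), but the linear-independence half is a plan rather than a proof, and the step you yourself flag as the obstacle is exactly the missing content.

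Concretely: to run your ``leading term in cutting-arc coordinates'' argument you must first know that an arbitrary skein class has a well-defined expansion in the multicurve set, i.e.\ that the reduction of a link diagram to crossingless, trivial-circle-free diagrams gives the same element of the free module on multicurves no matter in which order the crossings are resolved, which isotopy representative and projection you choose, and how the diagram sits relative to the cutting arcs (skein moves can create and destroy intersections with the arcs, so ``geometric intersection data'' is not obviously monotone or even well defined on skein classes). This well-definedness is the whole theorem in disguise, and your sketch assumes it rather than proves it. The standard way to fill the gap (Przytycki's original argument, which is what \cite{PrzytyckiSikora} rests on) is a confluence/diamond-lemma argument: order diagrams by (number of crossings, number of trivial components), check local confluence of the two elementary reductions (crossing resolution and trivial-circle removal) on overlapping configurations, and conclude that there is a well-defined $\C[A,A^{-1}]$-linear retraction from the free module on all diagrams onto the free module on simple multicurves that kills exactly the skein submodule; freeness follows because this retraction splits the quotient map. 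Equivalently, one can directly construct such a splitting map by hand and verify it annihilates the Kauffman relations. Without this (or the Dehn--Thurston normal-form bookkeeping carried out in full, which amounts to the same verification), the ``triangular change of basis'' claim is not established, so as written the independence step has a genuine gap.
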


Furthermore, we have:
\begin{theorem}
Choose a spin structure on $S$. The Poisson algebra $K_1(S)$ equipped with its natural Poisson bracket is isomorphic to the Poisson algebra $\cO(Ch_{\SL_2}(S))$ equipped with its Fock--Rosly (equivalently, Goldman) Poisson structure.
\end{theorem}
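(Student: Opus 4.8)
The plan is to identify $(K_1(S),\{\cdot,\cdot\})$ with $\cO(Ch_{\SL_2}(S))$ by assembling three classical facts. First I record what the ``natural Poisson bracket'' is: by Theorem~\ref{thm:skein-basis} the algebra $K_A(S)$ is free over $\C[A,A^{-1}]$, so the specialization $K_1(S)=K_A(S)\otimes_{\C[A,A^{-1}]}\C$ at $A=1$ carries the same basis of isotopy classes of simple multicurves. At $A=1$ the Kauffman relation becomes symmetric in the two smoothings of a crossing, so $K_1(S)$ is commutative; equivalently, for any lifts $\tilde x,\tilde y\in K_A(S)$ of elements $x,y\in K_1(S)$ the commutator $[\tilde x,\tilde y]$ lies in $(A-1)\,K_A(S)$, and $\{x,y\}:=\bigl((A-1)^{-1}[\tilde x,\tilde y]\bigr)\big|_{A=1}$ is independent of the lifts; this is the Poisson bracket in the statement.

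I would then fix a spin structure on $S$ and invoke Barrett's theorem that a spin structure determines an algebra isomorphism $K_A(S)\cong K_{-A}(S)$, sending a framed link $L$ to $(-1)^{\sigma(L)}L$ with the sign $\sigma(L)$ determined by the spin structure. This is compatible with specialization in the ground ring and so induces an algebra isomorphism $K_1(S)\cong K_{-1}(S)$; under it the Poisson bracket above is carried, up to an overall sign, to the one defined in the same way at $A=-1$. It therefore suffices to work at $A=-1$, where I would invoke the theorem of Bullock --- together with the reducedness of surface skein algebras due to Przytycki--Sikora \cite{PrzytyckiSikora} --- that the assignment sending a simple closed curve $\gamma$ to the function $\rho\mapsto-\operatorname{tr}(\rho(\gamma))$ extends to an isomorphism of commutative algebras $K_{-1}(S)\xrightarrow{\sim}\cO(Ch_{\SL_2}(S))$, where $Ch_{\SL_2}(S)$ is the affine GIT quotient of $\operatorname{Hom}(\pi_1 S,\SL_2)$ by conjugation. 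Finally, to match Poisson structures I would invoke Turaev's skein quantization theorem \cite{TuraevSkein}, which shows that under Bullock's isomorphism $\{\cdot,\cdot\}$ coincides with Goldman's Poisson bracket on $\cO(Ch_{\SL_2}(S))$, together with the classical identification of the Goldman bracket with the Fock--Rosly Poisson structure attached to the standard $r$-matrix on $\mathfrak{sl}_2$ (cf.\ \cite{FockRosly} and the discussion around Proposition~\ref{prop:RMatrixDegeneration} in Section~\ref{subsec:CharVarFrobPoissonOrder}).

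The hard part is entirely one of conventions and signs. I will need to pin down the normalization relating the uniformizer at $A=1$ used above to the formal parameter in Turaev's and Goldman's constructions, so that the leading-order brackets agree on the nose and not merely up to an overall nonzero scalar; and I will need to check that the spin-structure-dependent sign in Barrett's isomorphism is precisely the one that converts the $A=1$ specialization to the $A=-1$ specialization compatibly with the Poisson bracket, not only with the commutative product (an overall flip, if it occurs, being absorbable using the anti-Poisson automorphism induced by an orientation-reversing diffeomorphism of $S$). A secondary, routine point is to confirm that the Fock--Rosly structure for the particular $\mathfrak{sl}_2$ $r$-matrix appearing in Proposition~\ref{prop:RMatrixDegeneration} is the one to which Goldman's bracket corresponds; this is a direct computation once both sides are normalized consistently.
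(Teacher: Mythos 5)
Your proposal is correct and follows essentially the same route as the paper: Barrett's spin-structure isomorphism $K_A(S)\cong K_{-A}(S)$ over $\C[A,A^{-1}]$, combined with the classical identification (Turaev, Bullock--Frohman--Kania-Bartoszynska) of $K_{-1}(S)$ with $\cO(Ch_{\SL_2}(S))$ carrying the Goldman/Fock--Rosly bracket. The only difference is bookkeeping: the paper transports the bracket through Barrett's isomorphism by functoriality of the degeneration bracket (Proposition \ref{prop:PoissonOrderDegeneration}, Lemma \ref{lm:PoissonOrderAutomatic}) rather than by tracking the sign/normalization conventions you flag, which at most rescale the bracket and are harmless for the later applications.
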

\begin{proof}
It is shown in \cite{TuraevSkein} and \cite{BFKB} that the Poisson algebra $K_{-1}(S)$  is isomorphic to $\cO(Ch_{\SL_2}(S))$ equipped with the Fock--Rosly Poisson structure.

In addition, Barrett \cite[Theorem 1]{BarrettSkein} constructs an isomorphism of $\C[A, A^{-1}]$-algebras
\[K_A(S)\cong K_{-A}(S)\]
using a spin structure on $S$.

At $A=1$ both algebras become commutative. So, by functoriality of the Poisson bracket induced on the center (see Proposition \ref{prop:PoissonOrderDegeneration} and Lemma \ref{lm:PoissonOrderAutomatic}), we obtain an isomorphism of Poisson algebras
\[K_1(S)\cong K_{-1}(S).\]
\end{proof}

We will now show that the same Poisson structure appears naturally when we degenerate to roots of unity.  Fix an odd number $\ell>1$ and suppose $\zeta$ is a primitive $\ell$th root of unity. Denote $K_\zeta(S) = K_A(S) / (A-\zeta)$.  A fundamental construction in the study of Kauffman bracket skein modules at root of unity parameters is the following statement \cite{BonahonWongI}.

\begin{theorem}\label{thm:BWqTr}
There exist injective homomorphisms
\[
\Fr^{(\ell)}\colon K_1(S)\to Z(K_\zeta(S)),
\]
natural for surface embeddings.
\end{theorem}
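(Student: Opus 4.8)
The plan is to construct the homomorphism $\Fr^{(\ell)}$ directly by \emph{Chebyshev threading} and then to verify its three properties --- that it is an algebra homomorphism, that its image is central, and that it is injective --- using the multicurve basis of Theorem~\ref{thm:skein-basis}. Recall the Chebyshev polynomials of the first kind $T_n\in\Z[x]$, determined by $T_0=2$, $T_1=x$, and $T_{n+1}=xT_n-T_{n-1}$, so that $T_n(z+z^{-1})=z^n+z^{-n}$ and $T_n(x)=x^n-n x^{n-2}+\cdots$ is monic of degree $n$. For an essential simple closed curve $\gamma$ on $S$, the inclusion of a thickened annular neighbourhood of $\gamma$ induces an algebra map from the skein algebra of the annulus at parameter $\zeta$, which is $\C[x]$ with $x$ the core, to $K_\zeta(S)$, taking $x$ to $\gamma$; write $T_\ell(\gamma)\in K_\zeta(S)$ for the image of $T_\ell(x)$. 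Since $K_1(S)$ is free over $\C$ on the isotopy classes of simple multicurves by Theorem~\ref{thm:skein-basis} at $A=1$, I would \emph{define} $\Fr^{(\ell)}$ on a multicurve $\mathbf m=\bigsqcup_i\gamma_i$, the $\gamma_i$ listed with multiplicities, by threading each component: $\Fr^{(\ell)}(\mathbf m)=\prod_i T_\ell(\gamma_i)$, a product of disjointly supported --- hence commuting --- skeins in $K_\zeta(S)$, extended $\C$-linearly. This map is manifestly well defined and natural for embeddings $S\hookrightarrow S'$, since threading is supported in a thickened annulus and commutes with the functorial maps $K_\zeta(S)\to K_\zeta(S')$ and $K_1(S)\to K_1(S')$.

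The substance of the theorem is that $\Fr^{(\ell)}$ is multiplicative with central image, and this rests on the Bonahon--Wong ``miraculous cancellation'' of \cite{BonahonWongI}, available because $\ell$ is odd (so $\zeta^4$ is again a primitive $\ell$-th root of unity). It supplies two facts. First, a $T_\ell$-threaded curve is \emph{transparent} in $K_\zeta(S)$: it can be isotoped across any transverse strand without changing the skein class. Consequently each $T_\ell(\gamma)$ commutes with every simple closed curve and hence, as these generate the algebra, with all of $K_\zeta(S)$; so the image of $\Fr^{(\ell)}$ lies in $Z(K_\zeta(S))$. Second, resolving all the crossings between two $T_\ell$-threaded strands at parameter $\zeta$ produces, after the cancellations, exactly the result of resolving the underlying single crossing by the Kauffman relation at $A=1$ and then threading by $T_\ell$. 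Superposing the threaded diagrams of two multicurves $\mathbf m_1$ and $\mathbf m_2$ and applying this crossing by crossing yields $\Fr^{(\ell)}(\mathbf m_1)\,\Fr^{(\ell)}(\mathbf m_2)=\Fr^{(\ell)}(\mathbf m_1\cdot\mathbf m_2)$, where $\mathbf m_1\cdot\mathbf m_2$ is computed and expanded in the multicurve basis of $K_1(S)$; since multicurves span $K_1(S)$ and $\Fr^{(\ell)}$ takes the empty multicurve to $1$, this makes it a unital algebra homomorphism.

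For injectivity I would use the leading-term argument of \cite{FrohmanUnicity}. Fix a filling system of curves $\Gamma$ on $S$, let the complexity of a multicurve be its geometric intersection number with $\Gamma$, and refine this complexity to a total order on isotopy classes of multicurves. Because $T_\ell(x)$ is monic, expanding $T_\ell(\gamma)$ in the multicurve basis of $K_\zeta(S)$ gives the $\ell$-fold parallel copy $\gamma^\ell$ with coefficient $1$ together with multicurves of strictly smaller complexity; and since the components of a multicurve are disjoint, no crossings arise when the $T_\ell(\gamma_i)$ are multiplied, so more generally $\Fr^{(\ell)}(\mathbf m)=\ell\mathbf m+(\text{strictly smaller multicurves})$, where $\ell\mathbf m$ is $\mathbf m$ with every multiplicity scaled by $\ell$. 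As $\mathbf m\mapsto\ell\mathbf m$ is injective, in any finite linear relation $\sum_{\mathbf m}\lambda_{\mathbf m}\Fr^{(\ell)}(\mathbf m)=0$ the coefficient of the largest leading multicurve $\ell\mathbf m_0$ occurring is exactly $\lambda_{\mathbf m_0}$, which must therefore vanish; hence $\Fr^{(\ell)}$ is injective.

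The single genuine obstacle is the miraculous cancellation used in the second paragraph: it is a non-formal root-of-unity identity for Chebyshev-threaded strands (equivalently, on a surface with an ideal triangulation, a consequence of the compatibility of the Bonahon--Wong--L\^e quantum trace map with classical shear coordinates), and it is the only ingredient imported from \cite{BonahonWongI}; the rest of the argument is bookkeeping with the multicurve basis.
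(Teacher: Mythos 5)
The paper itself gives no proof of Theorem \ref{thm:BWqTr}: it is quoted directly from \cite{BonahonWongI}, and your proposal is essentially an outline of that cited construction (Chebyshev threading of multicurves, transparency, and the homomorphism property), with the hard content imported from the same source. That is consistent with the paper's treatment; just be aware that your second paragraph is not an independent verification of multiplicativity and centrality but a restatement of what Bonahon--Wong's ``miraculous cancellation'' theorem asserts, and that transparency gives genuine centrality here only because $\ell$ is odd, so $\zeta^{2\ell^2}=1$ --- a point you correctly flag.

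The one step that needs repair is the injectivity argument. The theorem is stated for an arbitrary oriented surface, and on a surface with boundary a multicurve may have boundary-parallel components, which have zero geometric intersection number with any curve system; your ``complexity'' then need not strictly decrease on the lower Chebyshev terms (on the annulus it is identically zero), so the leading-term induction as written breaks down. The fix is immediate: grade instead by the total multiplicity (total number of parallel strands). Since the components of a multicurve $\mathbf m$ are disjoint, $\prod_i T_\ell(\gamma_i)$ expands in the multicurve basis of Theorem \ref{thm:skein-basis} with leading term the multicurve obtained from $\mathbf m$ by scaling every multiplicity by $\ell$, all remaining terms having strictly smaller total multiplicity; as the scaling map on multicurves is injective, looking at the terms of maximal total multiplicity in any purported relation $\sum_{\mathbf m}\lambda_{\mathbf m}\Fr^{(\ell)}(\mathbf m)=0$ forces the corresponding $\lambda_{\mathbf m}$ to vanish, and injectivity follows on any surface (no filling system needed). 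Alternatively one can reduce to the closed case by choosing an embedding $S\hookrightarrow T$ into a closed surface for which the induced map on skein modules is injective --- the same device the paper uses for $\Sp$ --- and invoke the naturality you have already established.
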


In \cite[Theorem 4.1]{FrohmanUnicity}, the quantum Frobenius map $\Fr^{(\ell)}$ was proved to be an isomorphism onto the center in many cases.  The particular case of their theorem we will require is as follows:

\begin{theorem}
Suppose the surface $S$ is closed. Then $\Fr^{(\ell)}$ is an isomorphism.
\label{thm:BWqTrIsomorphism}
\end{theorem}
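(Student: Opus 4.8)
The statement is the special case of \cite[Theorem 4.1]{FrohmanUnicity} with $S$ closed, and the plan is to follow the argument there. Injectivity of $\Fr^{(\ell)}$ is already Theorem \ref{thm:BWqTr}, so the content is to show that the image $Z_0:=\Fr^{(\ell)}(K_1(S))\subseteq Z(K_\zeta(S))$ is all of the center. First I would invoke the Chebyshev identity of \cite{FrohmanUnicity}: for a simple closed curve $\gamma$ one has $\Fr^{(\ell)}(\gamma)=T_\ell(\gamma)$, the $\ell$-th normalized Chebyshev polynomial applied to $\gamma$; since the left side lies in $Z_0$ and $T_\ell(\gamma)=\gamma^{\ell}+(\text{lower powers of }\gamma)$, the element $\gamma$ is integral over $Z_0$. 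As $K_\zeta(S)$ is generated as a $\C$-algebra by finitely many simple closed curves, it follows that $K_\zeta(S)$ is module-finite over $Z_0$; in particular $K_\zeta(S)$ is an affine PI ring (prime, by \cite{FrohmanUnicity}) and $Z(K_\zeta(S))$ is module-finite over $Z_0$. Finally, by the identification $K_1(S)\cong\cO(Ch_{\SL_2}(S))$ recalled above together with the injectivity of $\Fr^{(\ell)}$, the ring $Z_0$ is isomorphic to $\cO(Ch_{\SL_2}(S))$, which is a normal integral domain by \cite[Theorem 1.20]{BellamySchedler} (the low-genus cases being elementary).

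Next I would reduce matters to a generic statement. Since $Z_0$ is integrally closed in its fraction field and $Z(K_\zeta(S))$ is a domain integral over $Z_0$, it suffices to show that $\operatorname{Frac}(Z(K_\zeta(S)))=\operatorname{Frac}(Z_0)$. Localizing $K_\zeta(S)$ at the generic point of $\Spec Z_0$ produces a finite-dimensional algebra $A_F:=K_\zeta(S)\otimes_{Z_0}F$ over the field $F:=\operatorname{Frac}(Z_0)$, and standard PI theory gives $\dim_F A_F=[\operatorname{Frac}(Z(K_\zeta(S))):F]\cdot n^2$, where $n$ is the PI degree of $K_\zeta(S)$. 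So the whole question reduces to the numerical identity $\dim_F A_F=n^2$ --- equivalently, that the generic fibre of $K_\zeta(S)$ over $\Spec Z_0$ is a central simple $F$-algebra.

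The last step, which I expect to be the real obstacle, is to compute the two sides. On one hand, using a pants decomposition of $S$, the curves dual to its components, and the Chebyshev identity above, one shows $\dim_F A_F=\operatorname{rank}_{Z_0}K_\zeta(S)=\ell^{\dim Ch_{\SL_2}(S)}$. On the other hand one needs an irreducible representation of $K_\zeta(S)$ of dimension $\ell^{\frac{1}{2}\dim Ch_{\SL_2}(S)}$; for a closed surface such a representation can be built from the punctured surface $S\setminus\{p\}$, whose skein algebra embeds, via the Bonahon--Wong quantum trace \cite{BonahonWongI}, into a Chekhov--Fock quantum torus with explicit irreducible representations of the required dimension. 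This forces $n\ge \ell^{\frac{1}{2}\dim Ch_{\SL_2}(S)}$; combined with the rank identity it yields $n=\ell^{\frac{1}{2}\dim Ch_{\SL_2}(S)}$ and $[\operatorname{Frac}(Z(K_\zeta(S))):F]=1$, whence $Z(K_\zeta(S))=Z_0$ by the previous paragraph. Both the rank computation and the construction of a dimension-maximal representation are surface-specific and constitute the technical heart of \cite{FrohmanUnicity}; everything preceding it is formal.
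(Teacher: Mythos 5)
The paper does not prove this statement at all: Theorem \ref{thm:BWqTrIsomorphism} is quoted verbatim from \cite[Theorem 4.1]{FrohmanUnicity}, so there is no internal argument to compare with, and any assessment of your proposal has to be against the argument actually given there. Your preliminary steps are fine and do match that source: injectivity is Theorem \ref{thm:BWqTr}, the Chebyshev identity $\Fr^{(\ell)}(\gamma)=T_\ell(\gamma)$ makes every simple closed curve integral over $Z_0$, Bullock's finite generation gives module-finiteness, and the normality of $\cO(Ch_{\SL_2}(S))$ plus integrality correctly reduces the problem to an equality of fraction fields. But from that point on your route diverges from \cite{FrohmanUnicity} and, more importantly, its crucial step is not established. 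You need the numerical identity $\operatorname{rank}_{Z_0}K_\zeta(S)=n^2$ with $n=\ell^{3g-3}$, i.e.\ both the generic rank $\ell^{6g-6}$ and an irreducible representation of $K_\zeta(S)$ of dimension $\ell^{3g-3}$. Neither is easy, and your proposed construction of the representation does not work as stated: an irreducible representation of the Chekhov--Fock quantum torus restricts, via the quantum trace of \cite{BonahonWongQuantumTraces}, to a representation of the \emph{punctured} surface algebra $K_\zeta(S\setminus\{p\})$, and there is no reason for it to annihilate the kernel of the surjection $K_\zeta(S\setminus\{p\})\twoheadrightarrow K_\zeta(S)$, so it does not descend to the closed surface. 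In fact the exact rank and PI degree of $K_\zeta(S)$ for closed $S$ are the content of a later paper of the same authors, whose proofs take the identification of the center as input; so filling your gap along these lines risks being both harder than the theorem you are proving and circular.

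The actual proof of \cite[Theorem 4.1]{FrohmanUnicity} avoids PI-degree counting entirely. One writes a central element in the basis of isotopy classes of multicurves (Theorem \ref{thm:skein-basis}), parametrizes multicurves by Dehn--Thurston coordinates relative to a pants decomposition, and studies leading terms: to highest order the product of two multicurves is the union weighted by a power of $\zeta$ determined by a skew pairing of their coordinates (a quantum-torus-like behaviour of the associated graded algebra). Commuting with all simple closed curves then forces every coordinate of every multicurve appearing in a central element to be divisible by $\ell$, and such multicurves are exactly the leading terms of elements of the Chebyshev image; a downward induction on the filtration concludes that the center is $\Fr^{(\ell)}(K_1(S))$. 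If you want a self-contained argument to cite or reproduce, it is this leading-term argument, not the generic-fibre dimension count, that you should develop.
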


Therefore, combining Proposition \ref{prop:PoissonOrderDegeneration} and Lemma \ref{lm:PoissonOrderAutomatic} we obtain a Poisson order structure on $(K_\zeta(S), K_1(S))$. So, for every $\ell$ we obtain a Poisson structure on the same algebra $K_1(S)$.

\begin{lemma}
Let $\Sp$ be surface obtained by removing some disc from a closed surface $S$.  Then $(K_\zeta(\Sp), K_1(\Sp))$ is a Poisson order.
\end{lemma}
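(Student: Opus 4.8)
The plan is to exhibit $(K_\zeta(\Sp), K_1(\Sp))$ as a Poisson order by the degeneration mechanism of Proposition~\ref{prop:PoissonOrderDegeneration}, the only real work being to verify the hypothesis $D(z_1)(z_2)\in Z$ of that proposition; for this we reduce to the closed surface, where it is already known, using the functoriality of the constructions in the surface.

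The soft inputs are immediate. By Theorem~\ref{thm:skein-basis} the $\C[A,A^{-1}]$-algebra $K_A(\Sp)$ is free, hence flat, and its specialization at $A=\zeta$ is $K_\zeta(\Sp)$; by Theorem~\ref{thm:BWqTr} the homomorphism $\Fr^{(\ell)}\colon K_1(\Sp)\to K_\zeta(\Sp)$ is injective with central image, so we set $Z=\Fr^{(\ell)}(K_1(\Sp))$. Proposition~\ref{prop:PoissonOrderDegeneration} then yields a linear map $D\colon Z\to\Der(K_\zeta(\Sp))$. Finite generation of $K_\zeta(\Sp)$ as a $Z$-module follows from \cite{FrohmanUnicity}: $K_\zeta(\Sp)$ is module-finite over its center, and its center is module-finite over $Z$, being generated over $Z$ by the boundary curve $\partial\Sp$, which is integral over $Z$ via the Chebyshev relation $T_\ell(\partial\Sp)=\Fr^{(\ell)}(\partial\Sp)$ of \cite{BonahonWongI}.

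The crux is to show $D(z_1)(z_2)\in Z$ for $z_1,z_2\in Z$; granting this, Proposition~\ref{prop:PoissonOrderDegeneration} produces the Poisson order. Write $z_i=\Fr^{(\ell)}(w_i)$ with $w_i\in K_1(\Sp)$. Cap the boundary of $\Sp$ with a once-holed torus to obtain a closed surface $\widehat S\supset\Sp$ in which $\Sp$ sits as an incompressible subsurface with essential boundary; by the multicurve basis of Theorem~\ref{thm:skein-basis} the inclusion-induced map $\iota\colon K_\zeta(\Sp)\to K_\zeta(\widehat S)$ is then injective. Surface embeddings induce $\C[A,A^{-1}]$-algebra maps compatible with specialization and, by Theorem~\ref{thm:BWqTr}, with $\Fr^{(\ell)}$; hence $\iota$ intertwines the maps $D$ and the embeddings $\Fr^{(\ell)}$, giving
\[
\iota\bigl(D_{\Sp}(z_1)(z_2)\bigr)=D_{\widehat S}\bigl(\Fr^{(\ell)}(\iota_1 w_1)\bigr)\bigl(\Fr^{(\ell)}(\iota_1 w_2)\bigr),
\]
where $\iota_1\colon K_1(\Sp)\to K_1(\widehat S)$ is the classical map. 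Since $\widehat S$ is closed, $\Fr^{(\ell)}$ is an isomorphism onto $Z(K_\zeta(\widehat S))$ (Theorem~\ref{thm:BWqTrIsomorphism}), so by Proposition~\ref{prop:PoissonOrderDegeneration} and Lemma~\ref{lm:PoissonOrderAutomatic} the pair $(K_\zeta(\widehat S),K_1(\widehat S))$ is a Poisson order; its Poisson bracket on $K_1(\widehat S)$ is the Fock--Rosly (Goldman) bracket, which is local and therefore natural for surface embeddings. Thus the right-hand side equals $\Fr^{(\ell)}\bigl(\{\iota_1 w_1,\iota_1 w_2\}_{\widehat S}\bigr)=\Fr^{(\ell)}\bigl(\iota_1\{w_1,w_2\}_{\Sp}\bigr)=\iota\bigl(\Fr^{(\ell)}\{w_1,w_2\}_{\Sp}\bigr)$, and injectivity of $\iota$ forces $D_{\Sp}(z_1)(z_2)=\Fr^{(\ell)}\{w_1,w_2\}_{\Sp}\in Z$.

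The main obstacle is precisely this last reduction: one must propagate the Poisson-order property of the closed surface $\widehat S$ down to $\Sp$, which rests on the naturality of the degenerated Poisson bracket --- equivalently, on its identification with the local Goldman bracket on $K_1$ --- together with the equality $\iota(\Fr^{(\ell)}K_1(\Sp))=\Fr^{(\ell)}(\iota_1 K_1(\Sp))$ inside $\Fr^{(\ell)}K_1(\widehat S)$. A secondary point to nail down is the injectivity of $\iota$: it uses only the multicurve basis and the standard fact that essential simple multicurves of an incompressible subsurface with essential boundary remain distinct and essential in the ambient surface, which is why one caps with a once-holed torus rather than a disc (the latter would make $\partial\Sp$ inessential).
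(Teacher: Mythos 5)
Your overall strategy is the same as the paper's: you cap the boundary of $\Sp$ with a once-holed torus to get a closed surface of genus one higher (precisely because capping with a disc kills injectivity), you use the multicurve basis of Theorem \ref{thm:skein-basis} to see that $K_\zeta(\Sp)\hookrightarrow K_\zeta(\widehat S)$ is injective, you use naturality of $\Fr^{(\ell)}$ (Theorem \ref{thm:BWqTr}) together with Theorem \ref{thm:BWqTrIsomorphism} and Lemma \ref{lm:PoissonOrderAutomatic} to get a Poisson order on the closed surface, and you transfer back. Your treatment of module-finiteness of $K_\zeta(\Sp)$ over $\Fr^{(\ell)}(K_1(\Sp))$ is a reasonable supplement (the paper leaves this implicit).

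The gap is in your final transfer step. You assert that the bracket on $K_1(\widehat S)$ produced by Lemma \ref{lm:PoissonOrderAutomatic} at the root of unity $\zeta$ \emph{is} the Fock--Rosly/Goldman bracket, and then use its locality to write $\{\iota_1 w_1,\iota_1 w_2\}_{\widehat S}=\iota_1\{w_1,w_2\}_{\Sp}$. Nothing available at this point justifies that identification: the theorem with the spin structure identifies the bracket on $K_1(S)$ coming from the degeneration at $A=\pm 1$ with the Goldman bracket, not the bracket induced via the Chebyshev--Frobenius embedding into $K_\zeta$. That the $\zeta$-induced bracket is even independent of $\ell$ up to scale is exactly Proposition \ref{prop:independent}, which is proved \emph{after} this lemma, whose proof uses the Poisson order structure on $(K_\zeta(\Sp),K_1(\Sp))$ (via the quantum trace map of Bonahon--Wong, which requires a punctured surface), and which the authors explicitly note was previously mis-attributed to the literature. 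So as written your argument is circular, or at best rests on an unproven key step. The paper avoids this entirely: having placed $\{\Fr^{(\ell)}(K_1(\Sp)),\Fr^{(\ell)}(K_1(\Sp))\}$ inside $Z(K_\zeta(T))=\Fr^{(\ell)}(K_1(T))$ by Hayashi's lemma, it concludes by the purely module-theoretic identity $\Fr^{(\ell)}(K_1(T))\cap K_\zeta(\Sp)=\Fr^{(\ell)}(K_1(\Sp))$ (a multicurve-basis statement about the threading map), with no claim about what the bracket is. Replacing your Goldman-naturality step by this intersection argument repairs the proof.
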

\begin{proof}
We would like to apply Lemma \ref{lm:PoissonOrderAutomatic} directly to conclude that $K_1(S)$ is closed under the Poisson bracket, however because $\Sp$ is not closed, $\Fr^{(\ell)}(K_1(\Sp))\neq Z(K_\zeta(\Sp))$, so the Lemma does not apply.  Neither will it suffice to embed $\Sp$ back into the closed surface $S$, because the induced map on skein algebras has a kernel.  Instead, let us embed $\Sp$ into a surface $T$ of genus one greater than $S$, so that by Theorem \ref{thm:skein-basis} the induced map $K_A(\Sp)\to K_A(T)$ is injective.  Since $T$ is closed and $\Fr^{(\ell)}$ is natural for surface embeddings, we may indeed apply Lemma \ref{lm:PoissonOrderAutomatic}, to conclude that 
\[\{\Fr^{(\ell)}(K_1(\Sp)),\Fr^{(\ell)}(K_1(\Sp))\}\subset \Fr^{(\ell)}(K_1(T))\cap K_\zeta(\Sp) = \Fr^{(\ell)}(K_1(\Sp)),\]
as desired.
\end{proof}

\begin{prop}\label{prop:independent}
The Poisson structure on $K_1(S)$ in the Poisson order $(K_\zeta(S), K_1(S))$ is independent, up to a factor, of the order $\ell$ of the root of unity.
\end{prop}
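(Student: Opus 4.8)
The plan is to evaluate the bracket produced by Proposition~\ref{prop:PoissonOrderDegeneration} on the central subalgebra $\Fr^{(\ell)}(K_1(S))\subset K_\zeta(S)$ and to track its dependence on $\ell$ and on the primitive $\ell$-th root of unity $\zeta$. Transporting the bracket to $K_1(S)$ along the isomorphism $\Fr^{(\ell)}$ of Theorem~\ref{thm:BWqTrIsomorphism}, Proposition~\ref{prop:PoissonOrderDegeneration} gives
\[
\Fr^{(\ell)}\bigl(\{x,y\}_\ell\bigr)=\pi_\zeta\!\left(\frac{\widetilde{x}\widetilde{y}-\widetilde{y}\widetilde{x}}{A-\zeta}\right),
\]
for any lifts $\widetilde{x},\widetilde{y}\in K_A(S)$ of $\Fr^{(\ell)}(x),\Fr^{(\ell)}(y)$, where $\pi_\zeta\colon K_A(S)\to K_\zeta(S)$ is reduction modulo $(A-\zeta)$, the value being independent of the lifts. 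The essential point is that \emph{canonical} lifts are available: by the Chebyshev description of the quantum Frobenius \cite{BonahonWongI}, if $\mathsf{T}_\ell\in\Z[x]$ denotes the Chebyshev polynomial normalised by $\mathsf{T}_\ell(u+u^{-1})=u^\ell+u^{-\ell}$, then for a simple closed curve $\gamma$ the threaded element $\mathsf{T}_\ell(\gamma)\in K_A(S)$ — well defined over $\C[A^{\pm1}]$ because $\mathsf{T}_\ell$ has integer coefficients — satisfies $\pi_\zeta(\mathsf{T}_\ell(\gamma))=\Fr^{(\ell)}([\gamma])$, and more generally the product of threaded components of a simple multicurve lifts $\Fr^{(\ell)}$ of its class.

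First I would reduce to pairs of simple closed curves. A Poisson bracket is a biderivation, and by Theorem~\ref{thm:skein-basis} $K_1(S)$ is generated as a commutative algebra by the classes of simple closed curves (every multicurve being the product of its components). Hence it suffices to exhibit a nonzero scalar $c(\ell,\zeta)$, depending only on $\ell$ and $\zeta$, with $\{[\gamma],[\delta]\}_\ell=c(\ell,\zeta)\{[\gamma],[\delta]\}_1$ for all simple closed curves $\gamma,\delta$; the same scalar then propagates through every Leibniz expansion, so $\{-,-\}_\ell=c(\ell,\zeta)\{-,-\}_1$ identically.

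Next I would carry out the curve-level computation using the canonical lifts. Placing $\gamma,\delta$ in minimal position, the commutator $[\mathsf{T}_\ell(\gamma),\mathsf{T}_\ell(\delta)]\in K_A(S)=K_1(S)\otimes_\C\C[A^{\pm1}]$ vanishes at $A=\zeta$ since $\Fr^{(\ell)}(K_1(S))$ is central (Theorem~\ref{thm:BWqTr}). The content of the computation is a product-to-sum formula for Chebyshev-threaded curves — the formula of Frohman--Gelca in the model cases, and in general the ``transparency'' of $\mathsf{T}_\ell(\gamma)$ at roots of unity established by Bonahon--Wong and L\^{e} \cite{BonahonWongI} — whose effect is that threading by $\mathsf{T}_\ell$ multiplies each crossing exponent of $A$ in the resolution expansion by $\ell^2$ while commuting with resolution, so that $[\mathsf{T}_\ell(\gamma),\mathsf{T}_\ell(\delta)]=\sum_r(A^{\ell^2\mu(r)}-A^{-\ell^2\mu(r)})\,\mathsf{T}_\ell(C_r)$, where $\mu(r)$ and the multicurve $C_r$ are the $A$-exponent and resolved diagram of the state $r$ already present at $\ell=1$. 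Since $\zeta^{\ell^2}=1$, each coefficient vanishes at $A=\zeta$ with derivative $2\ell^2\mu(r)\zeta^{-1}$ there, so dividing by $(A-\zeta)$, reducing modulo $(A-\zeta)$, and using $\pi_\zeta(\mathsf{T}_\ell(C_r))=\Fr^{(\ell)}([C_r])$ yields $\{[\gamma],[\delta]\}_\ell=2\ell^2\zeta^{-1}\sum_r\mu(r)[C_r]$. As $\sum_r\mu(r)[C_r]\in K_1(S)$ does not involve $\ell$, comparing with $\ell=1$ (where $\mathsf{T}_1=\mathrm{id}$, $\Fr^{(1)}=\mathrm{id}$, and the same computation gives $\{[\gamma],[\delta]\}_1=2\sum_r\mu(r)[C_r]$) identifies $c(\ell,\zeta)=\ell^2\zeta^{-1}$; in particular all the brackets $\{-,-\}_\ell$ are scalar multiples of the Fock--Rosly bracket $\{-,-\}_1$, which is the assertion.

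The step I expect to be the main obstacle is this product-to-sum input for curves meeting more than once, i.e. the assertion that Chebyshev threading uniformly rescales crossing exponents by $\ell^2$: this ``miraculous cancellation'' is immediate for disjoint curves and, via functoriality of $\Fr^{(\ell)}$ under surface embeddings (Theorem~\ref{thm:BWqTr}), reduces for a single intersection point to a direct computation on the once-punctured torus, but the general pattern is delicate. What is actually needed is weaker — only the congruence of the product-to-sum identity modulo $(A-\zeta)^2$, equivalently the first-order term of the commutator at $A=\zeta$ — and I would extract this from the transparency of $\mathsf{T}_\ell(\gamma)$ at $A=\zeta$ by a first-order computation localised at the intersection points, each contributing the universal factor $2\ell^2\zeta^{-1}$ found in the one-point model, the zeroth-order contributions cancelling precisely because $\mathsf{T}_\ell(\gamma)$ is transparent there.
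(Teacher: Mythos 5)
Your overall strategy is genuinely different from the paper's: you compute the degeneration bracket directly in the skein algebra using the Chebyshev lifts $\mathsf{T}_\ell(\gamma)$ of $\Fr^{(\ell)}$, reduce to pairs of simple closed curves by the biderivation property (which is fine, given Theorem \ref{thm:skein-basis}), and then try to identify the first-order term of $[\mathsf{T}_\ell(\gamma),\mathsf{T}_\ell(\delta)]$ at $A=\zeta$. The paper instead avoids any skein-local computation: it embeds $K_\zeta(\Sp)$ into a quantum torus via the Bonahon--Wong quantum trace, where the Frobenius is simply $X_i\mapsto X_i^\ell$ and the $\ell$-dependence of the induced bracket is transparently an overall scalar; compatibility of the Chebyshev--Frobenius with the quantum trace makes this a map of Poisson orders, and the surjection $K_A(\Sp)\to K_A(S)$ then transports the statement to the closed surface. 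Your route, if completed, would have the advantage of working directly on $S$ without puncturing and of producing the explicit Goldman-type formula; but as written it has a genuine gap at its central step.

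The gap is the product-to-sum input. The identity
$[\mathsf{T}_\ell(\gamma),\mathsf{T}_\ell(\delta)]=\sum_r\bigl(A^{\ell^2\mu(r)}-A^{-\ell^2\mu(r)}\bigr)\mathsf{T}_\ell(C_r)$
is a theorem (Frohman--Gelca) only on the torus; for a general surface and curves meeting in more than one point no such exact formula is available, and the expansion of a product of threaded curves is not governed by a uniform rescaling of crossing exponents by $\ell^2$. You correctly retreat to the weaker claim that only the commutator modulo $(A-\zeta)^2$ is needed, but the argument you offer for it does not close: transparency of $\mathsf{T}_\ell(\gamma)$ at $A=\zeta$ (Theorem \ref{thm:BWqTr}) is precisely a statement modulo $(A-\zeta)$, so it constrains the zeroth-order term and says nothing by itself about the first-order coefficient; asserting that the first-order term ``localises at the intersection points, each contributing the universal factor $2\ell^2\zeta^{-1}$'' is exactly the content to be proved (it is equivalent to saying the induced bracket is $\ell^2\zeta^{-1}$ times the Goldman bracket in its intersection-point formula), and you give no argument that the crossing-by-crossing exchange contributions are additive modulo $(A-\zeta)^2$ with a universal local factor, nor that the zeroth-order corrections produced when pushing a strand through $\mathsf{T}_\ell(\gamma)$ do not contaminate the first-order term. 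Results of this type do exist in the skein literature and the gap is likely fillable, but it is the main technical point of the proposition, not a routine verification; either supply that local first-order computation in detail, or bypass it as the paper does via the quantum trace embedding into a quantum torus together with the surjectivity of $K_A(\Sp)\to K_A(S)$.
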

\begin{proof}
The claim is trivial if $S$ has genus 0, so we will assume that it has genus $\geq 1$. Let us first denote by $\Sp$ denote the surface obtained from $S$ by removing some disc.

Choose $\omega$ such that $\omega^{-2} = \zeta$. By \cite[Theorem 1]{BonahonWongQuantumTraces} (see also \cite{MullerSkein}), any suitable triangulation $\Delta$ of $\Sp$ determines an algebra embedding
\[
\Tr^\omega_\Delta\colon K_\zeta(\Sp) \to \mathcal{T}^\Delta_\zeta,
\]
of the Kauffman bracket skein algebra into a quantum torus $\mathcal{T}^\Delta_\zeta$.  Here by quantum torus we mean that $\mathcal{T}^\Delta_\zeta$ is presented with invertible generators $X_1,\ldots X_r$, for some $r$, with relations $X_iX_j = \zeta^{n_{ij}}X_jX_i$, for some skew-symmetric integer matrix $(n_{ij})$.

On the quantum torus we have a simple quantum Frobenius homomorphism,
\[
\Fr^{(\ell)}\colon \mathcal{T}^\Delta_1 \to Z(\mathcal{T}^\Delta_\zeta),
\]
which sends each generator $X_i$ of the ring $\mathcal{T}^\Delta_1$ to the $\ell$-th power of the corresponding generator in $\mathcal{T}^\Delta_\zeta$. In particular, by Proposition \ref{prop:PoissonOrderDegeneration} we obtain the structure of a Poisson order on $(\mathcal{T}^\Delta_\zeta, \mathcal{T}^\Delta_1)$. It is straightforward to see that the induced Poisson bracket on $\mathcal{T}^\Delta_1$ depends on $\ell$ only up to an overall scalar factor.

By \cite[Theorem 21]{BonahonWongI} we have a map of Poisson orders
\[(\Tr^\omega_\Delta, \Tr^{\omega^{\ell^2}}_\Delta)\colon (K_\zeta(\Sp), K_1(\Sp))\hookrightarrow (\mathcal{T}^\Delta_\zeta, \mathcal{T}^\Delta_1).\]

Since any skein in $S$ is isotopic to one missing the disk, the map $p\colon K_A(\Sp)\rightarrow K_A(S)$ is surjective. By naturality of $\Fr^{(\ell)}$ under surface embeddings (Theorem \ref{thm:BWqTr}) we have a map of Poisson orders
\[p\colon (K_\zeta(\Sp), K_1(\Sp))\twoheadrightarrow (K_\zeta(S), K_1(S)).\]

Consider a pair of elements $f, g\in K_1(S)$. Our goal will be to show that $\{f, g\}$ is independent of $\ell$ up to a factor. Since $p\colon K_1(\Sp)\rightarrow K_1(S)$ is surjective, we may find the lifts $\tilde{f}, \tilde{g}\in K_1(\Sp)$ of $f,g$ which we assume are independent of $\ell$.

The Poisson bracket $\{\Tr^{\omega^{\ell^2}}_\Delta(\tilde{f}), \Tr^{\omega^{\ell^2}}_\Delta(\tilde{g})\}$ is independent of $\ell$ up to a factor. Since $\Tr^{\omega^{\ell^2}}_\Delta\colon K_1(\Sp)\hookrightarrow \mathcal{T}^\Delta_1$ is Poisson, the same claim holds for $\{\tilde{f}, \tilde{g}\}$. Since $p\colon K_1(\Sp)\rightarrow K_1(S)$ is Poisson, the same claim also holds for $\{f, g\}$.
\end{proof}

\begin{rmk}
In a previous version of this paper, Proposition \ref{prop:independent} was incorrectly attributed to the literature.  We are grateful to Thang Le for bringing this to our attention, and suggesting that we add a complete proof.
\end{rmk}

\begin{theorem}[Theorem \ref{thm-Kauffman-intro}] 
The skein algebra $K_\zeta(S)$ is Azumaya over the whole smooth locus of $Ch_{\SL_2}(S)$.
\end{theorem}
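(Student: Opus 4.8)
The plan is to realise $K_\zeta(S)$ as a Poisson order over the classical $\SL_2$-character variety, to identify the open symplectic leaf of the Goldman bracket with the smooth locus of $Ch_{\SL_2}(S)$, and then to upgrade the density statement of \cite{FrohmanUnicity} to the whole smooth locus by the rigidity result Theorem \ref{thm:browngordon}. First I would observe that $(K_\zeta(S), K_1(S))$ is a Poisson order: by Theorem \ref{thm:BWqTrIsomorphism} the threading map $\Fr^{(\ell)}$ is an isomorphism of $K_1(S)$ onto the \emph{entire} center $Z(K_\zeta(S))$, and by Theorem \ref{thm:skein-basis} the family $K_A(S)$ is free of finite rank over $\C[A,A^{-1}]$, so after specialising at $A=\zeta$ the algebra $K_\zeta(S)$ is a finite module over $Z(K_\zeta(S))$ and Lemma \ref{lm:PoissonOrderAutomatic} together with Proposition \ref{prop:PoissonOrderDegeneration} produces the Poisson order. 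Under the identification $K_1(S)\cong\cO(Ch_{\SL_2}(S))$ coming from a choice of spin structure (and Barrett's isomorphism $K_A(S)\cong K_{-A}(S)$), Proposition \ref{prop:independent} combined with the theorem identifying the natural Poisson bracket on $K_1(S)$ with the Fock--Rosly/Goldman bracket shows that the Poisson bracket arising from the degeneration at $\zeta$ is a nonzero scalar multiple of the Goldman bracket; in particular it has the same symplectic leaves.

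Next I would pin down the open symplectic leaf. For $g\ge 2$, the variety $Ch_{\SL_2}(S)$ is an irreducible symplectic singularity with finitely many symplectic leaves (\cite{BellamySchedler}, \cite{Kaledin}); hence its smooth locus $U$ is irreducible, open and dense, and, being a connected symplectic manifold, is a single symplectic leaf --- this is exactly the observation used in the proof of Proposition \ref{prop:bellamyschedler}. The remaining cases are handled directly: for $g=0$ the character variety is a point and $K_\zeta(S^2)=\C$, while for $g=1$ one has $Ch_{\SL_2}(T^2)\cong(\mathbb{G}_m\times\mathbb{G}_m)/(\Z/2)$, again a symplectic singularity whose smooth locus is the open symplectic leaf. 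In every case, restricting the Poisson order to the open subscheme $U\subset\Spec K_1(S)$ gives a Poisson order $(K_\zeta(S)|_U,\cO(U))$ with $U$ smooth and equal to its own open symplectic leaf. Theorem \ref{thm:browngordon} then shows that $K_\zeta(S)|_U$ is locally free over $U$ and that its fibers at any two closed points of $U$ are isomorphic as $\C$-algebras. By \cite[Theorem 4.1]{FrohmanUnicity} the Azumaya locus of $K_\zeta(S)$ is a non-empty open subset of the irreducible variety $Ch_{\SL_2}(S)$, so it meets the non-empty open subset $U$; thus some fiber over $U$ is a matrix algebra $M_n(\C)$, whence every fiber over $U$ is, and together with local freeness and the fact (Theorem \ref{thm:BWqTrIsomorphism}) that $\cO(U)$ is the full center, this gives that $K_\zeta(S)$ is Azumaya over $U$.

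The main obstacle I anticipate is the geometric input that the smooth locus of $Ch_{\SL_2}(S)$ is a single symplectic leaf of the Goldman bracket: for $g\ge 2$ this rests on the nontrivial fact that the $\SL_2$-character variety of a closed surface is a symplectic singularity, and the low-genus cases require separate (if easy) verification; and it must be paired with the identification of the Poisson order bracket at $\zeta$ with a multiple of the Goldman bracket rather than with some a priori coarser Poisson structure, which is precisely where Proposition \ref{prop:independent} is needed, since for $\ell>2$ the Goldman bracket cannot be read off the degeneration at $\zeta$ directly.
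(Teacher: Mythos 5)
Your argument is essentially the paper's own proof: the Poisson order $(K_\zeta(S),K_1(S))$ obtained from Theorem \ref{thm:BWqTrIsomorphism} together with Proposition \ref{prop:PoissonOrderDegeneration} and Lemma \ref{lm:PoissonOrderAutomatic}, the bracket identified up to scale with the Goldman/Fock--Rosly bracket via Proposition \ref{prop:independent}, generic Azumayaness from \cite{FrohmanUnicity}, Brown--Gordon rigidity (Theorem \ref{thm:browngordon}) over the open symplectic leaf, and the identification of that leaf with the smooth locus via Proposition \ref{prop:bellamyschedler} for $g\ge 2$ and direct low-genus checks (your genus-one symplectic-singularity argument replaces, equivalently, the paper's explicit $\Z_2$-torsor computation). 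One minor imprecision: finite generation of $K_\zeta(S)$ as a module over its center does not follow from freeness of $K_A(S)$ over $\C[A,A^{-1}]$ (Theorem \ref{thm:skein-basis} supplies the flatness needed in Proposition \ref{prop:PoissonOrderDegeneration}); the module-finiteness is a separate known input, already available in \cite{FrohmanUnicity}.
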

\begin{proof}
It is shown in \cite[Theorem 2]{FrohmanUnicity} that $K_\zeta(S)$ is generically Azumaya over $Ch_{\SL_2}(S)$. Since $(K_\zeta(S), \cO(Ch_{\SL_2}(S)))$ is a Poisson order, by Theorem \ref{thm:browngordon} $K_\zeta(S)$ is Azumaya over the open symplectic leaf in $Ch_{\SL_2}(S)$. To prove the claim, we have to establish that the open symplectic leaf coincides with the smooth locus.

The case $g=0$ is trivial.

Suppose $g=1$. We have an isomorphism $Ch_{\SL_2}(S)\cong (\C^\times\times \C^\times) \sS \Z_2$ where $\Z_2$ acts on each factor by $z_i\mapsto z_i^{-1}$. In particular, the smooth locus is the locus of pairs $(z_1, z_2)$ where $z_i\neq \pm 1$. The Poisson structure on $Ch_{\SL_2}(S)$ comes from the standard symplectic structure on $\C^\times\times \C^\times$. Since the map $\C^\times\times \C^\times\rightarrow Ch_{\SL_2}(S)$ restricts to a $\Z_2$-torsor over the smooth locus, we get that the Poisson structure on the smooth locus is symplectic.

Suppose $g\geq 2$. Then by Proposition \ref{prop:bellamyschedler} the open symplectic leaf coincides with the smooth locus.
\end{proof}

\section{Quantized multiplicative quiver varieties}\label{sec:DqMat}

We now recall the definition of the multiplicative quiver variety, following \cite{YamakawaGeometryMultiplicativePreprojective2008, Crawley-BoeveyMultiplicativepreprojectivealgebras2006}, and its quantization, following \cite{JordanQuantizedmultiplicativequiver2014}. We subsequently implement the program of Section \ref{sec:frobenius}, in order to deduce that quantized multiplicative varieties define sheaves of Azumaya algebras over classical multiplicative quiver varieties.  Throughout this section, $q$ denotes a primitive root of unity of order $\ell$, where $\ell >1$ is odd.

\subsection{The classical multiplicative quiver variety}\label{subsec:mqv}

Let $Q = (E,V)$ be a finite quiver.  The doubled quiver $\overline{Q}$ has the same vertex set $\overline{V}=V$ but with an additional `dual' edge $e^\vee:j\to i$, for each edge $e:i\to j$ of $Q$, so $\overline{E}=E \sqcup E^\vee$.  For $e \in \overline{E}$, set $\epsilon(e)$ to be equal to $1$ if $e \in E$ and $-1$ if $e \in E^\vee$, and  write $\alpha = \alpha(e) \in V$ and $\beta = \beta(e) \in V$ for the source and target of  $e$.  Fix a dimension vector $\mathbf{d} = (\dv_v)_{v \in V} \in (\Z_{\geq 0})^V$.

A \emph{framed representation} of $\overline{Q}$ with dimension vector $\dv$ is an assignment of a linear map  $\C^{\dv_\alpha} \rightarrow \C^{\dv_\beta}$ to each edge $e \in \overline{E}$. The moduli space $\MQdf$ of framed representations of $\overline{Q}$ is therefore a Cartesian product of spaces of matrices,
$$\MQdf = \prod_{e\in E} \Mat(e) \times \Mat(e^\vee),\quad \textrm{ where }
\Mat(e)=\Hom_\C(\C^{\dv_\alpha},\C^{\dv_\beta}).$$
The group $G=\GL_{\dv} = \prod_v \GL_{d_v}$ acts on $\MQdf$ by change of basis at each vertex.

Let $\MQdf^\circ$ denote the Zariski open locus of $\MQdf$ on which the determinants of the matrices $(\id_\alpha + X_{e^\vee} X_e)$ are  non-vanishing for all $e \in \overline{E}$.  We have  a \emph{multiplicative moment map} $$\tilde \mu:\MQdf^\circ\to \GL_{\dv}$$
\begin{equation}X \mapsto \prod^{\rightarrow}_{e \in \overline{E}} (\id_\alpha + X_{e^\vee} X_e)^{\epsilon(e)}.\label{moment-map-formula}\end{equation}

Fix a function $\xi : V \rightarrow \C^\times$, satisfying $\prod_v\xi_v=1$, which we regard as a collection $\xi = (\xi_v)_{v \in V} \in \GL_{\dv}$ of scalar matrices.  Fix a character $\theta\colon G\rightarrow \C^\times$. Denote by $\C_{\theta}\in\Rep(G)$ the corresponding one-dimensional representation.

\begin{definition}
The multiplicative quiver variety is the GIT Hamiltonian reduction
\[\MQd   = \MQdf^\circ\dS_{\!\!\theta} \GL_\dv = \tilde\mu^{-1}(\xi)\sS_{\!\!\theta} \GL_\dv = \Proj\left(\bigoplus_{m=0}^\infty \cO(\tilde \mu^{-1}(\xi))\otimes \C_{\theta^{-m}}\right)^{\GL_\dv}.\]
\end{definition}

Recall the notion of $\theta$-semistable and $\theta$-stable points of the $G$-variety $\tilde \mu^{-1}(\xi)$ (see \cite[Chapter 1]{MumfordGIT} and \cite[Section 2]{KingModuli}).  Then $\tilde\mu^{-1}(\xi)\sS_{\!\!\theta} G$ may be identified with the quotient of the open subset of $\tilde\mu^{-1}(\xi)$ of $\theta$-semistable points by a certain equivalence relation.  In particular, we obtain a surjective morphism
\[\pi:\tilde\mu^{-1}(\xi)^{\theta\mathrm{-ss}}\longrightarrow \tilde\mu^{-1}(\xi)\sS_{\!\!\theta} G.\] 
 
\begin{definition}
The stable multiplicative quiver variety $\MQds\subset \MQd$ is the image of the $\theta$-stable points of $\tilde\mu^{-1}(\xi)$.
\end{definition}

Note that by construction we have a projective surjection $\tilde\mu^{-1}(\xi)\sS_{\!\!\theta} G\rightarrow \tilde\mu^{-1}(\xi)\sS G$ which is an isomorphism for $\theta$ trivial. The construction of a Poisson structure on $\tilde\mu^{-1}(\xi)\sS_{\!\!\theta} G$ mimics the construction of the Poisson structure on $\tilde\mu^{-1}(\xi)\sS G$ given by Proposition \ref{prop:poissonreduction}.

\begin{rmk} The multiplicative quiver variety was constructed first in \cite{Crawley-BoeveyMultiplicativepreprojectivealgebras2006} as a moduli space of representations of a multiplicative pre-projective algebra.  The construction as a multiplicative Hamiltonian reduction was accomplished in the papers \cite{VandenBerghDoublePoissonalgebras2008} and \cite{VandenBerghQuasiHamiltonian}, using the formalism of quasi-Hamiltonian moment maps. See Remark \ref{rmk-VdB} for more details.
\end{rmk}

\begin{example}
An important special case is when $\theta=0$.  Then it is well-known (see, e.g. \cite{GinzburgQuiver}) that every representation of $Q$ is semistable, and that only the simple representations are stable.  For general $\theta$, A. King gave a purely algebraic description of $\theta$-(semi)stability in terms of excluded `slopes' of sub-representations; see \cite{KingModuli}.
\end{example}

\begin{rmk} It can often happen that $\MQds$ is empty altogether -- for instance when $\theta=0$, not every dimension vector supports a simple reprsentation of the preprojective algebra.  In the other extreme, it can happen that the semi-stable and stable loci coincide.  In this case $\MQd$ defines a symplectic resolution of $\MQdtriv$, by results of \cite{SchedlerTirelli}.  Crawley-Boevey and Shaw have given an explicit characterization for when the stable locus is non-empty in terms of certain root datum on the quiver; we refer to \cite{Crawley-BoeveyMultiplicativepreprojectivealgebras2006}, and \cite{SchedlerTirelli} for the complete statement.
\end{rmk}

\subsection{The quantization of $\MQdf$}
\label{subsec:quantizationofQ}
We recall the quantizations of the framed moduli space  and the multiplicative quiver variety, following \cite[Section 3]{JordanQuantizedmultiplicativequiver2014}, and we describe a straightforward extension of the construction to non-trivial GIT quotients.   We begin by recalling the quantization of the framed moduli space.

\begin{definition}\cite[Example 4.8]{JordanQuantizedmultiplicativequiver2014} \label{def:dqmatmn}
	The algebra $\DqMatMN$ is the algebra generated over $\C$ by elements $x^i_j$ and $\partial_k^l$, for $1 \leq i, k \leq M$ and $1 \leq j,l \leq N$, organized into an $N$ by $M$ matrix $X$ and an $M$ by $N$ matrix $D$, with relations given by the following matrix equations:
	$$RX_2X_1=X_1X_2R_{21}, \quad RD_2D_1=D_1D_2R_{21},$$
	$$D_2R^{-1}X_1=X_1RD_2 + \Omega, \quad \textrm{where } \Omega=\sum E^i_j\ot E^j_i.$$
	 Define $\DqMatMN^\circ$ as the  non-commutative localization at the quantum
	determinants $\det_q$ of the following matrices:
	$$g^\alpha  := \id + (q -q\inv) DX, \qquad  g^\beta := \id + (q -q\inv) XD .$$
	\end{definition}

\begin{lemma}	In coordinates, this means $x^i_j$ and $\partial^k_l$ satisfy:
	\begin{align*} x_m^i x_n^j &= q^{\delta_{mn}} x_n^j x_m^i + \theta(n-m) (q-q\inv) x_m^j x_n^i \qquad & (i >j) \\
	x_m^i x_n^i &= q\inv x_n^i x_m^i \qquad & (m >n) \\
	\partial_m^i \partial_n^j &= q^{\delta_{mn}} \partial_n^j \partial_m^i + \theta(n-m) (q-q\inv) \partial_m^j \partial_n^i \qquad & (i >j) \\
	\partial_m^i \partial_n^i &= q\inv \partial_n^i \partial_m^i \qquad & (m >n) \\
	\partial_m^i x_n^j &= q^{\delta_{in} + \delta_{jm}} x_n^j \partial_m^i + \delta_{in} q^{\delta_{jm}}(q-q\inv) \sum_{p >i} x^j_p \partial^p_m + \delta_{jm}(q^2 -1) \sum_{p < j} \partial_p^i x^p_n + q\delta_{in} \delta_{jm} & \end{align*}
	\end{lemma}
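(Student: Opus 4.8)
The statement is the componentwise translation of the three defining matrix identities of $\DqMatMN$ into the standard generators $x^i_j$ and $\partial^k_l$, and the plan is to carry this translation out directly. First I would fix conventions for the auxiliary data: $R$ denotes the standard $R$-matrix of the vector representation of the general linear quantum group of the appropriate rank ($N$ or $M$, depending on the factor),
\[
R \;=\; q\sum_{i} E^i_i\ot E^i_i\;+\;\sum_{i\neq j} E^i_i\ot E^j_j\;+\;(q-q\inv)\sum_{i>j} E^i_j\ot E^j_i,
\]
$R_{21}=\tau R\tau$ with $\tau$ the tensor flip, $R\inv$ is $R$ with the diagonal $q$ replaced by $q\inv$ and the off-diagonal part replaced by $-(q-q\inv)\sum_{i>j}E^i_j\ot E^j_i$, and $\Omega=\sum_{i,j}E^i_j\ot E^j_i$. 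The matrices $X$ (of size $N\times M$) and $D$ (of size $M\times N$), with entries in $\DqMatMN$, are inserted into the two auxiliary tensor factors as indicated by the subscripts, so that each matrix identity becomes an equality in $\End(V\ot V')\ot\DqMatMN$ for suitable $V,V'\in\{\C^N,\C^M\}$; comparing the coefficient of a fixed elementary tensor $E^a_b\ot E^c_d$ then turns it into a scalar relation among the generators. Throughout, $\theta$ denotes the Heaviside step function, $\theta(k)=1$ for $k>0$ and $\theta(k)=0$ otherwise.

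The verification then splits into three blocks. For $RX_2X_1=X_1X_2R_{21}$, the diagonal part of $R$ contributes the factor $q^{\delta_{mn}}$ (and the bare $q\inv$ in the equal-column relation), while the term $(q-q\inv)\sum_{i>j}E^i_j\ot E^j_i$ contributes exactly the correction $\theta(n-m)(q-q\inv)\,x^j_m x^i_n$ in the range $i>j$; comparing the coefficient of each elementary tensor yields the first two displayed relations. The identity $RD_2D_1=D_1D_2R_{21}$ has the same shape and gives the two $\partial\partial$-relations by identical bookkeeping. For the cross relation $D_2R\inv X_1 = X_1RD_2+\Omega$, I would expand using the explicit forms of both $R$ and $R\inv$: the diagonal parts produce the leading term $q^{\delta_{in}+\delta_{jm}}\,x^j_n\partial^i_m$, the off-diagonal part of $R\inv$ on the left produces $\delta_{in}q^{\delta_{jm}}(q-q\inv)\sum_{p>i}x^j_p\partial^p_m$, the off-diagonal part of $R$ on the right produces $\delta_{jm}(q^2-1)\sum_{p<j}\partial^i_p x^p_n$, and the inhomogeneous summand $q\,\delta_{in}\delta_{jm}$ comes from the $\Omega$ term together with the diagonal contraction. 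Assembling these gives the last displayed relation.

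I expect the only genuine obstacle to be the index bookkeeping in the cross relation: one has to keep straight which auxiliary factor carries $X$ and which carries $D$, check that $R\inv$ (and not $R$ or $R_{21}\inv$) stands on the left, and track carefully which contractions produce $\delta_{in}$ as opposed to $\delta_{jm}$; a single transposition or sign slip would turn a $(q-q\inv)$ into a $(q\inv-q)$ or move a summation range from $p>i$ to $p<i$. Once the conventions above are pinned down, each relation follows by inspecting a single elementary-tensor component with no further input, and the result can be cross-checked against the normalization used in \cite[Example 4.8]{JordanQuantizedmultiplicativequiver2014}.
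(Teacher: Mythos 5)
Your proposal is correct and is essentially the argument the paper intends: the lemma is stated as a direct coordinate unpacking of the matrix relations in Definition~\ref{def:dqmatmn}, and the paper offers no separate proof beyond exactly the elementary-tensor comparison you describe (with the $R$-matrix normalization fixed as in \cite{JordanQuantizedmultiplicativequiver2014}). Your flagged caveat about matching conventions (which factor carries $X$ versus $D$, and the $p>i$ versus $p<j$ summation ranges) is indeed the only delicate point, and resolving it by cross-checking against the cited normalization is what the paper implicitly does.
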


We recall the following PBW basis theorem for the algebra $\DqMatMN$.

\begin{theorem}[\cite{JordanQuantizedmultiplicativequiver2014}]\label{thm:PBW}
The ordered monomials (in any fixed ordering), of the form:
$$(x^{i_1}_{j_1})^{k_1} \cdots (x^{i_m}_{j_m})^{k_m} \cdot (\partial^{r_1}_{s_1})^{t_1} \cdots (x^{r_n}_{s_n})^{t_n},$$
define a basis of the algebra $\DqMatMN$.
\end{theorem}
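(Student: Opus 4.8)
The statement is due to \cite{JordanQuantizedmultiplicativequiver2014}; the plan is to recall its proof, which proceeds by Bergman's Diamond Lemma, with the Hecke and Yang--Baxter identities for the standard $\GL$-type $R$-matrix providing the engine for the confluence checks.

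First I would set up the rewriting system. Fix any total order on the set of generators $\{x^i_j\}\cup\{\partial^k_l\}$ in which each $x^i_j$ precedes each $\partial^k_l$; then a monomial is in normal form precisely when it has the shape $x\cdots x\cdot\partial\cdots\partial$ appearing in the statement, with the $x$-block and the $\partial$-block each internally ordered. Orienting each of the six families of defining relations so that its unique maximal monomial is rewritten into a combination of smaller ones gives a set of reduction rules; the structurally important point is that the cross relation $\partial^i_m x^j_n = q^{\delta_{in}+\delta_{jm}}x^j_n\partial^i_m+\cdots$ produces correction terms that are either of strictly smaller total degree (the scalar $q\delta_{in}\delta_{jm}$) or of the same degree but with strictly smaller indices in the summed positions (the terms $\sum_{p>i}x^j_p\partial^p_m$ and $\sum_{p<j}\partial^i_p x^p_n$). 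Ordering monomials first by total degree and then by a well-founded refinement that is decreased by these index shifts, one checks the rewriting system is terminating, so that every element of $\DqMatMN$ reduces in finitely many steps to a $\C$-linear combination of normal monomials; this already shows the normal monomials span.

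The substantive step is confluence: I would verify that every overlap ambiguity between leading terms of two reduction rules is resolvable, which by the Diamond Lemma then forces linear independence of the normal monomials. The overlaps are the $x$-triples $x^{i_1}_{j_1}x^{i_2}_{j_2}x^{i_3}_{j_3}$ and the analogous $\partial$-triples --- whose resolution is the braid identity for $R$ already underlying the flatness of the quantum matrix algebra $\O_q(\Mat_{N,M})$ generated by the $x$'s (resp.\ the $\partial$'s) --- together with the mixed overlaps $\partial\partial x$ and $\partial x x$, obtained by confronting the cross relation with the quadratic $RD_2D_1=D_1D_2R_{21}$ and $RX_2X_1=X_1X_2R_{21}$ relations respectively. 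Each resolution, after expanding the two reduction paths, reduces to an identity among entries of $R$ and $R^{-1}$ (the Hecke relation $(R-q)(R+q\inv)=0$ and the Yang--Baxter equation), together with careful bookkeeping of the $\Omega=\sum E^i_j\otimes E^j_i$ term and of the correction sums. I expect this bookkeeping of the inhomogeneous terms --- matching the scalar and lower-degree corrections produced along the two orders of reduction --- to be the main obstacle; it is what pins down the precise coefficients $q^{\delta_{in}+\delta_{jm}}$, $q^{\delta_{jm}}(q-q\inv)$ and $(q^2-1)$ in the cross relation.

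An alternative route to linear independence, which I would keep in reserve, is to exhibit the faithful ``polynomial'' representation of $\DqMatMN$ on the algebra $\O_q(\Mat_{N,M})$ generated by the $x^i_j$, in which the $x$'s act by left multiplication and the $\partial$'s by the $q$-deformed partial derivatives singled out by the cross relation and the condition $\partial^i_m\cdot 1=0$. After checking that these operators indeed satisfy the $DD$-relations, one would prove that distinct normal monomials act by linearly independent endomorphisms by testing them against a weight basis of $\O_q(\Mat_{N,M})$; combined with the spanning statement this again yields the basis. As a consistency check, note that $\DqMatMN$ is a flat deformation of the algebra of classical differential operators on $\Mat_{N,M}$ (recovered as $q\to1$), whose PBW basis has exactly the claimed cardinality --- though flatness over a deformation parameter does not by itself give freeness over $\C$ at the fixed root of unity $q$, which is why one of the two explicit arguments above is needed.
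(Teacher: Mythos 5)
This theorem is not proved in the paper at all: it is imported by citation from \cite{JordanQuantizedmultiplicativequiver2014}, so there is no in-paper argument to compare against, and your proposal has to be judged on its own. Your primary route, Bergman's Diamond Lemma, is a standard and workable way to prove PBW for this quantum Weyl algebra, and your termination discussion (total degree plus a well-founded refinement decreased by the index shifts in the correction terms, noting that the term $\sum_{p<j}\partial^i_p x^p_n$ is not yet in normal form) is sensible. But as written the proposal defers precisely the substantive content: the resolution of the mixed overlap ambiguities $\partial\partial x$ and $\partial x x$, including the bookkeeping of the inhomogeneous $\Omega$-term, is exactly where the proof lives, and "I expect this bookkeeping to be the main obstacle" is an acknowledgment of the gap rather than a closure of it. Until those confluence computations are actually carried out (or replaced by a structural argument, e.g.\ realizing $\DqMatMN$ as a braided/twisted tensor product of $\OqMatMN$ and $\OqMatNM$ in $\Repq(\GL_M\times\GL_N)$, where flatness of the cross relations is built into the construction), the linear independence of the ordered monomials is not established.

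A second, more concrete problem is your reserve argument via the faithful polynomial representation: in the setting of this paper $q$ is a primitive $\ell$-th root of unity, and there the representation of $\DqMatMN$ on the algebra generated by the $x^i_j$ is \emph{not} faithful. Indeed, by Lemma \ref{lem:coeffs} the coefficients $a_\ell, b_\ell, c_\ell, d_\ell$ all vanish at $q^\ell=1$, so $(\partial^i_m)^\ell$ commutes with every $x^j_n$ (this is the centrality used in Theorem \ref{thm:quiver-center}); since it annihilates $1$, it acts by zero on the whole polynomial module while being a nonzero element of the claimed basis. So distinct normal monomials do not act by linearly independent operators, and this route cannot detect linear independence at the parameter values relevant here. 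It does work at generic $q$, but then one must pass through the integral form over $\C[t,t^{\pm 1}]$ and argue that the monomials remain a basis after specialization -- which is essentially the subtlety you correctly flag in your last sentence for flatness, but which equally undermines the faithfulness argument. In short: the Diamond Lemma plan is the right kind of argument and is uniform in $q$ once the overlap identities are verified as Laurent-polynomial identities, but those verifications are missing, and the proposed fallback is invalid at the root of unity.
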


\begin{definition} The algebra $\Dqloop $  is generated over $\C$ by elements $a_i^j$ and $d_i^j$, organized into $N$ by $N$ matrices $A$ and $D$, with relations given by the following matrix equations:
	$$R_{21}A_1RD_2 = A_2R_{21}A_1R,\quad
	R_{21}D_1RD_2 = D_2R_{21}D_1,\quad
	R_{21} D_1 R A_2 = A_2 R_{21} D_1 (R_{21})\inv.$$ 
The algebra $\Dqloop^\circ$ is the localization at the element $\detq(A) \detq(D)$, which is a $q$-central element.  \end{definition}

\begin{rmk}
An important observation is that the algebra $\Dqloop^\circ$ is precisely the algebra $A_{T^2\backslash D^2}$ for the group $\GL_N$.
\end{rmk}

\begin{definition} Let $Q$ be a quiver equipped with a dimension vector $\mathbf{d}$. Let $e$ be an edge of $Q$, with $d_\alpha = N$ and $d_\beta =M$. Set
	$$\DqMate:= \begin{cases}
	\DqMatMN \qquad &\text{\rm if $e$ is not a loop} \\ 
	  \Dqloop \qquad &\text{\rm if $e$ is a loop (so $d_\alpha = d_\beta= N$)} 
	\end{cases}$$

  \end{definition}

\begin{definition} Let $\RepqGLd$ denote the Deligne--Kelly tensor product of the categories $\Repq(\GL_{d_v})$,
$$\RepqGLd= \underset{v \in V}{\boxtimes} \Repq(\GL_{d_v}).$$
We regard $\Repq(\GL_\dv) $ as a braided tensor category with the product braiding.   The tensor product of Frobenius homomorphisms gives functors
$$\Fr^* : \Rep(\GL_\mathbf{d}) \rightarrow \RepqGLd  \qquad \qquad \Fr_* : \RepqGLd \rightarrow \Rep(\GL_\mathbf{d}).$$
\end{definition}

\begin{rmk}
In other words, an object of $\Repq(\GL_\dv) $ is a vector space equipped with commuting actions of $\UqL(\gl_{\dv_v})$, for each $v\in V$.  The tensor product and braiding on each such module is induced component-wise, meaning in particular that the universal $R$-matrix is simply the product of those for each $v$.
\end{rmk}

The  quantized edge algebras $\DqMatMN$ and $\Dqloop$ were defined invariantly as an algebras in $\RepqGLd$ as a quotient of a tensor algebra by the image of certain morphisms mimicking the Weyl algebra relations (see \cite[Section 3.2]{JordanQuantizedmultiplicativequiver2014}). We now recall the definition of the quantization of the moduli space $\MQdf$ of framed representations of $\overline{Q}$.

\begin{definition}[\cite{JordanQuantizedmultiplicativequiver2014}] \label{def:dqQ} Let $Q$ be a quiver with dimension vector $\mathbf{d}$. The algebra $\DqQ $ (respectively $\DqQ^\circ$)  is the braided tensor product in $\Repq(\GL_{\dv})$ of the corresponding edge algebras:
$$\DqQ = \bigotimes_{e \in E}  \DqMate \qquad \DqQ^\circ = \bigotimes_{e \in E}  \DqMate^\circ.$$
\end{definition}

\begin{definition}\label{def:quiver-moment-map}
The quantum moment map
$$\mu_q\colon \Oq(\GL_\mathbf{d}) \longrightarrow \DqQ^\circ,
$$
is the braided tensor product of moment maps $\Oq(\GL_{d_{\alpha}} \times \GL_{d_{\beta}} ) \to \DqMate$, each given by the formulas,
$$\mu_q(a_j^i \ot a_r^s) = (\delta_j^i + (q - q\inv) \sum_k \partial_k^i x_j^k)\inv (\delta_r^s + (q-q\inv) \sum_k x_k^s \partial_r^k  )  ).$$
\end{definition}
It is proved in \cite[Propositions 7.11 and 7.12]{JordanQuantizedmultiplicativequiver2014} that each $\mu_q\colon \Oq(\GL_{d_{\alpha}}\times \GL_{d_{\beta}})\rightarrow \DqMate$ defines a quantum moment map in the sense of Definition \ref{def:quantummomentmap}, so by Proposition \ref{prop:momentmapfusion} $\mu_q\colon \Oq(\GL_\mathbf{d}) \rightarrow \DqQ^\circ$ is also a quantum moment map.

\begin{rmk}
{\it A priori}, the definitions of $\DqMatQd$ and $\mu_q$ depend on a orientation of $Q$, as well as a total ordering of the edges of $\overline{Q}$; however, in \cite{JordanQuantizedmultiplicativequiver2014}, canonical isomorphisms are constructed identifying the different choices compatibly. 
\end{rmk}

\subsection{The Frobenius Poisson order on $\DqMatQd$}
\label{sec:qmqvfrobeniuspair}
We now turn to the construction of the Frobenius Poisson order structure on $\DqMatQd$.

\begin{lemma}\label{lem:coeffs} Fix $i$, $j$, $n$, and $m$. For $r\geq 1$, define the following quantities:
\begin{align*}
a_r &= \delta_{in} q^{\delta_{jm}r} (q^r - q^{-r}),\qquad & b_r &=  \delta_{jm} q^{\delta_{in}(r-1) + 1}(q^{r} - q^{-r})\\
 c_r &=  \delta_{in}\delta_{jm}(q^{2r} - 1)(1 - q^{-2(r-1)}),\qquad & d_r &=   \delta_{in}\delta_{jm}\frac{q^{2r} - 1}{q - q\inv}\\
t_r &= \theta(n-m) (q^{2r-1} - q\inv) \end{align*}
For $r,s \geq 1$, the following identities hold in $\DqMatMN$:
\begin{align*}
x_m^i (x_n^j)^r &= q^{r \delta_{mn}} (x_n^j)^r x_m^i  + t_r (x_n^j)^{r-1} x_m^j x_n^i & (i>j)\\
(x_m^i)^r x_n^j &= q^{r \delta_{mn}} x_n^j( x_m^i)^r  + t_r x_m^j x_n^i (x_m^i)^{r-1} & (i>j)\\
\partial_m^i (\partial_n^j)^r &= q^{r \delta_{mn}} (\partial_n^j)^r \partial_m^i  + t_r(\partial_n^j)^{r-1} \partial_m^j \partial_n^i & (i>j)\\
(\partial_m^i)^r \partial_n^j &= q^{r \delta_{mn}} \partial_n^j( \partial_m^i)^r  + t_r \partial_m^j \partial_n^i (\partial_m^i)^{r-1} & (i>j)\\
(x_m^i)^r (x_n^i)^s &= q^{-rs} (x_n^i)^s (x_m^i)^r & (m >n)\\
(\partial_m^i)^r (\partial_n^i)^s &= q^{-rs} (\partial_n^i)^s (\partial_m^i)^r & (m >n)
\end{align*}

\begin{align*} \partial_m^i (x_n^j)^r = q^{(\delta_{in} +  \delta_{jm})r} &(x_n^j)^r \partial_m^i + a_r \sum_{p >i} (x_n^j)^{r-1} x_p^m \partial_m^p  + b_r \sum_{p' <j} (x_n^j)^{r-1} \partial_{p'}^i x_n^{p'} + \\  & + c_r \sum_{p>n, p'<m} (x_n^m)^{r-2} x_p^m \partial^p_{p'} x^{p'}_n + d_r  (x_n^j)^{r-1}.\\
 (\partial_m^i)^r x_n^j = q^{(\delta_{in} +  \delta_{jm})r} &x_n^j (\partial_m^i)^r  +  a_r \sum_{p >i} x_p^m \partial_m^p  (\de_m^i)^{r-1}  + b_r \sum_{p' <j} \partial_{p'}^i x_n^{p'}  (\de_m^i)^{r-1} + \\  & + c_r \sum_{p>n, p'<m}  \de_{p'}^i x^{p'}_p \de^{p}_m (\de_m^i)^{r-2} + d_r  (\de_m^i)^{r-1}.  \end{align*}
\end{lemma}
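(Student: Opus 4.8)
The plan is a straightforward induction on the exponent $r$ (and, where two exponents appear, a double induction on $r$ and $s$), with base case $r=1$ (resp.\ $r=s=1$) supplied directly by the coordinate relations listed just before Theorem~\ref{thm:PBW}. First I would record the elementary fact that the coefficient sequences reduce correctly at $r=1$: since the rational function $\frac{q^{2r}-1}{q-q\inv}$ takes the value $q$ at $r=1$, one gets $d_1=q\,\delta_{in}\delta_{jm}$; likewise $b_1=\delta_{jm}q(q-q\inv)=\delta_{jm}(q^2-1)$, $a_1=\delta_{in}q^{\delta_{jm}}(q-q\inv)$, $c_1=0$, and $t_1=\theta(n-m)(q-q\inv)$, all matching the $r=1$ relations. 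Next I would write down, for each of $a_r,b_r,c_r,d_r,t_r$, the one-step recursion it satisfies (of the shape ``$a_r=q^{\delta_{in}+\delta_{jm}}a_{r-1}+(\text{correction built from }a_{r-1},b_{r-1},\dots)$''), chosen precisely so that the inductive step below closes. Verifying these recursions is a purely arithmetic check in $\C[q,q\inv]$.

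Second, I would dispose of the six \emph{homogeneous} identities (the relations among the $x$'s and among the $\partial$'s). Each of these has only a single correction term, so the step is light: write $x_m^i(x_n^j)^r=(x_m^i x_n^j)(x_n^j)^{r-1}$, substitute the $r=1$ relation, apply the inductive hypothesis to the leading term $q^{\delta_{mn}}x_n^j\,x_m^i(x_n^j)^{r-1}$, and use the $m>n$ (resp.\ $i>j$) homogeneous relation to slide the correction $t_1 x_m^j x_n^i$ past $(x_n^j)^{r-1}$. The four $\partial$-statements are identical in form, and the scalar-commutation identities $(x_m^i)^r(x_n^i)^s=q^{-rs}(x_n^i)^s(x_m^i)^r$ (and its $\partial$-analogue) for $m>n$ follow from a trivial double induction.

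Third, and this is the real content, I would prove the two mixed relations by induction on $r$. Expanding $\partial_m^i(x_n^j)^r=(\partial_m^i x_n^j)(x_n^j)^{r-1}$ via the $r=1$ relation gives a leading term $q^{\delta_{in}+\delta_{jm}}x_n^j\,\partial_m^i(x_n^j)^{r-1}$ (handled by the inductive hypothesis) together with three correction terms --- one proportional to $\sum_{p>i}x_p^j\partial_m^p$, one to $\sum_{p'<j}\partial_{p'}^i x_n^{p'}$, and the scalar $\delta_{in}\delta_{jm}$ --- each of which must be commuted leftward past the remaining $(x_n^j)^{r-1}$. Pushing $\partial_m^p$ ($p>i$) or $\partial_{p'}^i$ ($p'<j$) through $(x_n^j)^{r-1}$ invokes a lower instance of the very relation under proof, and in the diagonal case $\delta_{in}=\delta_{jm}=1$ this is exactly where the term carrying the factor $(x_n^m)^{r-2}$, i.e.\ the coefficient $c_r$, is produced; one then normalizes the resulting $x$-monomials using the homogeneous relations from the second step. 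Collecting contributions, the coefficient in front of each monomial species matches the recursion set up in the first step, closing the induction; the relation for $(\partial_m^i)^r x_n^j$ follows either by the symmetric argument or by transporting the first identity through the algebra anti-automorphism interchanging $X$ and $D$. \textbf{The main obstacle} is precisely this bookkeeping in the third step: there are several distinct species of correction term, they interact with one another when commuted past one more factor of $x_n^j$, and one must verify that $a_r,b_r,c_r,d_r$ are the unique solutions to the resulting coupled recursions with the stated initial data --- a finite but error-prone computation.
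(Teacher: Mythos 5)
Your proposal is correct and matches the paper's proof in essentials: both argue by induction on $r$ with the $r=1$ defining relations as base case, derive coupled recursions for $a_r,b_r,c_r,d_r$ from the inductive step, and then verify that the stated closed forms solve these recursions with the given initial data. The only organizational difference is that the paper right-multiplies the established expression for $\partial_m^i(x_n^j)^r$ by one further $x_n^j$ and disposes of the correction terms via three single-step auxiliary commutation identities, whereas you peel off the leftmost factor $\partial_m^i x_n^j$ and push the corrections through $(x_n^j)^{r-1}$ by invoking the lemma at lower exponent for shifted indices --- a valid, if slightly heavier in bookkeeping, variant of the same induction.
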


\begin{proof} We give a proof of the identity involving $\partial_m^i (x_n^j)^r $. The justification of the other identities is similar or easier. We proceed by induction on $r$. The base case $r=1$ follows from definitions. Now, if $p >i = n$, then we have that 
$$ x_p^j \partial_j^p  x^j_n  = q^{\delta_{jm} -1}  x^j_n   x_p^j \partial_j^p + \delta_{jm} (q^2-1) \sum_{p' <j} x_p^j \de_{p'}^p x_n^{p'},$$ 
and if $p' <j =m $, then
$$ \de_{p'}^i x_i^{p'} x^j_n =  q^{\delta_{in} -1} x^j_n \de_{p'}^i x_i^{p'} + \delta_{in} (1-q^{-2}) \sum_{p >i} x_p^j \de_{p'}^p x_i^{p'}$$
Finally, if $p>i = n$ and $p' < j=m$, then  $ x_p^j \de_{p'}^p x_i^{p'}  x^j_i = q^{-2} x_i^j x_p^j \de_{p'}^p x_i^{p'}. $ We write
\begin{align*} \partial_m^i (x_n^j)^r = q^{(\delta_{in} +  \delta_{jm})r} &(x_n^j)^r \partial_m^i + a_r \sum_{p >i} (x_n^j)^{r-1} x_p^m \partial_m^p + \\  & + b_r \sum_{p' <j} (x_n^j)^{r-1} \partial_{p'}^i x_n^{p'}  + c_r \sum_{p>n, p'<m} (x_n^j)^{r-2} x_p^j \partial^p_{p'} x^{p'}_n + d_r (x_n^j)^{r-1}\end{align*} for some $a_r, b_r, c_r, d_r$ in $\C$. Straightforward computations imply the recursive relations:
\begin{align*}
a_{r+1} &=  q^{\delta_{jm} -1}a_r + \delta_{in} q^{(1+\delta_{jm}) r + \delta_{jm} } (q - q\inv),\\
b_{r+1} &=  q^{\delta_{in} -1}b_r +   \delta_{jm} q^{(1+\delta_{in})r}(q^2 - 1), \\
c_{r+1} &=  \delta_{in} \delta_{jm} (q^2 - 1)a_r + (1-q^{-2})b_r + q^{-2} c_r, \\
d_{r+1} &=  d_r + \delta_{in} \delta_{jm}q^{2r+1}.\end{align*}
From the initial conditions $a_1 =  \delta_{in} q^{\delta_{jm}}(q-q\inv) $, $ b_1 = \delta_{jm}(q^2 -1) $, $c_1 = 0$, and $d_1 = q$, one deduces the formulae stated in 
Lemma \ref{lem:coeffs}. \end{proof}

\begin{cor}
As an object of $\Repq(\GL_M)$, $\OqMatMN$ is isomorphic to the tensor product, 
$$\left(\Sym_{q^{-1}}((\C^M)^*)\right)^{\ot N},$$
while as an object of $\Repq(\GL_N)$, $\OqMatMN$ is isomorphic to the tensor product,
$$\left(\Sym_{q}(\C^N)\right)^{\ot M},$$
where we denote
$$\Sym_q(V) = T(V)/(\sigma_{V,V}-q), \qquad \Sym_{q^{-1}}(V) = T(V)/(\sigma_{V,V}-q^{-1}).$$
\end{cor}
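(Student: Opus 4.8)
The plan is to compute $\OqMatMN$ explicitly using the PBW basis of $\DqMatMN$ (Theorem~\ref{thm:PBW}), and the commutation relations for powers of generators established in Lemma~\ref{lem:coeffs}. Recall that $\OqMatMN$ is obtained from $\DqMatMN$ by a quantum moment map construction, but for the purposes of identifying its underlying structure as an object of the edge algebra it is the subalgebra (or, as an object, the summand) spanned by the $x^i_j$ generators alone; concretely, the relevant claim is that the subalgebra generated by the matrix entries of $X$ inside $\DqMatMN$ is isomorphic, as an algebra in $\Repq(\GL_M)$, to $\left(\Sym_{q^{-1}}((\C^M)^*)\right)^{\ot N}$, and as an algebra in $\Repq(\GL_N)$ to $\left(\Sym_q(\C^N)\right)^{\ot M}$. (The symmetric roles of $X$ and $D$ give the same computation for $\OqMatNM$, i.e.\ the $D$-subalgebra.)

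First I would pin down the two braided-monoidal structures. By Theorem~\ref{thm:PBW}, the ordered monomials in the $x^i_j$ form a basis of the $X$-subalgebra, so the underlying vector space is $\Sym$ of the span of the $x^i_j$, which is $\C^N \otimes (\C^M)^*$ as a $\GL_N \times \GL_M$-representation (upper index transforms in $\C^N$ for $\GL_N$ and lower index in $(\C^M)^*$ for $\GL_M$; here one must be careful to match the conventions so that, e.g., $x^i_j$ sits in the $i$-th copy of $\C^N$ and the $j$-th slot $(\C^M)^*$). Then I would read off from Lemma~\ref{lem:coeffs} the relations: for fixed lower index $i$ (i.e.\ within one copy of $(\C^M)^*$ in the $\GL_N$-picture, or across copies in the $\GL_M$-picture), $(x_m^i)^r (x_n^i)^s = q^{-rs}(x_n^i)^s (x_m^i)^r$ for $m>n$, which is exactly the $q^{-1}$-symmetric algebra relation $\sigma_{V,V} = q^{-1}$ on the $\GL_M$-representation $(\C^M)^*$ (the inverse appearing because $x^i_\bullet$ is a \emph{covector}); while for fixed upper index relations involving $t_r$ and $\theta(n-m)$, namely $x_m^i (x_n^j)^r = q^{r\delta_{mn}}(x_n^j)^r x_m^i + t_r (x_n^j)^{r-1} x_m^j x_n^i$, these encode precisely the $q$-symmetric algebra relations on the $\GL_N$-representation $\C^N$ (the defining relations $RX_2X_1 = X_1 X_2 R_{21}$ from Definition~\ref{def:dqmatmn} are the $r=s=1$ instance). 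The content of the corollary is then just that $\Sym_q$ of a tensor product of $M$ copies (one per lower index) of $\C^N$, with the braided tensor product built from $\sigma_{V,V}=q$, reproduces exactly these relations across the $M$ blocks — and symmetrically with $N$ blocks of $(\C^M)^*$ and $\sigma = q^{-1}$.

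The key step is therefore a matching of coefficients: I would verify that the braided-symmetric-algebra relations $\Sym_q(\C^N)^{\ot M} = T(\C^N \otimes \C^M)/(\sigma - q)$, where $\sigma$ is the product braiding on $\Repq(\GL_N)$ restricted to $(\C^N)^{\otimes 2}$ tensored with the trivial flip on the $\C^M$ factor, literally reproduce the $R$-matrix relations $RX_2X_1 = X_1X_2R_{21}$ of Definition~\ref{def:dqmatmn}; this is a direct unwinding of the standard $\GL_N$ $R$-matrix $R = q\sum_i E^i_i \otimes E^i_i + \sum_{i\neq j} E^i_i \otimes E^j_j + (q-q^{-1})\sum_{i>j} E^i_j \otimes E^j_i$. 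Then a dimension/PBW count (Theorem~\ref{thm:PBW}) upgrades the resulting surjection $\Sym_q(\C^N)^{\ot M} \twoheadrightarrow \OqMatMN$ to an isomorphism, since both sides have the same graded dimension. The two descriptions are related by the well-known fact that the braided symmetric algebra of $\C^N \otimes \C^M$ can be presented either as $M$ blocks of $\Sym_q(\C^N)$ or as $N$ blocks of $\Sym_{q^{-1}}((\C^M)^*)$ — equivalently, the quantum matrix bialgebra $\O_q(\Mat_{N\times M})$ has this symmetric presentation on both sides — and I would cite this (e.g.\ via the relations of \cite{JordanQuantizedmultiplicativequiver2014}) rather than re-derive it.

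The main obstacle I anticipate is bookkeeping of conventions: matching which index is ``acted on from the left'' versus ``from the right,'' the appearance of $q$ versus $q^{-1}$ (which is governed by whether a given index labels a vector or a covector, and by the ordering $m>n$ versus $i>j$ in Lemma~\ref{lem:coeffs}), and ensuring that the braiding used in the braided tensor product $\bigotimes_{e}$ of Definition~\ref{def:dqQ} is the \emph{product} braiding so that cross-block relations are trivial on the ``silent'' factor. None of this is deep, but getting every $q^{\pm 1}$ and every inequality consistent with the relations displayed in the Lemma is where care is required; once the dictionary is fixed, the isomorphism is immediate from the PBW theorem.
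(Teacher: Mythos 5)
Your proposal is essentially the paper's proof: the paper likewise fixes one index, identifies the subalgebra generated by the corresponding generators (via the relations of Lemma \ref{lem:coeffs}) with a $q^{\pm 1}$-symmetric algebra of the defining representation or its dual, and then invokes the PBW basis theorem (Theorem \ref{thm:PBW}) to obtain the tensor factorization as objects. The only caution is the bookkeeping you yourself flagged: in Definition \ref{def:dqmatmn} the upper index of $x^i_j$ runs to $M$ and the lower to $N$, so the $N$ factors $\Sym_{q^{-1}}((\C^M)^*)$ correspond to fixing the lower index and the $M$ factors $\Sym_{q}(\C^N)$ to fixing the upper index (your write-up swaps these at one point), and the $R$-matrix/cross-relation matching you outline, while true, is not needed for the purely object-level statement.
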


\begin{proof}
We prove the first statement, the second being identical.  For $i=1,\ldots, N$, let $\OqMatMN_{(i)}$ denote the subalgebra generated by the $x^j_i$, for $j=1,\dots, M$.  Inspection of the defining relations gives an isomorphism,
$$\OqMatMN_{(i)}\cong \Sym_{q^{-1}}((\C^M)^*).$$
The PBW basis Theorem \ref{thm:PBW} implies an isomorphism of objects,
$$\OqMatMN = \OqMatMN_{(1)}\otimes \cdots \otimes \OqMatMN_{(N)}.$$
\end{proof}

\begin{prop}\label{prop:Dqgoodfiltrations}
The algebras $\DqMatMN$, $\Dqloop^\circ$, $\DqMatQd$ admit good filtrations.
\end{prop}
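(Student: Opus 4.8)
The plan is to reduce the statement, via the PBW basis and Proposition~\ref{prop:goodfiltrations}(2), to two standard good-filtration facts: that a rectangular quantum matrix (RTT) coordinate algebra has a good filtration over the relevant product of quantum groups, and that the reflection equation algebra $\Oq(\GL_N)$ has a good filtration (Theorem~\ref{thm:REAgoodfiltration}). Throughout, ``good filtration'' for $\DqMatMN$, $\Dqloop^\circ$, $\DqMatQd$ is understood over $\UqL(\gl_N)\otimes\UqL(\gl_M)$, over $\UqL(\gl_N)$, and over $\UqL(\gl_\dv)$ respectively; since having a good filtration is a property of the underlying module, the braiding entering the braided tensor products plays no role.

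For $\DqMatQd=\bigotimes_{e\in E}\DqMate$: an edge algebra $\DqMate$ for $e\colon\alpha\to\beta$ is, as a $\UqL(\gl_\dv)$-module, the external tensor product of $\DqMate$ (viewed over $\UqL(\gl_{d_\alpha})\otimes\UqL(\gl_{d_\beta})$, or over $\UqL(\gl_{d_v})$ if $e$ is a loop at $v$) with the trivial module on the remaining factors; since the trivial module is $\nabla(0)$ and an external tensor product of dual Weyl modules is a dual Weyl module for the product group, a good filtration over the ``endpoint'' factors refines to one over $\UqL(\gl_\dv)$. By Proposition~\ref{prop:goodfiltrations}(2) it therefore suffices to produce a good filtration on each edge algebra $\DqMatMN$ and $\Dqloop$ (the latter both localized and not).

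For $\DqMatMN$, the PBW basis (Theorem~\ref{thm:PBW}) identifies its underlying object of $\Repq(\GL_N\times\GL_M)$ with the external tensor product of the subalgebra $\OqMatMN$ generated by the entries of $X$ and the subalgebra generated by the entries of $D$; each is a rectangular quantum matrix coordinate algebra, whose $\UqL(\gl_N)$- and $\UqL(\gl_M)$-module structures the preceding corollary makes explicit in terms of quantum symmetric algebras. The input I would invoke is the root-of-unity quantum Cauchy (Peter--Weyl) statement: such an algebra carries a good filtration over $\UqL(\gl_N)\otimes\UqL(\gl_M)$ with successive quotients $\nabla(\lambda)\boxtimes\nabla(\lambda)$, the index running over partitions with at most $\min(N,M)$ rows. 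At generic parameter this is the usual direct-sum Cauchy decomposition; its degeneration to a good filtration at a root of unity is proved exactly as Theorem~\ref{thm:REAgoodfiltration}, by the arguments of \cite{JantzenRepresentations} and \cite{ParadowskiFiltrations}. Proposition~\ref{prop:goodfiltrations}(2) then combines the two factors. For the loop algebra I would use the observation that $\Dqloop^\circ$ is the moduli algebra $A_{T^2\setminus D^2}$ for $\GL_N$, which is $\Oq(\GL_N)^{\otimes 2}$ as an object of $\Repq(\GL_N)$; since $\Oq(\GL_N)$ has a good filtration by Theorem~\ref{thm:REAgoodfiltration}, Proposition~\ref{prop:goodfiltrations}(2) finishes this case, and the unlocalized $\Dqloop$ is handled by the same kind of PBW decomposition into reflection equation matrix algebras. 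Localization causes no difficulty at any stage, since for a $\GL$-semiinvariant $f$ one has $A[f^{-1}]=\varinjlim_n(\C_{-n\chi}\otimes A)$, a filtered union of modules with good filtrations. A final application of Proposition~\ref{prop:goodfiltrations}(2) over all edges yields the good filtration on $\DqMatQd$.

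The step I expect to be the main obstacle is the quantum Cauchy input: that the generic decomposition of a rectangular quantum matrix coordinate algebra into dual Weyl bimodules degenerates, at a root of unity, to a good filtration over the product quantum group. This is the precise analogue of Theorem~\ref{thm:REAgoodfiltration} with matrix space in place of $G$, and is supplied by the same references; the remainder of the argument is bookkeeping with the PBW basis and Proposition~\ref{prop:goodfiltrations}(2).
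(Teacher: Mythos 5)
Your proposal is correct in outline and shares the paper's skeleton -- PBW decomposition of each edge algebra, reduction to building blocks, Theorem \ref{thm:REAgoodfiltration} for the loop case via $\Dqloop^\circ\cong\Oq(\GL_N)^{\otimes 2}$, and repeated use of Proposition \ref{prop:goodfiltrations}(2) to tensor the pieces -- but it differs in the key lemma for the rectangular edge algebras. You import the root-of-unity ``quantum Cauchy'' statement, that $\OqMatMN$ has a good filtration over $\UqL(\gl_N)\otimes\UqL(\gl_M)$ with sections $\nabla(\lambda)\boxtimes\nabla(\lambda)$, as an external input proved ``by the same arguments'' as Theorem \ref{thm:REAgoodfiltration}. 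The paper never invokes any Cauchy-type bimodule filtration: using the corollary to Lemma \ref{lem:coeffs}, it identifies $\OqMatMN$ as a module with a tensor power of the $q$-symmetric algebra $\Sym_q(\C^N)$ (respectively $\Sym_{q^{-1}}((\C^M)^*)$), and then shows by a short Hecke-algebra (co)invariants duality that each graded piece $\Sym_q^k(\C^N)$ is itself a dual Weyl module $\nabla(k\omega_1)$, so the good filtration is immediate and entirely self-contained. The trade-off is real: your Cauchy route, where available (it is indeed established in the quantum Schur algebra literature), produces the product-group (bimodule) filtration in one stroke and you also treat the localization and the unlocalized $\Dqloop$ explicitly via the $\varinjlim_n(\C_{-n\chi}\otimes A)$ trick, which the paper leaves implicit; the paper's route costs nothing beyond results already proved in the text and is shorter, stating explicitly only the one-sided filtrations. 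So your plan works, but its ``main obstacle'' is exactly the step the paper's more elementary argument is designed to avoid.
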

\begin{proof}
The PBW theorem gives an isomorphism $\DqMatMN\cong \OqMatMN \otimes \OqMatNM$.  By the preceding proposition, therefore we need only to construct a good filtration the $q$-symmetric algebras.  We note that each summand $\Sym_q^k(V)$ identifies with the coinvariants for the finite Hecke algebra action on $V^{\ot k}$, hence it is dual to the invariants, $\Delta(k\omega_{N-1})$ of the finite Hecke algebra action on $V^{*\ot k}$, and hence its isomorphic precisely to a dual-Weyl module $\nabla(k\omega_1)$.  The claim for $\Dqloop^\circ$ follows similarly from the tensor decomposition $\Dqloop^\circ \cong \Oq(\GL_N)\otimes \Oq(\GL_N)$, and then application of Theorem \ref{thm:REAgoodfiltration}.  Finally, the good filtration on each edge algebra tensors to give a good filtration on $\DqMatQd$, by applying Proposition \ref{prop:goodfiltrations}.
\end{proof}

\begin{lemma}\label{prop:frobpush} Suppose $q$ is a primitive $\ell$-th root of unity, with $\ell >1$ odd. Then the Frobenius pushforward of the representation $\Sym_q(\C^N)$ of $\UqL(\gl_N)$ is naturally identified with the symmetric algebra of the defining representation $\C^N$ of $\gl_N$. That is,
$$\Fr_* (\Sym_q(\C^N)) = \Sym(\C^N)$$
\end{lemma}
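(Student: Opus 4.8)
The plan is to identify $\Sym_q(\C^N)$ explicitly as a quantum affine space and to compute its small quantum group invariants directly. Write $V=\C^N$ for the defining representation of $\UqL(\glN)$ and fix the standard basis $x_1,\dots,x_N$. Since the braiding $\sigma_{V,V}$ satisfies the Hecke relation $(\sigma_{V,V}-q)(\sigma_{V,V}+q^{-1})=0$, the algebra $\Sym_q(\C^N)=T(V)/(\sigma_{V,V}-q)$ is a quantum polynomial ring in which the $x_i$ $q$-commute, and by the PBW theorem of \cite{JordanQuantizedmultiplicativequiver2014} the ordered monomials $x_1^{a_1}\cdots x_N^{a_N}$ form a $\C$-basis. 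The Cartan generators $K_j$ of $\UqL(\glN)$ act on such a monomial by $q^{a_j}$, and the generators $E_i=E_i^{(1)}$, $F_i=F_i^{(1)}$ of $\uq(\glN)$ act by skew $q$-derivations; applying $E_i$ (resp. $F_i$) to a monomial produces, up to a power of $q$, the $q$-integer $[a_{i+1}]_q$ (resp. $[a_i]_q$) as a coefficient.

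Next I would compute the invariants. Being $\uq$-invariant under the torus forces each $a_j$ to be divisible by $\ell$. Since $q$ is a primitive $\ell$-th root of unity with $\ell>2$, we have $[\ell c]_q=0$ for every $c\geq 1$, and hence every monomial $x_1^{\ell c_1}\cdots x_N^{\ell c_N}$ is annihilated by all the $E_i$ and $F_i$. Conversely any $\uq$-invariant element is a linear combination of torus-weight-zero monomials, i.e. of such monomials. Therefore
\[
\left(\Sym_q(\C^N)\right)^{\uq(\glN)}=\C[x_1^\ell,\dots,x_N^\ell].
\]
Because $q^\ell=1$ the elements $x_i^\ell$ are central and pairwise commuting, and by the PBW basis they are algebraically independent, so the right-hand side is an honest polynomial algebra on $N$ generators.

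It remains to identify the residual $\glN$-module-algebra structure, i.e. the $\UqL(\glN)$-action factored through $\Fr$. Here $E_i^{(\ell)}$ applied to $x_1^{\ell c_1}\cdots x_N^{\ell c_N}$ produces, up to a power of $q$, the $q$-binomial coefficient $\binom{\ell c_{i+1}}{\ell}_q$, which equals $c_{i+1}$ by the $q$-Lucas theorem for a primitive $\ell$-th root of unity; similarly for $F_i^{(\ell)}$. Thus, writing $y_i=x_i^\ell$, the generators of $U(\glN)$ act on $\C[y_1,\dots,y_N]$ by the usual derivations, so $\left(\Sym_q(\C^N)\right)^{\uq(\glN)}\cong\Sym(\C^N)$ as $\glN$-module algebras. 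Equivalently, $\C[x_1^\ell,\dots,x_N^\ell]$ is exactly the image of the central embedding $\Fr^*(\Sym(\C^N))\hookrightarrow\Sym_q(\C^N)$ given by $y_i\mapsto x_i^\ell$, and $\Fr_*\circ\Fr^*=\mathrm{id}$.

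I do not expect a serious obstacle: the only care needed is to set up the $\uq(\glN)$-action on the quantum polynomial ring correctly and to invoke the two $q$-combinatorial identities $[\ell c]_q=0$ and $\binom{\ell c}{\ell}_q=c$. As a sanity check — and an alternative route staying closer to the representation theory just used — one can combine the isomorphism $\Sym_q^k(\C^N)\cong\nabla(k\omega_1)$ from the proof of Proposition \ref{prop:Dqgoodfiltrations} with the Steinberg tensor product theorem to get $\dim\nabla(k\omega_1)^{\uq}=\binom{N+k/\ell-1}{N-1}$ for $\ell\mid k$ and $0$ otherwise, matching the graded dimension of $\Sym(\C^N)$; but the essential input there (that the torus-weight-zero vectors of $\nabla(k\omega_1)$ are genuinely $\uq$-invariant) is again the vanishing $[\ell c]_q=0$.
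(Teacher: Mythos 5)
Your proposal is correct and follows essentially the same route as the paper's proof: write down the explicit action of $K_j$, $E_i^{(r)}$, $F_i^{(r)}$ on monomials in the quantum polynomial ring, use torus invariance to force all exponents to be divisible by $\ell$, use the vanishing of the relevant $q$-integers/$q$-binomials (the $q$-Lucas identity) to see these monomials are killed by the small quantum group generators, and then check that the divided powers $E_i^{(\ell)}, F_i^{(\ell)}$ act on the $\ell$-th powers $x_i^\ell$ as the classical Serre generators act on $\Sym(\C^N)$. The only cosmetic difference is your closing sanity check via $\nabla(k\omega_1)$, which the paper does not include.
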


\begin{proof} The Frobenius pushforward functor is the functor of taking small quantum group invariants. We show that the small quantum group invariants of  $\Sym_q(\C^N)$ can be identified with the subspace $S_\ell$ consisting of polynomials in the $\ell$-th powers $x_1^\ell, \dots, x_N^\ell$. This subspace can be naturally identified with $\Sym(\C^N)$, and the induced action of the classical enveloping algebra on $S_\ell$ matches the usual one on $\Sym(\C^N)$. 
	
By definition we have the following formulas for the divided power generators of $\UqL(\g)$ on the generators $x_n^{s_n}$ of $\Sym_q(\C^N)$:
$$ K_j x_n^{s_n} = \begin{cases}
q^{s_n} x_n^{s_n}  \quad \text{\rm if $n=j$} \\
q^{-s_n} x_n^{s_n}  \quad \text{\rm if $n=j-1$} \\
x_n^{s_n}  \quad \text{\rm otherwise} \\
\end{cases}$$
	\begin{equation*}\label{eqn:divpoweraction} E_i^{(r)} x_n^{s_n}  = \begin{cases}
	\begin{bmatrix} s_n \\ r
	\end{bmatrix}_q x_{n-1}^r x_n^{s_n -r} \quad \text{\rm if $i = n-1$} \\
	0 \quad \text{\rm otherwise}
	\end{cases}, \quad 
	F_i^{(r)} x_n^{s_n}  = \begin{cases}
	\begin{bmatrix} s_n \\ r
	\end{bmatrix}_q x_n^{s_n -r} x_{n+1}^r  \quad \text{\rm if $i = n+1$} \\
	0 \quad \text{\rm otherwise}
	\end{cases} \end{equation*}
Here, $n, j \in \{1, \dots N\}$, $s_n \in \Z_{\geq 0}$, and $i \in \{1, \dots, N-1\}.$  We note the identity,

$$\begin{bmatrix} r \\ s \end{bmatrix}_q = \begin{bmatrix} r_0 \\ s_0 \end{bmatrix}_q {r_1 \choose s_1}$$
where $r = r_0 + r_1 \ell$ and $s= s_0 + s_1\ell$ for $0 \leq r_0, s_0 \leq \ell-1$. Thus, a polynomial is invariant for $K_j$ if and only if each of its monomial summands is invariant for $K_j$. From the identities above for the action of $K_j$, we see that the condition that all $K_j$ act on a monomial $\prod_{n=1}^N x_n^{s_n}$ by the scalar 1 is equivalent to the condition that each $s_n$ is divisible by $\ell$.  Thus, the invariants are contained in $S_\ell$. Similarly, the formulas  for the action of the divided powers show that $S_\ell$ is indeed invariant for the $E_i$ and $F_i$. As is well-known, the classical enveloping algebra $U(\glN)$ has a presentation in terms of Serre generators $\bar E_i$, $\bar F_i$, and $H_j$ for $i = 1, \dots, N-1$ and $j = 1, \dots, N$. We have that $\Fr(E_i^{(r)}) = \bar E_i$, $\Fr(F_i^{(r)}) = \bar F_i$. Thus, the induced action of $\gl_N$ on $S_\ell$ is the same as the usual action of $\gl_N$ on $\Sym(\C^N)$. \end{proof}

We are now ready to state the main result of this section:

\begin{theorem}\label{thm:quiver-center}
Let $q$ be a primitive $\ell$-th root of unity, where $\ell >1$ is odd.  Then:
\begin{enumerate}
\item We have a central, $\UqL(\gl_N\times\gl_M)$-equivariant embedding:
$$\Fr^*\left(\O(\Mat(e)\times \Mat(e^\vee))\right) \hookrightarrow \DqMatMN,$$
\item We have a central, $\UqL(\gl_N)$-equivariant embedding:
$$\Fr^*\left(\O(GL_N\times GL_N)\right) \hookrightarrow \Dqloop^\circ,$$
\item We have a central, $\UqL(\gl_\mathbf{d})$-equivariant embedding:
$$\Fr^*\left(\O(\MQdf)\right) \hookrightarrow \DqMatQd,$$
\item We have a Frobenius Poisson order structure on the pair $(\DqMatMN,\MQdf)$.
\end{enumerate}
\end{theorem}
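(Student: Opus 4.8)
The plan is to reduce the theorem to statement~(1), the construction of the central subalgebra for a single non-loop edge; statements~(2)--(4) then follow by assembly and by the degeneration machinery of Section~\ref{subsec:degenqalgebras}. For~(1) I would first exhibit an explicit central subalgebra. Putting $r=\ell$ in the commutation identities of Lemma~\ref{lem:coeffs} and using $q^{\ell}=1$ (so also $q^{\ell^{2}}=q^{2\ell}=1$), each of $a_{\ell},b_{\ell},c_{\ell},d_{\ell}$ vanishes, $t_{\ell}=\theta(n-m)(q^{2\ell-1}-q^{-1})=0$, and $q^{\ell\delta_{mn}}=1$; hence every $(x^{i}_{j})^{\ell}$ and every $(\partial^{i}_{j})^{\ell}$ commutes with all the generators of $\DqMatMN$, so the subalgebra $C$ they generate is central. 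The PBW theorem~\ref{thm:PBW} shows the ordered monomials in the $(x^{i}_{j})^{\ell},(\partial^{i}_{j})^{\ell}$ are linearly independent, whence $C$ is a polynomial ring in $2MN$ variables, i.e.\ $C\cong\O(\Mat(e)\times\Mat(e^{\vee}))$ as a commutative ring. To identify this with $\Fr^{*}(\O(\Mat(e)\times\Mat(e^{\vee})))$ as a $\UqL(\gl_{N}\times\gl_{M})$-module algebra I would reuse the quantum-binomial computation from the proof of Lemma~\ref{prop:frobpush}, together with its $\Sym_{q^{-1}}$-analogue for the $\gl_{M}$-action: the divided powers $E^{(s)}_{i}$ and $F^{(s)}_{i}$ annihilate the $\ell$-th powers of generators for $0<s<\ell$, so $\uq$ acts trivially on $C$ and $C$ is $\UqL$-stable; therefore the $\UqL$-action on $C$ factors through $\Fr$, and the induced $\gl_{N}\times\gl_{M}$-action (with $E^{(\ell)}_{i}$ sending $x^{\ell}_{n}$ to $x^{\ell}_{n-1}$, and so on) is precisely the defining action on $\O(\Mat(e)\times\Mat(e^{\vee}))$.

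For statement~(2) the same method applies to the explicit relations of the loop algebra, both before and after the localization; after inverting $\detq(A)\detq(D)$ one obtains the inverse of the $\ell$-th power of the $q$-determinant, which is the function cutting $\GL_{N}$ out of $\Mat_{N}$, so the central subalgebra becomes $\Fr^{*}(\O(\GL_{N}\times\GL_{N}))$. Alternatively one bootstraps from the known central copy of $\O(\GL_{N})$ inside $\Oq(\GL_{N})$ (Section~\ref{sec:REA}) and the PBW decomposition $\Dqloop^{\circ}\cong\Oq(\GL_{N})\ot\Oq(\GL_{N})$, checking that the cross-relations are also trivial on the $\ell$-th powers. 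For statement~(3), since $\MQdf=\prod_{e\in E}(\Mat(e)\times\Mat(e^{\vee}))$ and $\Fr^{*}$ is monoidal, $\Fr^{*}(\O(\MQdf))$ is the braided tensor product over the edges of the algebras $\Fr^{*}(\O(\Mat(e)\times\Mat(e^{\vee})))$, and the braided tensor product of the edgewise embeddings of~(1)--(2) carries it into $\bigotimes_{e}\DqMate=\DqMatQd$; this map is injective because a tensor product over $\C$ of injective maps is injective, it is $\UqL(\gl_{\dv})$-equivariant componentwise, and its image is central in the braided tensor product since each factor lies in the M\"{u}ger center of $\Repq(\GL_{\dv})$.

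For statement~(4), set $A_{\Rt}=\DRMatQd$, which is flat over $\Rt$ (the PBW basis of Theorem~\ref{thm:PBW} works verbatim over $\Rt$) and is an algebra in $\RepR(\GL_{\dv})$, and take $Z=\Fr^{*}(\O(\MQdf))\subset\DqMatQd$, which by~(3) is central, $\UqL(\gl_{\dv})$-equivariant and, being the image of $\Fr^{*}$, $\uq(\gl_{\dv})$-invariant; by PBW $\DqMatQd$ is finite as a $Z$-module. Proposition~\ref{prop:PoissonOrderDegeneration} then produces the derivation map $D\colon Z\to\Der(\DqMatQd)$, and the only remaining point is that $D(z_{1})(z_{2})\in Z$. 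I would establish this by identifying the bracket induced on $\O(\MQdf)$ with the classical Poisson structure attached to the standard $r$-matrix via Proposition~\ref{prop:RMatrixDegeneration}, the multiplicative-quiver analogue of Proposition~\ref{prop:fockroslynondegenerate}: by compatibility of degeneration with fusion (Proposition~\ref{prop:fusiondegeneration}), and because the fusion formula of Proposition~\ref{prop:Poissonfusion} preserves internality of the bracket, it is enough to treat a single edge, where a direct computation from the generic-parameter version of Lemma~\ref{lem:coeffs} exhibits $D((x^{i}_{j})^{\ell})((x^{k}_{l})^{\ell})$ as a combination of monomials in the $\ell$-th powers. Once $D(z_{1})(z_{2})\in Z$ is in hand, Proposition~\ref{prop:FrobeniusPoissonOrderDegeneration} upgrades $(\DqMatQd,Z)$ from a Poisson order to a Frobenius Poisson order.

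I expect two main obstacles. First, in~(1) one must pin down the image of $\Fr^{*}$ \emph{exactly}---not merely produce \emph{some} central polynomial subalgebra---which is where the root-of-unity identities for quantum binomials and the Frobenius pushforward of $\Sym_{q}$ do the real work. Second, the internality $D(z_{1})(z_{2})\in Z$ in~(4) is in substance the assertion that the first-order part of the degeneration of the quantized relations reproduces the known classical Poisson bracket on $\MQdf$; the bookkeeping in the $\theta(n-m)$-terms of Lemma~\ref{lem:coeffs}, which must conspire so that the apparent non-$\ell$-th-power corrections cancel, is the delicate step, and reducing to one edge via fusion is what keeps it tractable.
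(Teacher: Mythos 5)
Your proposal follows essentially the paper's own route: parts (1), (3) and (4) are proved there exactly as you describe --- centrality of the $\ell$-th powers read off from Lemma \ref{lem:coeffs}, identification with $\Fr^*\left(\O(\Mat(e)\times\Mat(e^\vee))\right)$ via the quantum-binomial/Frobenius-pushforward argument of Lemma \ref{prop:frobpush} (applied to the $\Sym_q(\C^N)$, resp.\ $\Sym_{q^{-1}}((\C^M)^*)$, tensor factors singled out by the PBW basis of Theorem \ref{thm:PBW}), assembly over edges by braided tensor products with centrality coming from $\uq(\gl_\dv)$-invariance (equivalently M\"uger-centrality), and part (4) via Proposition \ref{prop:PoissonOrderDegeneration}, a direct single-edge computation of the induced bracket (the paper records the resulting bivector explicitly), fusion compatibility as in Proposition \ref{prop:fusiondegeneration}, and Proposition \ref{prop:FrobeniusPoissonOrderDegeneration}. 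The one caveat is part (2): your primary suggestion of running the same entrywise $\ell$-th-power computation on the loop generators $a^i_j, d^i_j$ is not sound, since the central copy $\Fr^*(\O(\GL_N))\subset\Oq(\GL_N)$ is by construction the image of the bialgebra map induced by the monoidal structure of $\Fr^*$ (spanned by matrix coefficients of Frobenius-pulled-back representations), and for the reflection equation algebra this is not the subalgebra generated by $\ell$-th powers of the generators, nor are those powers obviously central; however, your stated fallback is precisely what the paper does, namely identifying $\Dqloop^\circ$ with $A_{T^2\backslash D^2}\cong\Oq(\GL_N)^{\ot 2}$ and importing the central copy $\O(\GL_N\times\GL_N)$ from Section \ref{sec:REA}, with the cross-relations trivialized by M\"uger-centrality rather than by any explicit power computation. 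Finally, note that the paper observes (in the remark following the proof) that the delicate check $D(z_1)(z_2)\in Z$ in (4) can be bypassed altogether: the Azumaya point at the zero representation (Theorem \ref{thm:Azumayazero}) together with Lemma \ref{lm:wholecenter} shows $Z$ is the full center, and then Lemma \ref{lem:Hayashi} gives the containment for free.
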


\begin{proof}
For the central subalgebra asserted in (1), we let $Z\subset \DqMatMN$ be the subalgebra generated by the $\ell$th powers $(x^i_j)^\ell$, $(\partial^k_l)^\ell$ of all generators.  Inspecting the formulas \eqref{lem:coeffs}, we note that commutators with all $\ell$th powers of generators vanish when $q^\ell=1$, i.e. the subalgebra $Z$ is central.  Clearly it is isomorphic as an algebra to $\O(\Mat(e)\times \Mat(e^\vee)$.

Hence to complete the proof of (1), it remains to prove equivariance for the action of the restricted quantum group. The algebra $\DqMatMN$ is isomorphic as a $\UqL(\glN)$-module (that is, upon forgetting the $\UqL(\gl_M)$-action) to a tensor product of $2M$ copies of the $q$-symmetric algebra $\Sym_q(\C^N)$, where each sub-algebra is determined by fixing the lower indices.  Hence the equivariance of the inclusion follows immediately from Lemma \ref{prop:frobpush}.

Finally, to conclude the proof of (1), it follows from Lemma \ref{prop:frobpush} that the small quantum group invariants contain the $\ell$-th powers of the generators and the induced action of the classical enveloping algebra $U(\gl_N)$ on the copy of $\O(\Mat(N,M) \times \Mat(M,N))$ they generate coincides with the action induced from $\GL_N$ acting on the space $\Mat(N,M)$ by multiplication. Similar observation hold for the action of $\UqL(\gl_M)$ (here $q$ is replaced by $q^{-1}$, because the defining representation is replaced by its dual), and we conclude that the embeddings in the statement of the theorem are equivariant for the restricted quantum group. 

The claim (2) has already been proved in Section 4, because the algebra $\Dqloop^\circ$ is isomorphic (as an algebra object of $\Repq(G)$) to the framed quantum character variety of the punctured torus, $A_{T^2\backslash D^2}$ for $\GL_N$.

The claim (3) now follows from the fact that $\DqMatQd$ is a braided tensor product of its edge-algebras; this implies easily that the central subalgebras of each edge algebra is $\UqL(\gl_{\dv})$-equivariant, and $\uq(\gl_{\dv})$-invariant.  This further implies that the central subalgebra on each edge is in fact central in the whole algebra, as the braided tensor product commutativity relations become trivial on $\uq(\gl_\dv)$-invariant subalgebras.

Finally, to prove claim (4), we appeal to Proposition \ref{prop:FrobeniusPoissonOrderDegeneration}.  Combining Proposition \ref{prop:Dqgoodfiltrations} and Proposition \ref{prop:goodfiltrations} establishes the required surjectivity, so we need only to show that the resulting derivation $D$ on $\DqMatQd$ preserves $Z$.  For this, we can compute directly with the relations of Lemma \ref{lem:coeffs} to see that, on each edge algebra $\DqMatMN$ we have that the derivation of Proposition \ref{prop:PoissonOrderDegeneration} restricts on $Z$ to give the Poisson bivector,
\begin{align}
\pi  &= \sum_{i >j ; n,m} \left( \delta_{mn} y_n^j y_m^i + 2\theta(n-m) y_m^j y_n^i \right) \frac{\partial}{\partial y_m^i} \wedge \frac{\partial}{\partial y_n^j}  -   \sum_{m > n }  y_n^i y_m^i  \frac{\partial}{\partial y_m^i} \wedge \frac{\partial}{\partial y_n^i} \label{eq:edgePoissonbivector}
\\
&+   \sum_{i >j ; n,m} \left( \delta_{mn} z_n^j z_m^i + 2\theta(n-m) z_m^j z_n^i \right) \frac{\partial}{\partial z_m^i} \wedge \frac{\partial}{\partial z_n^j}     -    \sum_{m > n }  z_n^i z_m^i  \frac{\partial}{\partial z_m^i} \wedge \frac{\partial}{\partial z_n^i} \nonumber \\
&+   2\sum_{i,j,m,n } \left(  (\delta_{in} + \delta_{jm})  y_n^j z_m^i + \delta_{in} \sum_{p >i} y^j_p z^p_m + \delta_{jm} \sum_{p < j} y_n^p z_p^i + \frac{\delta_{in} \delta_{jm}}{(q^2 -1)^\ell}    \right) \frac{\partial}{\partial z_m^i} \wedge \frac{\partial}{\partial y_n^j}. \nonumber
\end{align} 

Meanwhile, we know that the derivation preserves the central subalgebra on loop edges as is proved in Section \ref{subsec:CharVarFrobPoissonOrder}.  And now once again the claim for $\DqMatQd$ follows by considering braided tensor products. \end{proof}

\begin{rmk}
The direct check that $D(Z)(Z)\subset Z$ given here can be avoided:  Theorem \ref{thm:Azumayazero} gives the existence of a single Azumaya point, and $\cZ=\Spec(Z)$ is smooth and connected.  Hence Lemma \ref{lm:wholecenter} implies that $Z$ is the center of $\DqMatQd$, and then the desired containment follows by Lemma \ref{lem:Hayashi}.
\end{rmk}

\subsection{Nondegeneracy of the Poisson $G$-variety $\MQdf$}
\label{subsec:qmqvnondeg}
We are now going to analyze the nondegeneracy of the Poisson $\GL_{\dv}$-variety structure on $\MQdf$ given by Theorem \ref{thm:quiver-center}.

Consider the quiver $(Q, \dv) = \left( \overset{N}{\bullet}\rightarrow\overset{M}{\bullet}\right)$. From the formula \eqref{eq:edgePoissonbivector} it is clear that the Poisson structure is independent of $\ell$ up to scale. So, it is enough to analyze the nondegeneracy of the Poisson structure for $q\rightarrow 1$. Let $r_M$ and $r_N$ be the classical $r$-matrices on $\gl_M$ and $\gl_N$ respectively. By our convention the symmetric parts of $r_M$ and $r_N$ are given by $P/2$, where $P\colon \C^M\otimes \C^M\rightarrow \C^M\otimes \C^M$ is the flip $v\otimes w\mapsto w\otimes v$ and similarly for $\C^N$.

Then the $q\rightarrow 1$ limit of the relations in Definition \ref{def:dqmatmn} gives the following Poisson structure on $\MQdf$:
\begin{align*}
\{X_1, X_2\} &=  r_N X_1X_2 - X_1X_2 (r_M)_{21} \\
\{D_1, D_2\} &= r_M D_1D_2 - D_1D_2 (r_N)_{21} \\
\{X_1, D_2\} &= -D_2 r_N X_1 - X_1 r_M D_2 - \Omega.
\end{align*}

Here the relations take place in $\cO(\MQdf)$ tensored with $\Hom\left(\C^M\otimes \C^M, \C^N\otimes \C^N\right)$ in the first line, $\Hom\left(\C^N\otimes \C^N, \C^M\otimes \C^M\right)$ in the second line, and $\Hom\left(\C^M\otimes \C^N, \C^M\otimes \C^N\right)$ in the last line, respectively, and $\Omega\colon \C^M\otimes \C^N\rightarrow \C^M\otimes \C^N$ simply denotes the identity map.

\begin{theorem}\label{thm:qmqvnondeg}
Let $Q$ be an arbitrary quiver. The Poisson $\GL_{\dv}$-variety $\MQdf^\circ$ is nondegenerate.
\label{thm:quivervarietynondegenerate}
\end{theorem}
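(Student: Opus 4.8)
The plan is to reduce, by fusion, to the case of a single edge, and then to verify the nondegeneracy criterion of Proposition~\ref{prop:LiBlandSevera} by a direct computation in coordinates.

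First I would use that, by Definition~\ref{def:dqQ}, $\DqMatQd$ is the braided tensor product in $\RepqGLd$ of the edge algebras $\DqMate$. Together with Theorem~\ref{thm:quiver-center} and Proposition~\ref{prop:fusiondegeneration}, applied one edge at a time, this exhibits the Poisson $\GL_\dv$-variety $\MQdf^\circ$ as the fusion---along the vertex map $\prod_{e\in E}(\GL_{d_{\alpha(e)}}\times\GL_{d_{\beta(e)}})\to\GL_\dv$---of the product of the localized edge spaces $\Mat(e)^\circ\times\Mat(e^\vee)^\circ$. A product of nondegenerate Poisson $G_i$-varieties is a nondegenerate Poisson $\prod_iG_i$-variety, since the map $(a,\pi^\sharp)$ of the definition splits as a direct sum, and nondegeneracy is preserved by fusion by Proposition~\ref{prop:Poissonfusion}, applied iteratively. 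So it suffices to show that each edge space is nondegenerate for its own gauge group. For a loop at a vertex $v$ this is immediate: by Theorem~\ref{thm:quiver-center}(2) the algebra in question is $\Dqloop^\circ\cong A_{T^2\setminus D^2}$, whose classical degeneration is the framed $\GL_{d_v}$-character variety of the once-punctured torus, which is nondegenerate by Proposition~\ref{prop:fockroslynondegenerate}. This reduces the theorem to the single non-loop edge $(Q,\dv)=(\overset{N}{\bullet}\rightarrow\overset{M}{\bullet})$.

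For this case, let $X=\MQdf^\circ$ be the open subvariety of $\Hom(\C^N,\C^M)\times\Hom(\C^M,\C^N)$, with coordinate matrices $X\colon\C^N\to\C^M$ and $D\colon\C^M\to\C^N$, on which $\det(1+DX)$ and $\det(1+XD)$ are invertible, equipped with its $\GL_N\times\GL_M$-action, its moment map $\tilde\mu$, and its Poisson bivector $\pi_X$; because the bivector \eqref{eq:edgePoissonbivector} depends on $\ell$ only through an overall scalar, I would compute with the $q\to1$ brackets displayed just above the theorem. By Proposition~\ref{prop:LiBlandSevera} it is enough to show that $\widetilde{\pi}_X^\sharp=(\pi_X+a(r))^\sharp\colon\T^*_X\to\T_X$ is an isomorphism, where $a$ is the infinitesimal $\GL_N\times\GL_M$-action and $r=r_N\oplus r_M$ is the classical $r$-matrix. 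Since $X$ is open in an affine space, $\widetilde{\pi}_X^\sharp$ is a self-map of a trivial bundle of rank $2MN$, so one must check that its matrix over $\cO(X)$ has unit determinant. Adding to the $q\to1$ brackets the contributions $a(r)(X_1\otimes X_2)$, $a(r)(D_1\otimes D_2)$ and $a(r)(X_1\otimes D_2)$, the $r_N$- and $r_M$-terms of $a(r)$ combine with the $r$-matrix terms already present in $\{X_1,X_2\}$ and $\{D_1,D_2\}$, so that $\widetilde{\pi}_X$ reduces to the standard Darboux pairing between the $X$- and $D$-coordinates (coming from the $-\Omega$ term in $\{X_1,D_2\}$) perturbed only by terms of strictly positive polynomial degree in $(X,D)$. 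Evaluating the determinant of $\widetilde{\pi}_X^\sharp$, one finds it equals, up to a nonzero scalar, a power of $\det(1+DX)\det(1+XD)$, which is a unit on $\MQdf^\circ$; hence $\widetilde{\pi}_X^\sharp$ is an isomorphism on all of $\MQdf^\circ$, not merely generically.

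The step I expect to be the main obstacle is precisely this last computation: carrying out, in the explicit coordinates of Definition~\ref{def:dqmatmn}, the cancellation of the $r$-matrix contributions in $\{X_1,X_2\}$ and $\{D_1,D_2\}$ against $a(r)$, and keeping track of the surviving terms accurately enough to evaluate $\det\widetilde{\pi}_X^\sharp$ on the localized variety. This is a finite but somewhat involved linear-algebra calculation; it is consistent with, and reproves, the classical fact that $\Mat(e)^\circ\times\Mat(e^\vee)^\circ$ underlies a quasi-Hamiltonian $\GL_N\times\GL_M$-space \cite{VandenBerghQuasiHamiltonian, YamakawaGeometryMultiplicativePreprojective2008}.
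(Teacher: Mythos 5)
Your proposal follows essentially the same route as the paper: reduce by fusion (Propositions \ref{prop:fusiondegeneration} and \ref{prop:Poissonfusion}) to a single edge, dispose of loops via Proposition \ref{prop:fockroslynondegenerate}, and then verify the Li--Bland--Severa criterion for the Kronecker edge by a direct coordinate computation of $\widetilde{\pi}^\sharp=(\pi+a(r_N+r_M))^\sharp$ at $q\to 1$. The determinant computation you flag as the remaining obstacle is carried out in the paper and gives exactly the value you predict, namely $\det(\widetilde{\pi}^\sharp)=\det(1+DX)^{N+M}$, a unit on $\MQdf^\circ$.
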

\begin{proof}
By Definition \ref{def:dqQ} and Proposition \ref{prop:fusiondegeneration}, $\MQdf^\circ$ is obtained by the fusion of the corresponding varieties for a single edge. But by Proposition \ref{prop:Poissonfusion} fusion preserves nondegeneracy, so it is enough to prove nondegeneracy of $\MQdf^\circ$ when $Q$ is a single edge. When $Q$ is a single loop, the result follows from Proposition \ref{prop:fockroslynondegenerate}. So, we just have to analyze the case $(Q, \dv) = \left( \overset{N}{\bullet}\rightarrow\overset{M}{\bullet}\right)$.

Let $a\colon \gl_N\oplus \gl_M\rightarrow \Gamma(\MQdf, \T_{\MQdf})$ be the infinitesimal action map and consider the new bivector $\widetilde{\pi} = \pi + a(r_M + r_N)\in\Gamma\left(\MQdf, \T_{\MQdf}^{\otimes 2}\right)$ (note that it is no longer antisymmetric). The corresponding biderivation $\{-, -\}'$ is given by the following formulas:
\begin{align*}
\{X_1, X_2\}' &= -X_1X_2 P \\
\{D_1, D_2\}' &= -D_1 D_2 P \\
\{X_1, D_2\}' &= -\Omega \\
\{D_2, X_1\}' &= D_2 P X_1 + X_1 P D_2 + \Omega.
\end{align*}

By Proposition \ref{prop:LiBlandSevera} the nondegeneracy of the Poisson $G$-variety $\MQdf$ is equivalent to the condition that this matrix that we will denote $M$ is invertible. Writing it out in coordinates, we have
\begin{align*}
\{y_i^j, y_k^l\}' &= -y_k^j y_i^l = (M_{XX})_{ik}^{jl} \\
\{z_i^j, z_k^l\}' &= -z_k^j z_i^l = (M_{DD})_{ik}^{jl} \\
\{y_i^j, z_k^l\}' &= -\delta_i^l \delta_k^j = (M_{XD})_{ik}^{jl} \\
\{z_k^l, y_i^j\}' &= \sum_m z_m^l y_i^m \delta^j_k + \sum_m z^m_k y_m^j \delta^l_i + \delta_k^j \delta_i^l = (M_{DX})_{ki}^{lj}.
\end{align*}

We have a block form
\[
M = \left(\begin{array}{cc} M_{XX} & M_{XD} \\ M_{DX} & M_{DD} \end{array}\right)
\]
Since $M_{XD}$ is a scalar matrix, we have $\det(M) = \det(M_{DD}M_{XX} - M_{XD}M_{DX})$. We get
\begin{align*}
(M_{DD}M_{XX})_{ik}^{jl} &= (DX)^j_k (XD)^l_i \\
(M_{XD}M_{DX})_{ik}^{jl} &= -(DX)^j_k \delta^l_i - (XD)^l_i \delta^j_k - \delta_i^l \delta_k^j
\end{align*}
and hence
\[(M_{DD}M_{XX}-M_{XD}M_{DX})_{ik}^{jl} = ((DX)^j_k+\delta^j_k)((XD)^l_i + \delta^l_i).\]
In other words, $M_{DD}M_{XX}-M_{XD}M_{DX}\colon \C^M\otimes \C^N\rightarrow \C^M\otimes \C^N$ is the tensor product of matrices $(1+DX)\colon \C^M\rightarrow \C^M$ and $(1+XD)\colon \C^N\rightarrow \C^N$. We have $\det(1+DX) = \det(1+XD)$, so
\[\det(M) = \det(1+DX)^{N+M}.\]
In particular, it is invertible on the locus $\MQdf^\circ\subset \MQdf$.
\end{proof}

\begin{rmk}\label{rmk-VdB}
Van den Bergh has previously defined a natural quasi-Poisson structure on $\MQdf$ using the theory of double quasi-Poisson structures \cite{VandenBerghDoublePoissonalgebras2008}. In particular, he shows that his quasi-Poisson structure is nondegenerate on $\MQdf^\circ\subset \MQdf$, see \cite[Proposition 8.3.1]{VandenBerghQuasiHamiltonian}. We expect that the twist of his quasi-Poisson structure with respect to the antisymmetric part of the $r$-matrix coincides with the Poisson structure studied in this paper which is obtained by degenerating the quantization given in \cite{JordanQuantizedmultiplicativequiver2014}.\end{rmk}

\subsection{An Azumaya point on $\MQdf$}\label{subsec:qmqvazumaya}

Let $Q\Kr$ be the Kronecker quiver with dimension vector  $\mathbf{d} = (M, N)$. Recall from Definition \ref{def:dqmatmn} the algebra $\DqMatMN$, which has generators $x^i_j$ and $\partial_p^r$ for $1 \leq i, p \leq M$ and $1 \leq j,r \leq N$. Recall also that we organize these generators into an $M \times N$ matrix $X$ and an $N \times M$ matrix $D$. Similarly, set $X^{[\ell]}$ to be the $M \times N$ matrix with entries $(x^i_n)^\ell$ and  $D^{[\ell]}$ to be the $N \times M$ matrix with entries $(\de_i^n)^\ell$. We fix $q$ to be a primitive $\ell$-th root of unity, where $\ell >1$ is odd. By Theorem \ref{thm:quiver-center}, the entries in $X^{[\ell]}$ and $D^{[\ell]}$ generate a central subalgebra $Z_\ell$ isomorphic to the coordinate algebra of $\Mat(M,N)\times \Mat(N,M)$. The quotients
$$A =  \DqMatMN/ ( D^{[\ell]} =0) \qquad \text{\rm and} \qquad \mathcal M =  \DqMatMN/(D =0)$$
of $ \DqMatMN$ by the ideal generated by the entries of $D^{[\ell]}$ and the ideal generated by the $\partial$'s are each modules for the quotient $Z_\ell/ (D^{[\ell]} =0)$. The former is in fact an algebra over  $Z_\ell/ (D^{[\ell]} =0)$, and by the following theorem, is the endomorphism algebra of the latter:

\begin{theorem}\label{thm:Azumayazero}
We have:
\begin{enumerate}
\item The left multiplication action induces isomorphisms,
$$ \DqMatMN/ \left( D^{[\ell]} =0\right) \stackrel{\sim}{\longrightarrow} \End_{Z_\ell/\left(D^{[\ell]} =0\right) }\left(\DqMatMN/(D =0) \right),$$
Hence, the fiber $\DqMatMN/ \left( D^{[\ell]} =0, X^{[\ell]} =0\right)$ of the sheaf defined by $\DqMatMN$ over zero in $\Mat(M,N) \times \Mat(N,M)$ is a matrix algebra.
\item The fiber of $\MQdf$ over the zero representation is a matrix algebra.
\end{enumerate}
\end{theorem}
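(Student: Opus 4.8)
The plan is to reduce both statements to a single finite–dimensional computation: that the algebra $\bar A_0 := \DqMatMN/(X^{[\ell]}=0,\,D^{[\ell]}=0)$, which has $\C$–dimension $\ell^{2MN}$ by the PBW theorem (Theorem~\ref{thm:PBW}), acts \emph{faithfully} by left multiplication on the ``restricted Fock module'' $\bar{\mathcal M}_0 := \OqMatMN/(X^{[\ell]}=0)$, of dimension $\ell^{MN}$; since $\dim_\C \bar A_0 = \ell^{2MN} = \dim_\C\End_\C(\bar{\mathcal M}_0)$, faithfulness at once promotes the left–multiplication map to an isomorphism $\bar A_0 \cong \End_\C(\bar{\mathcal M}_0)$, which is exactly the assertion that the fibre at zero is a matrix algebra. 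First I would set up the modules globally. Writing $\bar Z_\ell := Z_\ell/(D^{[\ell]}=0) \cong \O(\Mat(M,N))$, a polynomial ring on the classes of the $(x^i_j)^\ell$ (using Theorem~\ref{thm:quiver-center}(1) and the fact, read off from Lemma~\ref{lem:coeffs} at $q^\ell=1$, that the $\ell$th powers of all generators are central), the PBW theorem gives that $A := \DqMatMN/(D^{[\ell]}=0)$ is free over $\bar Z_\ell$ of rank $\ell^{2MN}$ with basis the monomials $x^{(a)}\partial^{(b)}$, $0\le a,b<\ell$, and that $\mathcal M := \DqMatMN/(D=0)$ is free over $\bar Z_\ell$ of rank $\ell^{MN}$, identified with $\OqMatMN$ with the $x^i_j$ acting by (truncated) multiplication and the $\partial^k_l$ by ``$q$–differentiation'' lowering $x$–degree. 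Thus $\rho\colon A \to \End_{\bar Z_\ell}(\mathcal M)$ is a homomorphism of $\bar Z_\ell$–algebras between free $\bar Z_\ell$–modules of equal finite rank $\ell^{2MN}$, and $\bar A_0,\bar{\mathcal M}_0$ above are its fibres at the origin.

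To deduce statement (1) in full from the fibre computation I would use the $\Z$–grading on $\DqMatMN$ with $\deg x^i_j = 1$ and $\deg \partial^k_l = -1$: it descends to $A$ and $\mathcal M$, makes $\bar Z_\ell$ a non–negatively graded ring with degree–zero part $\C$, makes $A$ and $\End_{\bar Z_\ell}(\mathcal M)$ finitely generated, bounded–below graded $\bar Z_\ell$–modules, and makes $\rho$ degree–preserving. Choosing homogeneous bases, the palindromic symmetry $a\leftrightarrow b$ (resp.\ of the generator degrees of $\mathcal M$) shows $\det(\rho)$ is a homogeneous element of $\bar Z_\ell$ of degree zero, hence a scalar, which is nonzero precisely when $\bar\rho_0$ is injective; so $\bar\rho_0$ an isomorphism forces $\rho$ to be one. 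For statement (2): $\DqMatQd = \bigotimes_{e}\DqMate$ is a braided tensor product in $\RepqGLd$ and $\Fr^*(\O(\MQdf))$ is the corresponding tensor product of the edgewise central subalgebras, so the fibre of the sheaf $\DqMatQd$ over the zero representation is the braided tensor product in $\Rep(\uq(\gl_\dv))$ of the fibres at zero of the $\DqMate$; for a non–loop edge this fibre is a matrix algebra by part (1), for a loop edge $\Dqloop^\circ = A_{T^2\setminus D^2}$ its fibre at the identity is a matrix algebra by Lemma~\ref{lm:smallquantumgroupfactorizable} (the case $g=1$), and a braided tensor product of internal endomorphism algebras of dualizable objects is again one by Proposition~\ref{prop:tensorend}, hence a matrix algebra.

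The main obstacle is the faithfulness of $\bar A_0$ on $\bar{\mathcal M}_0$. Were $\DqMatMN$ the ``straight'' quantum Weyl algebra, i.e.\ with the $\theta(n-m)$–correction terms of Lemma~\ref{lem:coeffs} deleted, it would factor as an $MN$–fold braided tensor product of single–variable root–of–unity $q$–Weyl algebras $\C_q\langle x^i_j,\partial^k_l\rangle$ (each pair satisfying $\partial x = q^2 x\partial + q$ and distinct pairs $q$–commuting), each of which acts faithfully on $\C[x]/(x^\ell)$ because the relevant quantum integers $[n]_{q^2} = 1 + q^2 + \dots + q^{2(n-1)}$ are nonzero for $1\le n\le \ell-1$ — this is where oddness of $\ell$ enters — so Proposition~\ref{prop:tensorend} would conclude. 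To handle the actual relations I would equip $\DqMatMN$ with a secondary filtration, induced by a total order on the index pairs chosen so that every correction term in Lemma~\ref{lem:coeffs} (which moves along a row, $p>i$, or along a column, $p'<j$) is strictly lower, so that the associated graded is exactly this straight quantum Weyl algebra; the filtration then descends compatibly to $\bar A_0$ and $\bar{\mathcal M}_0$, with $\mathrm{gr}\,\bar A_0 \cong \Mat_{\ell^{MN}}(\C)$ acting faithfully on $\mathrm{gr}\,\bar{\mathcal M}_0 \cong (\C^{\ell})^{\otimes MN}$, and faithfulness lifts from the associated graded to $\bar A_0$ itself by the usual symbol argument. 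Verifying that such a filtration exists, that $\mathrm{gr}\,\bar A_0$ is the expected straight quantum Weyl algebra, and that everything is compatible with reduction modulo $\ell$th powers — carefully tracking how the $\partial$–relations mix rows and columns — is the substantive ``long computation'' alluded to in the introduction, but since each individual correction term manifestly decreases the chosen weight, I expect it to go through.
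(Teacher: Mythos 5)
Your global scaffolding is fine: reducing statement (1) to the fibre at the origin by the graded-determinant trick is a legitimate (and slightly different) substitute for the paper's rank-count argument, and your treatment of part (2) via braided tensor products, Proposition \ref{prop:tensorend} and the loop-edge case matches the paper. The gap is at the crux, the faithfulness of $\bar A_0$ on $\bar{\mathcal M}_0$: the filtration you postulate cannot exist. Your argument needs the associated graded to be the ``straight'' truncated $q$-Weyl algebra \emph{with the inhomogeneous constants retained} (otherwise the symbols of the $\partial$'s annihilate $\mathrm{gr}\,\bar{\mathcal M}_0$ and the graded action is nowhere near faithful, so nothing lifts). Retaining the constant in the relation for the conjugate pair $(x^j_i,\partial^i_j)$ forces the symbol degrees to satisfy $d(x^j_i)+d(\partial^i_j)=0$ for \emph{every} pair. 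But in the fully diagonal case $i=n$, $j=m$ of the cross relation in Lemma \ref{lem:coeffs}, the correction terms $\sum_{p>i}x^j_p\partial^p_j$ and $\sum_{p'<j}\partial^i_{p'}x^{p'}_i$ are themselves products of conjugate pairs, hence sit in degree exactly $0$ with nonzero symbols (their symbols are the generators of your putative graded Weyl factors, whose product is nonzero in a matrix algebra). So they cannot be made ``strictly lower'': in any algebra filtration whose generator symbols generate the associated graded, either the constants die (and faithfulness on the graded module fails) or these corrections survive at leading order (and the associated graded is not the straight Weyl algebra). Your closing assertion that ``each individual correction term manifestly decreases the chosen weight'' is false precisely in this diagonal case, which is the case that matters.

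This is not a removable technicality: the paper's own computation shows the corrections contribute at leading order. Its Lemma \ref{lem:expression} rewrites $\partial^i_{p'}x^{p'}_n-\delta_{in}q^{2p'+1}$ as terms ending in $\partial$'s of higher index, i.e.\ on the relevant vectors these ``corrections'' act by the nonzero scalars $q^{2p'+1}$, and those scalars accumulate into the leading coefficient $q^{2j}(q^{2r}-1)/(q-q^{-1})$ in the key step $\partial^n_j\triangleright(x^j_n)^r x$. In other words, the terms you hope to filter away are exactly what produces the ($j$-dependent) nonzero eigenvalues needed for cyclicity; they must be tracked, not degenerated away. To repair your proof you would have to replace the degeneration step by an induction of the paper's type — order the index pairs, show that acting by $(\partial^n_j)^r$ strips the top variable with an explicitly nonzero coefficient while the other correction terms either vanish on the vector (Lemma \ref{lem:LI}) or act by scalars (Lemma \ref{lem:expression}) — after which your dimension count and determinant argument do finish statement (1), and your part (2) stands.
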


\newcommand{\LI}{\text{\rm LI}}

\begin{proof} For (1) we observe that the module $\mathcal M$ is of rank $\ell^{MN}$ over $\tilde Z := Z_\ell/ (D^{[\ell]} =0)$ with basis given by the (ordered) monomials
$$\bar x(\bar r) := (x^1_N)^{r^1_N} (x^1_{N-1})^{r_{N-1}^1} \cdots (x^M_1)^{r^M_1}$$ 
where $\bar r = (r^1_N, r^1_{N-1}, \dots r^1_1, r^2_N, \dots, r^M_1)$ ranges over the elements in $\{0, \dots, \ell-1\} \times \{ 0, \dots, \ell-1\}$.  Clearly the element $1$ is a cyclic generator for the module; hence our strategy is to show that we can reach $1$ from an arbitrary element of $\cM$ using the $A$-action, and hence conclude that $\cM$ is an irreducible representation of $A$ of the correct dimension.

Note that $\bar r$ is constantly zero if and only if $\bar x(\bar r) = 1$. An arbitrary element of $\mathcal M$ can be expressed as $$\bar y  = \sum_{\bar r} z_{\bar r} \bar x (\bar r) $$ for some $z_{\bar r} \in \tilde Z$.  Define a total order on the set $S = ( [1,M] \times [1,N]) \coprod \{0\}$ by 
$$ (j,m) \succ 0, \qquad \qquad (i,n) \succ (j,m) \ \text{\rm if} \ \begin{cases}
    n > m\\
   \text{or} \ n=m  \ \text{and} \ i < j
  \end{cases}  $$ for any $i,j \in [1,N]$ and any $m,n \in [1,M]$.  The leading index of the  monomial $\bar x(\bar r)$ is defined as $$\text{\rm LI}( \bar x(\bar r)) = \max  \{ (i,n) \ | \ r_n^i \neq 0\} \in S$$ if $x \neq 1$, and $\text{\rm LI}(1) = 0 \in S$. The leading index of an element  $\bar y  = \sum_{\bar r} z_{\bar r} \bar x (\bar r) $ is defined as $$\text{\rm LI}(\bar y) = \max \{ \text{\rm LI}(\bar x (\bar r)) \ | \ z_{\bar r} \neq 0\} \in S$$
if $\bar y \neq 0$, and $\text{\rm LI}(0) = 0$.

\begin{lemma}\label{lem:expression} For any $p$, $1 \leq p \leq M$, the element 
$$ \de_p^i x^p_n - \delta_{in} q^{2p+1} \in \DqMatMN$$
can be expressed as a sum of elements, each divisible on the right by some $\de_{p'}^s$, where $(p',s) \succeq (p,i)$.  \end{lemma}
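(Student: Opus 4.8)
The plan is to induct on $p$, starting from the defining commutation relation between the $\partial$'s and the $x$'s. First I would specialize the coordinate relations following Definition~\ref{def:dqmatmn} by taking the lower index of $\partial$ and the upper index of $x$ both equal to $p$; this gives
\[\partial_p^i x_n^p \;=\; q^{\delta_{in}+1}\,x_n^p\,\partial_p^i \;+\; \delta_{in}\,q(q-q\inv)\sum_{r>i} x^p_r\,\partial^r_p \;+\; (q^2-1)\sum_{r<p}\partial_r^i\, x^r_n \;+\; q\,\delta_{in}.\]
The strategy is then to recognize that every term on the right other than the scalar contribution either is already right-divisible by some $\partial_{p'}^s$ with $(p',s)\succeq(p,i)$, or can be put in that form by the inductive hypothesis, and to keep track of the scalar.

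Next I would check the terms one at a time. The term $q^{\delta_{in}+1}x_n^p\,\partial_p^i$ is right-divisible by $\partial_p^i$, and $(p,i)\succeq(p,i)$. Each summand of $\sum_{r>i}x^p_r\,\partial^r_p$ is right-divisible by $\partial^r_p$, and since the upper index of $\partial$ has strictly increased from $i$ to $r>i$ while its lower index has stayed $p$, we have $(p,r)\succ(p,i)$. The summands of $\sum_{r<p}\partial_r^i\,x^r_n$ are precisely of the form of the left-hand side with $p$ replaced by a strictly smaller index $r$; invoking the inductive hypothesis, each equals a scalar multiple of $\delta_{in}$ plus a sum of terms right-divisible by some $\partial_{p'}^s$ with $(p',s)\succeq(r,i)$, and since $r<p$ forces $(r,i)\succ(p,i)$ (equal upper index, strictly smaller lower index), transitivity of $\succeq$ puts those terms in the required form as well. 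Collecting everything, $\partial_p^i x^p_n$ equals a scalar multiple of $\delta_{in}$ plus an element right-divisible by $\partial$'s of index $\succeq (p,i)$; a short auxiliary induction on $p$ then identifies the accumulated scalar as $\delta_{in}\,q^{2p+1}$. The base case $p=1$ is the specialized relation itself, the third sum being vacuous.

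I do not anticipate a genuine obstacle here: the whole argument is a controlled reordering inside the PBW basis of Theorem~\ref{thm:PBW}, so the only work is bookkeeping — verifying that every correction term produced when moving $\partial_p^i$ to the right carries a $\partial$-index weakly above $(p,i)$, and checking that the scalars accumulated along the induction telescope to the asserted power of $q$.
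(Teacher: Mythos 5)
Your proposal follows the paper's proof essentially verbatim: the same specialization of the defining relation (lower index of $\partial$ equal to the upper index of $x$), the same induction on $p$, and the same term-by-term check that $x^p_n\partial_p^i$, the terms $x^p_s\partial_p^s$ with $s>i$, and, via the inductive hypothesis together with $(p',i)\succ(p,i)$ for $p'<p$, the terms $\partial_{p'}^i x^{p'}_n$ are all right-divisible by $\partial$'s of index $\succeq(p,i)$. The one caveat, which your argument shares with the paper's own proof, is the scalar bookkeeping: running the recursion $c_p=q+(q^2-1)\sum_{p'<p}c_{p'}$ that the displayed relation actually produces gives $c_1=q$ and hence $c_p=q^{2p-1}$ rather than the stated $q^{2p+1}$, so your ``short auxiliary induction'' would surface this $q^2$ discrepancy rather than confirm the stated exponent --- a harmless slip for the downstream argument (only the nonvanishing of the scalar is ever used), but worth noting rather than asserting the telescoping.
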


\begin{proof} By the defining relations of the algebra, we have 
$$ \de_p^i x^p_n  - \delta_{in} q  = q^{1+ \delta_{in}} x^p_n \de_p^i + \delta_{in} (q^2 -1) \sum_{s >i} x^p_s \de_p^s + (q^2 -1) \sum_{p' < p} \de_{p'}^i x_n^{p'}$$
We proceed by induction on $p$. From the above expression, the base case $p=1$ is clear. If $p>1$, then by the inductive hypothesis, all elements on the RHS are of the desired form except for $(q^2-1) \delta_{in} \sum_{p'<p} q^{2p'+1}$. Adding this to the LHS, we obtain $\delta_{in} q^{2p+1}$. \end{proof}

\begin{lemma}\label{lem:LI} Suppose $y$ is the form $y = (x_n^j)^r x$, where $\LI( x) \prec (j,n)$ and $1 \leq r \leq \ell-1$. Then
 $$\LI\left( \de_m^i \triangleright \left( y\right) \right) =0 \ \text{ for any $(m,i)$ with $(m,i) \succ (j,n)$.}  $$\end{lemma}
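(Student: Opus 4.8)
The statement concerns the action of the operator $\partial_m^i$ (acting by left multiplication, or equivalently by the "$\triangleright$" derivation-like action implicit in the algebra $\DqMatMN$) on an element $y = (x_n^j)^r x$ with $\mathrm{LI}(x) \prec (j,n)$ and $1 \le r \le \ell - 1$, when $(m,i) \succ (j,n)$. I want to show the result lies in the $\tilde Z$-span of monomials that all involve some $\partial$-generator, hence have leading index $0$ — in other words, after passing to the quotient $\cM = \DqMatMN/(D=0)$, the element $\partial_m^i \triangleright y$ vanishes (its leading index is $0$ meaning it is a $\tilde Z$-multiple of $1$, but actually the sharper claim is that the $\partial$'s on the right kill it). The strategy is a direct computation using the commutation formula from Lemma~\ref{lem:coeffs} for $\partial_m^i (x_n^j)^r$, combined with the hypothesis on $\mathrm{LI}(x)$ and the index inequality $(m,i) \succ (j,n)$.

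First I would apply the explicit formula from Lemma~\ref{lem:coeffs}:
\[
\partial_m^i (x_n^j)^r = q^{(\delta_{in}+\delta_{jm})r}(x_n^j)^r \partial_m^i + a_r \sum_{p>i}(x_n^j)^{r-1}x_p^m \partial_m^p + b_r \sum_{p'<j}(x_n^j)^{r-1}\partial_{p'}^i x_n^{p'} + c_r \sum_{p>n, p'<m}(x_n^m)^{r-2}x_p^m\partial^p_{p'}x^{p'}_n + d_r (x_n^j)^{r-1}.
\]
Now I would analyze each term after it is applied to $x$ and then consider which terms survive in $\cM$. The first term $q^{\ast}(x_n^j)^r \partial_m^i x$: since $(m,i) \succ (j,n)$ and $\mathrm{LI}(x) \prec (j,n) \prec (m,i)$, by the defining relations (and an inductive argument in the spirit of Lemma~\ref{lem:expression}) the element $\partial_m^i x$ is a sum of terms each right-divisible by some $\partial^s_{p'}$ with $(p',s) \succeq (m,i)$, hence vanishes in $\cM$ — here the key input is the ordering hypothesis which ensures no "leftover scalar" term $\delta_{\ast}q^{\ast}$ appears because the relevant Kronecker deltas $\delta_{in}$, etc., cannot all fire given $(m,i) \succ (j,n)$. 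The middle terms carry a factor $\partial_m^p$ or $\partial_{p'}^i$ sitting to the left of $x$; again one commutes these $\partial$'s past $x$, and I must check each such term ends up right-divisible by a $\partial$ (using the index constraints $p > i$, $p' < j$, and $\mathrm{LI}(x) \prec (j,n)$), so they die in $\cM$. The crucial point is that the coefficients $a_r, b_r, c_r, d_r$ involve Kronecker deltas $\delta_{in}$ and $\delta_{jm}$, and the hypothesis $(m,i) \succ (j,n)$ forces these to interact in a controlled way — in particular $d_r = \delta_{in}\delta_{jm}\frac{q^{2r}-1}{q-q^{-1}}$, and if $\delta_{in}\delta_{jm} \ne 0$ then $i = n$ and $j = m$, which combined with $(m,i) \succ (j,n)$ would force $(j,n) \succ (j,n)$, a contradiction; so the last term $d_r(x_n^j)^{r-1}x$ is actually zero. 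I would handle $a_r, b_r, c_r$ similarly or show that even when they survive, the resulting monomials all retain a $\partial$ factor.

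The main obstacle I anticipate is the careful bookkeeping of the leading indices after each commutation: when I commute $\partial_m^p x$ or $\partial_{p'}^i x$ past the remaining factors, the relations of Lemma~\ref{lem:coeffs} (specifically the $\partial x$ relation) produce several terms, some of which involve products like $x_p^m \partial_m^p$ that are \emph{not} immediately right-divisible by a $\partial$, but rather need one more round of commutation to move the $\partial$ to the right. I expect that an induction on the leading index of $x$ (or on $\ell - r$, or a lexicographic combination) will organize this, using the hypothesis $\mathrm{LI}(x) \prec (j,n) \prec (m,i)$ at each stage to guarantee the "scalar correction" terms never have the right deltas to produce a $\partial$-free summand. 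Concretely, I would set up the induction so that the inductive hypothesis is exactly the statement of Lemma~\ref{lem:LI} for all pairs with smaller leading index, reduce the general case to this via the explicit commutation formula, and verify the base case ($x = 1$, or $\mathrm{LI}(x)$ minimal) by direct inspection of the formula above — there, $\partial_m^i (x_n^j)^r$ applied to $1$ gives terms all ending in a $\partial$ except possibly $d_r(x_n^j)^{r-1}$, which vanishes by the delta analysis, completing the argument.
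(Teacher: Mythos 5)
Your proposal is correct and follows essentially the same route as the paper's proof: expand $\partial_m^i\triangleright\bigl((x_n^j)^r x\bigr)$ via Lemma~\ref{lem:coeffs}, kill the $c_r$ and $d_r$ terms because $(m,i)\succ(j,n)$ forces $\delta_{in}\delta_{jm}=0$, and dispose of the remaining terms by induction over the ordered index set with the inductive hypothesis being the lemma itself (the paper handles the $b_r$ term exactly as you anticipate, by the Lemma~\ref{lem:expression}-type rewriting, where $j=m$ forces $i\neq n$ so no scalar correction survives). The only cosmetic difference is that the paper applies the inductive hypothesis directly to the first and $a_r$ terms (since $(m,i),(m,p)\succ\LI(x)$) rather than re-deriving a right-divisibility statement, and its base case is the minimal index $y=(x_N^1)^r$, matching your $x=1$ check.
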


\begin{proof} The condition  $(m,i) \succ (j,n)$ implies that we cannot have both $i=n$ and $j=m$, i.e.\ $\delta_{in} \delta_{jm} = 0$. Using Lemma \ref{lem:coeffs}, we compute
\begin{align*}  \partial_m^i \triangleright y = q^{(\delta_{in} +  \delta_{jm})r} &(x_n^j)^r (\partial_m^i \triangleright  x) + a_r \sum_{p >i} (x_n^j)^{r-1} [ x_p^m \partial_m^p  \triangleright  x]  + b_r \sum_{p' <j} (x_n^j)^{r-1} [\partial_{p'}^i x_n^{p'}\triangleright  x], \end{align*} noting that $c_r$ and $d_r$ vanish since they are divisible by $\delta_{in} \delta_{jm}$. 

We now prove the claim by induction on $(j,n)$ in the totally ordered set $S$. The base case is $(j,n) = (1,N)$. In this case, $y = (x^1_N)^r$ for some $1 \leq r \leq \ell -1$, and there are no $p$ or $p'$ with $p >N$ and $p' <1$. Thus, $$\partial_m^i \triangleright y = q^{(\delta_{in} +  \delta_{jm})r} (x_N^1)^r (\partial_m^i \triangleright 1) = 0.$$

For the induction step, let $y = (x_n^j)^r x$, where $\LI( x) \prec (j,n)$ and $1 \leq r \leq \ell-1$. Now, 
\begin{align*}  \partial_m^i \triangleright y  = q^{(\delta_{in} +  \delta_{jm})r} &(x_n^j)^r (\partial_m^i \triangleright  x) + a_r \sum_{p >i} (x_n^j)^{r-1} [ x_p^m \partial_m^p  \triangleright  x]  + b_r \sum_{p' <j} (x_n^j)^{r-1} [\partial_{p'}^i x_n^{p'}\triangleright  x]. \end{align*} The inductive hypothesis implies that $\partial_m^i \triangleright  x = 0$ and $\partial_m^p  \triangleright  x = 0$ since $(m,p) \succ (m,i)$. Suppose $m=j$, so that $b_r$ is nonzero. Then necessarily $i \neq n$.  By Lemma \ref{lem:expression}, we have that $\partial_{p'}^i x_n^{p'}$   can be expressed as a sum of elements, each divisible on the right by some $\de_{p''}^s$, where $(p'',s) \succeq (p',i) \succ (m,i) $. Thus, this element acts on $x$ as zero. The inductive step follows. 
\end{proof}

\begin{lemma} Suppose $ y = (x_n^j)^r  x$, where $\LI( x) \prec (j,n)$ and $1 \leq r \leq \ell-1$. Then $\LI\left( \de_j^n \triangleright  y \right) \preceq (j,n)$ and is nonzero. \end{lemma}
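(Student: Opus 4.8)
\emph{Strategy.} The plan is to expand $\partial_j^n\triangleright y$ using the commutation identity for $\partial_m^i(x_n^j)^r$ of Lemma~\ref{lem:coeffs} in the ``diagonal'' case $m=j$, $i=n$, i.e.\ with $\partial_j^n$ the transpose of $x_n^j$, so that $\delta_{in}=\delta_{jm}=1$ and all four coefficients $a_r,b_r,c_r,d_r$ are active. For $y=(x_n^j)^r x$ this reads
\[
\partial_j^n\triangleright y = q^{2r}(x_n^j)^r(\partial_j^n\triangleright x) + a_r\sum_{p>n}(x_n^j)^{r-1}\bigl(x_p^j\,\partial_j^p\triangleright x\bigr) + b_r\sum_{p'<j}(x_n^j)^{r-1}\bigl(\partial_{p'}^n x_n^{p'}\triangleright x\bigr) + c_r(\cdots) + d_r\,(x_n^j)^{r-1}x,
\]
and the task is to sort these terms by leading index and by the exponent they carry at position $(j,n)$.

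The first ingredient is a batch of vanishing statements. Since $\LI(x)\prec(j,n)$, every generator occurring in $x$ has lower index (its subscript) $\le n$. Commuting an operator $\partial_j^p$ with $p>n$ rightwards past such a monomial therefore never triggers the ``middle'' term or the constant term in the relations of Lemma~\ref{lem:coeffs}, whence $\partial_j^p\triangleright x=0$ for $p>n$, and similarly $\partial_{p'}^p\triangleright(x_n^{p'}x)=0$ for $p>n$. This annihilates the $a_r$- and $c_r$-sums, and reduces the $b_r$-sum to the contribution of its ``pass-through'' piece. For the first summand, Lemma~\ref{lem:LI} (applied to $x$, after exhibiting its leading factor) gives $\LI(\partial_j^n\triangleright x)=0$, i.e.\ $\partial_j^n\triangleright x\in\tilde Z$; hence $q^{2r}(x_n^j)^r(\partial_j^n\triangleright x)=\mu\,(x_n^j)^r$ with $\mu\in\tilde Z$, a $\tilde Z$-multiple of the single basis monomial $(x_n^j)^r$, so its leading index is $(j,n)$ or $0$.

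After these reductions $\partial_j^n\triangleright y$ equals $\mu\,(x_n^j)^r+c\,(x_n^j)^{r-1}x$ together possibly with terms of strictly smaller leading monomial, where $c$ is assembled from $d_r$ and the residue of the $b_r$-sum. The factor $(x_n^j)^{r-1}x$ is, after reordering, a $\tilde Z$-combination of basis monomials of $(j,n)$-exponent $r-1$ with leading index $\LI(x)\prec(j,n)$; combined with the previous paragraph this yields $\LI(\partial_j^n\triangleright y)\preceq(j,n)$. For non-vanishing I would isolate the coefficient of the basis monomial $(x_n^j)^{r-1}\bar x(\bar s_0)$, where $\bar x(\bar s_0)$ is the leading monomial of $x$: it equals $c\,z_{\bar s_0}$ with $z_{\bar s_0}\ne0$, and since $q$ is a primitive $\ell$-th root of unity with $\ell$ odd and $1\le r\le\ell-1$ one has $q^{2r}\ne1$, so $d_r=\tfrac{q^{2r}-1}{q-q^{-1}}\ne0$; checking that the full $c$ does not vanish is a short computation with the explicit constants of Lemma~\ref{lem:coeffs}.

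The step I expect to be the genuine obstacle is the bookkeeping around the $b_r$-sum: it involves operators $\partial_{p'}^n$ whose upper index is only $n$, so the simple ``upper index exceeds every lower index'' argument used for the $a_r$- and $c_r$-sums no longer applies, and one must check directly that no contribution of leading index $\succ(j,n)$ escapes and that the scalar $c$ surviving on the monomial $(x_n^j)^{r-1}\bar x(\bar s_0)$ is genuinely nonzero rather than an expression cancelling $d_r$. Everything else is a routine, if somewhat lengthy, manipulation of the relations already recorded in Lemma~\ref{lem:coeffs}.
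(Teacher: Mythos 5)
Your strategy is the same as the paper's: expand $\partial_j^n\triangleright(x_n^j)^r x$ via the diagonal case of Lemma \ref{lem:coeffs}, kill the $a_r$- and $c_r$-sums because the operators occurring there have upper index $p>n$ while every generator of $x$ has lower index at most $n$ (this is exactly what Lemma \ref{lem:LI} records), control the first summand, and reduce everything to the scalar $c$ multiplying $(x_n^j)^{r-1}x$. The leading-index bound $\preceq(j,n)$ does follow from what you write, and your treatment of the first summand is in fact slightly more careful than the paper's: you only use $\partial_j^n\triangleright x\in\tilde Z$ and correctly observe that the resulting multiple of $(x_n^j)^r$ cannot collide with the coefficient you isolate, since its $(j,n)$-exponent is $r$ rather than $r-1$.

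The nonvanishing, however, is the actual content of the lemma, and it is precisely the step you leave open; moreover it is not "a short computation with the explicit constants of Lemma \ref{lem:coeffs}", because those constants do not determine how $\partial_{p'}^n x_n^{p'}$ (for $p'<j$) acts on $x$. The missing ingredient is Lemma \ref{lem:expression}: $\partial_{p'}^n x_n^{p'}$ minus an explicit power of $q$ is a sum of terms each right-divisible by some $\partial_{p''}^{s}$ with $(p'',s)\succeq(p',n)\succ(j,n)\succ \mathrm{LI}(x)$, and every such term annihilates $x$ by Lemma \ref{lem:LI}. Hence each $b_r$-summand acts on $x$ purely by that power of $q$ --- so nothing of leading index $\succ(j,n)$ escapes, and no uncontrolled coefficient lands on the monomial $(x_n^j)^{r-1}\bar x(\bar s_0)$. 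With this in hand, $c=d_r+b_r\sum_{p'<j}q^{2p'\pm1}=\tfrac{q^{2r}-1}{q-q^{-1}}\bigl(1+(q-q^{-1})\sum_{p'<j}q^{2p'\pm1}\bigr)$, and the bracket telescopes to a single power of $q$ (the paper's stated exponent has a harmless off-by-one), so $c$ is a power of $q$ times $\tfrac{q^{2r}-1}{q-q^{-1}}\neq0$ because $\ell$ is odd and $1\le r\le\ell-1$; in particular your worry that the $b_r$-residue might cancel $d_r$ is ruled out by this computation, not by generalities. Without invoking Lemma \ref{lem:expression} (or reproving its content), the proposal does not establish the nonvanishing, so the decisive step is missing.
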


\begin{proof} We record the following computation:
\begin{align*}
\partial_j^n \triangleright (x_n^j)^r  x = q^{2r} &(x_n^j)^r (\partial_j^n \triangleright  x) + a_r \sum_{p >i} (x_n^j)^{r-1} [ x_p^j \partial_j^p  \triangleright  x]  + b_r \sum_{p' <j} (x_n^j)^{r-1} [\partial_{p'}^n x_n^{p'}\triangleright  x]  + \\  & + c_r \sum_{p>n, p'<j} (x_n^j)^{r-2} [ x_p^j \partial^p_{p'} x^{p'}_n \triangleright  x]  + d_r  (x_n^j)^{r-1} x.\end{align*}
We now proceed by induction on $(j,n)$ in the totally ordered set $S$.  The base case is $(j,n) = (1,N)$. In this case, $y = (x^1_N)^r$ for some $1 \leq r \leq \ell -1$, and there are no $p$ or $p'$ with $p >N$ and $p' <1$. Thus, $$\partial_1^N \triangleright y = q^{2r} (x_N^1)^r (\partial_1^N \triangleright 1) + d_r  (x_N^1)^{r-1}  = \frac{q^{2r} - 1}{q - q\inv}  (x_N^1)^{r-1},$$ which is nonzero.

For the induction step, let $y = (x_n^j)^r x$, where $\LI( x) \prec (j,n)$ and $1 \leq r \leq \ell-1$.  Lemma \ref{lem:LI} implies that $\partial_j^n \triangleright  x =0$. Now fix $p>i$ and $p' <j$. Then Lemma \ref{lem:LI} also implies that $x_p^j \partial_j^p  \triangleright  x =0$. By Lemma \ref{lem:expression}, for $p' <j$, we have that $\partial_{p'}^n x_n^{p'} -  q^{2p' +1} $   can be expressed as a sum of elements, each divisible on the right by some $\de_{p''}^s$, where $(p'',s) \succeq (p',n) \succ (j,n) $. Thus, this element acts on $x$ as $q^{2p'-1}$. Finally, $\LI(x^{p'}_n  x) \prec (p',p)$, so $\partial^p_{p'} \triangleright ( x^{p'}_n   x ) = 0$. What remains nonzero in the expression at the beginning of this proof is:
$$ \partial_m^i \triangleright (x_n^j)^r  x = \left((q^{2r} -1) \sum_{p' < j} q^{2p'+1} + \frac{q^{2r} -1}{q- q\inv} \right) (x_n^j)^{r-1}  x =  \frac{q^{2j}(q^{2r} -1)}{q- q\inv} (x_n^j)^{r-1}  x, $$ which is nonzero and of leading index at most $(j,n)$.  \end{proof}

\begin{lemma} Suppose $\LI( y) = (j,n)$. Then, for some $r$, $1 \leq r \leq \ell-1$, we have that
$$\LI\left( (\de_j^n)^r \triangleright  y \right) \prec (j,n), \ \text{\rm and is nonzero}.$$ \end{lemma}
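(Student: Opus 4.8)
The plan is to decompose $y$ according to the highest power of the ``leading variable'' $x_n^j$ that occurs in it, and then to iterate the computations of Lemma~\ref{lem:coeffs} and of the preceding two lemmas. Let $B\subset\cM=\DqMatMN/(D=0)$ be the subalgebra generated over $\tilde Z$ by the generators $x_b^a$ with $(a,b)\prec(j,n)$; note that the leading index of a nonzero element of $B$ is $\prec(j,n)$, since in the $\tilde Z$-basis of $\cM$ such an element is supported on monomials in the generators of index $\prec(j,n)$. First I would check that any $y$ with $\LI(y)=(j,n)$ can be written
\[y=\sum_{k=0}^{\ell-1}(x_n^j)^k x_k,\qquad x_k\in B,\]
with not all of $x_1,\dots,x_{\ell-1}$ zero. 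Indeed, since $\LI(y)=(j,n)$ every PBW monomial occurring in $y$ involves only generators of index $\preceq(j,n)$, and the relations of Lemma~\ref{lem:coeffs} (with $\theta(n-m)=0$ for $m\ge n$) show that $x_n^j$ either commutes with, $q$-commutes with, or --- when commuted past a generator of index $\prec(j,n)$ --- produces only further terms lying in $B$; hence every such monomial normal-orders into $(x_n^j)^k$ times an element of $B$ modulo strictly lower powers of $x_n^j$, and an induction on the power of $x_n^j$ gives the displayed form. If all $x_k$ with $k\ge 1$ vanished, then $y=x_0\in B$ would have $\LI(y)\prec(j,n)$, a contradiction; so $r_0:=\max\{k\ge1:x_k\neq0\}$ is well-defined.

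The second and main step is to show that $\de_j^n$ annihilates $B$, as an operator on $\cM$. On the central subalgebra $\tilde Z$ this is immediate, because the coefficients $a_\ell,b_\ell,c_\ell,d_\ell$ of Lemma~\ref{lem:coeffs} all vanish when $q^\ell=1$, so $\de_j^n$ commutes with $\tilde Z$ and hence kills it on $\cM$. For a monomial in the generators $x_b^a$ with $(a,b)\prec(j,n)$ one pushes $\de_j^n$ to the right through the monomial: the argument of Lemma~\ref{lem:LI}, together with Lemma~\ref{lem:expression}, shows that the constant term of $\de_m^i x_n^j$ is a multiple of $\delta_{in}\delta_{jm}$, and the index constraints forced by $(a,b)\prec(j,n)$ --- together with the restricted shape of the secondary derivative operators that can be generated during this process --- make $\delta_{in}\delta_{jm}=0$ in every relevant case; since every such operator eventually exits the monomial on the right and dies against $1\in\cM$, the whole expression vanishes. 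I expect this bookkeeping to be the only real work in the proof.

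Finally I would combine the two steps. Using the preceding lemma in the form $\de_j^n\triangleright\bigl((x_n^j)^r x\bigr)=\frac{q^{2j}(q^{2r}-1)}{q-q\inv}(x_n^j)^{r-1}x$, valid for $1\le r\le\ell-1$ and $\LI(x)\prec(j,n)$, together with the fact that $\de_j^n$ annihilates $B$, a straightforward iteration yields
\[(\de_j^n)^{r_0}\triangleright\bigl((x_n^j)^k x_k\bigr)=
\begin{cases}
\Bigl(\prod_{s=1}^{r_0}\tfrac{q^{2j}(q^{2s}-1)}{q-q\inv}\Bigr)x_{r_0}, & k=r_0,\\
0, & 0\le k<r_0,
\end{cases}\]
the second line because after $k$ applications of $\de_j^n$ one is left with a scalar multiple of $x_k\in B$, which a further $\de_j^n$ annihilates (and $(\de_j^n)^{r_0}$ annihilates $x_0\in B$ directly). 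Since $\ell$ is odd and $1\le s\le r_0\le\ell-1$, we have $q^{2s}\neq1$, so the scalar $\prod_{s=1}^{r_0}\frac{q^{2j}(q^{2s}-1)}{q-q\inv}$ is nonzero; hence $(\de_j^n)^{r_0}\triangleright y$ is a nonzero scalar multiple of $x_{r_0}$, and the latter is a nonzero element of $B$, so $\LI\bigl((\de_j^n)^{r_0}\triangleright y\bigr)\prec(j,n)$. Taking $r=r_0$ proves the lemma. The step I expect to be the main obstacle is the verification that $\de_j^n$ annihilates $B$.
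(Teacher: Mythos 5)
Your proposal is correct and follows essentially the same route as the paper's proof: decompose $y=\sum_k (x_n^j)^k x_k$ with each $x_k$ of leading index $\prec(j,n)$, take $r$ to be the top nonvanishing power, and iterate the exact formula $\de_j^n\triangleright\bigl((x_n^j)^r x\bigr)=\tfrac{q^{2j}(q^{2r}-1)}{q-q^{-1}}(x_n^j)^{r-1}x$ from the preceding lemma, the lower-order terms dying and the scalar $\prod_{s=1}^{r}(q^{2s}-1)$ being nonzero because $\ell$ is odd (the paper's displayed scalar has a typo $q^{2r}$ for $q^{2s}$, which you correct). The one step you single out as the main obstacle --- that $\de_j^n$ annihilates every element of leading index $\prec(j,n)$ --- is exactly what (the proof of) Lemma \ref{lem:LI} already provides, applied with the operator index $(j,n)$ strictly greater than the leading index of the element, so it can simply be cited rather than re-derived.
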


\begin{proof} We can write 
$$ y  = (x^i_n)^r  x^{(r)} +  (x^i_n)^{(r-1)}  x^{(r-1)} + \cdots +  x^i_n  x^{(1)} +  x^{(0)}$$
for some $ x^{(\bullet)} \in \mathcal M$ with $\LI( x^{(\bullet)}) \prec (j,n)$ and $1 \leq r \leq \ell-1$. Then, iterating the computation in the proof of the previous lemma, we obtain $ (\de_j^n)^r \triangleright  y  = \left( \frac{q^{2j}}{q - q\inv} \right)^r \prod_{s = 1}^r (q^{2r} -1)  x^{(r)},$ which is nonzero. \end{proof}

\begin{lemma} For any $ y \in \mathcal M$ nonzero, there is an element $d $  of $\DqMatMN$ such that $d \triangleright  y$ is a nonzero multiple of $1$.  $\LI( y) = (j,n)$. \end{lemma}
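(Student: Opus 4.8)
The plan is a short well-founded induction on the leading index $\LI(y)$, which takes values in the finite totally ordered set $S=\left([1,M]\times[1,N]\right)\sqcup\{0\}$; the previous lemma is the engine that drives the descent, and the present statement is then a formal consequence of it.

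First I would dispatch the base case $\LI(y)=0$: by the definition of $\LI$, an element of $\cM$ with leading index $0$ is supported only on the monomial $\bar x(\bar 0)=1$, so $y=z\cdot 1$ for some $z\in\tilde Z$, and $y\neq 0$ forces $z\neq 0$; hence $d=1$ already exhibits $d\triangleright y$ as a nonzero multiple of $1$. For the inductive step, suppose $\LI(y)=(j,n)\succ 0$ and that the claim is known for every nonzero element of $\cM$ of strictly smaller leading index. The previous lemma furnishes an exponent $r$ with $1\le r\le\ell-1$ such that $y':=(\partial_j^n)^r\triangleright y$ is nonzero and has $\LI(y')\prec(j,n)$. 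By the inductive hypothesis there is $d'\in\DqMatMN$ with $d'\triangleright y'$ a nonzero multiple of $1$, and then $d:=d'\,(\partial_j^n)^r$ satisfies $d\triangleright y=d'\triangleright y'$, which is what was wanted. Finiteness of $S$ guarantees that the recursion terminates.

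Thus at this step there is no genuine obstacle: once the earlier lemmas are in hand, this one is purely formal. The real difficulty has already been spent on those lemmas, where, using the commutation relations of Lemma~\ref{lem:coeffs} together with the structural identity of Lemma~\ref{lem:expression}, one computes how each $\partial_m^i$ acts on a monomial $(x_n^j)^r x$ with $\LI(x)\prec(j,n)$, checks that all of the resulting correction terms have leading index strictly below $(j,n)$ (in particular, that the terms involving $\partial_{p'}^i x_n^{p'}$ with $p'<j$ act on $x$ as scalars), and verifies that the one surviving scalar coefficient $\frac{q^{2j}(q^{2r}-1)}{q-q^{-1}}$ is nonzero --- which is precisely where the hypotheses that $\ell$ is odd, that $q$ is a primitive $\ell$-th root of unity, and that $1\le r\le\ell-1$ enter. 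Granting those lemmas, the present one closes the chain: lifting any nonzero class of the fiber $\cM\otimes_{\tilde Z}\C$ to an element of $\cM$ with scalar coefficients and applying the lemma shows that this class generates $\cM\otimes_{\tilde Z}\C$ cyclically over $\DqMatMN/(D^{[\ell]}=0,X^{[\ell]}=0)$; so the fiber is a simple module, and Burnside's theorem together with a count of $\C$-dimensions against the PBW basis of Theorem~\ref{thm:PBW} identifies $\DqMatMN/(D^{[\ell]}=0,X^{[\ell]}=0)$ with the full matrix algebra on it, which is Theorem~\ref{thm:Azumayazero}.
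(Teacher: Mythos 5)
Your proof is correct and follows essentially the same route as the paper: induction on $\LI(y)$ in the finite totally ordered set $S$, with the trivial base case $\LI(y)=0$ and the inductive step supplied by the preceding lemma, which produces an $r$ with $1\le r\le \ell-1$ such that $(\partial_j^n)^r\triangleright y$ is nonzero of strictly smaller leading index. The paper states this in one line; you have merely written out the same descent argument in full, and your closing remarks on how the lemma feeds into Theorem \ref{thm:Azumayazero} agree with the paper's conclusion.
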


\begin{proof} Proceed by induction on $\LI(\bar y) \in S$. If $\LI(\bar y) =0$, there is nothing to show. Otherwise, let $(j,n) = \LI(\bar y)$. By the previous corollary we can lower the leading index using an element of $\DqMatMN$. \end{proof}

To conclude the proof on (1): we have shown that under the action of $\DqMatMN/\left(D^{[\ell]} = 0 \right)$ on $\mathcal M$ any element lies in the orbit of $1$, and conversely that $1$ lies in the orbit of any nonzero element of $\mathcal M$.  Hence the representation is irreducible.  Since it is  a representation whose rank over $\tilde Z$ is equal to the square root of the rank of $\DqMatMN/\left(D^{[\ell]} = 0 \right)$ as a module over $\tilde Z$, we conclude that the action map  $$ \DqMatMN/ \left( D^{[\ell]} =0\right) {\longrightarrow} \End_{\tilde Z }\left(\mathcal M \right)$$ is an isomorphism.

For (2), we once again appeal to the construction of $\MQdf$ via braided tensor products and the result follows from Proposition \ref{prop:tensorend}.
\end{proof}

\subsection{The Frobenius quantum moment map}
\label{subsec:qmqvmom-map}
The quantum moment map 
$$\mu_q : \Oq(\GL_\mathbf{d}) \rightarrow \DqQ^\circ,$$
from Definition \ref{def:dqQ} is valid for $q$ a root of unity when working in $\Repq(\GL_{\dv})$. We also have the group-valued moment map:
$$\tilde \mu : \MQdf^\circ \rightarrow \GL_{\mathbf{d}}$$
defined on a Zariski open subset  $\MQdf^\circ$ of $\MQdf$ (see Section \ref{subsec:mqv}).  These two moment maps fit into a Frobenius moment map structure, as explained in the following key theorem.

\begin{theorem}\label{thm:mmgeneral} We have a central, $\UqL(\gl_\mathbf{d})$-equivariant embedding 
\[\O(\MQdf^\circ) \hookrightarrow \DqQ^\circ\]
which fits into a commutative diagram 
$$\xymatrix{  \O(\MQdf^\circ) \ar[r] & \DqQ ^\circ \\ \O(\GL_\mathbf{d}) \ar[u]^{ \tilde \mu^\#} \ar[r]^{i_\mathbf{d} } & \Oq(\GL_\mathbf{d}) \ar[u]^{\mu_q} },$$ 
where $\tilde \mu^\#$ is pullback along the group-valued moment map $\tilde \mu$. Thus, $(\DqQ^\circ,\O(\MQdf^\circ))$ is a $G$-Hamiltonian Frobenius Poisson order.
\end{theorem}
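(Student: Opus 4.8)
The plan is to realize $(\DqQ^\circ,\O(\MQdf^\circ))$ as the degeneration at $t=q$, in the sense of Section~\ref{subsec:degenqalgebras}, of the $\Rt$-form $A_\Rt=\DRMatQd^\circ$ carrying its quantum moment map $\mu_\Rt\colon\OR(\GL_\dv)\to\DRMatQd^\circ$, and then to feed this into Proposition~\ref{prop:HamiltonianFrobeniusPoissonOrderDegeneration} and Proposition~\ref{prop:StrongHamiltonianOrder}.

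\textbf{Step 1: the central embedding over the localization.} I would first upgrade Theorem~\ref{thm:quiver-center}(3) past the Ore localization. The localizing elements $\detq(g^\alpha),\detq(g^\beta)$ are $q$-central in $\DqMatQd$, so their $\ell$-th powers are genuinely central; inspecting the relations of Lemma~\ref{lem:coeffs} one identifies these $\ell$-th powers, inside the subalgebra $\Fr^*(\O(\MQdf))\cong\O(\MQdf)$, with the determinants $\det(\id+X_{e^\vee}X_e)$ (up to a power of $q-q\inv$) whose non-vanishing cuts out $\MQdf^\circ\subset\MQdf$. Inverting these produces a central, $\UqL(\gl_\dv)$-equivariant, $\uq(\gl_\dv)$-invariant embedding $\O(\MQdf^\circ)\hookrightarrow\DqQ^\circ$, over which $\DqQ^\circ$ is finite free by the PBW theorem (Theorem~\ref{thm:PBW}). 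Since the zero representation lies in $\MQdf^\circ$ and has matrix-algebra fiber by Theorem~\ref{thm:Azumayazero}, and $\MQdf^\circ$ is smooth and connected, Lemma~\ref{lm:wholecenter} shows $\O(\MQdf^\circ)=Z(\DqQ^\circ)$; together with Lemma~\ref{lem:Hayashi} and Propositions~\ref{prop:PoissonOrderDegeneration} and \ref{prop:FrobeniusPoissonOrderDegeneration} this already supplies the Frobenius Poisson order structure on $(\DqQ^\circ,\O(\MQdf^\circ))$, with no direct check that $D(Z)(Z)\subseteq Z$ needed.

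\textbf{Step 2: the moment-map square.} By Lemma~\ref{lm:HamiltonianPoissonOrderAutomatic}, $\mu_q(\O(\GL_\dv))\subseteq Z(\DqQ^\circ)=\O(\MQdf^\circ)$, so $\mu_q|_{\O(\GL_\dv)}$ defines a morphism $\nu\colon\MQdf^\circ\to\GL_\dv$ fitting into the square of the theorem with $\nu$ in place of $\tilde\mu$; the real content is the identification $\nu=\tilde\mu$, which I would check one edge at a time. On an edge $e$ with $d_\alpha=N,d_\beta=M$, the Frobenius subalgebra of $\Oq(\GL_N\times\GL_M)$ is generated by matrix coefficients of the $\ell$-th Frobenius twists of the defining representations; since $\mu_q$ is an algebra map and is given on the degree-one coefficients $a_j^i\ot a_r^s$ by the entries of $(g^\alpha)\inv$ and $g^\beta$ (Definition~\ref{def:quiver-moment-map}), its restriction to the Frobenius subalgebra is governed by the central ``$\ell$-th power'' parts of $(g^\alpha)\inv$ and $g^\beta$, which in the coordinates $(x^i_j)^\ell,(\partial^k_l)^\ell$ on $\Mat(e)\times\Mat(e^\vee)$ are precisely $(\id+X_{e^\vee}X_e)^{\pm1}$. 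Fusing over the edges of $\overline Q$ via Proposition~\ref{prop:momentmapfusion}, and matching the fused classical data using Propositions~\ref{prop:Poissonfusion} and \ref{prop:fusiondegeneration}, recovers the ordered product \eqref{moment-map-formula}, i.e.\ $\nu=\tilde\mu$. I expect this bookkeeping --- reconciling the quantum divided-power generators and the normalization constant $q-q\inv$ with the classical formula --- to be the main technical obstacle; the rest is formal.

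\textbf{Step 3: $G$-Hamiltonianity.} With the square in hand, Proposition~\ref{prop:HamiltonianFrobeniusPoissonOrderDegeneration} applies --- $A_\Rt=\DRMatQd^\circ$ is flat over $\Rt$ (free, by PBW), lies in $\RepR(\GL_\dv)$, carries $\mu_\Rt$, and $\O(\MQdf^\circ)$ is $\UqL$-equivariant and $\uq$-invariant --- so $\tilde\mu$ is a classical moment map and $(\DqQ^\circ,\O(\MQdf^\circ))$ is a weakly $G$-Hamiltonian Frobenius Poisson order. To upgrade this to a $G$-Hamiltonian Frobenius Poisson order I would invoke Proposition~\ref{prop:StrongHamiltonianOrder}, whose remaining hypothesis is surjectivity of $(\DRMatQd^\circ)^{\ULR(\gl_\dv)}\to(\DqQ^\circ)^{\UqL(\gl_\dv)}$; this follows from the good filtration on the edge algebras (the $\Rt$-analogue of Proposition~\ref{prop:Dqgoodfiltrations}, via Theorem~\ref{thm:REAgoodfiltration} and Proposition~\ref{prop:goodfiltrations}), which survives the localization since the localizing elements span one-dimensional subrepresentations, so Proposition~\ref{prop:goodfiltrations}(3) still applies after tensoring by the relevant characters. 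This yields the theorem.
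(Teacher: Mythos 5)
Your Steps 1 and 3 reproduce the formal skeleton that the paper also uses (the Azumaya point at zero plus Lemma \ref{lm:wholecenter} to identify the center, Lemma \ref{lm:HamiltonianPoissonOrderAutomatic} to get $\mu_q(\O(\GL_\dv))\subset Z$, Propositions \ref{prop:HamiltonianFrobeniusPoissonOrderDegeneration} and \ref{prop:StrongHamiltonianOrder} with good filtrations for the surjectivity hypothesis). The genuine gap is in Step 2, which is where the actual content of the theorem lives: the identification of the abstractly produced map $\nu\colon\MQdf^\circ\to\GL_\dv$ with the explicit $\tilde\mu$ of \eqref{moment-map-formula}. Your argument for this is an assertion, not a proof. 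The entries of $g^\alpha=\id+(q-q\inv)DX$ do not commute, so ``the central $\ell$-th power parts of $(g^\alpha)\inv$ and $g^\beta$'' is not a defined object without further work; and the Frobenius subalgebra $\O(\GL_N)\subset\Oq(\GL_N)$ is generated by matrix coefficients of Frobenius-twisted representations, not by naive $\ell$-th powers of the reflection-equation generators, so its image under $\mu_q$ is not read off from Definition \ref{def:quiver-moment-map} by ``taking $\ell$-th power parts.'' The same unproved identity underlies your Step 1 claim that $\detq(g^\alpha)^\ell$ equals $\det(\id+X_{e^\vee}X_e)$ up to a power of $q-q\inv$ ``by inspecting Lemma \ref{lem:coeffs}'': such statements (analogues of De Concini--Lyubashenko/Parshall--Wang $\ell$-th power identities, here for a $q$-Weyl-type algebra) are theorems requiring real computation, and you yourself flag this as the main technical obstacle without carrying it out.

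The missing idea, and the route the paper takes, is to avoid this computation entirely using uniqueness of multiplicative moment maps: once the degeneration machinery produces \emph{some} classical moment map $\nu$ making the square commute, the nondegeneracy of the Poisson $\GL_\dv$-variety $\MQdf^\circ$ (Theorem \ref{thm:qmqvnondeg}), together with connectedness, lets one invoke Proposition \ref{prop:momentmapunique}: any two moment maps for this action differ by a translation by a central element, so $\nu$ agrees with \eqref{moment-map-formula} after a harmless normalization. Your proposal never uses Proposition \ref{prop:momentmapunique} or the nondegeneracy statement for this purpose, so as written it stands or falls with the deferred $\ell$-th-power bookkeeping; to make your route rigorous you would have to actually prove those identities (including the compatibility of the quantum determinants with the classical ones needed to see that the localization $\DqQ^\circ$ contains $\O(\MQdf^\circ)$ rather than some other localization of $\O(\MQdf)$), which is substantially more than inspection of Lemma \ref{lem:coeffs}.
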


\begin{proof}
We may apply Proposition \ref{prop:StrongHamiltonianOrder} to conclude that there exists some classical multiplicative moment map $\widetilde{\mu}^\#$ making the diagram commute.  By Proposition \ref{prop:momentmapunique}, the set of possible multiplicative moment maps associated to our fixed $\GL_\dv$-action is a torsor over the center of $\GL_\dv$, so we may assume without loss of generality that it recovers the classical moment map from Equation \eqref{moment-map-formula} exactly. 
\end{proof}

\subsection{The quantized multiplicative quiver variety at a root of unity}\label{subsec:QMQVrootofunity}
First, we recall from the definition of the quantized multiplicative quiver variety with trivial GIT character:

\begin{definition} \cite[Definition 8.1]{JordanQuantizedmultiplicativequiver2014}  
Fix an algebra homomorphism $\xi_q: \Oq(G)\to \C$ in $\Repq(G)$, and let $\mathcal I_{\xi_q}$ denote its kernel. The quantized multiplicative quiver variety corresponding to the above data is defined as the quantum Hamiltonian reduction of $D_q(\Mat(Q, \dv))$ at the ideal $\mathcal I_{\xi_q}$:
$$\mathcal A^\xi ( Q, \dv) := \Hom_{\RepqGLd} \left(  \C, \DqMatQd/I_{\xi_q}   \right).$$
\end{definition}

We now generalize this to non-trivial GIT characters, to the construct a sheaf of algebras on the multiplicative quiver variety, via a version of version of quantum Hamiltonian reduction for non-affine GIT quotients.

First suppose $(A, Z)$ is a $G$-Hamiltonian Frobenius Poisson order. Denote as before by $I\subset Z$ the classical moment map ideal and by $I_q\subset A$ the quantum moment map ideal. Recall that $(A/I_q)^{\uq(\g)}$ is an algebra with a central subalgebra $Z/I$. Therefore,
\[\left(\bigoplus_{m=0}^\infty (A/I_q)^{\uq(\g)} \otimes \C_{\theta^{-m}}\right)^G\]
is a graded algebra with a central subalgebra
\[\left(\bigoplus_{m=0}^\infty Z/I\otimes \C_{\theta^{-m}}\right)^G.\]
\begin{definition} We denote by $\cA^\xi_\theta(Q,\dv)$ the sheaf of algebras over $\cZ\dS_{\!\!\theta} G$ so constructed
\end{definition}

We will say a sheaf of algebras $\cA$ over a scheme $X$ is a Poisson order if $X$ is a Poisson scheme and on each open subset of $X$ we obtain the structure of a Poisson order. Then we get the following variant of Proposition \ref{prop:FrobeniusPoissonReduction}

\begin{prop}
Suppose $(A_q, Z)$ is a $G$-Hamiltonian Frobenius Poisson order where $Z$ is Noetherian. Then $(\cA^\xi_\theta, \cZ\dS_{\!\!\theta} G)$ is a Poisson order.
\label{prop:GITFrobeniusPoissonReduction}
\end{prop}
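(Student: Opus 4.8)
The plan is to reduce Proposition~\ref{prop:GITFrobeniusPoissonReduction} to the affine case already settled in Proposition~\ref{prop:FrobeniusPoissonReduction}, by working Zariski-locally on the non-affine GIT quotient
\[
\cZ\dS_{\!\!\theta}G\ =\ \Proj\!\left(\bigoplus_{m\geq 0}\bigl(Z/I\otimes\C_{\theta^{-m}}\bigr)^{G}\right).
\]
This scheme is covered by the standard affine opens $U_f=\{f\neq 0\}$, where $f$ runs over $\theta^{d}$-semi-invariant functions on $\mu^{-1}(\xi)$ of positive degree $d$, with $\cO(U_f)$ the degree-zero part of the localization at $f$. Since $G$ is reductive, the restriction $(Z\otimes\C_{\theta^{-d}})^{G}\to (Z/I\otimes\C_{\theta^{-d}})^{G}$ is surjective, so I may lift $f$ to a $\theta^d$-semi-invariant $\tilde f\in Z$. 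Then $\tilde f$ is central in $A_q$ and is a $\UqL(\g)$-eigenvector --- it is $\uq(\g)$-invariant, being an element of $Z$, and transforms via $\mathrm{d}\theta^{d}$ under the classical part of the action --- so $A_q[\tilde f^{-1}]$ is again a $\UqL(\g)$-module algebra with central subalgebra $Z[\tilde f^{-1}]$, $\mu_q$ followed by the localization map is a quantum moment map into it, $Z[\tilde f^{-1}]$ is the coordinate ring of the open Poisson $G$-subvariety $\{\tilde f\neq 0\}\subset\cZ$ with moment map the restriction of $\mu$, and the derivation map $D$ extends uniquely to $D^{\tilde f}\colon Z[\tilde f^{-1}]\to\Der(A_q[\tilde f^{-1}])$.

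The first step is pure bookkeeping: because $\tilde f$ is central and $\uq(\g)$-invariant, localization at $\tilde f$ commutes with passage to the quantum moment map quotient and with $\uq(\g)$- and $\UqL(\g)$-invariants, which gives
\[
\cA^\xi_\theta|_{U_f}\ \cong\ A_q[\tilde f^{-1}]\dS\UqL(\g),\qquad
\cO(U_f)\ \cong\ \bigl(Z[\tilde f^{-1}]/I[\tilde f^{-1}]\bigr)^{G}.
\]
Applying Proposition~\ref{prop:poissonreduction} to the open Poisson $G$-variety $\{\tilde f\neq 0\}$ equips each $\cO(U_f)$ with a Poisson bracket, and as all these constructions are canonical they agree on the intersections $U_f\cap U_{f'}$, so $\cZ\dS_{\!\!\theta}G$ becomes a Poisson scheme.

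The second step, and \textbf{the main obstacle}, is to verify that $\bigl(A_q[\tilde f^{-1}],Z[\tilde f^{-1}]\bigr)$ with this data is again a $G$-Hamiltonian Frobenius Poisson order; granting this, Proposition~\ref{prop:FrobeniusPoissonReduction} applies on each $U_f$ (note $Z[\tilde f^{-1}]$ is Noetherian) and produces the Poisson order on $\bigl(\cA^\xi_\theta|_{U_f},\cO(U_f)\bigr)$, these glue by canonicity, and Noetherianity of $Z$ gives the required module-finiteness of $\cA^\xi_\theta$ over $\cO(\cZ\dS_{\!\!\theta}G)$ exactly as in the affine case. Centrality of $\Fr^*(Z[\tilde f^{-1}])$, the commuting square of moment maps, module-finiteness of $A_q[\tilde f^{-1}]$ over $Z[\tilde f^{-1}]$, and the Poisson-order identity $D^{\tilde f}(z_1)(z_2)=\{z_1,z_2\}$ all survive localization formally. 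The delicate part is conditions (1) and (2) of Definition~\ref{def:HamiltonianFrobeniusPoissonOrder}, which must be checked for the enlarged invariant ring $\bigl(Z[\tilde f^{-1}]\bigr)^{G}$: a general element there is $w=z\tilde f^{-k}$ with $z\in Z$ merely a $\theta^{kd}$-semi-invariant, and $D^{\tilde f}(w)=\tilde f^{-k}D(z)-k\,z\tilde f^{-k-1}D(\tilde f)$ is assembled from the derivations attached to the semi-invariants $z$ and $\tilde f$, for which conditions (1)--(2) hold only when those elements are themselves $G$-invariant. The point is that the two contributions must be combined: the $\theta$-weights of $z$ and of $\tilde f^{k}$ cancel in the $G$-invariant element $w$, and, using the quantum moment map equation to see that $D(z)(\mu_q(h))$ lies in the left ideal $A_q\cdot\mu_q(\Oq(G))$ with coefficients of controlled $\theta$-weight, one deduces that $D^{\tilde f}(w)$ annihilates $\mu_q(\Oq(G))$, preserves $I_q[\tilde f^{-1}]$, and is $\UqL(\g)$-equivariant. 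This semi-invariant computation --- the analogue, carried out on a principal open, of the verification of Definition~\ref{def:HamiltonianFrobeniusPoissonOrder} in the affine setting --- is the one point that requires care; everything else follows the template of Proposition~\ref{prop:FrobeniusPoissonReduction}.
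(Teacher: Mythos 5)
Your overall architecture is the natural way to make precise what the paper only asserts (the paper gives no written proof, declaring the statement a ``variant'' of Proposition~\ref{prop:FrobeniusPoissonReduction}): cover $\cZ\dS_{\!\!\theta}G$ by the principal opens $U_f$ attached to $\theta^d$-semi-invariants, lift $f$ to a central semi-invariant $\tilde f\in Z$, identify $\cA^\xi_\theta|_{U_f}$ with $A_q[\tilde f^{-1}]\dS\UqL(\g)$, and invoke the affine proposition. The identifications in your first step are correct (though your justification is off: localization at $\tilde f$ does \emph{not} commute with $\UqL(\g)$-invariants, since $\tilde f$ is only a semi-invariant; the correct statement is that the $\UqL(\g)$-invariants of the localization coincide with the degree-zero part of the graded localization at $f$, which is exactly the sections of the sheaf over $U_f$). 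Also ``$D$ extends uniquely'' is unjustified: a Poisson-order datum $D$ is merely linear in its argument, so the quotient-rule formula for $D^{\tilde f}(z\tilde f^{-k})$ is only well defined once you use the specific degeneration construction of Proposition~\ref{prop:PoissonOrderDegeneration} with multiplicatively compatible lifts, so that $D$ is Leibniz in its argument up to inner derivations.

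The genuine gap is exactly at what you yourself call the main obstacle, and it is not closed by the one sentence you devote to it. To verify conditions (1)--(2) of Definition~\ref{def:HamiltonianFrobeniusPoissonOrder} for an invariant fraction $w=z\tilde f^{-k}$ you must work with the derivations attached to the \emph{semi-invariants} $z$ and $\tilde f$, and for these the mechanism of Proposition~\ref{prop:StrongHamiltonianOrder} does not apply verbatim: you first need lifts of $z$ and $\tilde f$ to $\UqL$-\emph{eigenvectors} in $A_\Rt$ whose $\Rt$-characters are matched (the lift of $z$ chosen in the $k$-th convolution power of the character lifting $\theta^d\circ\Fr$), which requires a strengthening of the surjectivity hypothesis from invariants to isotypic components --- available from good filtrations as in Proposition~\ref{prop:goodfiltrations}, but nowhere invoked in your argument. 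Granting such lifts, the quantum moment map equation gives $D(z)(\mu_q(h))=z\cdot\Xi_{\chi}(h)$, where $\Xi_{\chi}(h)$ is the first-order term of $\chi(h_{(1)})\mu_\Rt(h_{(2)})-\mu_\Rt(h)$ (this vanishes at $t=q$ because the coaction composed with a Frobenius-pullback character is the identity, by M\"uger-centrality as in Lemma~\ref{lm:HamiltonianPoissonOrderAutomatic}); one must then prove the additivity $\Xi_{\chi^k}=k\,\Xi_{\chi}$ to see that the two terms of $D^{\tilde f}(w)(\mu_q(h))$ cancel, and carry out the analogous bookkeeping for condition (2), where commutation with a semi-invariant lift is only equivariant up to a character twist that must be shown to cancel against the twist of $\tilde f^{-k}$. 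None of this is in your proposal: ``with coefficients of controlled $\theta$-weight, one deduces\ldots'' asserts precisely the statement that needs proof. So the route is right and the crux correctly located, but as written the argument stops where the real work begins.
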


\subsection{Proof of Theorem \ref{quiver-thm-intro}}\label{subsec:sheaf}

We are now ready to prove the main result of this section, that the sheaf of algebras constructed in the previous section is Azumaya over the entire smooth locus of $\MQd$.

\begin{theorem}[Theorem \ref{quiver-thm-intro}] Let $\ell>1$ be an odd integer, and  $q$  a primitive $\ell$-th root of unity. Then:
\begin{enumerate}
\item The algebra $\D_q(\MatQd)$ is finitely generated over its center, which is isomorphic to the coordinate ring $\O(\MQdf)$ of the classical framed multiplicative quiver variety.
\item Moreover, $\D_q(\MatQd)$ is Azumaya over the preimage in $\MQdf$ of the big cell $\Br\subset G$ under the multiplicative moment map $\MQdf\to G$. 
\item Frobenius quantum Hamiltonian reduction defines a coherent sheaf of algebras over the classical multiplicative quiver variety $\MQd$, which is Azumaya over the locus $\MQds$ of $\theta$-stable representations.
\end{enumerate}
\end{theorem}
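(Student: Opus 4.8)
The plan is to assemble the quiver-specific inputs proved above—Theorems~\ref{thm:quiver-center}, \ref{thm:Azumayazero}, \ref{thm:qmqvnondeg} and \ref{thm:mmgeneral}—and run them through the general machinery of Section~\ref{sec:frobenius}, then upgrade the affine conclusion of Theorem~\ref{thm:frobtwistedAzumaya} to the projective GIT quotient. For statement (1): by Theorem~\ref{thm:Azumayazero} the fiber of $\DqMatQd$ at the zero representation is a matrix algebra, and by the PBW theorem~\ref{thm:PBW} (together with localization at the $q$-determinants) the monomials in the generators with all exponents $<\ell$ span $\DqMatQd$ over the central subalgebra $Z=\Fr^*(\O(\MQdf))$ of Theorem~\ref{thm:quiver-center}, so $\DqMatQd$ is a finitely generated—indeed free, hence projective—$Z$-module. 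Since $\MQdf=\Spec Z$ is smooth and connected, Lemma~\ref{lm:wholecenter} then identifies $Z$ with the full center of $\DqMatQd$; this also justifies the remark following Theorem~\ref{thm:quiver-center}.

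For statement (2), I would verify the hypotheses of Theorem~\ref{thm:frobtwistedAzumaya}, taking $A_\Rt=\DRMatQd$ (flat over $\Rt$, an algebra in $\RepR(\GL_\dv)$), $\mu_\Rt$ the integral-form moment map, $Z$ as above, $G=\GL_\dv$, $G_{stab}=\C^\times$ the diagonal torus (which acts trivially on $\Mat(Q,\dv)$, and on whose Lie algebra the ambient pairing is nondegenerate), $G_0$ the subvariety $\{(g_v):\prod_v\det g_v=1\}$ through which the multiplicative moment map $\tilde\mu$ factors and which is finite \'etale over $\overline G=\GL_\dv/\C^\times$, and $\xi_q$ a character of $\Oq(\GL_\dv)$ lying over the central—hence big-cell—point $\xi\in\GL_\dv$. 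The surjectivity of $(A_\Rt)^{\ULR(\gl_\dv)}\to(\DqMatQd)^{\UqL(\gl_\dv)}$ comes from the good filtrations of Proposition~\ref{prop:Dqgoodfiltrations} and Proposition~\ref{prop:goodfiltrations}; nondegeneracy of the Poisson $\GL_\dv$-variety $\MQdf$ is Theorem~\ref{thm:qmqvnondeg}; $G_{stab}$ acts trivially already on $\DqMatQd$, since each generator $x^i_j,\partial^k_l$ has weight zero for the diagonal torus, hence trivially on any reduction; and the zero representation, with $\tilde\mu(0)=e\in\Br$, is the required Azumaya point in $\mu^{-1}(\Br)$. The $G$-Hamiltonian Frobenius Poisson order structure, with classical moment map~\eqref{moment-map-formula}, is Theorem~\ref{thm:mmgeneral}. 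Theorem~\ref{thm:frobtwistedAzumaya} then gives that $\DqMatQd$ is Azumaya over $\mu^{-1}(\Br)$, and Lemma~\ref{lem:BrAzumaya} supplies the reverse containment of the Azumaya locus.

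For statement (3), the case $\theta=0$ is exactly the ``reduced'' conclusion of Theorem~\ref{thm:frobtwistedAzumaya} applied to the open stable locus $U=\tilde\mu^{-1}(\xi)^{\theta\mathrm{-s}}$ with stabilizer $G_{stab}$. For general $\theta$, Proposition~\ref{prop:GITFrobeniusPoissonReduction} equips $(\cA^\xi_\theta,\MQd)$ with the structure of a sheaf of Poisson orders. The stable locus $\MQds$ is smooth and symplectic: by Luna's slice theorem $\tilde\mu^{-1}(\xi)^{\theta\mathrm{-s}}\to\MQds$ is a $\overline G$-torsor, and $\tilde\mu^{-1}(\xi)^{\theta\mathrm{-s}}\subset\mu^{-1}(\Br)$ is smooth and lies in the open symplectic leaf (as $\xi\in\Br$), so Proposition~\ref{prop:symplecticquotient} applies. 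By Theorem~\ref{thm:browngordon} it then suffices to establish the Azumaya property of $\cA^\xi_\theta$ generically on each connected component of $\MQds$, which I would do as in the proof of Theorem~\ref{thm:frobtwistedAzumaya}: pull back along $U\to\MQds$, pass to an \'etale cover over which $\DqMatQd/I_q$ splits as $\End(\cV)$ (possible since $\DqMatQd$ is Azumaya over $\mu^{-1}(\Br)\supset\tilde\mu^{-1}(\xi)$ by (2)), use factorizability of $\uq(\gl_\dv)$ (Theorem~\ref{thm:smallquantumgroupfactorizable}) and Proposition~\ref{prop:matrixreduction} to identify $\End(\cV)\dS\uq(\gl_\dv)\cong\End(\cW)$ with $\cW=\cV^*_{\uq(\gl_\dv)}$, and conclude by generic flatness of $\cW$.

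The main obstacle is precisely the GIT globalization in (3): one must check that the $\Proj$-type construction $\cA^\xi_\theta$ genuinely defines a coherent sheaf of algebras on $\MQd$ and that the Poisson-order data produced affine-locally glue, so that Brown--Gordon's theorem can be invoked symplectic-leaf by symplectic-leaf; by contrast, all the pointwise ingredients—the single Azumaya point of Theorem~\ref{thm:Azumayazero}, the nondegeneracy of Theorem~\ref{thm:qmqvnondeg}, and the moment-map compatibility of Theorem~\ref{thm:mmgeneral}—are already in hand, so the remaining work is bookkeeping rather than new computation.
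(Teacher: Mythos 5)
Your proposal is correct and follows essentially the same route as the paper: the paper's proof likewise feeds Theorems \ref{thm:quiver-center}, \ref{thm:Azumayazero}, \ref{thm:qmqvnondeg} and \ref{thm:mmgeneral} into the template of Section \ref{sec:template-for-results}, with the same choices of $A_\Rt=\DRMatQd$, $Z$, $G_{stab}=\C^\times$, $G_0=\{\prod_v\det g_v=1\}$, $\xi_q$ over a central point of $\Br$, and $U$ the $\theta$-stable locus, deducing (1) from Lemma \ref{lm:wholecenter} and (2)--(3) from Theorem \ref{thm:frobtwistedAzumaya}. Your only departures are matters of explicitness rather than substance: you spell out the PBW-based finite generation over $Z$, add the reverse containment of the Azumaya locus via Lemma \ref{lem:BrAzumaya} (not claimed in the statement), and for general $\theta$ you unfold the argument of Theorem \ref{thm:frobtwistedAzumaya} through Proposition \ref{prop:GITFrobeniusPoissonReduction}, Luna's slice theorem and Theorem \ref{thm:browngordon} instead of citing it wholesale.
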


\begin{proof}

We follow the template of Section \ref{sec:template-for-results}.  Hence, we let $A_\Rt$ be the algebra $\DRMatQd$, and $\mu_\Rt$ be the quantum moment map of Definition \ref{def:quiver-moment-map}.  For $Z$ we take the central subalgebra constructed in Theorem \ref{thm:quiver-center}.  For the Azumaya point $p$ required in Lemma \ref{lm:wholecenter}, we take the zero representation, as in Theorem \ref{thm:Azumayazero}.  The first claim of the Theorem now follows from Lemma \ref{lm:wholecenter}.

We prove the remaining two claims together by applying Theorem \ref{thm:frobtwistedAzumaya}.  For $G_{stab}$ we take the copy of $\C^\times$ embedded diagonally in $\GL_\dv$.  For $G_0$ we take the subgroup of $\GL_\dv$ on which the product of all determinants is one.  We choose for $\xi_q$ the character $(\xi_v\epsilon_v)$.  We take for the open subset $U$ the entire stable locus (which we assume is non-empty), on which $G$ acts with common stabilizer $G_{stab}$.

Then, Assumption 1 of Theorem \ref{thm:frobtwistedAzumaya} is confirmed in Theorem \ref{thm:quivervarietynondegenerate}, Assumption 2 is clear, since the entire algebra $\DqMatQd$ lies in total degree zero for the grading given by summing degrees in all vertices, hence $G_{stab}$ acts trivially on $\DqMatQd$.  Assumption 3 is clear, since $\xi_q$ lives over the identity element in $G$, which clearly lies in $\Br$.
\end{proof}


\printbibliography

\bigskip

\noindent{\small

\noindent {\sc IST Austria, Klosterneuburg, Austria}

\noindent \href{mailto:iordan.ganev@ist.ac.at}{iordan.ganev@ist.ac.at}

\medskip

\noindent {\sc School of Mathematics, University of Edinburgh, Edinburgh, UK}

\noindent \href{mailto:D.Jordan@ed.ac.uk}{D.Jordan@ed.ac.uk}

\medskip

\noindent {\sc Institut f\"{u}r Mathematik, Universit\"{a}t Z\"{u}rich, Zurich, Switzerland}

\noindent \href{mailto:pavel.safronov@math.uzh.ch}{pavel.safronov@math.uzh.ch}}

\end{document}